\documentclass[a4paper,12pt,final]{article}

\usepackage[margin=21.5mm]{geometry}

\usepackage[utf8]{inputenc}
\usepackage{amsmath,amsfonts,amssymb,bm,bbm,mathrsfs,esint,relsize}
\numberwithin{equation}{section}
\numberwithin{figure}{section}
\usepackage{bef_alex,scrtime,tikz,xcolor}
\renewcommand{\ProofFont}{\itshape}

\renewcommand{\FG}{\bm}

\renewcommand{\ti}{{\times}}
\renewcommand{\varkappa}{p}
\renewcommand{\dd}{\:\rmd}

\newcommand{\bddin}{\in_{\rmb\rmd}}

\newcommand{\STEP}[1]{\noindent\underline{\itshape Step #1}}

\newcommand{\AAA}{}
\newcommand{\BBB}{}
\newcommand{\RRR}{}
\newcommand{\SSS}{}
\newcommand{\EEE}{}

\usepackage[colorlinks=true]{hyperref}
\hypersetup{urlcolor=blue, citecolor=brown!50!black, linkcolor=blue!50!black}
\newcommand{\TOS}{\texorpdfstring}

\usepackage[notref,notcite]{showkeys}

\makeatletter
\renewcommand*\env@cases[1][1.2]{%
  \let\@ifnextchar\new@ifnextchar
  \left\lbrace
  \def\arraystretch{#1}%
  \array{@{\,}c@{\quad}l@{}}%
}
\makeatother

\newcommand{\PM}{{(\pm)}}
\newcommand{\eff}{\mathrm{eff}}
\newcommand{\mb}{\mathrm{mb}}

\newcommand{\LEB}{\mathscr L}
\newcommand{\CCC}{\mathscr{C}}
\newcommand{\DDD}{\mathscr D}

\newcommand{\mfD}{\mathfrak D}
\newcommand{\mfI}{\mathfrak I}
\newcommand{\mfP}{\mathfrak P_\mathrm{fo}}
\newcommand{\mfM}{\mathfrak M}
\newcommand{\mfS}{\mathfrak S}
\newcommand{\mfT}{\mathfrak T}

\newcommand{\sfg}{\mathsf{g}} 
\newcommand{\sfn}{\mathsf{n}} 
\newcommand{\sfE}{\mathsf{E}} 
\newcommand{\sfF}{\mathsf{F}} 
\newcommand{\sfT}{\mathsf{T}} 
 
\newcommand{\sfC}{\mathsf{C}}
\newcommand{\sfD}{\mathsf{D}}
\newcommand{\sfCE}{\mathsf{C}_\mathrm{en}}
\newcommand{\sfCF}{\mathsf{C}_\mathrm{flux}}
\newcommand{\sfCS}{\mathsf{C}_\mathrm{slope}}
\newcommand{\mob}{\mathsf{m}}

\newcommand{\qsubE}{{q_{_E}}}
\newcommand{\qsubH}{{q_{_H}}}

\newcommand{\qand}{\quad \text{and}\quad}
\newcommand{\weak}{\rightharpoonup}
\newcommand{\weaks}{\overset{*}{\rightharpoonup}}
\newcommand{\scrB}{\mathscr B}
\newcommand{\scrE}{\mathscr E}
\newcommand{\scrG}{\mathscr G}
\newcommand{\scrQ}{\mathscr Q}
\newcommand{\scrR}{\mathscr R}

\begin{document}

\title{From diffusion to transmission via EDP-convergence: \\ 
a paradigmatic multiscale limit%
\thanks{The research was partially supported by DFG via SFB 1114,
  subproject C05}} 

\author{Thomas Frenzel\thanks{WIAS Berlin, Anton-Wilhelm-Amo-Stra\ss{}e 39, 10117
    Berlin, Germany}\ \ and Alexander
  Mielke$^\dagger$}

\date{14 January 2026. \AAA Revised 12 July 2026 \EEE} 

\maketitle

\centerline{\AAA \itshape Dedicated to Felix Otto on the occasion of his
  sixtieth birthday} 


 {\footnotesize\tableofcontents}

 
\newpage

\begin{samepage}
\begin{abstract}
We consider nonlinear diffusion equations on an interval where the diffusion
coefficient in a small region near the center is scaled such that it approximates
a transmission condition at a membrane. While the limiting behavior of the
solutions is well-understood, we study the convergence in the sense of the
energy-dissipation principle (EDP) of associated gradient structures given in
terms of a free energy and a dissipation potential.

EDP-convergence provides a uniquely specified limiting gradient structure that
reformulates the transmission condition in terms of an effective kinetic
relation for the membrane, which relates the jump of the chemical potential and
the flux through the membrane. We show how \AAA properties of the chosen free
energy and the mobility of small-scale diffusion migrate to the effective
kinetic relation. \EEE A surprising \BBB result \EEE is that starting from the
linear Onsager relation of Otto's gradient structure for the linear diffusion
equation, one obtains an exponentially growing kinetic relation, the so-called
Marcelin-De\;Donder kinetic.
\end{abstract}
\end{samepage}

\section{Introduction}
\label{se:Intro}

The theory of gradient-flow structures for linear and nonlinear
diffusion equations was initiated in \cite{JoKiOt98VFFP, Otto01GDEE}.
In particular, for the porous-medium equation
\begin{equation}
  \label{eq:I.PME}
  \pl_t v = \DIV\big( A(x) \nabla v^\alpha\big) \
\text{ in } \DDD, \qquad A(x)\nabla v^\alpha  \cdot \nu = 0 \
\text{ on }\pl\DDD ,
\end{equation}
with exponent $ \alpha > 0 $ can be written formally as a gradient-flow
equation with a free energy $\calE$ and a dual dissipation potential $\calR^*$ in
the form
\begin{equation}
  \label{eq:I.calE.calR}
   \calE(v) = \int_\DDD E(v(x)) \dd x \qand 
\calR^*(v,\xi) = \frac12\int_\DDD A(x)
\mob(v) |\nabla\xi|^2 \dd x . 
\end{equation}
As $\calR^*$ is quadratic in $\xi$ we have
$ \calR^*(v,\xi)= \frac12\langle \xi,\bbK(v) \xi\rangle $ with a linear Onsager
operator $ \bbK ( v ) $ of Otto-type that acts as the inverse of a metric
tensor:
\[
\bbK(v)\xi : = - \DIV \big( A(x) \mob(v) \nabla \xi
\big) \ \text{ in }\DDD, 
\quad  A(x) \mob(v) \nabla \xi\cdot \nu= 0 \ \text{ on } \pl\DDD, 
\]
where $\mob$ denotes the mobility law, while $A$ includes
heterogeneities in the domain $ \DDD $.
With this, the porous-medium equation \eqref{eq:I.PME} can be written
as the gradient-flow equation
\begin{equation}
  \label{eq:I.GenDiffEqn}
    \pl_t v = - \bbK(v) \rmD \calE(v)
 = \DIV\Big( A \,\mob(v) \nabla E'(v)\Big) 
 =  \DIV\Big( A \,\mob(v) E''(v) \nabla v\Big) ,
\end{equation}
if the relation $ \alpha v ^{\alpha-1}= \mob(v) E''(v)$ holds. In
particular, we see that the choices $\mob(v)=v^\beta$ and $E= \sfE_\varkappa$ with
$\alpha= \beta{+}\varkappa{-} 1$ are possible, where $\sfE_\varkappa$ satisfies
$\sfE''_\varkappa(v)=v^{\varkappa-2}$. However,
there are even more general families of gradient structures for
\eqref{eq:I.PME}.

Thus, we see that the gradient structures induced by $\calE$ and $ \bbK$
provide additional information to the partial differential equation
\eqref{eq:I.PME} that is not contained in the equation itself. This avenue was
opened up by the pioneering work of Otto (cf.\ \cite[p.\,108]{Otto01GDEE}):
\begin{quote}
\itshape The merit of the right gradient flow formulation of a dissipative
evolution equation is that it separates energetics and kinetics: The energetics
endow the state space $\calM$ with a functional $E$, the kinetics endow the state
space with a (Riemannian) geometry via the metric tensor $g$.
\end{quote}
\AAA We call the triple $(\calM,E,g)$ a \emph{gradient structure} for the
equation and will use the notation $\big(\bfQ,\calE,\bbK\big)$, where \EEE  
our operator $\bbK$ is the inverse of the metric tensor $g$, i.e.\ 
$g(v)=\big(\bbK(v) \big)^{-1}$. 

\AAA Hence, for a given $\alpha>0$ we have infinitely many choices for the
gradient structure, e.g.\ by choosing $m(v)=v^\beta$ and $E=E_p$ with
$\alpha =\beta+p-1$, see Figure \ref{fig:Intro}. All of these choices are
mathematically valid gradient structures, each leading to a different
functional analytic setup. For instance, the case $\beta=0$ leads to classical
gradient flows in the Hilbert space $\rmH^{-1}(\Omega)$ that were introduced in
\cite{Brez71MMHS}.  The analysis of the case $\beta=1$ was initiated by Felix
Otto in \cite{Otto96DDDE, JoKiOt98VFFP, Otto01GDEE} and let to the beautiful
connections to the Wasserstein distance and the theory of optimal transport.
In the recent papers \cite{FehGes23NELD, GesHey25PMEL} the gradient structure
for the case $p=1$ and $\beta>1$ was derived via Large-Deviation theory.  \EEE
\begin{figure}
\centerline{\begin{tikzpicture}
\draw[gray!6, fill] (-2,-1.6) rectangle (11,5.9);
  \draw[fill,color=green!30] (2,2.8) ellipse (.14 and 2.5);
    \node[rotate=90, green!80!blue] at (2.4,4) {Otto $\geq 1996$};
  \draw[fill,color=violet!30] (5.7,2) ellipse (3.7 and 0.12);
    \node[violet!70] at (5.5,2.3) {Fehrm-Gess-Heyd.\ 2023/25};
  \draw[fill,color=orange!40] (0,2.8) ellipse (.14 and 2.5);
    \node[rotate=90, orange!80!brown] at (0.4,4) {Br\'ezis $\geq 1971$};

  \draw[red,fill] (4,0) circle (0.1);
  \draw[thick,red,->] (4.8,0.7) --(4.14,0.14);
  \node[right, red] at (4.7,0.7){{\smaller Peletier-Redig\&Var.2014}};


\node at (8,5.2) {\fbox{ $\dot v= \DIV\big(A v^{\beta+p-2}\nabla v\big) $ }};
\draw[very thick, ->] (0,0) -- (9,0) node[below] {$\beta$};
\draw[very thick, ->] (0,0) -- (0,5.2) node[above] {$p$};

\draw[thick, blue] (0,2)--(9,2) node[below]{Boltzmann};
\node[left, blue]  at (0,2) {$p=1$ }; 
\node[left, blue]  at (0,0) {$p=0$ }; 
\draw[thick, blue] (2,5)--(2,-0.1) node[below]{$\beta=1$};
\node[below, blue] at (0,0) {$\beta=0$};

\draw[ultra thick] (4,0)--(0,4) node[left,rotate=-45] {linear};
  \draw[thin] (0,3)--(3,0)  node[right,rotate=-45] {{\small$\beta{+}p=3/2$}};
  \draw[thin] (0,5)--(5,0)  node[right,rotate=-45] {{\small$\beta{+}p=5/2$}};

\node[gray!90] at (0.2,-1) {$\|\cdot\|_{\rmH^{-1}}$};
              \node[green!80!blue] at (2,-1) {$\rmW_2$};
\end{tikzpicture}}
\caption{In the $(\beta,p)$ plane for the mobility $\mob(v)=v^\beta$ and the
  free energy $E=\sfE_p$ we indicate several regions where the analysis of the
  porous medium equation was developed. For details see the main text.}
\label{fig:Intro} 
\end{figure}
\AAA In \cite{PeReVa14LDSH} the case $p=0$ and $\beta=2$ was shown to serve as
a model for heat transfer. All the different choices are mathematically valid,
but they correspond to different microscopic models or, what is the same, to
different physical applications.  The case $\beta=1$ seem to be the right
choice for diffusion processes, while modeling heat transfer needs an entropy
$S=S(u)=-E(u)$ that depends concavely and monotonously on the internal energy
$u$, such that the temperature $\theta=1/S'(u) =\rmd u/\rmd s$ is
non-negative. This can only be realized by $S(u)=u^p$ with $0< p<1$ or
$S(u)=\log u$, i.e.\ $p=0$. Thus, the word ``right'' in the above citation from
\cite[p.\,108]{Otto01GDEE} corresponds to the right one for a given physical
application.\EEE

The present work, which has its origin in \cite[Cha.\,4]{Fren19DEGS}, 
reconsiders the model from \cite[Sec.\,4]{LMPR17MOGG}
of a heterogeneous diffusion equation on the interval $\DDD={]{-}1,1[}$, 
where the mobility $A=A_\eps$ is of order $\eps$ on a small
region of length $\eps$ around $x=0$. This means we set   
\[
  A_\eps(x)= \left\{\ba{cl} A_-&\text{for }x \in {]{-}1,0[},\\
        b\eps & \text{for }x \in {]0,\eps[}, \\
         A_+& \text{for } x \in {]\eps,1[}. \ea \right.
\]
and obtain the one-dimensional parabolic equation
\[
\pl_t v = \pl_x\big( A_\eps(x) \pl_x v^\alpha\big)  \ \text{
  in } {]{-}1,1[}, \qquad \pl_xv^\alpha\Big|_{x=\pm 1} = 0. 
\]
Following the ideas in \cite{NeuJag07ETCR, GaNeKn17DETC,
  CGLP19DAEI,CiDaPo24EICP} and the references there, it is not difficult to
pass to the limit $\eps\to 0^+$ in this 
equation to receive a membrane model:
\begin{equation}
  \label{eq:I.PME.transm}
  \begin{aligned}
  &\pl_t v = \pl_x\big( A_- \pl_x v^\alpha\big)
    \ \text{ in } {]{-}1,0[},
  \\
  & A_-\pl_x v^\alpha\big|_{x=0^-} = A_+\pl_x
    v^\alpha\big|_{x=0^+} = \tdfrac b\alpha\,\big(  v^\alpha|_{x=0^+} -  v^\alpha|_{x=0^-}\big),
  \\
  &\pl_t v = \pl_x\big( A_+ \pl_x v^\alpha\big) \ \text{ in } {]0,1[},
\end{aligned}
\end{equation}
The middle equations are the \emph{transmission conditions} saying that the
mass flux $A_\pm \pl_x v^\alpha(0^\pm)$ through the membrane is given by
$\frac b\alpha \big(v(0^+)^\alpha{-}v(0^-)^\alpha\big)$. Note that the limiting
solutions are no longer continuous across the membrane. More generally, the
transmission condition reads 
\[
 A_\pm\pl_x v^\alpha\big|_{x=0^\pm}= b \big( G(v_+)- G(v_-)\big) \quad
 \text{with } G(u)=\int_0^u \mob(u)E''(u) \dd u,
\]
which \SSS only depends on \EEE the term $A\, \mob(v)E''(v)$ that is available
in the diffusion equation \eqref{eq:I.GenDiffEqn}; see Section \ref{su:Rescale}
for a simple derivation.

This paper is going beyond the mere convergence of the solutions. Instead,
we choose a gradient \AAA structures \EEE $(\calE, \calR^*_\eps)$ in the form
$\calR^*_\eps(v,\xi)= \frac12 \langle\xi, \bbK(v) \xi\rangle$ as above, but
$A$ replaced by $A_\eps$, and the question arises whether the family of
gradient structure $(\calE, \calR^*_\eps)_\eps$ has a limit and how it can be
obtained.  As explained above, there are many gradient structures for the
limiting system, see e.g.\ \cite{Miel13TMER, GliMie13GSSC} for a choice with a
linear operator $\bbK_\text{bulk-interf}$. Here, we derive the gradient
structure from the above Otto-type gradient structure defined via $\calE$ and
$\bbK_\eps$ in a systematic way by exploiting the theory of \emph{evolutionary
  $\Gamma$-convergence for gradient systems in the sense of the
  energy-dissipation principle}, shortly called EDP-convergence. For the
porous-medium equation with exponent $\alpha$, we will obtain a gradient
structure, where the dissipative process of transmission really depends truly
on the two parameters $\beta$ and $\varkappa$, while the transmission condition
for the PDE \eqref{eq:I.PME.transm} depends on the exponent $\alpha=
\beta+\varkappa-1$ only. 

The notion of EDP convergence was initiated in \cite{LMPR17MOGG} and formally
introduced in \cite{DoFrMi19GSWE, MiMoPe21EFED}, for further applications in
the area of reaction and diffusion systems, see
\cite{FreLie21EDTS, MiPeSt21EDPC, Step21CGED, PelSch23CGST, HePiSc24?GFMG,
  HeMiSt25?DCLN, MiScSt25?DFOD} and the references therein.  
In this theory we consider a family
of gradient system $(\bfQ,\calE_\eps,\calR_\eps)$, where the state space $\bfQ$
is a closed subset of a Banach space $\bfX$, the energy functionals
$\calE_\eps:\bfQ \to \R\cup\{\infty\}$ are sufficiently smooth, and
$\calR_\eps:\bfQ \ti \bfX\to [0,\infty]$ denotes the dissipation
potentials, i.e.\ for all $q\in \bfQ$, $\calR_\eps(q,\cdot):\bfX\to [0,\infty]$
is lower semi-continuous, convex, and satisfies $\calR_\eps(q,0)=0$. The dual
dissipation $\calR^*_\eps$ is given by Legendre-Fenchel conjugation
$  \calR^*_\eps( q,\xi) = \sup\bigset{ \langle \xi,v\rangle -
    \calR_\eps(q,v) }{ v\in \bfX}$.

The associated gradient-flow equation associated with
$(\bfQ,\calE_\eps,\calR_\eps)$ is given by
\begin{equation}
  \label{eq:I.GradFlow}
    0 \in \pl_{\dot q} \calR_\eps(q,\dot q) + \rmD \calE_\eps(q)  \quad
  \text{or equivalently by} \quad
  \dot q \in \pl_{\xi}\calR^*_\eps(q,{-}\rmD\calE_\eps(q)).
\end{equation}
The energy-dissipation principle states that, under suitable technical
assumptions, the gradient-flow equation \eqref{eq:I.GradFlow} is
equivalent to the energy-dissipation balance
\begin{equation}
  \label{eq:I.EnerDissBal}
  \calE_\eps(q(T)) + \mfD_\eps(q(\cdot)) = \calE_\eps(q(0)) \ \ 
  \text{with }\mfD_\eps(q(\cdot))= \int_0^T \!\!\big\{
  \calR_\eps(q,\dot q) + \calR^*_\eps(q,{-}\rmD\calE_\eps(q)) \big\}
  \dd t.  
\end{equation}
The simplest form of EDP-convergence of the family
$(\bfQ,\calE_\eps,\calR_\eps)$ to the limiting gradient system
$(\bfQ,\calE_0,\calR_\eff)$ is defined via the $\Gamma$-convergence
$\calE_\eps \overset{\Gamma}{\weak} \calE_0$ on $\bfQ$ and
$\mfD_\eps \overset{\Gamma_\rmE}{\weak} \mfD_0$ on $\rmL^1([0,T];\bfQ)$ and
$\mfD_0$ is defined via $\calE_0$ and $\calR_\eff$ as in
\eqref{eq:I.EnerDissBal}, see \cite{DoFrMi19GSWE, MiMoPe21EFED} for precise
definitions.  We emphasize that (i) $\mfD_\eps$ has \RRR to
$\Gamma$-converge on \EEE
the \emph{space of curves in} $\bfQ$ with bounded energy $\calE_\eps$ whereas
$\calE_\eps$ $\Gamma$-converges in the usual sense on the state space $\bfQ$
and that (ii) the effective dissipation potential is uniquely determined by
this procedure, but it may differ from a possibly existing limit $\calR_0$. In
particular, in our case we will see that all $\calR_\eps^*(q,\xi)$ are
quadratic in $\xi$, while the effective dissipation potential
$\calR_\eff^*(q,\cdot)$ contains a non-quadratic term for describing the
transmission through the membrane.

Moreover, we emphasize that the definition of EDP-convergence does not contain
any statement concerning the convergence of the solutions $q_\eps$ of the
gradient-flow. Nevertheless, under \AAA the same \EEE technical assumption as
for the Energy-Dissipation Principle one can show that solutions
$q_\eps:[0,T]\to \bfQ$ of the gradient-flow equation \eqref{eq:I.GradFlow}
converge (after extracting a suitable subsequence) to a solution of the
gradient-flow equation for the effective gradient system, see e.g.\
\cite[Lem.\,2.8]{MiMoPe21EFED}. \RRR For our specific model we provide the
corresponding convergence result in Corollary \ref{co:CvgSols}. \EEE This is in
close analogy of the convergence of minimizers in the case of classical
$\Gamma$-convergence for static minimization problems. However, \RRR we
emphasize that \EEE the purpose of EDP-convergence is the identification of
$\calR_\eff$, which is in fact a more difficult task than showing convergence
of solutions.

\medskip
Our main result is indeed the $\Gamma$-convergence $\mfD_\eps
\overset{\Gamma_\rmE}{\weak} \mfD_0$ (conditioned to bounded energies),
which is contained in Theorem \ref{th:MainResUnscaled} for a suitable class of
mobilities $\mob$ and energy densities $E$. It is obtained by
deriving a corresponding result for a suitably rescaled problem, where
the membrane layer is described by the stretched internal variable $ y =
x/\eps \in [0,1]$. This scaling highlights that there is a separation of time scales
because inside the small layer the density is adjusting to an optimal profile
on the time scale of order $\eps$. The relevant profiles $y \mapsto u(y) =
v(\eps y) $ are in fact
\emph{non-equilibrium steady states} (NESS) that are determined by the chemical
potentials on the two sides of the membrane layer, see \cite{Miel23NESS} for a
general theory of such fast-slow gradient systems. 

As a result we find the effective dual dissipation potential $\calR_\eff$
including the membrane term:
\begin{align}
\label{eq:I.R*eff}
  \calR^*_\eff(v,\xi)
  &= \int_{-1}^0 \!\frac{A_-}2 \mob(v)|\pl_x \xi|^2 \dd x
    +  \int^{1}_0 \!\frac{A_+}2 \mob(v) |\pl_x \xi|^2 \dd x +
    b\,\bfR^*_\mb \big(v(0^-), v(0^+), [\xi ]_0\big),
\end{align} 
where $[\xi]_0:=\xi(0^+){-}\xi(0^-)$ is the jump of the chemical potential
across the membrane. Denoting the flux through the membrane by $\kappa $, the
effective membrane potential $\bfR_\mb $ is characterized by the
following cell problem:
\begin{equation}
\label{eq:I.def.Rmemb}
\begin{aligned}  
  &\bfR_\mb \big(v_-,v_+, \kappa ):= \ol M (v_-,v_+, \kappa ) -
    \ol M (v_-,v_+, 0) \\
  & \text{with }\ol M (v_-,v_+, \kappa)
   =\inf \int_0^1\Big\{\frac{\kappa^2}{2 \mob(u(y))} +
    \frac{\mob(u(y))}2 \big| \pl_y \big(E'(u(y))\big)\big|^2 \Big\} \dd y.
\end{aligned}
\end{equation}
where the infimum is taken over all positive $u \in \rmH^1([0,1])$ with
boundary conditions $u(0)=v_-$ and $u(1)=v_+$. \AAA In
\cite[Eqn.\,(14)]{BodDer04CFNE} a closely related functional was obtained when
studying NESS via large deviation theory, see Remark \ref{rm:BodineauDerrida}. \EEE

Clearly, \AAA properties of the
mobility function $\mob$ as well as from the energy density $E$ migrate into
the membrane potential $\bfR_\mb $. \EEE To substantiate this claim, we observe
\begin{equation}
  \label{eq:I.def.H}
  \ol M(v_-,v_+,0)= \SSS \frac12 \EEE \big( H(v_+)- H(v_-)\big)^2 \quad \text{with } H(u)=\int_0^u
\sqrt{\mob(r)}\, E''(r) \dd r , 
\end{equation}
which depends on the choice of $E$ and $\mob$. 

While the occurrence of the function $\ol M $ in the $\Gamma$-limit $\mfD_0$ is
\RRR more or less straightforward, one of our key results (see Proposition
\ref{pr:Rmb.olM}) states \EEE that $\ol M $ satisfies the relation
\begin{equation}
  \label{eq:olM.Rmb.R*mb}
  \ol M (v_-,v_+,\kappa)= \bfR_\mb ( v_-,v_+,\kappa) +
\bfR_\mb ^* \big(v_-,v_+, E'(v_-){-}E'(v_+)\big).
\end{equation}
In \cite{LMPR17MOGG} this was shown by an explicit calculation, but for the
general case this follows via the abstract saddle-point theory for NESS
developed in \cite{Miel23NESS, Miel25?PGSN}, see Section
\ref{su:RedMembFormulas}.

Specifying to the case $\mob(v)=v^\beta$ and $E=\sfE_\varkappa$ one sees that
$\bfR_\mb = \wt\bfR_{\beta,\varkappa}$ depends on both, $\beta$ and
$\varkappa$, independently, while the transmission condition in
\eqref{eq:I.PME.transm} only depends on $\alpha=\beta+\varkappa-1$. Two cases
can be solved explicitly: 
\begin{align*}
\wt\bfR^*_{0,\varkappa}\big(v_-,v_+,[\xi]_0\big) = \frac12\, [\xi]_0^2 
\ \text{ and } \ 
\wt\bfR^*_{\beta,1}(v_-,v_+,[\xi]_0)= \frac1{\beta^2} \big(
v_-v_+\big)^{\beta/2} \CCC^*\big(\beta [\xi]_0\big),
\end{align*}
where the ``cosh function'' $\CCC^*(\zeta)=4 \cosh(\zeta/2) -4$ is the Legendre
transform of $\CCC$, namely
\begin{equation}
  \label{eq:CCC}
  \CCC(a)= \sup\bigset{a \zeta-\CCC^*(\zeta)}{\zeta \in \R} = 2 a \,
  \sinh^{-1}(a/2) -2 \sqrt{4{+}a^2} +4 . 
\end{equation}
In fact, the case of the linear
Fokker-Planck equation with $\alpha=1$ and $\beta = \varkappa =1 $ was derived in
\cite[Sec.\,4]{LMPR17MOGG} by formal calculations only. Here we give a rigorous
proof and generalize the result to the \AAA range 
\[
 \beta \in [0,1] \quad \text{and} \quad 2\varkappa+\beta \in [3,4],
\]
see Figure \ref{fig:beta.q}. Thus, for $\alpha \in [1/2,3/2]$ we can treat at
least one case. \EEE 

It is interesting to note that $\bfR^*_\mb (v_-,v_+,\cdot)$ is in general no
longer quadratic, as one might expect because all $\calR^*_\eps(v,\cdot)$ are
quadratic \AAA (see Remark \ref{rm:Quadr.bfRmb} for a quadratic
$\bfR^*_{\mb,\mafo{quadr}}$ that is simply fitted but seems to have less physical
relevance). \EEE  
For $\varkappa=1$ (the Boltzmann entropy) the explicit formula for
$\wt\calR_{\beta,1}$ gives an exponential behavior in $[\xi]_0$, which recovers
the classical exponential kinetics introduced by Ren\'e Marcelin in
\cite{Marc15CECP} for chemical reactions, transmission, absorption,
etc. Nowadays, it is often called Marcelin-De\;Donder kinetics, see
\cite{Fein72CKCC} and \cite[p.\,77]{Grme10MENT}.  Such exponential kinetic
relations arise also in large-deviation theory for jump processes, see
\cite[p.\,187-188]{DemZei87LDTA} and \cite{Leon95LDLR, MPPR17NETP}. For
$\varkappa>1$ we will show that
\[
  \wt\bfR^*_{\beta,\varkappa}(v_-,v_+,[\xi]_0) \sim \big( 
[\xi]_0\big)^{\gamma} \quad \text{with } 
 \gamma = \AAA 1+ \EEE  \alpha/(\varkappa{-}1) 
        = \AAA 2 \EEE  + \beta/(\varkappa{-}1), 
\]
see \eqref{eq:GrwothExpo}.
Of course, this is consistent with the fact that the transmission condition 
takes the form 
\[
A_\pm \pl_x(v^\alpha)\big|_{x=0^\pm} 
= \tdfrac b\alpha \big(v_+^\alpha - v_-^\alpha \big)
= b \,\pl_\delta \bfR^*_\mb (v_+,v_-; \xi_+{-}\xi_-) \ 
\text{ with } \xi_\pm = -E'(v_\pm),
\]
see \eqref{eq:GenTransCond}.  Thus, our result generalizes the approach in
\cite{LMPR17MOGG, PelSch23CGST} significantly by allowing general mobilities
$\mob$ and general energy densities $E$ (where $\mob(v)=v^\beta$ and
$E''(v)=v^{\varkappa-2}$ is just a special case) and provide the first rigorous
proof for the EDP-convergence.

In summary, we see that the notion of EDP-convergence is flexible enough to
study convergence of gradient systems in degenerate cases (loss of uniform
geodesic convexity, cf.\ \cite{Miel16EGCG}) where the effective dissipation
potential may completely change its structure, e.g.\ from quadratic to general
power laws or even exponential. Moreover, it becomes clear that EDP-convergence
is a convergence of the couple $(\calE_\eps,\calR_\eps)$, where by definition
the convergence of $\calE_\eps \overset{\Gamma}{\weak} \calE_0$ is independent
of $\calR_\eps$, but the convergence of $\calR_\eps$ cannot be seen
independently of $\calE_\eps$. Indeed, in general \AAA (microscopic) properties
of $\calE_\eps$ migrate into $\calR_\eff$, \EEE as is seen in the exponential 
behavior of $\delta\mapsto \wt\bfR^*_{\beta,1} (u_-,u_+,\delta)$, which is a
consequence of the logarithmic Boltzmann statistics $\xi= E'(v)=\log v$ in the
case $ \varkappa =1$.

\RRR At first sight. the migration of properties of $\calE_\eps$ into
$\calR_\eff$ may seem to be inconsistent with Otto's philosophy of ``separating
energetics and kinetics''. However, this separation has to be understood on one
given scale, and when singular limits are concerned, then the scales are mixed
again such that kinetics and energetics can mix on the microscale, via the cell
problem defined in \eqref{eq:I.def.Rmemb} and the associated NESS. Thus, the
effective (macroscopic) kinetic relation may indeed depend on both, the
microscopic energetics and the microscopic kinetics. In contrast, the effective
energetics only depends on the microscopic energetics. \EEE

The plan of the paper is as follows. In Section \ref{se:MembrModel} we describe
the  model in detail and introduce the notation for the scaling $
x=X_\eps(y)$. In Section \ref{su:Ass.GS} we present our precise assumptions on
the mobility $ u \mapsto \mob(u)$ and the energy density $u \mapsto E(u)$ and
discuss the range of validity for the power-law case $\mob(u)=u^\beta$ and
$E=\sfE_p$. Sections \ref{su:EDPcvg} to \ref{su:ProofMainRes} contain the main
results on the EDP-convergence in the unscaled, which is derived  directly from
the corresponding statements in the scaled setting. 

Sections \ref{se:APrioriEst} to \ref{se:Limsup} work in the scaled setting on a
sequence of functions $u_\eps(t,y) = v_\eps(t,X_\eps(y))$ with the
corresponding scaled dissipation functional $\wh\mfD_\eps$. We first provide a
priori estimates which \AAA lead \EEE to spatio-temporal compactness on the
bulk parts (see Proposition \ref{pr:SpaTimeComp}), whereas the degeneration
$X'_\eps(y)=\eps$ of the membrane part $y\in I_0={]0,1[}$ can be exploited in
the limiting continuity equation to show that the flux in membrane part is
spatially constant, see Section \ref{su:CharLimits}. The $\Gamma$-liminf
estimate for $\wh\mfD_\eps$, which is presented in Section \ref{se:Liminf},
tackles the missing compactness of $u_\eps$ in the membrane part by using weak
convergence methods and a convexification transformation, which is possible
only because of the scale separation and a subtle argument showing that the
weak* limit does not have a singular part. 
\AAA
Here we use ideas similar to \cite[Thm.\,3.2]{AMPSV12PLWG} and
\cite[Sec.\,4.3+4]{FreLie21EDTS}. 
\EEE
The construction of the recovery
sequences for $\wh\mfD_\eps$ for the $\Gamma$-limsup estimate are obtained only
under the more restrictive assumption that $\mfD_\eps$ is convex, which allows
for the application of Jensen's inequality when doing temporal smoothing via a
convolution kernel.

Section \ref{se:NESS.Rmemb} supplies the necessary background on NESS and their
characterization via so-called BER functions $\scrB_{\calE,\calR}$. This theory
is crucial for showing that the integrand of the $\Gamma$-limit $\mfD_0$ can
again be identified as $\calR_\eff(u,\dot u)+ \calR^*_\eff(u,-\rmD\calE(u))$
thus defining the effective dissipation potential $\calR_\eff$. 
Moreover, we derive estimates of the membrane dissipation potentials
$\wt\bfR_{\beta,p}$ arising for $\mob(v)=v^\beta$ and $E=\sfE_p$. In
particular, Section \ref{su:BoltzmannPower} provides an elegant and short way
to calculate $\wt\bfR_{\beta,1}$ explicitly.

\section{The membrane problem and the main results}
\label{se:MembrModel}

\subsection{Modeling using gradient structures}
\label{su:Setup}

We generalize the linear Fokker-Planck equation considered in
\cite[Sec.\,4]{LMPR17MOGG} in considering nonlinear diffusion on the
one-dimensional domain $\DDD ={]{-}1,1[}\subset  \R^1$, where we assume that the
mobility is depending on the space variable $ x\in \DDD$ in such a way
that it is of order $\eps$ in the interval $[0,\eps]$. The total mass of the
diffusion substance is assumed to be normalized to $1$ such that the density
distributions $v(t,\cdot)$ lie in 
\[
\calP(\DDD):= \bigset{ v \in \rmL^1(\DDD)}{ \int_\DDD v \dd x =1, \ v\geq
  0 \text{ a.e.\, } } . 
\]
Our diffusion equation for the density $v$ takes the form 
\begin{equation}
  \label{eq:PME1}
 \begin{aligned}
  \pl_t v = \DIV\Big( A_\eps(x)  
   \mob(v)  \nabla\big(\BBB E'(v  ) \EEE \big) \Big)
  &\quad \text{for }(t,x)\in {]0,\infty[}\ti \DDD,
\\
\nu \cdot \nabla\big( \BBB E'(v) \EEE \big) =0 
  & \quad  \text{for }(t,x)\in {]0,\infty[}\ti \pl\DDD.
\end{aligned}
\end{equation}
The associated gradient system is given by the energy $\calE$ and
the dual dissipation potential $\calR_\eps^*$ as defined in
\eqref{eq:I.calE.calR}. To write down an explicit expression of the dissipation
functional $\mfD_\eps$ we use the continuity equation
\begin{equation}
  \label{eq:ContSimp}
\pl_t v + \pl_x J = 0   
\end{equation}
which is always interpreted in the sense of distributions, i.e., 
\begin{equation}
  \label{eq:CE.weak} 
\iint_{[0,T]\ti\DDD} \big( \pl_t\phi(t,x) v(t,x) + \pl_x \phi(t,x) J(t,x) \big)
\dd x \dd t = 0 \quad  \text{for all } 
 \phi \in \rmC^\infty_\rmc({]0,T[}\ti \ol{\DDD }).
\end{equation}
Note that
$\phi(0,x)=\phi(T; x)=0$, whereas $\phi(t,\pm1)$ can be arbitrary. It will be
part of our analysis to provide conditions guaranteeing $J\in
\rmL^1([0,T]\ti\DDD) $ for all $v$ satisfying $\mfD_\eps(v)<\infty$. Indeed, by
Legendre transforming $\calR^*(v,\cdot)$ we have 
\[
\int_0^T \calR_\eps(v,\pl_t v)\dd t = \inf\Bigset{ \iint_{[0,T]\ti\DDD} 
  \frac{J^2}{2A_\eps\mob(v)}  \dd x \dd t }{ v\in
  \rmL^1( [0,T] \ti\DDD), \ \pl_t v +\pl_x J =0 }.  
\]
Thus, using a simple H\"older estimate and $\int_0^T\calR(v,\pl_t v)\dd
t<\infty$,  we find $J\in \rmL^q([0,T]\ti\DDD)$ with $q\in [1,2]$ whenever
$A_\eps\mob {\circ} v \in \SSS \rmL^{q/(2-q)} \EEE ([0,T]\ti\DDD)$, \AAA see
Proposition \ref{pr:SpaTimeComp}. \EEE  Subsequently, we will
always assume that $J$ is given by $v$ as the minimizer of the above definition of
$\calR_\eps$. 

\subsection{Rescaling the domain, the limit equation, and NESS}
\label{su:Rescale}

As suggested in \cite{AMPSV12PLWG,LMPR17MOGG} it is advantageous to
transform the variable $x \in \DDD $ in such a way that the
membrane has finite width equal to 1. We set 
$I :=[-1,2]$, where $I$ decomposes into the
three subintervals  
\[
I_-:=[-1,0],\quad I_0:={]0,1[}, \quad I_+:=[1,2].
\]
Here $y=x/\eps \in I_0$ corresponds to an adapted microscopic variable
describing the membrane.

The piecewise affine rescaling $Y_\eps: \DDD \to I; \: x \mapsto y=Y_\eps(x)$
and its inverse $X_\eps=Y_\eps^{-1}:I\to \DDD $ are explicitly given by
\[
Y_\eps(x)=
 \begin{cases} 
     x& \text{for } x\in [-1,0],\\
    x/\eps& \text{for } x\in [0,\eps],\\
    \frac{x+1{-}2\eps}{1{-}\eps}&\text{for } x\in [\eps,1],
 \end{cases}
\quad \text{and} \quad 
X_\eps(y)=
 \begin{cases} 
    y& \text{for }y\in I_-,\\
    \eps y& \text{for } y\in I_0,\\
    (1{-}\eps)y{-}1{+}2\eps&\text{for }y\in I_+.
 \end{cases}
\]
The construction was done such that the thin layer $[0,\eps] \subset \DDD$ is
magnified to the interval $[0,1]=I_0\subset I$. We will use that $X_0$ is
still Lipschitz continuous with $ X'_0(y) = 0$ on $I_0$. 

We transform the density $v$ into the new function
\[
u(t,y) = v(t,X_\eps(y)) \quad \text{or inversely} \quad v(t,x)=u(t,Y_\eps(x)).
\]
Then $u(t)$ is no longer a probability density, but
$\int_I u X'_\eps \dd y \equiv 1$. Moreover, the continuity equation (CE)
$\pl_t v+\pl_x J=0$ transforms into
\begin{subequations}
  \label{eq:mfFeps}
\begin{equation}
  \label{eq:CE.transformed}
  \pl_t ( X'_\eps\, u) + \pl_y Q=0 \quad \text{with } Q(t,y)=J(t,X_\eps(y)). 
\end{equation}
Using $(\pl_x v)(t,X_\eps(y))= \pl_y u(t,y)/X'_\eps(y)$ we define the
transformed energy and dissipation functional $\wh\calE_\eps$ and
$\wh\mfD_\eps$ for $\eps\geq 0$ via 
\begin{equation}
  \label{eq:def.mfFeps}
\begin{aligned}
&\wh\calE_\eps(u):= \calE( u{\circ} Y_\eps) = \int_I E(u(y)) X'_\eps(y) \dd y,
\\
& \wh\mfD_\eps(u):= \mfD_\eps(u{\circ}Y_\eps) 
 = \iint_{[0,T]\ti I}\!\!  \Big( \frac{Q^2}{2\bbA_\eps\mob(u)}  
   + \frac{\bbA_\eps}{2} \big| \pl_y H(u)\big|^2\Big) \dd y \dd t  
\end{aligned}
\end{equation}
\end{subequations}
with $\bbA_\eps(y): = A_\eps(y) / X'_\eps(y) $ and $H$ from \eqref{eq:I.def.H}. 
Here $Q$ is the solution of the scaled continuity equation
\eqref{eq:CE.transformed} minimizing the expression defining
$\wh\mfD_\eps$, see Lemma \ref{le:DifferentFlux} for a characterization.  

The main observation is that $\bbA_\eps$  is
piecewise constant taking only the values $A_-$, $b$, and $A_+/(1{-}\eps)$,
such that $\bbA_\eps$ and $1/\bbA_\eps$ are nicely bounded for $\eps\in
{]0,1/2[}$. \AAA In particular, we can define $\bbA_0$ by continuous extension with
$\bbA_0 (y) \in \{A_-, b, A_+\}$, which gives \EEE the uniform convergence  
$ \| \bbA_\eps {-} \bbA_0 \|_{\rmL^\infty} = O(\eps)$. Thus, the functional
$\wh\mfD_\eps$ has a simple dependence on $\eps$. The only nontrivial dependence
on $\eps$ stems from the scaled continuity equation \eqref{eq:CE.transformed},
where the prefactor satisfies $ X'_\eps(y)=\eps$ for $y \in I_0=[0,1]$. This
degeneration will lead to a scale separation such that $u(t,\cdot)|_{I_0}$ can
adjust instantaneously to the data given at $y=0$ and $y=1$ at time $t$. 

While the rescaled dissipation functional $\wh\mfD_\eps$ will be our main interest,
we can also look at the rescaled version of the diffusion equation
\eqref{eq:I.GenDiffEqn}, namely 
\begin{equation}
  \label{eq:RescalDiffEqn}
  X'_\eps(y) \pl_t u(t,y) = \pl_y\big( \bbA_0 \mob(u) E''(u)\pl_y u\big) \quad
  \text{for } (t,y) \in \R_>\ti I,
\end{equation}
where the layer thickness $\eps$ only occurs in the prefactor $X_\eps'$. 

We note that $X_\eps$ has a Lipschitz continuous limit $X_0$ satisfying
$X'_0(y)= 0$ in $I_0$ and $X'_0(y)=1$ otherwise. \AAA Using the ideas in 
\cite{NeuJag07ETCR, GaNeKn17DETC, CGLP19DAEI,CiDaPo24EICP} it follows that \EEE
the limiting  equation reads 
\begin{equation}
  \label{eq:LimitDiffEqn}
   X'_0 \pl_t u(t,y) = \pl_y\big( \bbA_0 \mob(u) E''(u)\pl_y u\big) \ \text{ in }
   \R_>\ti I, \quad \pl_y u=0 \text{ on }\R_>\ti \pl I, 
\end{equation}
which consists of two diffusion equations on $\R_>\ti I_-$ and $\R_>\ti I_+$,
and a quasistatic equation on the rescaled thin layer $\R_>\ti I_0$, in which
no time derivative appears any more. This means that inside the membrane the
profile is adjusted on a much faster time scale and can be consider as
quasistatic with respect to the time scale of the diffusion in $I_-$ and
$I_+$. These profiles are so-called Non-Equilibrium Steady States (NESS) and
are defined by the boundary values $u_-(t):= u(t,0) $ and $u_+(t)=u(t,1)$ and
the static equation
\begin{align}
  \label{eq:NESS.Eqns}
  0= b\,\pl_y \big(\mob(u) E''(u)\pl_yu \big) , \quad u(0)=u_-, \quad u(1)=u_+.
\end{align}
Using the monotone function $G(u)= \int_0^u \mob(r) E''(r) \dd r$, we easily
find the unique NESS 
\begin{equation}
  \label{eq:ExplicitNESS}
  u_\mafo{NESS}(y) = G^{-1} \Big( (1{-}y)\,G(u_-) + y\,G(u_+)\Big) \quad
  \text{for } y\in  I_0=[0,1] . 
\end{equation}

As \eqref{eq:LimitDiffEqn} can be rewritten as
\SSS $ X'_0 \pl_t u = \pl_y \big( \bbA_0 \pl_y G(u)\big)$ \EEE we see that
$y \mapsto \bbA_0\pl_y G(u)\big|_{(t,y)}$ has to be continuous. As
$u_\text{NESS}$ from \eqref{eq:ExplicitNESS} \AAA satisfies \EEE
$\pl_y ( G{\circ}u_\text{NESS})(y) \equiv G(u_+)-G(u_-)$ we obtain the desired
transmission conditions \AAA  \eqref{eq:DerivedTransCond}. Thus, the limit equation \eqref{eq:LimitDiffEqn} can be reduced to the smaller
domain $\R_> \ti I_\PM$ and we arrive at the limiting system
\begin{subequations}
\label{eq:LimitEqn}
\begin{align}
 \pl_t u(t,y) & = A_- \pl_y^2 G(u) \quad \text{in } \R_>0\ti I_-, \qquad \pl_y
 u(t,-1)=0 \text{ for } t>0,  
\\
  \label{eq:DerivedTransCond}
  &A_+\pl_y( G{\circ} u) (t,1) = A_- \pl_y (G {\circ} u)(t,0) = b \big(
  G(u(t,1)) - G(u(t,0))\big). 
\\
 \pl_t u(t,y) & = A_+ \pl_y^2 G(u) \quad \text{in } \R_>0\ti I_+, \qquad \pl_y
 u(t,2)=0 \text{ for } t>0,  
\end{align}
\end{subequations}

In the following remark we show that it is not difficult to equip the limiting
system with a gradient structure have a quadratic dissipation potential. This
approach was indeed taken in \cite{Miel13TMER}, but the present paper shows that
such quadratic potentials are physically relevant only in the case $\mob(u) =
\mafo{const}$.   

\begin{remark}[Linear kinetic relation for membrane transmission]
\label{rm:Quadr.bfRmb}
It is easy to see that the limiting equation has the gradient structure
$ \big( \bfQ_\mafo{lim},\calE_0, \calR_\mafo{quad} \big)$ with 
\begin{align*}
&\bfQ_\mafo{lim}=\rmL^1(I_-\cup I_+), \quad 
  \calE_0(u)=\int_{I_-\cup I_+} E(u(y))\dd y ,
\\
&\calR^*_\mafo{quad}(u,\xi) = \sum_{\sigma \in \{-,+\}} \int_{I_\sigma}
      \frac{ A_\sigma\mob(u)}2 \big(\pl_y \xi\big)^2 \dd y +  \frac b2\,
      \frac{G(u(1)){-}G(u(0))}{E'(u(1)){-}E'(u(0))} \,\big( \xi(1))-
      \xi(0)\big)^2.  
\end{align*}
Similar ad hoc constructions of quadratic dissipation potentials have been 
given in many contexts, see \cite{Maas11GFEF, CHLZ12FPEF, 
 Miel13GCRE, ErbMaa14GFSD} for Markov processes on a finite state
space and \cite{CarMaa14A2WM, MitMie17EGSL, CarMaa17GFEI} for Lindblad
equations for quantum systems, and for reaction diffusion systems including
transmission in \cite{ Miel11GSRD, GliMie13GSSC, Miel13TMER}. 
\end{remark}
\EEE

\subsection{Assumptions on $E$ and $\mob$}
\label{su:Ass.GS} 

We now formulate the specific assumptions on the \AAA coefficient function
$\bbA_0$ (cf.\ paragraph after \eqref{eq:def.mfFeps}), \EEE on
the energy function $E$, and the mobility function $\mob$.
\begin{subequations}
\label{eq:AllAssump}
\begin{align}
\label{eq:bbA}
& A_-,A_+, b>0, \quad \mob\in \rmC^0(\R_\geq), \ \mob(r) >0 \text{ for }r>0 , 
\\
& \label{eq:E.bounds}
\begin{aligned}  &E\in \rmC^0({[0,\infty[}) \cap \rmC^2({]0,\infty[}), \ \ E
  \text{ is strictly convex and superlinear}, \\
&\exists\,  q_E\geq 1, \ c_\rmE>0\ \ \forall\, u\geq 0: \quad   E(u)\geq c_\rmE
u^{q_\rmE} - 1/c_\rmE, 
 \end{aligned}
\\[0.3em]
&\label{eq:Ass.for.H}
  \int_0^1 \mob(r)^{1/2} E ''(r) \dd r < \infty.
\end{align}   
The last assumption allows us to define $H$ as in \eqref{eq:I.def.H} that
encodes combined information on $E$ and $\mob$. We impose the further
assumptions:
\begin{align}
& \label{eq:Cond.H.qH} 
  \exists\, \qsubH\geq \frac12,\  C>0\ \forall\,u>0: \quad  \frac{u^\qsubH}C -C
  \leq H(u) \leq C \big( 1{+} u^\qsubH\big),  
\\[0.3em]
&  \label{eq:mob.H.growth}
  \exists\, C_\mob>0 \ \forall\, u\geq 0:\qquad \qquad \qquad  
  \mob(u) \leq C_\mob \big(1{+} H(u)\big)^2 . 
\end{align}
\end{subequations}

Our main result on the EDP convergence relies on the restrictive assumption
that the two functions 
\begin{equation}
  \label{eq:ConcaveTwofold}
  u\mapsto \mob(u) \qand  u\mapsto \sfg_\mafo{id}(u):=\frac1
         {\mob(u)\,(E''(u))^2} \ \text{ are concave}.
\end{equation}
This condition guarantees that the functionals $\mfD_\eps$ and $\wh\mfD_\eps$
are convex, which is a very helpful property.  However, we believe that this
condition is not essential and may be relaxed by a technically more advanced
analysis. To substantiate this point, we introduce the following assumption
that is significantly weaker, see the discussion in Proposition
\ref{pr:PowerLawCvx}. This weaker condition will be enough to pass to the limit
in the membrane part, where there is no temporal compactness, such that the
full $\Gamma$-liminf estimate is valid.
\begin{equation}
  \label{eq:Cvx.Ass.psi}
 \begin{aligned}  
  \exists\: &\text{homeomorphism } \psi:\R_\geq \to \R_\geq 
  \ \ \exists\, C_\psi>0\ \forall\, w>0: 
 \\ 
  &\text{(i) } \  \ w \mapsto \sfn_\psi(w):=\mob(\psi(w))
    \text{ is concave}, 
 \\
  &\text{(ii) } \ w\mapsto \sfg_\psi(w):=\frac1
         {\mob(\psi(w))\big(\psi'(w)\,E''(\psi(w))\big)^2}
    \: \text{ is concave} ,
 \\
  &\text{(iii) } w  \leq C_\psi 
  \big(1{+}H(\psi(w))\big)^2,     
\\
 & \text{(iv) } \ \psi(0)=0, \quad \psi,\,\psi^{-1} \in \rmC^1(\R_>).
\end{aligned}
\end{equation}

The following result shows that the above assumptions can be met by looking at
mobilities and energies of power-law type. Figure \ref{fig:beta.q} displays the
relevant regions for different parts of the conditions. The linear
Fokker--Planck equation is obtained for $\beta+\varkappa =2$, where
$\varkappa =1$ and $\beta=1$ provides the Otto gradient-structure with the
Boltzmann entropy, see \cite{JoKiOt98VFFP}, whereas \cite{Otto01GDEE}
treats the porous medium equation with $\beta=1$ and $\varkappa \geq 1-1/N$ and
$\varkappa >N/(N{+}2)$ with the space dimension $N$. 
\begin{figure}
\centering 
\begin{tikzpicture}[scale=1.5]
\draw[gray!6, fill] (-2,-0.6) rectangle (5,3.5);

\draw[thick, ->] (0,0) -- (4.2,0) node[right]{$\beta$};
\draw[thick, ->] (0,0) -- (0,3) node[left]{$\varkappa $};

\draw[ultra thick, color=red] (0,2)-- node[pos=-.04, left, sloped]
{lin.\,FP} (2,0) ; 

\draw[color=white] (2,3)--node[sloped]
{\textcolor{red!60}{slow diffusion}} (3,2); 
\draw[color=white] (-1.5,2.9)--node[sloped]
{\textcolor{red!60}{fast diffusion}} (-0.5,1.9); 

\draw[opacity=0.2, fill =green, very thin] (0,0)--(1,0)
                         --(1,3.3) --(0,3.3)--cycle;
\node[right, rotate=90] at (0.5,1.85) {\textcolor{green!50!black}{$\mob$ concave}};

\draw[opacity=0.2, fill=blue, very thin] (0,1.5)--(3,0)--(4,0)--(0,2)--cycle;

\node[right] at (1.15,1.2) 
{\textcolor{blue!60!black}{$\sfg_\mathrm{id}$ concave}} ;

\draw[fill = black!30 ,very thin ] (0,0)-- (4,0) -- 
   (4,1) -- (1,1) -- (0,1.5) -- cycle;

\draw (1,0) node[below]{$1$} -- (1,1)-- (0,1)  
                node[left]{$1$}  ;

\draw (2,1) -- (2,0) node[below]{$2$}  ;

\end{tikzpicture}
\caption{The different regimes for $\mob(u)=u^\beta$ and
  $\sfE''_\varkappa (u)=u^{\varkappa -2}$ leading to the equation $\dot u = \pl_x\big(
  u^{\beta+\varkappa -2}\pl_x u\big)$. The weaker condition
  \eqref{eq:Cvx.Ass.psi} hold above the gray area, whereas the stronger condition
  \eqref{eq:ConcaveTwofold} holds in the closed parallelogram obtained as
  intersection of the light blue and the light green strips.}
\label{fig:beta.q} 
\end{figure}

\begin{proposition}[Power-law $E$ and $\mob$] 
\label{pr:PowerLawCvx} 
Assume that $E=\sfE_\varkappa $ and $\mob$ are given via
\begin{equation}
  \label{eq:PolyCase}
  \sfE_\varkappa''(u)=u^{\varkappa -2}, \ \sfE_\varkappa (1) = 
  \sfE'_\varkappa(1)=0,  \qand  \mob(u)=u^\beta.  
\end{equation}
Then, the conditions \eqref{eq:AllAssump} and \eqref{eq:Cvx.Ass.psi} hold if
and only if  
\begin{align}
\label{eq:Model.Param}
\beta\geq 0 \qand \varkappa\geq \max\{ 1, (3{-}\beta)/2\},
\end{align}
Then, the conditions \eqref{eq:AllAssump} and \eqref{eq:ConcaveTwofold} hold if
and only if  
\begin{align}
\label{eq:Model.Param2}
\beta \in  [0,1] \qand 2\varkappa +\beta \in [3,4].
\end{align}
\end{proposition}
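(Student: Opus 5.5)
The plan is to verify every condition by direct computation with power laws. Each condition reduces to an inequality between exponents, and the "only if" parts come from reading these computations backwards.

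First I treat \eqref{eq:AllAssump} alone. Condition \eqref{eq:bbA} is trivial for $\mob(u)=u^\beta$ with $\beta\geq0$. For $\beta<0$, $\mob$ is not in $\rmC^0(\R_\geq)$, so $\beta\geq0$ is necessary. For \eqref{eq:E.bounds}: $\sfE_\varkappa''>0$ gives strict convexity. Continuity at $0$ together with superlinearity holds exactly for $\varkappa\geq1$. For $\varkappa=1$ one has $\sfE_1(u)=u\log u-u+1$, and for $\varkappa<1$ the growth is sublinear. The lower bound holds with $q_E=\max\{1,\varkappa\}$; for $\varkappa=1$ use $u\log u\geq u-C$.

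Next I compute $H'(u)=u^{\beta/2+\varkappa-2}$. Then \eqref{eq:Ass.for.H} holds iff $\beta/2+\varkappa-2>-1$. In that case $H(u)=u^{\qsubH}/\qsubH$ with $\qsubH=\beta/2+\varkappa-1$. So \eqref{eq:Cond.H.qH} is equivalent to $\qsubH\geq\frac12$, i.e.\ $2\varkappa+\beta\geq3$, which also implies the integrability condition. Finally, \eqref{eq:mob.H.growth} reads $u^\beta\leq C(1+u^{\beta+2\varkappa-2})$. Since $u^\beta\to0$ near $0$, only large $u$ matters, and there the bound holds iff $\beta\leq\beta+2\varkappa-2$, i.e.\ $\varkappa\geq1$. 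Hence \eqref{eq:AllAssump} is equivalent to $\beta\geq0$, $\varkappa\geq1$, $2\varkappa+\beta\geq3$, which is exactly \eqref{eq:Model.Param}.

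For \eqref{eq:Cvx.Ass.psi} I make the ansatz $\psi(w)=w^\gamma$ with $\gamma>0$, so (iv) is automatic. Then $\sfn_\psi(w)=w^{\gamma\beta}$ is concave iff $\gamma\beta\in[0,1]$. A short computation gives
\[
\sfg_\psi(w)=\gamma^{-2}\,w^{\,2-\gamma(\beta+2\varkappa-2)},
\]
which is concave iff $\gamma(\beta+2\varkappa-2)\in[1,2]$. Condition (iii) becomes $w\leq C(1+w^{\gamma(\beta+2\varkappa-2)})$, which already follows from the lower bound $\gamma(\beta+2\varkappa-2)\geq1$. Under \eqref{eq:Model.Param} we have $\beta+2\varkappa-2\geq1$, and I choose $\gamma=1/(\beta+2\varkappa-2)$. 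This gives exponent exactly $1$ for $\sfg_\psi$, and $\gamma\beta=\beta/(\beta+2\varkappa-2)\leq1$ because $\varkappa\geq1$. So \eqref{eq:Model.Param} is sufficient. It is also necessary, since it was already forced by \eqref{eq:AllAssump}; therefore no general $\psi$ needs to be analysed for the "only if" direction.

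For \eqref{eq:ConcaveTwofold}, the case $\psi=\mathrm{id}$, i.e.\ $\gamma=1$, the computation above shows: $\mob$ is concave iff $\beta\in[0,1]$, and $\sfg_\mathrm{id}$ is concave iff $\beta+2\varkappa-2\in[1,2]$, i.e.\ $2\varkappa+\beta\in[3,4]$. Combined with \eqref{eq:AllAssump}, the remaining requirement $\varkappa\geq1$ is automatic, because $\varkappa\geq(3-\beta)/2\geq1$ for $\beta\leq1$. This yields \eqref{eq:Model.Param2}.

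The only delicate points are edge cases. These are the concavity of $w^a$ for $a\in\{0,1\}$, which is linear or constant and hence fine, the Boltzmann case $\varkappa=1$, where $\sfE_1$ has a logarithmic form, and the behaviour near $u=0$. I expect the main care to go into checking necessity without overlooking exotic $\psi$; this is avoided because necessity already comes from \eqref{eq:AllAssump}.
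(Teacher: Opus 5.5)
Your proposal is correct and follows essentially the same route as the paper. You compute $H(u)=u^{\qsubH}/\qsubH$ with $\qsubH=\varkappa+\beta/2-1$ to reduce \eqref{eq:AllAssump} to exponent inequalities. You take $\psi(w)=w^{a}$ with $a=1/(2\varkappa+\beta-2)$, which is the endpoint $a=\sigma/2$ of the paper's admissible interval, and you note that necessity is already forced by \eqref{eq:AllAssump}. Finally you read off \eqref{eq:Model.Param2} from $\mob(u)=u^\beta$ and $\sfg_\mathrm{id}(u)=u^{4-2\varkappa-\beta}$. One wording slip: for $\varkappa<1$ the energy $\sfE_\varkappa$ grows linearly, not sublinearly, but it still violates superlinearity.
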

\begin{proof}
Assuming \eqref{eq:PolyCase} we first observe $\qsubE = \max\{ \varkappa ,1\}$
and $H'(u) = u^{\varkappa +\beta/2-2}$. Hence, \eqref{eq:Ass.for.H} is satisfied
for $2\varkappa+\beta>2$, and then $H(u)= c u^{\varkappa
  +\beta/2-1}$ for all $u\geq 0$.
 
In particular, we have $\qsubH= p + \beta/2-1$ which leads to the
restriction $2\varkappa + \beta \geq 3$. The condition \eqref{eq:mob.H.growth}
means $\beta \leq 2 \qsubH$ and reduces to $\varkappa \geq 1$. This shows that 
\eqref{eq:AllAssump} holds if and only if \eqref{eq:Model.Param}.  
Next we show that in this case condition \eqref{eq:Cvx.Ass.psi} can always be
satisfied by a transformation $u = \psi(w)=w^a$ with $a>0$.
For the four conditions in \eqref{eq:Cvx.Ass.psi}  use the
transformation $u=\psi(w)= w^a$ with $a>0$ and find 
\[
\sfn (w)= w^{\beta\, a} \quad \text{and} \quad 
\sfg(w) = w^\gamma \text{ with } \gamma= 2 -a\,(2\varkappa {+}\beta{-}2).
\]
The concavities (i) and (ii) hold if the exponents lie in $[0,1]$, which means 
\[
a \in {]0,1/\beta]} \qand  a \in [\sigma/2, \sigma] \ \text{ with }
\sigma=\frac2{2\varkappa{+}\beta{-}2} > 0.
\]
Such $a>0$ can be found if and only if $2\varkappa +\beta > 2$ (for $\sigma>0$)
and $\varkappa \geq 1$ (for $1/\beta\geq \sigma/2$).  Using
$z \mapsto H(\psi(z))^2 \sim z^b$ with $b = 2 a/\sigma \geq 1$; we see that
(iii) and (iv) in \eqref{eq:Cvx.Ass.psi} hold.

The conditions \eqref{eq:Model.Param2} follow easily
from $\mob(u)=u^\beta$ and $\sfg_\text{id}(u)= u^{4-2\varkappa -\beta}$.
\end{proof}


\subsection{EDP-convergence and the effective kinetic relation} 
\label{su:EDPcvg}

The total dissipation functional is given as 
\begin{equation}
  \label{eq:Def.mfDeps}
  \mfD_\eps(v) = \iint_{[0,T]\ti \DDD}\! \Big( \frac{J^2}{2A_\eps\mob(v)} +
  \frac{A_\eps}2 \big| \pl_x H(v)\big|^2 \Big) \dd x \dd t . 
\end{equation}
This form of the dissipation functional shows the dependence on the small
parameter $\eps$ explicitly. The first term becomes singular \AAA because
$A_\eps$ \EEE is in the denominator, hence the flux through the membrane will
be \AAA relatively \EEE small. In the second term $A_\eps$ appears as a
prefactor, which allows the function $v$ to develop a large slope such that in
the limit a discontinuity will appear.

Before going into the details, we will provide the main result on the
$\Gamma$-convergence of $\mfD_\eps$. In the following 
result the convergence $\mfD_\eps  \overset{\Gamma_\rmE}{\weak} \mfD_0$  is
defined via: 
\begin{align*}
\text{(i) }\quad& \hspace{-0.5em}\left. \ba{@{}c} v_\eps \weak v_0 \text{ in } 
     \rmL^1([0,T]\ti\DDD) \text{ and} 
\\ 
   \sup_{\eps} \sup_{ t\in [0,T]} \calE(v_\eps(t)) < \infty \ea\right\}
 \ \Longrightarrow \ \liminf_{\eps\to 0} \mfD_\eps(v_\eps) \geq \mfD_0(v_0), 
\\[0.5em]
\text{(ii)}\quad & \forall\, \wt v_0\in \rmL^1([0,T]\ti\DDD)  \ \exists\, (\wt
v_\eps)_{\eps} \subset \rmL^1([0,T]\ti\DDD) \text{ such that } \\
& \qquad \wt v_\eps \weak \wt v_0 \text{ in } \rmL^1([0,T]\ti\DDD) \ \text{ and }
\  \limsup_{\eps\to 0} \mfD_\eps(\wt v_\eps) \leq \mfD_0(\wt v_0).
\end{align*} 
Moreover, we define the subintervals $\DDD^-:={]{-}1,0[}$ and $\DDD^+:={]0,1[}$.

\begin{theorem}[$\Gamma$-limit of $\mfD_\eps$]
\label{th:MainResUnscaled}
Let $\mfD_\eps$ be given as above and assume that $E$ and $\mob$ satisfy the
assumptions \eqref{eq:AllAssump} and \eqref{eq:ConcaveTwofold}. Then, we have 
$\mfD_\eps  \overset{\Gamma_\rmE}{\weak} \mfD_0$, where  $\mfD_0$ is
given in the form
\begin{align*}
\mfD_0(v) &= \iint_{[0,T]\ti \DDD^-} \Big(\frac{J^2}{2A_-
  \mob(v)}  + \frac{A_-}2 \big| \pl_x H(v)|^2 \Big)\dd x \dd t 
\\
&\quad + \int_0^T \!\Big(b\, \bfR_\mb \big(v_-,v_+, \kappa/b\big) +
b \, \bfR^*_\mb  \big( v_-,v_+, E'(v_-){-}E'(v_+)  \big) \Big) \dd t 
\\
&\quad +  \iint_{[0,T]\ti \DDD^+} \Big(\frac{J^2}{2A_+ \mob(v)}  
  + \frac{A_+}2 \big| \pl_x H(v)|^2 \Big)\dd x \dd t , 
\end{align*}
where $\bfR_\mb $ is defined in \eqref{eq:I.def.Rmemb},  the lower and
upper traces $v_-$ and $v_+$ are given by $v_\pm(t)=\lim_{h\searrow 0} v(t,\pm
h)$, and 
$\kappa$ is the flux through the membrane, namely $\kappa(t) =
\frac\rmd{\rmd t} \int_{\DDD^+}v(t,x)\dd x$.  
\end{theorem}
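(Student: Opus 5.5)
The plan is to prove the theorem in the rescaled variables and then transfer it back. Via $u=v\circ X_\eps$ we have $\mfD_\eps(v)=\wh\mfD_\eps(u)$ and $\calE(v)=\wh\calE_\eps(u)$. Weak $\rmL^1$ convergence $v_\eps\weak v_0$ on $[0,T]\ti\DDD$ corresponds to weak convergence of $u_\eps X'_\eps$, and $X'_\eps\to X'_0$ uniformly. So it suffices to prove $\wh\mfD_\eps\overset{\Gamma_\rmE}{\weak}\wh\mfD_0$. Here $\wh\mfD_0$ consists of the bulk integrals over $I_\pm$ with $\bbA_0$, plus the membrane term
\[
\int_0^T b\,\ol M\big(u(t,0),u(t,1),\kappa(t)/b\big)\dd t .
\]
This is justified by \eqref{eq:olM.Rmb.R*mb}, i.e.\ Proposition \ref{pr:Rmb.olM}: with $\xi=-E'$ it gives $\ol M=\bfR_\mb+\bfR^*_\mb(\cdot,\cdot,E'(v_-)-E'(v_+))$, which turns the membrane integrand into the one stated in the theorem. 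The flux rescaling $\kappa/b$ comes from $\bbA_0=b$ on $I_0$ and the substitution $\tilde u$ absorbing the factor $b$ in $\frac{Q^2}{2b\mob}+\frac b2|\pl_yH|^2 = b\big(\frac{(Q/b)^2}{2\mob}+\frac12|\pl_y H|^2\big)$.

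\textbf{Liminf.} The first step is a priori estimates from $\sup_\eps\wh\mfD_\eps(u_\eps)<\infty$ and the energy bound. By \eqref{eq:Cond.H.qH} and \eqref{eq:mob.H.growth}, $\pl_yH(u_\eps)$ is bounded in $\rmL^2$ and $\mob(u_\eps)$ in $\rmL^1$. Hence $Q_\eps$ is bounded in $\rmL^1$ by Hölder, and the rescaled continuity equation gives $\pl_t u_\eps$ bounded in a dual space on $I_\pm$. An Aubin--Lions-type argument then gives strong compactness of $H(u_\eps)$ on $I_\pm$ (Proposition \ref{pr:SpaTimeComp}). With weak lower semicontinuity of the jointly convex integrand $(Q,\mob)\mapsto Q^2/\mob$, this yields the bulk liminf.

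On $I_0$ there is no temporal compactness. Passing to the limit in $\eps\pl_t u_\eps+\pl_yQ_\eps=0$ shows that the limit flux $Q_0(t,\cdot)$ is constant in $y$ and equals $\kappa(t)$. Traces at $y=0,1$ converge because $H(u_\eps)\in\rmL^2(\rmH^1)$ and we have strong bulk convergence. For the membrane integral, I would use the homeomorphism $\psi$ of \eqref{eq:Cvx.Ass.psi}, writing $u_\eps=\psi(w_\eps)$ so that the integrand becomes jointly convex in $(w,\pl_yw,Q)$. Weak limits then give
\[
\liminf\ge\int_0^T\int_0^1\Big(\frac{Q_0^2}{2b\,\sfn(w)}+\frac b2\,\frac{|\pl_yw|^2}{\sfg(w)}\Big)\dd y\dd t .
\]
For a.e.\ $t$ this is $\ge b\,\ol M(u_-,u_+,\kappa/b)$ by the definition of the infimum.

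\textbf{Main obstacle in the liminf.} The hard step is ensuring that the weak* limit of $w_\eps$, a priori only a measure in $y$, has no singular part, and that the boundary values are attained. I would first try the $\rmL^2(\rmH^1)$ bound on $H(\psi(w_\eps))$ together with (iii). Superlinear control of $w$ by $H(\psi(w))^2$ gives equi-integrability and excludes concentration.

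\textbf{Limsup.} This uses \eqref{eq:ConcaveTwofold}, which makes $\wh\mfD_\eps$ convex. Given $u_0$ with finite $\wh\mfD_0$, first regularize in time by convolution; Jensen and convexity ensure this does not increase $\wh\mfD_0$, and it makes the traces $u_\pm$ and $\kappa$ smooth. On $I_0$ insert for each $t$ the minimizer of the cell problem for $\ol M(u_\pm(t),\kappa(t)/b)$, a NESS-type profile from Section \ref{se:NESS.Rmemb}. Then define $Q_\eps$ on $I_0$ by $Q_\eps(t,y)=\kappa(t)-\eps\int_0^y\pl_tu\,\dd\eta$. The correction is $O(\eps)$ provided the profiles depend smoothly on $t$, which the smoothing ensures, so $\wh\mfD_\eps(u_\eps)\to\wh\mfD_0(u_0)$. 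A diagonal argument in the smoothing parameter completes the proof. Finally, rewrite in the $x$ variables.
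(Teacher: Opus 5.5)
Your reduction to the scaled problem and your overall architecture match the paper: bulk compactness, convexification by $\psi$ in the membrane, and, for the limsup, time mollification with Jensen plus an $O(\eps)$ flux correction. The membrane liminf, however, has a genuine gap.

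\emph{The flux limit is only a measure.} On $\Omega_0$ you only have $Q_\eps\bddin\rmL^1(\Omega_0)$. So passing to the limit in $\eps\pl_t u_\eps+\pl_yQ_\eps=0$ gives a limit flux $\LEB|_{I_0}\otimes\ol\kappa$ with $\ol\kappa\in\mfS\mfM([0,T])$ merely a measure. You write $Q_0(t,\cdot)=\kappa(t)$ as if it were a function. Proving $\ol\kappa=\kappa\LEB$ with $\kappa\in\rmL^1(0,T)$ is exactly what is needed for $u_0\in\bfL_1$, i.e.\ for the membrane term of $\mfD_0$ to be defined, and it is the hardest step of the paper.

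\emph{Your remedy does not work in the critical case.} You propose equi-integrability of $w_\eps$ via (iii), but (iii) is only the \emph{linear} bound $w\le C_\psi(1{+}H(\psi(w)))^2$. In the membrane the energy bound degenerates by the factor $\eps$, so you only know that $\|H(u_\eps(t))\|_{\rmL^\infty(I)}^2$ is bounded in $\rmL^1(0,T)$, which permits concentration in time. Take the Otto case $\mob(u)=u$, $E''(u)=1/u$, which the theorem covers with $\psi=\mathrm{id}$ and $w=H(u)^2/4$. Let $u_\eps=c_0+\tau^{-1}\chi((t{-}t_0)/\tau)\sin^2(\pi y)$ on $I_0$ with $\tau=\sqrt\eps$, a smooth bump $\chi\geq0$, and the mass $O(\sqrt\eps)$ supplied from the bulk. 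Then the energy stays bounded, the slope cost is $\lesssim\int\tau^{-1}\chi\dd t=O(1)$ and the flux cost is $O(\tau)$. Yet $u_\eps\LEB^2|_{\Omega_0}$ develops a Dirac mass in time. Your idea does work in the non-critical situation of Remark~\ref{re:mob.HighInt}. In the critical case, which includes the Boltzmann entropy, equi-integrability of $Q_\eps$ is not directly available (Remark~\ref{rem:kappa}, Section~\ref{su:Bound.calM}).

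\emph{The missing idea} is the indirect argument of Proposition~\ref{pr:liminf.memb}:
\begin{itemize}
\item Extend the convexified functional to measures, including its recession part (Theorem~\ref{th:Relaxation}), so that lower semicontinuity survives weak* convergence.
\item Disintegrate the singular parts in time as $\wt\kappa\,\ol\mu^\rmS$ and $\wt w\,\ol\mu^\rmS$, using that the limiting flux is constant in $y$.
\item Show that $\wt w$ has vanishing traces $\ol\mu^\rmS$-a.e. This holds because the traces $\sfT(u_\eps)$ converge strongly in $\rmL^s(0,T)$ with $s>1$, and (iii) transfers this to $w$.
\item Use that the recession term dominates $\ol N_1(\wt w_\PM,s_\infty\wt\kappa)/\sfg^\infty$, with $\ol N_1(0,0,k)=+\infty$ for $k\neq0$. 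This forces $\wt\kappa=0$.
\end{itemize}
The singular part of $\ol W_0$ itself need not vanish, as the example shows; it is simply dropped by nonnegativity.

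\emph{Omissions in the limsup.}
\begin{itemize}
\item You need a positivity lift, because the $O(\eps)$ flux correction enters divided by $\mob(u)$.
\item You need a time-dependent normalization so that $\int_I X'_\eps u\dd y=1$ and the corrected flux vanishes at $y=-1$ and $y=2$. Since $X'_\eps=1{-}\eps\neq X'_0$ on $I_+$, the correction must run through the bulk, not only through $I_0$.
\item You assert, without justification, that the re-minimized cell profiles are regular in time. The paper sidesteps this by inserting the cell minimizers first and then mollifying the whole function. Then $\rmC^1$-regularity in time is automatic and Jensen controls the cost.
\end{itemize}
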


The proof of this result will be obtained through the corresponding
$\Gamma$-convergence of a rescaled version of $\mfD$ given in 
in Theorem \ref{th:Main.Gamma.Cvg} and then  \SSS completed in \EEE 
Section \ref{su:ProofMainRes}. In fact, the stronger condition
\eqref{eq:ConcaveTwofold} is only needed for the $\Gamma$-limsup estimate (ii),
whereas the $\Gamma$-liminf estimates works with the weaker assumptions
\eqref{eq:Cvx.Ass.psi}, see Theorem \ref{th:Main.Gamma.Cvg}.  

This allows us to state our result on EDP-convergence of the gradient systems 
$(\calP(\DDD),\calE,\calR_\eps)_\eps$  to the effective membrane system 
$(\calP(\DDD),\calE,\calR_\eff)$. According to \cite{DoFrMi19GSWE,
  MiMoPe21EFED, Miel23IAGS} convergence in the sense of the
Energy-Dissipation Principle (EDP) is defined as follows:
\[
(X,\calE_\eps,\calR_\eps) \overset{\,\text{EDP}\,}\longrightarrow
(X,\calE_0,\calR_\eff) \ 
\Longleftrightarrow \ \begin{cases} \calE_\eps \overset{\Gamma}\to \calE_0
  &\text{in } X, \\  
\mfD_\eps\overset{\Gamma_\rmE}{\weak} \mfD_0& \text{in }\rmL^1([0,T];X), 
\end{cases}
\] 
with $\mfD_\eps(u)=\int_0^T \! \big( \calR_\eps(u,\dot u) {+} 
\AAA \calR^*_\eps \EEE (u,{-}\rmD\calE(u))\big) \dd t$ and
$\AAA \mfD_0 (u) \EEE =\int_0^T\!\big( \calR_\eff(u,\dot u) {+} 
\calR^*_\eff(u,{-}\rmD\calE(u))\big) \dd t$. Note that we use the notation
$\calR_\eff$ rather than $\calR_0$, because $\calR_\eff$ may depend also on
$\calE$ (as it does in this paper) and there might be a ``natural limit''
$\calR_0$ of $\calR_\eps$ that is different from $\calR_\eff$. 

The explicit representation of $\mfD_0$ immediately tells us that it is indeed
given in terms of a dual pair $\calR_\eff$ and $\calR_\eff^*$, namely with 
\begin{align}
\label{eq:2.16b}
\calR^*_\eff(v,\xi) :=& \int_{\DDD^-}\! \frac{A_-}2 \mob(v) \SSS \big|\pl_x
\xi\big|^2 \EEE   \dd x + b \bfR^*_\mb \big( v_\PM , [\xi]_0 \big)  
 + \int_{\DDD^+} \frac{A_+}2 \mob(v) \SSS \big|\pl_x \xi\big|^2 \EEE \dd x,
\end{align}
where $v_\PM \SSS = \big( v \EEE (0_-),v(0_+) \big) $ and $[\xi]:=\xi_+ - \xi_-$ with
$\xi_\pm=\xi(0_\pm)$. Hence, $\calR^*_\eff $ has two bulk
parts for $\DDD^-:={]{-}1,0[}$ and $\DDD^+:={]0,1[}$ and the membrane part in
$\bfR^*_\mb$, which only depends on the jump $[\xi]_0$ of the chemical
potential $\xi$ at the membrane.

We continue to use the symbol ``$\pm$'' in the
usual way to indicate two cases, whereas ``$\PM$'' defines a pair via 
$a_\PM:=(a_-,a_+)$ or a union as in $I_\PM :=I_-\cup I_+$.

\begin{corollary}[EDP-convergence]
\label{co:EDPcvg} Under the assumptions of Theorem \ref{th:MainResUnscaled} we
have 
\[
(\calP(\DDD),\calE,\calR_\eps)_\eps \ \overset{\text{EDP}}\longrightarrow \
(\calP(\DDD),\calE,\calR_\eff) 
\]
with the effective dissipation potential given in \eqref{eq:2.16b} or \eqref{eq:I.R*eff}. 
\end{corollary}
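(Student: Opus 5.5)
By definition, EDP-convergence consists of two parts: the static $\Gamma$-convergence $\calE\overset{\Gamma}\to\calE$ on $\calP(\DDD)$, and $\mfD_\eps\overset{\Gamma_\rmE}{\weak}\mfD_0$ with an integrand of the form $\calR_\eff(v,\dot v)+\calR^*_\eff(v,-\rmD\calE(v))$. The energy does not depend on $\eps$, so the static part is trivial. The constant sequence is a recovery sequence. The liminf inequality follows from the weak lower semicontinuity of $v\mapsto\int_\DDD E(v)\dd x$ with respect to weak $\rmL^1$ convergence, because $E$ is convex, continuous on $[0,\infty[$ and superlinear by \eqref{eq:E.bounds}. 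The $\Gamma_\rmE$-convergence of $\mfD_\eps$ is exactly Theorem \ref{th:MainResUnscaled}. What remains is to show that the limit $\mfD_0$ given there has the form $\int_0^T\{\calR_\eff(v,\pl_tv)+\calR^*_\eff(v,-\rmD\calE(v))\}\dd t$ with $\calR^*_\eff$ as in \eqref{eq:2.16b}.

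For this I identify the terms of $\mfD_0$ one by one. On the bulk parts $\DDD^\pm$, the term $\frac{A_\pm}2|\pl_xH(v)|^2$ equals $\frac{A_\pm}2\mob(v)|\pl_x E'(v)|^2$, since $H'=\sqrt{\mob}E''$. This is the bulk part of $\calR^*_\eff(v,\xi)$ at $\xi=-E'(v)$. The membrane slope term is $b\bfR^*_\mb(v_-,v_+,E'(v_-)-E'(v_+))$, and $E'(v_-)-E'(v_+)=[\xi]_0$ for $\xi=-E'(v)$, so it matches the membrane part of \eqref{eq:2.16b}. For the rate part, I use the flux representation. Given $\pl_tv$ on $\DDD^-\cup\DDD^+$ and the mass transfer $\kappa$, define $\calR_\eff(v,\dot v)$ as the Legendre dual of $\calR^*_\eff(v,\cdot)$. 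The decomposition is additive: the bulk quadratic forms are in duality with $\xi|_{\DDD^\pm}$ and the membrane potential is in duality with $[\xi]_0$. Hence $\calR_\eff(v,\dot v)=\inf\{\sum_\pm\int_{\DDD^\pm}\frac{J^2}{2A_\pm\mob(v)}\dd x+b\bfR_\mb(v_-,v_+,J(0)/b)\}$, where the infimum runs over fluxes $J$ with $\pl_tv+\pl_xJ=0$ on $\DDD^\pm$, no-flux at $x=\pm1$, and $J$ continuous across $0$ with value $\kappa$.

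The one delicate point is the scaling of the membrane term. The dual of $\xi\mapsto b\bfR^*_\mb(v_\PM,[\xi]_0)$ is $\kappa\mapsto b\bfR_\mb(v_\PM,\kappa/b)$, which is precisely what appears in $\mfD_0$. Here I use that $\bfR_\mb(v_\PM,\cdot)$ and $\bfR^*_\mb(v_\PM,\cdot)$ form a Legendre pair, which is the content of \eqref{eq:olM.Rmb.R*mb} and Proposition \ref{pr:Rmb.olM}, together with convexity and lower semicontinuity of $\bfR_\mb$. The flux in $\mfD_0$ is the minimizing one, as stipulated in Section \ref{su:Setup}, so the rate part of $\mfD_0$ coincides with $\int_0^T\calR_\eff(v,\pl_tv)\dd t$.

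I expect the main obstacle to be this duality bookkeeping. One must verify that the infimal-convolution structure (bulk quadratic plus membrane) dualizes term by term, i.e.\ that the sup over $\xi$ splits into independent sups over $\pl_x\xi$ on $\DDD^\pm$ and over $[\xi]_0$. This works because $\xi$ may jump at $0$. It also uses that $\bfR_\mb(v_\PM,\cdot)$ is convex, superlinear and lsc, so that biduality holds.
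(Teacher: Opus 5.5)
Your proposal is correct and follows the paper's route. The paper gives no separate proof: it treats the corollary as immediate, since $\calE$ is $\eps$-independent, Theorem \ref{th:MainResUnscaled} supplies $\mfD_\eps \overset{\Gamma_\rmE}{\weak} \mfD_0$, and the explicit form of $\mfD_0$ (membrane term split via \eqref{eq:olM.Rmb.R*mb}) shows directly that it equals $\int_0^T(\calR_\eff+\calR^*_\eff)\dd t$ with $\calR^*_\eff$ as in \eqref{eq:2.16b}. You spell out the duality bookkeeping the paper leaves implicit: the term-by-term Legendre transform, which splits because $\xi$ may jump at $0$, the scaling $b\bfR_\mb(\cdot,\kappa/b)$, and the flux taking the value $\kappa$ at the membrane.
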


We emphasize that the definition of EDP-convergence provides a \emph{unique}
effective dissipation potential that is extracted from the (unique)
$\Gamma$-limit $\mfD_0$. This is in contrast to the approach in
\cite{FreLie21EDTS} where only the $\Gamma$-liminf estimate and a chain rule is
established. In approaches using $\Gamma$-liminf estimates only (see also in
\cite{SanSer04GCGF}) the limiting equation is rigorously derived, but the
additional information on the effective gradient structure is not uniquely
determined, see \cite[Rem.\,5.24]{Miel23IAGS} for a counterexample.

For the limiting gradient system $(\calP(\DDD),\calE,\calR_\eff)$ we can derive
the associated gradient-flow equation, which reads in the abstract form
$\dot q= \rmD_\xi\calR^*_\eff\big(q,{-}\rmD\calE(q)\big)$, by using the 
weak form $\frac\rmd{\rmd t} \langle \phi, q\rangle = \rmD_\xi
\calR^*(q, - \rmD\calE(q))[\phi]$ for a suitable set of dense test functions
$\phi$. 

We choose test functions $\phi_\PM:=(\phi_-,\phi_+)\in
\rmC^\infty(\ol\DDD^-)\ti \rmC^\infty (\ol\DDD^+)$  (which do not need to have the
same trace on $x=0$). With this we obtain 
\begin{align}
\label{eq:D.calR*.phi}
\begin{aligned}
\rmD_\xi \calR^*_\eff(u,\xi)[\phi_\PM] &
= \int_{\DDD^-} \!\!\frac{A_-\mob(v)}2 \,\pl_x\xi \, \pl_x\phi_-\dd x 
  + b \,\pl_\delta \bfR^*_\mb( v_\PM, [\xi]_0) [\phi]_0
\\ &
\quad  + \int_{\DDD^+} \!\! \frac{A_+\mob(v)}2 \,\pl_x\xi \, \pl_x\phi_+\dd x ,
\end{aligned}
\end{align}
where $[\phi]_0=\phi_+(0){-}\phi_-(0)$. For treating the time derivative, it is
important to take into account the flux $\kappa$ through the membrane. 
The continuity equation takes the form  (see Proposition \ref{pr:Prop.Q0}) 
\begin{equation}
  \label{eq:ContWith.kappa}
  \frac\rmd{\rmd t} \Big( \int_{\DDD^-}\!\! \phi_- v\dd x + 
     \int_{\DDD^+}\!\! \SSS \phi_+ v \dd x \EEE \Big) 
 =  -\int_{\DDD^-} \!\!\pl_x \phi_- J_- \dd x + \kappa\,[\phi]_0
    - \int_{\DDD^+} \!\!\pl_x \phi_+ J_+\dd x,
\end{equation}
which implies $\kappa(t)= \frac\rmd{\rmd t} \int_{\DDD^+}v(t,y)\dd y $ (to see
this, choose $\phi_\PM \equiv (0,1)$). 

Comparing \eqref{eq:D.calR*.phi} evaluated at $\xi=-\rmD\calE(v)=-E'(v)$ and
\eqref{eq:ContWith.kappa} we find the constitutive equations
\begin{equation}
  \label{eq:Constitutive}
  J_\pm= -A_\pm \mob(v)\pl_x\big(E'(v)\big) \qand 
\kappa = b \, \rmD_\delta \bfR^*_\mb\big( v_\PM,E'(v_-){-}E'(v_+) \big).
\end{equation}

Moreover, to obtain the strong form \eqref{eq:I.PME.transm} one assumes
sufficient smoothness and integrates by parts on the right-hand side of
\eqref{eq:ContWith.kappa} to 
find the additional boundary condition
\[
J_-(t,-1)=0, \quad J_-(t,0)=-\kappa(t)=J_+(t,0), \quad J_+(t,1)=0.
\]
The middle conditions and the following result provide the transmission
conditions \eqref{eq:DerivedTransCond}.

\begin{proposition}[Transmission condition]
\label{pr:TransmCond} Under the assumptions of Theorem
\ref{th:MainResUnscaled} we have 
\[
\kappa = b \, \rmD_\delta \bfR^*_\mb\big( v_\PM,E'(v_-){-}E'(v_+) \big)
= b \,\big( G(v_+) - G(v_-)\big).
\]
\end{proposition}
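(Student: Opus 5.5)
The plan is to read off the transmission condition from the Fenchel equality hidden in the relation \eqref{eq:olM.Rmb.R*mb}, i.e.\ $\ol M(v_-,v_+,\kappa)=\bfR_\mb(v_-,v_+,\kappa)+\bfR^*_\mb(v_-,v_+,\delta_0)$ with $\delta_0:=E'(v_-){-}E'(v_+)$, which we take from Proposition \ref{pr:Rmb.olM}. First, the factor $b$ is harmless. In $\mfD_0$ of Theorem \ref{th:MainResUnscaled} the membrane term is $b\,\bfR_\mb(\cdot,\kappa/b)$, whose Legendre dual in the flux variable is $b\,\bfR^*_\mb(\cdot,\delta)$. The Fenchel relation $\kappa/b\in\pl_\delta\bfR^*_\mb(v_\PM,\delta_0)$ is therefore the same as $\kappa= b\,\rmD_\delta\bfR^*_\mb(v_\PM,\delta_0)$. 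It thus suffices to show that the cell problem with $b=1$ has the normalized flux $\kappa_*=G(v_+)-G(v_-)$ as the unique element of $\pl_\delta\bfR^*_\mb(v_\PM,\delta_0)$.

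The key step is a lower bound for $\ol M$ by Young's inequality, with equality exactly at the NESS. For any admissible positive $u\in\rmH^1(0,1)$ with $u(0)=v_-$, $u(1)=v_+$, the pointwise bound $\frac{\kappa^2}{2\mob(u)}+\frac{\mob(u)}2|\pl_y E'(u)|^2\geq \pm\kappa\,\pl_y E'(u)$, integrated over $[0,1]$, gives
\[
\ol M(v_-,v_+,\kappa)\geq \kappa\,\big(E'(v_-){-}E'(v_+)\big)=\kappa\,\delta_0 .
\]
The sign has to be chosen consistently with the convention $J(0)=-\kappa$ from \eqref{eq:ContWith.kappa}. Equality holds iff $\kappa = \mob(u)\pl_yE'(u)=\pl_y G(u)$ a.e. This forces $G\circ u$ to be affine, so $u=u_\mafo{NESS}$ from \eqref{eq:ExplicitNESS}, and $\kappa=\kappa_*=G(v_+)-G(v_-)$. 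We still have to check that $u_\mafo{NESS}$ is admissible: it is positive and lies in $\rmH^1$ for $v_\pm>0$, because $G$ is a $\rmC^1$-diffeomorphism on $\R_>$. Approximation arguments cover the degenerate cases $v_\pm=0$.

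Next we combine this with \eqref{eq:olM.Rmb.R*mb}. For every $\kappa$ we have $\bfR_\mb(\kappa)+\bfR^*_\mb(\delta_0)=\ol M(\kappa)\geq\kappa\delta_0$, which is just the Fenchel--Young inequality. At $\kappa=\kappa_*$ we have equality, so $\bfR_\mb(\kappa_*)+\bfR^*_\mb(\delta_0)=\kappa_*\delta_0$, i.e.\ $\kappa_*\in\pl_\delta\bfR^*_\mb(v_\PM,\delta_0)$. Conversely, any $\kappa\in\pl\bfR^*_\mb(\delta_0)$ gives equality in Fenchel--Young, hence $\ol M(\kappa)=\kappa\delta_0$. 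By the equality characterization above this forces $\kappa=\kappa_*$. So the subdifferential is the singleton $\{\kappa_*\}$, $\bfR^*_\mb(v_\PM,\cdot)$ is differentiable at $\delta_0$, and $\rmD_\delta\bfR^*_\mb(v_\PM,\delta_0)=G(v_+)-G(v_-)$. Together with the constitutive relation \eqref{eq:Constitutive} this proves the proposition.

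The main obstacle is the equality case of the lower bound: the infimum defining $\ol M$ might not be attained. Then $\ol M(\kappa)=\kappa\delta_0$ would only hold along minimizing sequences $u_n$, and we must deduce $\kappa=\kappa_*$ from this. I would handle this by writing the deficit as $\int_0^1\frac1{2\mob(u_n)}\big(\kappa-\pl_yG(u_n)\big)^2\dd y\to0$. Using the bound \eqref{eq:mob.H.growth} and the a priori $H$-bound for $u_n$, this gives $\pl_y G(u_n)\to\kappa$ in $\rmL^1$. Integrating over $[0,1]$ then yields $\kappa=G(v_+)-G(v_-)$. A secondary point is to keep track of the sign conventions for $\kappa$ versus the transformed flux $Q$ and for $[\xi]_0$, so that the orientation of $\delta_0=E'(v_-)-E'(v_+)$ matches \eqref{eq:olM.Rmb.R*mb}.
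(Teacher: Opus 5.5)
The step that fails is the equality case of your Young bound, and the failure cannot be repaired by choosing conventions. The bound $\ol M(v_\PM,\kappa)\geq\kappa\,\delta_0$ with $\delta_0=E'(v_-)-E'(v_+)$ comes from $\frac{\kappa^2}{2\mob(u)}+\frac{\mob(u)}2\,a^2\geq-\kappa a$ with $a=\pl_yE'(u)$. Equality there holds iff $\kappa=-\mob(u)\pl_yE'(u)=-\pl_yG(u)$, not $+\pl_yG(u)$, so the NESS gives $\kappa_*=G(v_-)-G(v_+)$.

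You can check directly that $\kappa=G(v_+)-G(v_-)$ does not work. For this $\kappa$ one gets $\bfM(u_{\mathrm{NESS}},\kappa)=\int_0^1\kappa^2/\mob(u_{\mathrm{NESS}})\dd y=-\kappa\delta_0$. For $v_+\neq v_-$ the number $\kappa\delta_0$ is negative, while $\ol M(v_\PM,\kappa)\geq\ol M(v_\PM,0)=\frac12(H(v_+)-H(v_-))^2>0$. So the Fenchel equality you need at $\kappa_*$ fails.

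The structural reason is that $\bfR^*_\mb(v_\PM,\cdot)$ is even and convex. Hence every element of $\pl_\delta\bfR^*_\mb(v_\PM,\delta_0)$ has the sign of $\delta_0$, i.e.\ of $v_--v_+$, whereas $G(v_+)-G(v_-)$ has the sign of $v_+-v_-$. Carried out consistently, your argument proves
\[
\kappa=b\,\rmD_\delta\bfR^*_\mb\big(v_\PM,E'(v_-){-}E'(v_+)\big)=b\,\big(G(v_-)-G(v_+)\big).
\]
In your deficit argument the integrand must correspondingly be $(\kappa+\pl_yG(u_n))^2/(2\mob(u_n))$.

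This corrected identity is the true one: the second equality as printed, and likewise \eqref{eq:GenTransCond}, has a sign error. Three independent checks confirm this.
\begin{itemize}
\item The paper's explicit Boltzmann formula gives $\rmD_\delta\wt\bfR^*_{\beta,1}\big(u_\PM,\log(u_-/u_+)\big)=(u_-^\beta-u_+^\beta)/\beta=G(u_-)-G(u_+)$.
\item The limit equation together with \eqref{eq:DerivedTransCond} gives $\kappa=\dot M_+=-A_+\pl_yG(u)(t,1)=b\,\big(G(u(t,0))-G(u(t,1))\big)$.
\item In the proof of Corollary~\ref{ex:Exist.Rmb} the pairing $\langle\phi,\pl_\xi\calR^*(u,-E'(u))\rangle$ is written with a plus sign, although it equals $-\int_0^1\mob(u)\pl_y\phi\,\pl_yE'(u)\dd y$.
\end{itemize}

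With the sign fixed, your proof is a genuinely different and more elementary route than the paper's. The paper evaluates the reduced kinetic relation of Proposition~\ref{pr:RedKinRel} on the NESS, which relies on a formal value-function differentiation at the saddle point. You need only the representation from Proposition~\ref{pr:Rmb.olM}, the Fenchel--Young equality case and a completion of squares. Your converse argument even shows that $\pl_\delta\bfR^*_\mb(v_\PM,\delta_0)$ is a singleton, which the paper takes for granted.

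Finally, your remark that approximation covers $v_\pm=0$ does not hold in general. For $E=\sfE_1$ one has $E'(0)=-\infty$ and $\ol N_1(0,u_+,\kappa)=\infty$ for $\kappa\neq0$, so the statement is only meaningful for $v_\pm>0$.
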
 
\begin{proof} 
This result will be established  in Corollary \ref{ex:Exist.Rmb}, see
\eqref{eq:GenTransCond}.    
\end{proof} 

We reiterate our main focus in deriving a consistent kinetic relation $\kappa =
b \rmD_\delta \bfR_\mb\big(v_\PM, [\xi]_0\big)$ \BBB which contains more
information \AAA than \EEE the transmission condition $\kappa =  b \,\big( G(v_+) -
G(v_-)\big)$. Thus, the importance is that the effective gradient system
$(\calP(\DDD), \calE,\calR_\eff)$ contains the additional information of the
energy functional $\calE$ and the effective dissipation potential $\calR_\eff$
that incorporates the kinetic relation for the membrane through $\bfR_\mb$.

\subsection{Statement of the $\Gamma$-convergence result for $\wh\mfD_\eps$}
\label{su:MainResult}

For defining the $\Gamma$-limit $\wh\mfD_0$ of the family
$\wh\mfD_\eps$ we need to distinguish the membrane part in
$\Omega_0=[0,T]\ti I_0$ from the bulk parts $\Omega_\PM = [0,T]\ti
I_\PM$ where $I_\PM:=I_+\cup I_-$. In the latter case we will derive
strong convergence results for the $u_\eps$ while
for $u_\eps|_{\Omega_0}$ we have to rely on weak-convergence, because only
$X'_\eps u_\eps$ is controlled (in the strong norm).

We first define a set of $u$ that contains the domain of $\wh\mfD_0$. Recalling 
$X'_0=\bm1_{I_\PM}$  we set
\begin{align}
   \label{eq:def.bfL1}
\begin{aligned} 
   \bfL_1:= \Big\{ \: u \in \rmL^1(\Omega) \; \Big|\;  u \in
     \rmL^1\big([0,T];\rmC(I)\big), \  X'_0u(t,\cdot)\in \calP(I) \text{ a.e.\
       on } [0,T],& \\  
        \exists\, Q \in \rmL^1(\Omega)   
   \text{ such that \eqref{eq:LimitCE.3} holds} &\:\Big\}, 
\end{aligned}
\end{align}
where the limiting continuity equation has the form 
\begin{equation}
  \label{eq:LimitCE.3} 
   \pl_t(X'_0 u) + \pl_y Q =0  \quad\text{in the sense of distributions}. 
\end{equation}

Because of $X'_0(x)=0$ for $y \in I_0$ a flux $Q$ associated with $u\in \bfL_1$ 
satisfies $\pl_y Q(t,y) =0$ for $y\in I_0$. Hence, there exists a function
$\kappa \in \rmL^1(0,T)$ such 
that 
\[
 Q(t,y)= \kappa(t) \quad \text{for a.a.\ } (t,y)\in \Omega_0. 
\]
In other words, for a.a.\ $t\in [0,T]$ the flux $Q(t,\cdot)$ is
constant with respect to the microscopic membrane variable $y \in
I_0$. Moreover, we can test the limit continuity equation
\eqref{eq:LimitCE.3} with 
functions $\varphi$ satisfying $\varphi(t,y)= \phi_\pm(t)$ for $y \in
I_\pm$. Defining the masses $M_\pm(t)$ in the upper and
lower part of the bulk domains $I_\pm$, respectively, via 
\[
M_\pm(t):= \int_{I_\pm} u(t,y) \dd y, \quad \text{giving } 
M_+(t)+M_-(t)=1 \ \text{ a.e.\ on }
[0,T], 
\]
leads to $\int_0^T \big(\dot \phi_+ M_+ + \dot\phi_-
M_- + \kappa(\phi_+{-}\phi_- ) \big) \dd t =0$ for all
$\phi_\pm\in \rmC_\rmc^1({]0,T[})$.  This identity reveals the role of
$\kappa$ as the flux through the membrane, because it shows that
\begin{equation}
  \label{eq:flux.kappa.3}
   \frac\rmd{\rmd t} M_+(t) = \kappa(t) = -   \frac\rmd{\rmd t}
   M_-(t)   \qquad \text{a.e.\ in } [0,T].  
\end{equation}

With the above we can define three linear maps on $\bfL_1$
providing the flux $\kappa$ through the membrane and the two traces
$U_\PM=(u(\cdot,0),u(\cdot,1))$ of the relative densities, namely
\begin{equation}
  \label{eq:FluxTrace.3}
  \sfF(u):= \kappa \in \rmL^1(0,T) 
  \qand 
  \sfT(u):= (u(\cdot,0),u(\cdot,1))=:U_\PM \in \rmL^1(0,T)^2.
\end{equation}
Now we are able to write down the limit functionals $\wh\mfD_0:
\rmL^1(\Omega) \to [0,\infty]$ as follows:
\begin{subequations}
 \label{eq:mfD0limit}
\begin{align}
  \label{eq:mfD0.3A}
&  \wh\mfD_0(u):=  
    \begin{cases} 
     \wh\mfD^\text{bulk}_0\big(u\big) +
     \wh\mfD^\mb _0\big(\sfT(u),\sfF(u)\big)
   & \text{for } u \in \bfL_1,\\    \infty& \text{otherwise,} 
   \end{cases} 
\\
  \label{eq:mfD0.3B}
&\text{with } \wh\mfD^\text{bulk}_0(u)= \iint_{\Omega_\PM} \!\!
\Big(\frac{\SSS Q^2_\pm \EEE }{2\bbA_0\mob(u)} + \frac{\bbA_0}{2} \big|
  \pl_y H(u) \big|^2  \Big) \SSS \dd y \EEE \dd t 
\\
  \label{eq:mfD0.3C}
&\text{and }  \wh\mfD^\mb _0(U_\PM,\kappa) = \int_0^T  b\, \ol M \big(U_\pm(t),
\kappa(t)/b \big)  \dd t 
\end{align}
\end{subequations}
with $\ol M$ is defined in \eqref{eq:I.def.Rmemb} (or \eqref{eq:RMJ} below) and
can be expressed via $\bfR_\mb$ and $\bfR^*_\mb$ as in \eqref{eq:olM.Rmb.R*mb}.
Here $Q_\pm$ should be extended by $\kappa$ to a function $Q$ such that
\eqref{eq:LimitCE.3} holds and that the integral in \eqref{eq:mfD0.3B} is
minimized.

We emphasize that $\wh\mfD_0$ \AAA completely ignores \EEE the values of $u$ in
the membrane part $\Omega_0=[0,T]\ti {]0,1[}$. This will be essential in the
proofs for the liminf and the limsup estimate.

The membrane functional $\ol M:\R_\geq^2\ti\R \to \R_\geq $ is defined via
\begin{subequations}
\label{eq:RMJ}
\begin{align}
\label{eq:RMJ.a}
& \ol M(u_\PM,\kappa):= 
 \inf\bigset{ \bfM \big(u,\kappa \bm 1_{I_0}\big) }{ u \in \bfP(u_\PM)  },
\\
\label{eq:RMJ.c}
&\text{with } 
 \bfM(u,Q) :=\int_0^1  \!\!\Big(  \frac{Q(y)^2}{2 \mob(u(y))} 
  + \frac12 \big| \pl_y H(u(y))\big|^2 \Big) \dd y 
\\
\label{eq:RMJ.d}
&\text{and } \bfP(u_\PM ) := \bigset{u \in \rmW^{1,1}([0,1])}{\ u\geq 0, \
  u(0)=u_-,\ u(1)=u_+ }. 
\end{align}
\end{subequations}
Here $\ol M$ may attain the value $+\infty$ if $\bfM(u,\kappa) = \infty$ for
all competitors. However, for $\kappa=0$ we always have
$\ol M(U_\PM,0)= \AAA \frac12 \EEE 
 \big( H(U_+){-}H(U_-)\big)^2<\infty$, such that $\bfR_\mb$ in
\eqref{eq:I.def.Rmemb} is well-defined. We note that $\bfM$ is defined for
general functions $Q\in \rmL^1(0,1)$ for later purposes, but for the definition
of $\ol M$  it is evaluated only for constant functions $Q=\kappa \bm1_{I_0}$.

We see that $\wh\mfD^\mb _0$ depends via $\bfR_\mb$ on $u|_{\Omega_0}$ in such
a way that the minimization of the dissipation functional $\bfM$ is local in
the time $t\in [0,T]$ for a given constant flux $\kappa(t)$ through the
membrane and given relative densities $u_-(t)$ and $u_+(t)$. This manifests
a separation of time scales: the convergence to a spatially constant flux
inside the membrane and to the density profile $u(y)=u_\mafo{NESS}(y)$ 
is much faster than the diffusive processes in the bulk.

We are now ready to formulate our main result on the scaled domain
$\Omega$. For the liminf estimate we can use the weaker \AAA concavity \EEE
assumption \eqref{eq:Cvx.Ass.psi}, whereas for the limsup estimate involving
the construction of a recovery sequence we need the \AAA stronger concavity
\EEE assumption \eqref{eq:ConcaveTwofold}.  However, we expect that the usage
of the stronger assumption is technical and conjecture that the weaker
assumption is sufficient.

\begin{theorem}[$\wh\mfD_\eps \overset{\Gamma_\rmE}{\weak} \wh\mfD_0$] 
\label{th:Main.Gamma.Cvg}
Let $\wh\mfD_\eps$ and $\wh\mfD_0$ be defined as in \eqref{eq:def.mfFeps} and
\eqref{eq:mfD0limit}, respectively, and assume that \eqref{eq:AllAssump} holds.   

(A) If additionally \eqref{eq:Cvx.Ass.psi} holds, then for every
sequence $(u_\eps)_{\eps\in {]0,1[}} $ satisfying the energy bound
\eqref{eq:MainAss.E} and the convergence $X'_\eps u_\eps  \weaks \mu $ in
$\mfM(\ol\Omega)$, we have
\begin{equation}
    \label{eq:MainLiminf}
  \mu= u_0\LEB^2 \text{ with } u_0 \in \bfL_1 \qand
     \liminf_{\eps\to 0} \wh\mfD_\eps(u_\eps) \geq \wh\mfD_0(u_0). 
\end{equation}

(B) If additionally \eqref{eq:ConcaveTwofold} holds, then for all $\wh u_0 \in
\bfL_1$ with $\sup_{t\in [0,T]} \calE_0(\wh u_0(t,\cdot)) < \infty$ and
$\wh\mfD_0(\wh u_0)<\infty$, there exists a (recovery) sequence
$(\wh u_\eps)_{\eps \in {]0,1[}}$ satisfying $ X'_\eps \wh u_\eps \weak 
X'_0 \wh u_0$ in $\rmL^1(\Omega)$, the energy bound \eqref{eq:MainAss.E}, and the
dissipation convergence $\lim_{\eps\to 0} \wh\mfD_\eps(\wh u_\eps) =
\wh\mfD_0(\wh u_0)$.
\end{theorem}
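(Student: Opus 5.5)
The proof splits into the liminf part (A) and the limsup part (B). We work throughout with the rescaled flux $Q_\eps$ from \eqref{eq:CE.transformed} and use that $\bbA_\eps\to\bbA_0$ uniformly, so $\wh\mfD_\eps$ differs from the functional with $\bbA_0$ only by a factor $1{+}O(\eps)$.

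\textbf{Part (A).} The first step is a priori estimates. Assume $\liminf\wh\mfD_\eps(u_\eps)<\infty$ and pass to a subsequence realizing it.
\begin{itemize}
\item The energy bound \eqref{eq:MainAss.E} together with \eqref{eq:E.bounds} gives equi-integrability of $X'_\eps u_\eps$.
\item The slope term gives $\pl_yH(u_\eps)$ bounded in $\rmL^2(\Omega)$.
\item By \eqref{eq:Cond.H.qH} and \eqref{eq:mob.H.growth}, $\mob(u_\eps)$ is controlled by $(1{+}H(u_\eps))^2$. With Hölder this yields $Q_\eps$ bounded in $\rmL^q$ for some $q>1$.
\end{itemize}
On the bulk $\Omega_\PM$ the continuity equation bounds $\pl_t u_\eps$ in a dual space. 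An Aubin--Lions/Simon argument then gives strong compactness of $H(u_\eps)$, hence of $u_\eps$, in $\rmL^1(\Omega_\PM)$, with uniform convergence in $y$ for a.a.\ $t$ thanks to the $\rmH^1$-bound in $y$.

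On $I_0$ the factor $X'_\eps=\eps$ kills the mass, so $\mu$ has no part there. Passing to the limit in \eqref{eq:CE.transformed} with $Q_\eps\weak Q$ gives \eqref{eq:LimitCE.3} and $\pl_yQ=0$ on $I_0$, so $Q|_{\Omega_0}=\kappa(t)$. Since $H(u_\eps)$ is bounded in $\rmL^2(0,T;\rmH^1(I))$, we also get continuity in $y$ of the weak limit of $H(u_\eps)$. The traces $u_0(t,0)$ and $u_0(t,1)$ are the bulk traces, so $u_0\in\bfL_1$.

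The bulk liminf is standard joint lower semicontinuity. The integrand $(Q,u)\mapsto Q^2/\mob(u)$ is convex in $Q$, $u_\eps$ converges strongly, and $\pl_yH(u_\eps)$ converges weakly.

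The membrane liminf is the main obstacle, because on $\Omega_0$ we only have weak convergence. Here I would use the transformation $w=\psi^{-1}(u)$ from \eqref{eq:Cvx.Ass.psi}. It rewrites the membrane integrand as
\[
\frac{Q^2}{2\sfn_\psi(w)}+\frac{|\pl_yw|^2}{2\sfg_\psi(w)},
\]
which is jointly convex in $(w,Q,\pl_yw)$ because $\sfn_\psi$ and $\sfg_\psi$ are concave (perspective-type functions). Condition (iii) of \eqref{eq:Cvx.Ass.psi} bounds $w_\eps$ in $\rmL^1$ via $H$, so $w_\eps\weaks\nu$ in measures on $\Omega_0$. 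The delicate sub-step is to exclude a singular part of $\nu$ and show $\nu=w\LEB^2$. I would do this using the $\rmL^2$ control of $\pl_yH(u_\eps)$ and the superlinear growth of $H^2$ relative to $w$ from (iii), following \cite{AMPSV12PLWG}.

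Convex lower semicontinuity (Ioffe/Reshetnyak) then gives
\[
\liminf_{\eps\to0}\iint_{\Omega_0}\cdots\ \geq\ \int_0^T\bfM\big(\psi(w(t)),\kappa(t)\bm1_{I_0}\big)\dd t .
\]
Finally, $w(t,\cdot)$ has the correct boundary values, matching the bulk traces via the continuity of $H$. Therefore $\bfM\geq b\,\ol M(U_\PM,\kappa/b)$ pointwise in $t$.

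\textbf{Part (B).} Start from $\wh u_0$ with $\wh\mfD_0(\wh u_0)<\infty$ and build the recovery sequence in three steps.
\begin{enumerate}
\item Regularize $\wh u_0$ in time by convolution. Under \eqref{eq:ConcaveTwofold} the functional $\wh\mfD_0$ is convex in $(u,Q)$, so by Jensen the dissipation does not increase in the limit, and $\ol M$ depends continuously on the regularized data.
\item For a.a.\ $t$, fill $I_0$ with a measurably selected near-minimizer $u^*(t,\cdot)\in\bfP(U_\PM(t))$ of $\ol M$. For smooth data this can be taken smooth in $t$, and positivity is obtained by adding small constants.
\item Define $\wh u_\eps=\wh u_0$ on $I_\PM$ and $\wh u_\eps=u^*$ on $I_0$, with flux $Q_\eps=Q$ on the bulk. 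In the membrane, correct the flux to $Q_\eps(t,y)=\kappa(t)-\eps\int_0^y\pl_tu^*\dd y'$. This satisfies \eqref{eq:CE.transformed} exactly, keeps the bulk masses consistent, and perturbs the membrane cost by only $O(\eps)$.
\end{enumerate}
Then $\wh\mfD_\eps(\wh u_\eps)\to\wh\mfD_0$ up to the regularization errors, and a diagonal argument concludes. The energy bound holds because the membrane contributes $\eps\int E(u^*)$, which vanishes in the limit. The remaining technical point in (B) is justifying the time regularity of $u^*$, which is simplest via the explicit convexity and continuity of $\ol M$.
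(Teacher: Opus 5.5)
Your bulk liminf and the convexification of the membrane integrand via $w=\psi^{-1}(u)$ follow the paper. However, the central difficulty of part (A) is bypassed by an a priori bound that fails on the membrane.

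\textbf{Part (A).} The H\"older argument gives $Q_\eps\bddin\rmL^q$ with $q>1$ only on $\Omega_\PM$, where \eqref{eq:MainAss.E} upgrades $\mob(u_\eps)$ to $\rmL^{1+\qsubE/\qsubH}$. On $\Omega_0$ the energy carries the factor $X'_\eps=\eps$ and gives nothing, while $H(u_\eps)^2$ is only bounded in $\rmL^1(0,T;\rmL^\infty(I))$. So $\mob(u_\eps)$, $w_\eps$ and $Q_\eps$ are merely bounded in $\rmL^1(\Omega_0)$; for $\beta=\varkappa=1$ one has $\mob(u)=u=H(u)^2/4$, which leaves no room for an extra power. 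Hence the limit flux on $\Omega_0$ is a priori only a measure $\LEB|_{I_0}\otimes\ol\kappa$ with $\ol\kappa\in\mfS\mfM([0,T])$. Your step ``$Q|_{\Omega_0}=\kappa(t)$, so $u_0\in\bfL_1$'' therefore assumes exactly what has to be proved, namely the absolute continuity of $\ol\kappa$.

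Your proposed substitute, excluding a singular part of the limit $\nu$ of $w_\eps$ via \eqref{eq:Cvx.Ass.psi}(iii), cannot work. Condition (iii) is the upper bound $w\le C_\psi(1{+}H(\psi(w)))^2$, which yields only $\rmL^1$ bounds. Moreover, $\nu$ can genuinely concentrate in time. Take $\beta=\varkappa=1$, so $\psi=\mathrm{id}$, and add an interior bump $a\,\chi(a(t{-}t_0))\sin^2(\pi y)$ with $a=\eps^{-1/2}$, fed by an $O(1)$ bulk flux. This costs $O(1)$ slope dissipation, $O(\eps^{1/2})$ flux dissipation and $o(1)$ energy, yet produces a Dirac mass in $t$.

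The missing idea is to keep the singular parts and show that only the flux part vanishes.
\begin{enumerate}
\item The relaxed functional $\wt\mfD_\mfM$ on measures has a recession part governed by $\sfn^\infty,\sfg^\infty$.
\item Disintegrate $\ol Q{}^\rmS_0=\wt\kappa\,\ol\mu^\rmS$ and $\ol W{}^\rmS_0=\wt w\,\ol\mu^\rmS$, using $\pl_y\ol Q_0=0$ on $\Omega_0$. Minimizing in $y$ bounds the singular contribution below by $\int\frac1{\sfg^\infty}\ol N_1(\wt w_\PM,s_\infty\wt\kappa)\dd\ol\mu^\rmS$.
\item The strong convergence of the traces $\sfT(u_\eps)$ in $\rmL^s(0,T)$, which comes from the bulk compactness \eqref{eq:Subseq.c}, combined with (iii), gives $\wt w_\PM=0$ $\ol\mu^\rmS$-a.e.
\item Since $\ol N_1(0,0,k)=+\infty$ for $k\neq0$, this forces $\wt\kappa=0$.
\end{enumerate}
Thus it is the superlinear growth of the cell problem in the flux, not equi-integrability, that rules out flux concentration. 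The singular part of $\ol W_0$ is simply discarded, since $\wh\mfD_0$ ignores membrane values. The traces of the Lebesgue part are identified through the same strong trace convergence. ``Continuity of $H$'' does not suffice here, because on $\Omega_0$ the weak limits of $H(u_\eps)$ and of $w_\eps$ are not linked by $H\circ\psi$.

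\textbf{Part (B).} Your overall strategy (positivity lift, time mollification with Jensen, an $O(\eps)$ flux correction, diagonal argument) is the paper's. As written, however, your construction does not satisfy \eqref{eq:rho.eps.CE}. The membrane stores the time-dependent mass $\eps\int_0^1u^*\dd y$, and $X'_\eps=1{-}\eps$ on $I_+$. Consequently:
\begin{itemize}
\item your flux jumps by $\eps\frac{\rmd}{\rmd t}\int_0^1u^*\dd y$ at $y=1$;
\item the equation on $I_+$ is off by $\eps\pl_t\wh u_0$;
\item $\int_IX'_\eps\wh u_\eps\dd y$ is not conserved.
\end{itemize}
So no admissible flux with no-flux boundary conditions exists, and $\wh\mfD_\eps(\wh u_\eps)=+\infty$. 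The paper repairs this with the normalization $c_\eps(t)$ and a flux corrected on all of $I$ (Lemma \ref{le:RecovCE}). That correction requires bounded time derivatives on the whole interval.

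This is also why the paper reverses your order. It fixes the exact minimizers in $\Omega_0$ first and then lifts and mollifies the entire profile, membrane included. Then $\rmC^1$-in-time regularity is automatic, and Jensen for the full convex functional $\sfD_0$, evaluated with the actual membrane profile, controls the cost. In your ordering, continuity of $\ol M$ gives no time regularity of (near-)minimizers selected after mollification.
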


The proof of this result will be the content of Sections \ref{se:APrioriEst} to
\ref{se:Limsup}. In particular, the liminf estimate in part A follows from 
Propositions \ref{pr:liminf.bulk} and \ref{pr:liminf.memb}, see the end of
Section \ref{se:Liminf}. The limsup estimate is established in Section
\ref{se:Limsup} based on the stronger assumption \eqref{eq:ConcaveTwofold}.

\subsection{Proof of the Main Result in Theorem \ref{th:MainResUnscaled} } 
\label{su:ProofMainRes}

We now return to the original unscaled gradient systems
$(\bfQ, \calE, \calR^*_\eps)$ generating the
thin-layer porous-medium equation \eqref{eq:PME1}. The energy
$\calE_\eps$ and the dual dissipation potentials
$\calR^*_\eps$ are given in \eqref{eq:I.calE.calR} with $A=A_\eps$. 
The corresponding dissipation functional
$\mfD_\eps$ defined in the usual form from  $\calE$ and
$\calR^*_\eps$ is given in \eqref{eq:Def.mfDeps}.

We now deduce the main result in Theorem \ref{th:MainResUnscaled} from the 
scaled result in Theorem \ref{th:Main.Gamma.Cvg}. For this we recall the
fundamental transformations 
\[
u_\eps (t,y)= v_\eps(t,X_\eps(y)) \qand v_\eps(t,x) = u_\eps (t , X_\eps(y)),
\]
here it is important that $v_\eps(t,\cdot) \in \calP(\DDD)$ and $X'_\eps u_\eps
\in \calP(I)$.

The main connection between the two results stems from the weak convergence
properties. We recall that by the criterion of de la Vall\'ee Poussin a
sequence $\big(w_k\big)_{k\in \N}$ is weakly pre-compact in
$\rmL^1(\Sigma,\mu)$ if and only if there exists a superlinear function
$\Phi:\R_\geq \to \R_\geq$ such that $\sup_{k\in \N}\int_\Sigma \Phi(|w_k|) \dd
\mu \leq C < \infty$. In particular, the weak convergence $w_k \weak w$ in
$\rmL^1(\Sigma,\mu)$ implies 
\[
\sup_{k \in \N} \int_{\Sigma'} |w_k|\dd \mu \ \longrightarrow \ 0 \quad
\text{for } \mu(\Sigma') \to 0. 
\] 

In our situation we use that $r \mapsto E(r)$ is
superlinear by \eqref{eq:E.bounds} and that we always assume $\sup_{[0,T]}
\calE(v_\eps(t)) \leq \sfCE <\infty$, which implies $\int_0^T\!\!\int_\DDD
E(v_\eps(t,x)) \dd x \dd t\leq T   \sfCE <\infty$. We easily obtain the
following equivalence $u_\eps= v_\eps {\circ}X_\eps$, here assumed for
$\eps\in [0,1]$.
\begin{align}
\label{eq:WeakImpliesWeak}
& v_\eps \weak  v_0 \ \text{ in } \rmL^1([0,T]\ti \DDD) \quad \Longleftrightarrow
\quad 
X'_\eps\,u_\eps \weak X'_0 \,u_0  \text{ in } \rmL^1([0,T] \ti I).
\end{align}
The main point here is that there cannot be any concentration of $(v_\eps)$
near the membrane $[0,T]\ti \{0\}$, because 
\[
\sup_{\eps\in {]0,1]}} \int_0^T\!\!\int_{[-\delta,\delta]} v_\eps(t,x)  \dd x
  \dd t \ \longrightarrow \ 0 \quad \text{for } \delta \searrow 0. 
\]
\medskip

\noindent
\begin{proof}[Proof of Theorem \ref{th:MainResUnscaled}]
We first deduce the liminf estimate for $\mfD_\eps$ from that of
$\wh\mfD_\eps$.

First, we observe that for all functions $u\in \bfL_1$ we can define $v(t,x)=
u(t,Y_0(x))$ for all $x \in \DDD\setminus\{0\}$, because $Y_0$ is discontinuous
at $x=0$. Nevertheless the limits $v(t,0_\pm)$ exists and are equal to $u(t,0)$ and
$u(t,1)$, respectively. Hence, comparing the definitions of $\mfD_0$ and
$\wh\mfD_0$ we obtain
\[
\mfD_0( u {\circ} Y_0) = \wh\mfD_0( u) \quad \text{for all } u \in \bfL_1.
\]

Secondly, we consider a sequence $v_\eps \weak v$ in $\rmL^1(\Omega)$ with
$\sup_{t\in [0,T]} \calE(v_\eps(t)) \leq \sfCE<\infty $ and $\mfD_\eps( v_\eps)
\leq \sfC_\mfD < \infty$. Because of $X'_\eps u_\eps(t) \in \calP(I)$ we can
choose a subsequence such that $X'_\eps u_\eps \weaks \mu $ in $\mfM(\Omega)$.
By \eqref{eq:WeakImpliesWeak} we also know that 
$u_\eps=v_\eps {\circ} X_\eps$ satisfies $X'_\eps u_\eps \weak X'_0 u_0$. 

Thirdly, we observe that $\wh\calE_\eps(u_\eps(t))=\calE(v_\eps(t)) \leq \sfCE$
and $\wh\mfD_\eps( u_\eps)=\mfD_\eps( v_\eps) \leq \sfC_\mfD $. Thus, the
desired liminf inequality follows from \eqref{eq:MainLiminf} in Part A of
Theorem \ref{th:Main.Gamma.Cvg}: 
\[
\liminf_{\eps\to 0} \mfD_\eps(v_\eps)= \liminf_{\eps\to 0}
\wh\mfD_\eps(u_\eps) \geq \wh\mfD_0(u) = \mfD_0(v).
\]

For the limsup estimate we use Part B of Theorem
\ref{th:Main.Gamma.Cvg}. Given $\wt v_0$ we define $\wh u_0= \wh v_0{\circ}
Y_0$ for $(t,y) \not\in \Omega_0$, and for $(t,y)\in \Omega_0$ we fill in the
values of $\wh u_0$ via $\wh u_0(t,y) = u_\mafo{min}{v_\PM(t),\kappa(t)}(y)$,
where $u_\mafo{min}^{u_\PM,\kappa}$ is the unique minimizer of $\bfM(\cdot,\kappa)$ in
$\bfP(u_\PM)$. With this we obtain $\wh u_0\in \bfL_1$ as well as 
$\wh \calE_0(\wh u_0(t))=\calE(\wt v_0(t)) \leq \sfCE$ and $\wh\mfD_0( 
\wh u_0)=\mfD_0(\wt v_0) < \infty$. Thus, Part B of Theorem
\ref{th:Main.Gamma.Cvg} provides the recovery sequence $\wh u_\eps$ such that 
$X'_\eps \wh u_\eps \weak X'_0 \wh u_0$ in $\rmL^1(\Omega)$ and $\wh\mfD_\eps( 
\wh u_\eps) \to \wh \mfD_0(\wh u_0)$. Setting $\wt v_\eps = \wh u_\eps{\circ}
X_\eps$ for $\eps \in [0,1]$, relation \eqref{eq:WeakImpliesWeak} gives 
$ \wt v_\eps \to \wt v_0$. Moreover, we have  
\[
\mfD_\eps(\wt v_\eps) = \wh\mfD_\eps(\wt u_\eps) \to \wh\mfD_0(\wt u_0) 
= \mfD_0(\wt v_0),
\]
which is the desired limsup estimate. 
\end{proof}

\RRR 

Finally, we want to show that EDP-convergence automatically implies that 
accumulation points $u_0$ of a family $\big(v_\eps)_{\eps\in (0,1)}$ of EDB
solution for the gradient systems
$\big(\calP(\DDD),\calE,\calR_\eps\big)$ are indeed EDB solutions for the
limiting gradient system $\big(\calP(\DDD),\calE,\calR_\eff\big)$. Here, a
curve $v_\eps:[0,T] \to \calP(\DDD)$ is called an EDB solution if the 
function $[0,T]\ni t \mapsto \calE(v_\eps(t)) \in \R$ is absolutely continuous and
satisfies 
\[
\calE(v_\eps(t)) + \int_r^t \!\! \Big(\calR_\eps(v_\eps,\dot v_\eps) +
\calR^*_\eps(v_\eps,{-}E'(v_\eps) \big) \Big) \dd s = \calE(v_\eps(r)) \quad 
\text{for all }r,t \text{ with } 0 \leq r< t\leq T. 
\]  
The following result is established under the additional assumption that the
limiting system $\big(\calP(\DDD),\calE,\calR_\eff\big)$ 
satisfies chain-rule inequality in the following sense. 
\begin{align}
\nonumber
&\text{If }v:[0,T]\to \calP(\calD) \text{ satisfies } 
\\
\label{eq:ChainRule}
&\sup\nolimits_{t\in [0,T]} \calE(v(t)) \ < \infty \quad \text{and} \quad
\mfD_0(v) \ < \infty,
\\
\nonumber
&\text{then, we have }  \calE(v(T)) +  \mfD_0(v) \geq \calE(v(0)). 
\end{align}
We expect that this chain rule can be established under our assumptions because
we can exploit the convexity of $\calE$ and $\mfD_0$ for temporal smoothing,
see \cite{MiScSt25?DFOD, HeMiSt25?DCLN, MaMiZi26?NACR} similar approaches.   

\begin{corollary}[Convergence of solutions]
\label{co:CvgSols}
Let the assumptions \eqref{eq:AllAssump} and \eqref{eq:Cvx.Ass.psi} as well as
the chain-rule inequality \eqref{eq:ChainRule} hold. Then, for every family
$\big(v_\eps)_{\eps\in (0,1)}$  of EDB solutions for 
$\big(\calP(\DDD),\calE,\calR_\eps\big)_{\eps\in (0,1)}$ with well-prepared
initial conditions, viz.\ 
\[
v_\eps(0) \to v^0 \text{ in }\rmL^1(\DDD) \quad \text{and} \quad
\calE(v_\eps(0)) \to \calE(v^0) < \infty.
\]
then there exists a subsequence (not relabeled) and a limit curve $v_0 :[0,T]
\to \calP(\DDD)$ such that (i) $v_0(0)=v^0$, (ii) we have the convergence 
\[
v_\eps(t) \to v_0(t) \text{ in } \rmL^1([0,T]) \quad \text{and} \quad
\calE(v_\eps(t)) \to \calE(v_0(t)) \qquad \text{for all } t\in [0,T],
\]
and (iii) $v_0$ is an EDB solution for the
effective gradient system $\big(\calP(\DDD),\calE,\calR_\eff\big)$ . 
\end{corollary}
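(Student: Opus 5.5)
The argument is the standard "EDP-convergence implies convergence of solutions" scheme (cf.\ \cite[Lem.\,2.8]{MiMoPe21EFED}). It uses only the liminf part (A) of Theorem \ref{th:Main.Gamma.Cvg}, transferred to $\mfD_\eps$ as in the proof of Theorem \ref{th:MainResUnscaled}, together with the chain rule \eqref{eq:ChainRule}. This is why only \eqref{eq:Cvx.Ass.psi}, and not \eqref{eq:ConcaveTwofold}, is needed.

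\emph{Step 1: a priori bounds and compactness.} The EDB identity on $[0,T]$ and well-preparedness give
\[
\sup_{t}\calE(v_\eps(t))\le \calE(v_\eps(0))\le C \qand \mfD_\eps(v_\eps)\le \calE(v_\eps(0))-\inf\calE\le C .
\]
Superlinearity of $E$ yields equi-integrability, so a subsequence satisfies $v_\eps\weak v_0$ in $\rmL^1([0,T]\ti\DDD)$. For pointwise-in-time convergence we need more. From the bound on the flux term in $\mfD_\eps$, via Proposition \ref{pr:SpaTimeComp}, we obtain $J_\eps$ bounded in $\rmL^1$. Hence $t\mapsto v_\eps(t)$ is equicontinuous in a weak (e.g.\ dual Lipschitz) norm, and the bound is uniform in $\eps$ because it uses only $\int|J_\eps|$. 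A refined Arzel\`a--Ascoli argument then gives $v_\eps(t)\weak v_0(t)$ in $\rmL^1(\DDD)$ for all $t$, with $v_0(0)=v^0$. Strong convergence in the bulk follows from the spatio-temporal compactness of Proposition \ref{pr:SpaTimeComp}. Near $x=0$ no mass concentrates, so the convergence holds in $\rmL^1([0,T]\ti\DDD)$, which is the sense of (ii).

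\emph{Step 2: liminf inequalities.} Weak lower semicontinuity of the convex functional $\calE$ gives $\calE(v_0(t))\le\liminf\calE(v_\eps(t))$ for every $t$. Theorem \ref{th:MainResUnscaled}(i), whose proof only uses Part A of Theorem \ref{th:Main.Gamma.Cvg}, gives $\mfD_0(v_0)\le\liminf\mfD_\eps(v_\eps)$. The same holds on subintervals $[0,t]$ after restricting the functionals in time. Passing to the limit in the EDB identity on $[0,t]$ and using $\calE(v_\eps(0))\to\calE(v^0)$ yields
\[
\calE(v_0(t))+\mfD_0^{[0,t]}(v_0)\le\liminf\big(\calE(v_\eps(t))+\mfD^{[0,t]}_\eps(v_\eps)\big)=\calE(v^0).
\]

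\emph{Step 3: closing with the chain rule.} Now $\sup_t\calE(v_0(t))<\infty$ and $\mfD_0(v_0)<\infty$, so \eqref{eq:ChainRule}, applied on $[0,t]$ and on $[r,t]$, gives the reverse inequality. Hence the energy-dissipation balance holds with equality for $v_0$ on every $[r,t]$, so $v_0$ is an EDB solution, and absolute continuity of $t\mapsto\calE(v_0(t))$ follows from the balance. Equality also forces each liminf in Step 2 to be a limit: if $\limsup\calE(v_\eps(t))>\calE(v_0(t))$ for some $t$, the sum inequality would be strict, a contradiction. This gives $\calE(v_\eps(t))\to\calE(v_0(t))$ for all $t$. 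With strict convexity of $E$, a standard Visintin/Radon--Riesz argument upgrades the weak convergence to strong convergence $v_\eps(t)\to v_0(t)$ in $\rmL^1(\DDD)$.

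\emph{Main obstacle.} The delicate point is Step 1: obtaining pointwise-in-time convergence uniformly through the membrane, where the $\rmL^1$ bound on $J_\eps$ comes from the growth assumption \eqref{eq:mob.H.growth}. A second delicate point is the restriction of the liminf estimate to subintervals $[r,t]$. I would handle the latter by noting that the proof of Theorem \ref{th:Main.Gamma.Cvg}(A) is local in time.
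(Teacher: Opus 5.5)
Your proposal is correct and follows essentially the paper's own proof: a priori bounds from Proposition \ref{pr:SpaTimeComp}, Arzel\`a--Ascoli for weak convergence at every time $t$, the liminf part of Theorem \ref{th:MainResUnscaled} on $[0,r]$, the chain rule \eqref{eq:ChainRule} for the reverse inequality, and energy convergence plus strict convexity of $E$ for strong convergence. The one step to tighten is your equicontinuity claim, because a uniform $\rmL^1$ bound on $J_\eps$ alone gives no modulus of continuity in time; as in the paper, use $Q_\eps \bddin \rmL^{r_*}(\Omega_\PM)$ with $r_*>1$ to get $\|u_\eps(t){-}u_\eps(r)\|_{\rmW^{-1,r_*}(I_\PM)}\le C(t{-}r)^{1-1/r_*}$, and handle the layer $[0,\eps]$ by non-concentration from the energy bound (in unscaled variables its flux contribution is only $O(\eps)$).
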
 
\begin{proof}
We base our arguments on the a priori estimates for
$u_\eps(t,y)=v_\eps(t,X_\eps(y))$ derived in Proposition \ref{pr:SpaTimeComp}.  
From $\wh\calE_\eps(u_\eps(t))=\calE(v_\eps(t))\leq \calE(v_\eps(0)) \leq
\calE(v^0) {+} 1< \infty $ we see that \eqref{eq:MainAss.E} holds. Moreover, we have
\begin{align*}
\wh\mfD_\eps(u_\eps)= \mfD_\eps (v_\eps)=\calE(v_\eps(0))-\calE(v_\eps(T)) \leq
\calE(v_\eps(0)) \leq \calE(v^0) +1< \infty,
\end{align*}
such that \eqref{eq:MainAss.P} and \eqref{eq:MainAss.D} hold as well. Moreover,
the continuity equation \eqref{eq:rho.eps.CE} for the pairs $(u_\eps,Q_\eps)$
holds by definition.  Proposition \ref{pr:SpaTimeComp} implies the a priori
estimates
\[
\big\| u_\eps \big\|_{\rmL^{2\qsubE+2\qsubH}(\Omega_\PM)} + 
  \big\| Q_\eps\big\|_{\rmL^{r_*} (\Omega_\PM)} \leq C_\rma \quad \text{with }
  r_*=\tfrac{2 \qsubE+2\qsubH}{\qsubE+2\qsubH}>1 . 
\]
Exploiting the continuity equation we obtain that $u_\eps|_{I_\PM}$ is
equi-continuous, namely 
\[
 \| u_\eps(t)-u_\eps(r)\|_{\rmW^{-1,r_*}(I_\PM)} \leq \int_r^t \|
 Q_\eps(s)\|_{\rmL^{r_*}(I_\PM)} \dd s \leq C_\rma (t{-}r)^{1-1/r_*} \quad \text{for }
   0\leq r<t\leq T.
\]
Thus, the Arzel\`a-Ascoli theorem provides a limit function $\ol u_0:[0,T] \ti
\rmW^{-1,r_*}(I_\PM) $ and a subsequence (not relabeled) with
$u_\eps(t)|_{I_\PM} \weak \ol u_0(t)$ in $\rmW^{-1,r_*}(I_\PM) $ for all $t\in
[0,T]$. With $\wh\calE_\eps(u_\eps(t)) \leq \calE(v^0) {+} 1< \infty $
and superlinearity of $E$ 
we obtain $ u_\eps(t)|_{I_\PM} \weak \ol u_0(t)$ in $\rmL^1(I_\PM)$. 
In particular, we have (i) $v_0(0)=v^0$. 

By the superlinearity of $E$ we also know that $v_\eps(t)$ cannot concentrate
near $0$ and we conclude $ v_\eps(t) \weak v_0(t)$ in $\rmL^1(\DDD)$, where
$v_0(t,x):= \ol u_0\big(t,Y_0^{-1}(x)\big)$.  To show that $v_0$ is an EDB
solution, we can start from $\calE(v_\eps(T))+ \mfD_\eps(v_\eps) = \calE(v_\eps(0))$
because $v_\eps$ is an EDB solution. We can pass to the limit $\eps\to 0$ along
the chosen subsequence because $v_\eps \weak v_0$ in $\rmL^1([0,T]\ti \DDD)$,
and Theorem \ref{th:MainResUnscaled} gives $\calE(v_0(T))+ \mfD_0(v_0) \leq
\calE(v_\eps(0))$.

As the assumed chain rule \eqref{eq:ChainRule} implies the opposite
  inequality we conclude $\calE(v_0(T))=\lim_{\eps\to 0} \calE(v_\eps(T))$ and 
superlinearity and strict convexity of $ E$ imply the strong convergence
$v_\eps(T) \to v_0(T)$ in $\rmL^1(\Omega)$. 

We can repeat the above argument on the intervals $[0,r]$ for all $r\in
{]0,T[}$ and thus obtain the convergences (ii). This also implies 
$\calE(v_0(r))+ \int_0^r (\calR_\eff+\calR^*_\eff)\dd s=\calE(v_0(0))$ for all
$r\in [0,T]$. Subtracting this relation with the choice $r=t$ from the relation
for $r$, shows that $v_0$ is indeed an EDB solution, i.e.\ (iii) is established
as well.  
\end{proof}

\subsection{Potential  generalizations}

The following remarks highlights that the theory developed in this paper is simple
enough to allow for several generalizations (see e.g.\ \cite{LMPR17MOGG,
  Fren19DEGS, Miel23NESS, PelSch23CGST}). He will only indicate possible
directions that seem to be promising and leave the details for future
research.\medskip

First, we may
allow for a suitable dependence on $x$ for the diffusion factor $A_\eps$, the
energy $E$, and the mobility. This would be best formulated in the stretched
(microscopic) variable $y=Y_\eps(x)$, e.g.\ in the form 
\begin{align*}
&A_\eps(x) = \wt  A_\pm(Y_\eps(x)) \text{ in }I_\PM, \quad A_\eps(x)= \eps
   \wt b(x/\eps) \text{ in }I_0, \\
& E_\eps(x,u)= \wt E(Y_\eps(x),u) ,
\quad \mob_\eps(x,u)=\wt\mob(Y_\eps(x),u),
\end{align*}
where the functions $\wt A_\pm$, $\wt b$, $\wt E$, and $\wt\mob$ are piecewise
continuous in $y\in I$ and satisfy the assumption of Section \ref{su:Ass.GS}
uniformly for $y \in I$. The essential arguments of the proofs remain the
same, because we do not 
need to differentiate in $y \in I$. Of course, the cell problem \eqref{eq:RMJ}
will now involve the functional 
\[
\wt\bfM(u,Q)= \int_0^1 \Big( \frac{Q(y)^2}{2\,\wt\mob(y, u(y))} +
\frac{\wt\mob(y,u(y))}{2} \big| \pl_y\big(\wt E(y,u(y))\big)\big|^2 \big) \dd y.
\]
We refer to \cite[Sec.\,4]{LMPR17MOGG} and \cite[Sec.\,4]{Fren19DEGS} more
details.\medskip

Secondly, it seems possible to include an absorption/desorption reaction to the
diffusion equation in the form 
\begin{align*}
\pl_t v &= \DIV\!\big( A_\eps\mob(v) \nabla E'(v)\big) {+} C_\eps\pl_\xi
\Psi_\mafo{react}^* \big(v, {-}E'(v)\big) E'(v)\\
& =- \Big(\bbK_\eps(v){+} \pl_\xi\bfPsi^*_\mafo{react}(v, \,\cdot\,)\Big) E'(v),
\end{align*}
where $\bbK$ is the Onsager operator for diffusion as before, whereas the dual
dissipation potential
$\bfPsi^*_\mafo{react}(v,\cdot): \xi \mapsto \int_I C_\eps(x)
\Psi^*_\mafo{react}(v,\xi) \dd y$ for the reaction may be nonquadratic and the
space-dependent prefactor $C_\eps$ scales as $A_\eps$. We refer to
\cite[Thm.\,V.2]{Miel23NESS} for a formal calculation showing that, for a linear
reaction-diffusion equation with Boltzmann entropy ($p=1$) and cosh-type
gradient structure for the reaction, the effective gradient structure for the
membrane limit can be obtained explicitly. However, a rigorous analysis would
be more difficult than the one in this paper, for instance the continuity
equation and the primal dissipation potential have to include a growth term $G$
(see e.g.\ \cite{HeMiSt25?DCLN} for EDP-convergence in a similar case):
\[
  X'_\eps(y) \pl_t u + \pl_y Q = X'_\eps(y)G \quad \text{and} \quad
  \wh\mfD_\eps^\mafo{flux} (u) = \int_0^T\!\!\int_I \Big( \frac{Q^2}{2 \ \bbA_\eps
    \mob(u) } + \bbC_\eps \Psi\big( G/ \bbC_\eps\big) \Big)\dd y \dd t.\medskip
\]

Finally, it can be expected that also higher-dimensional settings can be
treated where the membrane is a smooth hypersurface. After some localization it
will be possible to restrict to a case where the domain is a cylinder
$[-1,1] \ti \Sigma$ with a bounded cross-section $\Sigma \subset \R^{d-1}$
having Lipschitz boundary. By stretching $ x\in [-1,1]$ as before to
$y = Y_\eps(x) \in I =[-1,2]$ we obtain the scaled variables
$(y,z)\in I\ti \Sigma=:\Omega_\mafo{sca}$. The unscaled continuity equation
$\pl_t v  + \pl_x J_1 + \nabla_z (J_2,...,J_d)=0$ transforms as follows:
\begin{align*}
&X'_\eps(y) \pl_t u + \pl_y Q_1 + X'_\eps(y) \nabla_z(Q_2,...,Q_d) =0 
\\
&
\text{with }u(t,y,z)=v(t,X_\eps(y),z) \text{ and } Q(t,y,z)=J(t,X_\eps(y),z).
\end{align*}
Using $ \rmd x = X'_\eps\rmd y$ and $A_\eps = X'_\eps \bbA_\eps$, the scaled
dissipation functional takes the form
\[
\wh\mfD_\eps(u) = \int_0^T\!\!\! \int_\Sigma \!\int_I \Big(  \frac{|Q|^2}{2 \bbA_\eps
    \mob(u) } + \frac{\bbA_\eps\mob(u)}2  \big( (\pl_y E'(u))^2 {+} (X'_\eps)^2
    \big| \nabla_z E'(u)\big|^2 \big) \Big) \dd y \dd z \dd t.
\]
In the formal limit $\eps\to 0$, we again see a scale separation inside of the
membrane region $[0,T]\ti \Sigma \ti I_0$: On the one hand we obtain
$Q_1(t,y,z)= \kappa (t,z)$ for $y \in I_0$ because the continuity equation
reduces to $\pl_y Q_1=0$, i.e.\ we have constant flux in the scaled normal
direction. On the other hand, because of $X'_0=0$ in $I_0$ the functional
$\wh\mfD_0$  has neither couplings in time nor in $z \in \Sigma$, hence we can
look for minimizers $u(t,\cdot,z) \in \rmC^0(I_0)$ for each $(t,z)\in [0,T]\ti
\Sigma$.  Thus, the same cell problem via the functional $\bfM$ appears as in
the one-dimensional case. 
 
However, it remains a challenge to fill in all the details for a rigorous
analysis of multi-dimensional case.    
\EEE

\section{A priori estimates}
\label{se:APrioriEst}

We now work in the scaled variables $(t,y)\in \Omega= [0,T]\ti I$.  

\subsection{Energy and dissipation assumptions on $u_\eps$}
\label{su:E.D.Ass.mu}
 
To structure the a priori estimates used for the EDP-convergence we
enumerate the standard a priori bounds in terms of the energy and the
two parts of dissipation functional as follows. Throughout
we consider an $\eps$-dependent family $u_\eps:[0,T]\to \rmL^1(I)$,
which satisfies the three bounds for all
$\eps\in {]0,\eps_0]}$:
\begin{subequations}
 \label{eq:MainAss.all} 
\begin{align}
\label{eq:MainAss.E}
  & \!\!
   \sup_{t\in [0,T]} \wh\calE_\eps(u_\eps(t)) \leq \sfCE,
\\
\label{eq:MainAss.P} & 
 \wh\mfD^\text{flux}_\eps(u_\eps):=
 \int_0^T\!\! \int_I  \frac{Q_\eps^2}{2\bbA_\eps\mob(u_\eps)}  
   \dd y \dd t  \leq \sfCF <\infty,
\\
\label{eq:MainAss.D} 
 & \wh\mfD^\text{slope}_\eps( u_\eps ):=
 \int_0^T\!\! \int_I  \frac{\bbA_\eps}{2} \big| \pl_y H(u_\eps)\big|^2 \dd y \dd t 
 \leq \sfCS < \infty,
\\
  \label{eq:rho.eps.CE}
&\text{where } \quad   X'_\eps\,\pl_t u_\eps  + \pl_y Q_\eps=0 . 
\end{align}
\end{subequations}
We recall that $Q_\eps$ in \eqref{eq:MainAss.P} is always assumed to be the
minimizer in $ \wh\mfD_\eps^\text{flux}(u_\eps)$ subject to
\eqref{eq:rho.eps.CE}, see Lemma \ref{le:DifferentFlux} for details. 
Clearly, definition \eqref{eq:Def.mfDeps} implies $\wh\mfD_\eps(u)=
 \wh\mfD_\eps^\text{flux}(u) + \wh\mfD_\eps^\text{slope}(u)$, and we will be able to
derive a priori estimates by looking at the two parts independently,
in particular, \eqref{eq:MainAss.E} and \eqref{eq:MainAss.D} provide
spatial compactness for $u_\eps$, while
\eqref{eq:MainAss.E} and \eqref{eq:MainAss.P} provide temporal
compactness for $X'_\eps u_\eps$.

However, for deriving the $\Gamma$-limit of $\wh\mfD_\eps$ we need to
look at the sum, as taking the $\Gamma$-limits of the two parts
separately, we would obtain two limits $\wh\mfD_0^\text{flux} $ and
$\wh\mfD_0^\text{slope}$ such that the sum $\wh\mfD_0^\text{flux} +
\wh\mfD_0^\text{slope}$ would be lower than our $\wh\mfD_0$. Nevertheless,
$\wh\mfD_0$ will be again a sum of a flux term
$\wh\mfD_\eff^\text{flux}$ and a slope term $\wh\mfD_\eff^\text{slope}$, but
they are not the $\Gamma$-limits of $ \wh\mfD^\text{flux}_\eps$ and
$\wh\mfD^\text{slope}_\eps $, respectively.

Subsequently, we will use the following shorthand notation for stating that a
sequence is bounded in a function space $\bbX$:
\[
f_\eps \bddin \bbX \quad \overset{\text{def}}{\Longleftrightarrow} \quad 
\exists\, C, \eps_0>0\ \forall\, \eps \in {]0,\eps_0[}: \ 
\| f_\eps\|_\bbX \leq C.
\]
For example, \eqref{eq:MainAss.all} implies $Q_\eps^2/\mob(u_\eps) \bddin
\rmL^1(\Omega)$ and $\pl_y H(u_\eps) \bddin \rmL^2(\Omega)$. This is especially
useful in intermediate calculations, if we are not interested in the specific
constant.

\subsection{A priori estimates for \TOS{$u_\eps$}{the density}
  based on \TOS{$\wh\mfD_\eps$}{the dissipation functional}} 
\label{su:Apriori}

The first result provides a simple a priori estimate of $u_\eps$ in terms of
the energy. Combining \eqref{eq:E.bounds} and \eqref{eq:MainAss.E} we find
\begin{equation}
  \label{eq:ueps.Linfty.qE}
  \big\| u_\eps \big\|_{\rmL^\infty\left([0,T],\rmL^{\qsubE}(I_\PM)\right)}^\qsubE
  \leq \big(c_\rmE \sfCE {+} 1 \big)/ c_\rmE^2 . 
\end{equation}
Note that we only have a good estimate on the bulk parts $I_\PM$ because of the
factor $X'_\eps$ in the definition of $\wh\calE_\eps$. 
The following bound will be valid on all of $I$, because the slope estimate
\eqref{eq:MainAss.D} does not degenerate on $I_0$.  We use 
\eqref{eq:ueps.Linfty.qE} on $I_\PM$ and the growth bound \eqref{eq:Cond.H.qH}
for $H$.

\begin{proposition} 
\label{pr:H.u.eps} 
Let $ E $ and $H$ satisfy \eqref{eq:E.bounds} and \eqref{eq:Cond.H.qH}, 
and let $(u_\eps)_\eps$ satisfy \eqref{eq:MainAss.all}. Then there exist
constants $\sfC_H$ and $\wt\sfC_H$ such that 
\begin{equation}
  \label{eq:H.ueps.bound}
  \int_0^T \big\| H(u_\eps(t,\cdot)) \big\|_{\rmH^1(I)}^2 \dd t \leq \sfC_H
  \qand  \iint_\Omega u_\eps(t,y)^{2\qsubH} \dd y \dd t \leq \wt\sfC_H. 
\end{equation}
\end{proposition}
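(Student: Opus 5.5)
The plan is to control the $\rmH^1(I)$-norm of $H(u_\eps(t,\cdot))$ through its gradient and its mean, and then read off the $u^{2\qsubH}$ bound from the growth condition \eqref{eq:Cond.H.qH}.

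\emph{Gradient part.} Since $\bbA_\eps \geq \min\{A_-,b,A_+\}=:a_*>0$ on $I$ for $\eps\in{]0,1/2[}$, the slope bound \eqref{eq:MainAss.D} gives directly
\[
\int_0^T\!\!\int_I |\pl_y H(u_\eps)|^2\dd y\dd t \leq 2\sfCS/a_* .
\]
This estimate holds on all of $I$, including the membrane part $I_0$.

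\emph{Lower-order part.} Here we use that $I_-=[-1,0]$ has length $1$. For a.a.\ $t$ we bound the mean of $H(u_\eps(t))$ over $I_-$ by \eqref{eq:Cond.H.qH}:
\[
\int_{I_-} H(u_\eps)\dd y \leq C\big(1+\|u_\eps(t)\|^{\qsubH}_{\rmL^{\qsubH}(I_-)}\big).
\]
When $\qsubH\leq \qsubE$, Hölder's inequality on the bounded interval together with \eqref{eq:ueps.Linfty.qE} bounds this uniformly in $t$ and $\eps$. If instead $\qsubH>\qsubE$, one cannot argue this way, so we use a different route: write $H(u)\geq u^{\qsubH}/C - C$, so that $\int_{I_-}u^{\qsubE}\dd y$ controls $\int_{I_-} H(u)^{\qsubE/\qsubH}\dd y$. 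A Poincaré-type argument then applies: a function $h\in\rmH^1(I_-)$ with $\int_{I_-}|h|^{s}\dd y\leq C$ for some $s>0$ satisfies $\|h\|_{\rmL^\infty}\leq C(1+\|\pl_y h\|_{\rmL^2})$. The reason is that $h$ must be below a fixed level on a set of measure at least $1/2$ by Chebyshev, and oscillation is bounded by $\|\pl_y h\|_{\rmL^2}$.

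\emph{Extension to all of $I$.} In either case we get $\|H(u_\eps(t))\|_{\rmL^\infty(I_-)}\leq C(1+\|\pl_yH(u_\eps(t))\|_{\rmL^2(I)})$. Since $I$ is a bounded interval and $H(u_\eps(t))$ is absolutely continuous, this extends to $\|H(u_\eps(t))\|_{\rmL^\infty(I)}\leq C(1+\|\pl_yH(u_\eps(t))\|_{\rmL^2(I)})$. This propagation across the membrane uses exactly that the slope term does not degenerate on $I_0$. Squaring and integrating in time gives the first bound in \eqref{eq:H.ueps.bound}. Note $H\geq0$ because $H(0)=0$ and $H'>0$.

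\emph{Second estimate.} From \eqref{eq:Cond.H.qH} we have $u^{\qsubH}\leq C(H(u)+C)$, hence $u^{2\qsubH}\leq 2C^2(H(u)^2+C^2)$. Integrating over $\Omega$ and using the $\rmL^2(\Omega)$ bound on $H(u_\eps)$ just obtained yields $\iint_\Omega u_\eps^{2\qsubH}\leq\wt\sfC_H$.

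The main obstacle I expect is the lower-order step. One needs a pointwise-in-time anchor for $H(u_\eps)$ using only the $\rmL^{\qsubE}$ control on $I_\PM$, which may be weaker than $\rmL^{\qsubH}$. The Chebyshev/oscillation argument handles this. It must be written carefully so that the resulting constant is affine in $\|\pl_yH\|_{\rmL^2}$; this is what makes the time-integration square-summable.
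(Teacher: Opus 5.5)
Your proposal is correct and follows essentially the paper's argument. You anchor $H(u_\eps(t,\cdot))$ at a point of the bulk using the $\rmL^{\qsubE}$ energy bound, propagate it across all of $I$ (including $I_0$) with the $\rmH^1\subset\rmC^{1/2}$ oscillation estimate, square and integrate in time, and conclude via $u^{\qsubH}\le C(1{+}H(u))$. The case split is unnecessary: Chebyshev applied directly to $\int_{I_\PM}u_\eps^{\qsubE}\dd y\le C_2$ gives a point $y_t$ with $u_\eps(t,y_t)\le C_2^{1/\qsubE}$, and monotonicity of $H$ then bounds $H(u_\eps(t,y_t))$ with no growth condition on $H$, exactly as in the paper. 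Also, in your second case it is the upper bound $H(u)\le C(1{+}u^{\qsubH})$, not the lower one, that lets $\int u^{\qsubE}$ control $\int H(u)^{\qsubE/\qsubH}$.
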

\begin{proof} To simplify notation we write $u$ for $u_\eps$ and 
$z(t,y)=H(u(t,y))$. 

By \eqref{eq:MainAss.D} the $\rmL^2(\Omega)$ norm of $\pl_y z$ is bounded by
$(2\sfCS)^{1/2}$, so we only need to estimate $z$ in \AAA the \EEE
$\rmL^2(\Omega)$ norm.  For a.a.\ $t\in [0,T]$ we have
$\int_I \pl_y z(t,y)^2\dd y <\infty$, hence $z(t,\cdot)$ is continuous with
\begin{equation}
  \label{eq:z.Hoelder}
    |z(t,x)-z(t,y)| \leq |x{-}y|^{1/2} \|\pl_y z(t)\|_{\rmL^2(I)}.
\end{equation}

Moreover, defining $g(z)= \big(H^{-1}(z)\big)^{\qsubE}$ and using
$u(t,y)=H^{-1}(z(t,y))$ we have 
\[
\int_{I_\PM} g(z(t,y))\dd y = \int_{I_\PM} u(t,y)^{\qsubE} \dd y \leq
\big(c_\rmE \sfCE {+}1\big) /c_\rmE^2 =: C_2,
\]
where we used \eqref{eq:ueps.Linfty.qE}. Hence, for each $t$ there exists $y_t$
such that $g(z(t,y_t))\leq C_2$, or equivalently
$z(t,y_t)\leq H\big( C_2^{1/\qsubE}\big)=:C_3$.  Together with
\eqref{eq:z.Hoelder} we conclude
$ z(t,y) \leq C_3 + \sqrt3 \|\pl_y z(t)\|_{\rmL^2(I)}$, which implies
$\| z\|^2_{\rmL^2(\Omega)} \leq 6 C_3T + 6 \|\pl_y
z\|^2_{\rmL^2(\Omega)}$. Hence, the first estimate in \eqref{eq:H.ueps.bound}
is established with $\sfC_H= 6C_3T+ 14 \sfCS$.

Observing that \eqref{eq:Cond.H.qH} implies $u^\qsubH \leq
C(1{+}H(u))$ the desired second estimate follows immediately from the first one.  
\end{proof}

The importance of the above result is that we are able to control $u_\eps$
through $z_\eps=H(u_\eps)$ also in the layer domain $\Omega_0=[0,T]\ti I_0$. As
we will see now, it is much simpler to obtain $\rmL^q$ bounds for $u_\eps$ over
the bulk domain $\Omega_\PM = [0,T]\ti I_\PM$.
 
Next we show that the a priori estimates in the bulk domain
$\Omega_\PM$ can be improved by interpolating the last bound and the
uniform energy bound \eqref{eq:MainAss.E} with $\sfCE$.

\begin{proposition}[Improved bounds in the bulk] 
\label{pr:ImprovBound} 
Let $ E $ and $H$ satisfy \eqref{eq:E.bounds} and \eqref{eq:Cond.H.qH}, and
let $(u_\eps)_\eps$ satisfy \eqref{eq:MainAss.all}. Then there exist a constant 
$\sfC_\mathrm{bulk}$ such that 
\begin{equation}
  \label{eq:L.rstar}
  \int_{\Omega_\PM}   u_\eps(t,y)^{ 2\qsubE {+} 2\qsubH } \dd y \dd t \leq
\sfC_\mathrm{bulk}  .
\end{equation}
\end{proposition}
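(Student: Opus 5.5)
The plan is a one-dimensional Gagliardo--Nirenberg type interpolation, carried out separately on each bulk interval $I_-$ and $I_+$ for a.a.\ $t\in[0,T]$. It combines the uniform-in-time bound $\|u_\eps(t)\|_{\rmL^{\qsubE}(I_\PM)}^{\qsubE}\leq C_2$ from \eqref{eq:ueps.Linfty.qE} with the $\rmL^2$-bound on $\pl_y z_\eps$, where $z_\eps=H(u_\eps)$, coming from \eqref{eq:MainAss.D}. The key pointwise estimate to aim for is
\[
\sup_{y\in I_\pm} u_\eps(t,y)^{2\qsubH} \leq C\big(1+\|\pl_y z_\eps(t)\|^2_{\rmL^2(I)}\big).
\]
Once this is available we simply write $u^{2\qsubE+2\qsubH}=u^{\qsubE}\cdot u^{2\qsubH}$ on $I_\PM$ and estimate
\[
\int_{I_\PM} u_\eps(t)^{2\qsubE+2\qsubH}\dd y \leq \sup_{I_\PM} u_\eps(t)^{2\qsubH}\int_{I_\PM}u_\eps(t)^{\qsubE}\dd y \leq C\,C_2\big(1+\|\pl_y z_\eps(t)\|^2_{\rmL^2(I)}\big).
\]
Integrating over $t\in[0,T]$ and using $\int_0^T\|\pl_y z_\eps\|^2_{\rmL^2}\dd t\leq 2\sfCS/\min\{A_-,b,A_+\}$ then gives \eqref{eq:L.rstar}. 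The exponent $\qsubE$ in the final estimate is therefore exactly one factor of the energy bound, which explains the form $2\qsubE+2\qsubH$.

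The pointwise estimate follows the proof of Proposition \ref{pr:H.u.eps}, with one difference: the anchor point must lie in the same bulk interval, so no continuity across the membrane is used. On $I_-$ (and similarly on $I_+$), the bound $\int_{I_-}u^{\qsubE}\dd y\leq C_2$ together with $|I_-|=1$ yields a point $y_t\in I_-$ with $u(t,y_t)\leq C_2^{1/\qsubE}$, hence $z(t,y_t)\leq C_3:=H(C_2^{1/\qsubE})$. The H\"older estimate \eqref{eq:z.Hoelder} then gives $z(t,y)\leq C_3+\|\pl_y z(t)\|_{\rmL^2}$ for all $y\in I_-$. By \eqref{eq:Cond.H.qH} we have $u^{\qsubH}\leq C(1+H(u))=C(1+z)$, and squaring yields the desired bound on $\sup u^{2\qsubH}$.

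The argument is largely routine. The main point needing care is that the constant in \eqref{eq:ueps.Linfty.qE} is uniform in $t$ and $\eps$, which is why the factor $X'_\eps$ (equal to $1$ and $1-\eps\geq 1/2$ on $I_\PM$) must be tracked when passing from $\wh\calE_\eps$ to an $\rmL^{\qsubE}$ bound. One also uses that $H$ is increasing and nonnegative (since $H'=\sqrt{\mob}\,E''>0$ and $H(0)=0$), so that the inequalities can be transferred between $u$ and $z$.
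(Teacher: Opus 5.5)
\textbf{There is a genuine gap.} It sits in your first display. You have $u^{\qsubE}\cdot u^{2\qsubH}=u^{\qsubE+2\qsubH}$, not $u^{2\qsubE+2\qsubH}$. So even if every other step is correct, your argument only proves
\[
\iint_{\Omega_\PM}u_\varepsilon^{\qsubE+2\qsubH}\,dy\,dt\le C ,
\]
which is strictly weaker than the claim because $\qsubE\ge1$. Your closing remark that ``one factor of the energy bound explains $2\qsubE+2\qsubH$'' repeats the same slip.

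The loss is structural, not cosmetic. Set $z=H(u_\varepsilon)$ and $p:=\qsubE/\qsubH$. Your pointwise bound $\sup_{I_\pm} z\le C_3+\|\partial_y z\|_{L^2}$ uses the $H^1$-seminorm at full power. It never uses the uniform-in-time $L^p$-control of $z$. In terms of $z$, your scheme therefore reaches only $L^{p+2}(\Omega_\PM)$. The claim is equivalent to $z\in L^{2p+2}(\Omega_\PM)$, the sharp one-dimensional space-time interpolation of $L^\infty_tL^p_y\cap L^2_tH^1_y$. The paper gets it from the Gagliardo--Nirenberg inequality
\[
\|z\|_{L^\infty}\lesssim \|z\|_{L^p}+\|z\|_{L^p}^{p/(p+2)}\|\partial_y z\|_{L^2}^{2/(p+2)},
\]
treating $p\ge1$ and $p<1$ separately. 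This exponent $2\qsubE+2\qsubH$ is exactly what is needed afterwards for $\mathsf m(u_\varepsilon)\in L^{1+\qsubE/\qsubH}(\Omega_\PM)$.

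\textbf{How to repair it.} You need $\sup_{I_\pm}(1+z)^{p+2}\le C\big(1+\|\partial_y z\|_{L^2}^2\big)$, that is, control of $\sup u^{\qsubE+2\qsubH}$ rather than $\sup u^{2\qsubH}$. Your anchor-point idea delivers this if you apply it to $w:=(1+z)^{(p+2)/2}$ instead of to $z$.

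\begin{enumerate}
\item The upper bound $H(u)\le C(1+u^{\qsubH})$ and the energy bound give $\int_{I_\pm}(1+z)^p\,dy\le C(1+\int_{I_\pm}u^{\qsubE}\,dy)\le C$ uniformly in $t$ and $\varepsilon$.
\item Cauchy--Schwarz then gives
\[
\int_{I_\pm}|\partial_y w|\,dy=\tfrac{p+2}{2}\int_{I_\pm}(1+z)^{p/2}|\partial_y z|\,dy\le C\,\|\partial_y z\|_{L^2}.
\]
\item Since $\int_{I_\pm}w^{2p/(p+2)}\,dy=\int_{I_\pm}(1+z)^p\,dy\le C$, there is an anchor point in each of $I_-$, $I_+$ where $w\le C$. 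Hence $\sup_{I_\pm}w^2\le C(1+\|\partial_y z\|_{L^2}^2)$.
\item It follows that
\[
\int_{I_\pm}(1+z)^{2p+2}\,dy\le \sup_{I_\pm}(1+z)^{p+2}\int_{I_\pm}(1+z)^p\,dy\le C\big(1+\|\partial_y z(t)\|_{L^2}^2\big).
\]
\item Integrate in time. Then use $u^{2\qsubE+2\qsubH}=(u^{\qsubH})^{2p+2}\le C(1+z)^{2p+2}$, which follows from the lower bound in \eqref{eq:Cond.H.qH}.
\end{enumerate}

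This repaired argument would be a valid and more elementary alternative to the paper's Gagliardo--Nirenberg/space-time interpolation route, and it covers both cases $p\ge1$ and $p<1$ at once. As written, however, your proposal does not establish the stated exponent.
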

\begin{proof}  
Combining \eqref{eq:E.bounds} and \eqref{eq:MainAss.E} we find 
$u_\eps \bddin \rmL^\infty \big( [0,T] ; \rmL^{\qsubE }(I_\PM) \big)$. 

For $\qsubE\geq \qsubH$ we have $z_\eps=H(u_\eps)\leq C(1+u^\qsubH) $ by
\eqref{eq:Cond.H.qH}. Hence, $z_\eps \bddin \rmL^2\big([0,T];
\rmH^1(I_\PM)\big)$ and $z_\eps \bddin
\rmL^\infty\big([0,T];\rmL^{ \qsubE/\qsubH }(I_\PM)\big) $. Using the
\SSS Gagliardo-Nirenberg \EEE estimate   
and space-time interpolation (cf.\ \cite[Lem.\,4.1]{MieNau22EGTW}) we see that
$z_\eps \bddin \rmL^{q_*}(\Omega_\PM)$ with
$q_*=2 {+} 2\qsubE/\qsubH$.  Exploiting \eqref{eq:Cond.H.qH} once again, we
have established \eqref{eq:L.rstar} for the case $\qsubE\geq \qsubH$.

For $\gamma:= \qsubE/\qsubH <1$ we argue as follows. For $q\geq 1$ we have
$\| z\|_{\rmL^q} \leq \|z^\gamma\|_{\rmL^1}^{1/q}
\|z\|_{\rmL^\infty}^{1-\gamma/q}$, which is easily established by substituting
$z=w^{1/\gamma}$. Inserting the \AAA Gagliardo-Nirenberg interpolation \EEE 
$\|z\|_{\rmL^\infty} \leq C\| z\|_{\rmL^q}^\theta \|z\|_{\rmH^1}^{1-\theta}$
with optimal $\theta$, we find
\[
\|z\|_{\rmL^q}^q \leq C \| z^\gamma\|_{\rmL^1}^{(q+2)/(2+\gamma)} 
\| z\|_{\rmH^1}^{2(q-\gamma)/(2+\gamma)}.
\]
Choosing $q=2+2\gamma>2$ we obtain the estimate 
\[
\iint_{\Omega_\PM} z^{2+2\gamma} \dd y \dd t \leq C\int_0^T
\|z^\gamma\|_{\rmL^1}^2 \| z\|_{\rmH^1}^2 \dd t = C \sup_{[0,T]} \|
z(t)^\gamma\|_{\rmL^1}^2 \sfC_H. 
\]
Again using \eqref{eq:E.bounds} and \eqref{eq:Cond.H.qH} 
we have $ \int_{I_\PM} z_\eps(t)^\gamma \dd y \leq C(1{+}\sfCE)$. Now the estimate
for $u_\eps$ follows by using \eqref{eq:Cond.H.qH} again. 
\end{proof}

\subsection{Spatial and temporal compactness}
\label{su:SpaTemCompact}

To derive the liminf estimates will pass to the limit in cases where
$\mfD_\eps$ is non-convex. For this we need sufficient information on the 
strong convergence of $u_\eps$. To obtain this we will rely on a nonlinear
version of the classical Aubin-Lions-Simon lemma. We refer to
\cite[Thm.\,1]{Mous16VCAL} for a version fitting well to the PDE context. Here
we will rely on the theory developed in \cite{RosSav03TIEC} that is well suited
for problems in the calculus of variations, because it relies on scalar
functionals having compact sublevels. 

\begin{lemma}
\label{le:CompSublevels}
Let $ E $ and $H$ satisfy \eqref{eq:E.bounds} and
  \eqref{eq:Cond.H.qH} and define $\calF:\rmL^1_\geq (I)\to [0,\infty]$ via
\[
\calF(u)= \int_{I} \Big( H(u(y))^2 + \big| \pl_y
H(u(y))\big|^2 \Big) \dd y.  
\]
Then $\calF$ has compact sublevels in $\rmC^0(I)$. 
\end{lemma}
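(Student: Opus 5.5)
The plan is to reduce the statement to compactness of the embedding $\rmH^1(I)\Subset\rmC^0(I)$ (Arzelà–Ascoli in one dimension), and then transfer it back to $u$ through the continuous inverse $H^{-1}$.

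First I will record the structure of $H$. By \eqref{eq:Ass.for.H}, $H(u)=\int_0^u \mob(r)^{1/2}E''(r)\dd r$ is well defined, and it is continuous on $[0,\infty[$ with $H(0)=0$. Since $\mob>0$ on $]0,\infty[$ and $E''>0$ there (strict convexity, at least away from a null set), $H$ is strictly increasing. By \eqref{eq:Cond.H.qH} it is unbounded, $H(u)\geq u^{\qsubH}/C-C\to\infty$. Hence $H:[0,\infty[\,\to[0,\infty[$ is a homeomorphism, and $H^{-1}$ is continuous and maps bounded sets to bounded sets.

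Next, fix a sublevel $\{\calF\leq c\}$ and take a sequence $u_k$ in it. Set $z_k:=H(u_k)$. By the definition of $\calF$ we have $\|z_k\|_{\rmH^1(I)}^2\leq c$. As in \eqref{eq:z.Hoelder}, each $z_k$ has a continuous representative that is $\frac12$-Hölder with constant $\sqrt c$. Moreover $\|z_k\|_{\rmL^2}\leq\sqrt c$ gives a point where $|z_k|\leq \sqrt{c/3}$, so $\|z_k\|_{\rmC^0(I)}\leq \sqrt{c/3}+\sqrt{3c}$. Arzelà–Ascoli then yields a subsequence with $z_k\to z$ uniformly on $I$. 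Since $\rmH^1$-balls are weakly compact and the norm is weakly lower semicontinuous, we get $z\in\rmH^1(I)$, $z\geq0$, and $\int(z^2+|\pl_y z|^2)\dd y\leq c$.

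Now set $u:=H^{-1}(z)$. All $z_k$ take values in a fixed compact interval $[0,K]$, on which $H^{-1}$ is uniformly continuous. Therefore $u_k=H^{-1}(z_k)\to H^{-1}(z)=u$ uniformly on $I$, so $u\in\rmC^0(I)$. Also $\calF(u)=\int(z^2+|\pl_y z|^2)\leq c$, so $u$ lies in the same sublevel. The sublevel is thus sequentially compact, i.e. compact in $\rmC^0(I)$, and this also shows that $\calF$ is lower semicontinuous there.

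The only delicate point is interpretive. We must read $\calF(u)<\infty$ as meaning that $H(u)\in\rmH^1(I)$, identify $u$ with its continuous representative $H^{-1}(z)$, and check that $H$ is genuinely a homeomorphism of $\R_\geq$ onto itself. The last point uses only the positivity of $\mob$ and $E''$ on $]0,\infty[$ together with the lower growth bound in \eqref{eq:Cond.H.qH}. Everything else is the standard one-dimensional Sobolev compactness.
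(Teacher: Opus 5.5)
Your proof is correct and is essentially the paper's argument. Both rest on the uniform $\rmH^1\subset\rmC^{1/2}$ bounds for $H(u)$, the uniform continuity of $H^{-1}$ on a bounded interval, and Arzel\`a--Ascoli. The one difference is the order: you apply Arzel\`a--Ascoli to $H(u_k)$ and then transfer the convergence, while the paper first transfers the modulus of continuity to $u$ and applies it to $u$ directly. Your extra step, showing that the sublevel is closed in $\rmC^0(I)$ via weak $\rmH^1$-compactness, is left implicit in the paper, so your version is slightly more complete.
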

\begin{proof}
We fix $S>0$ and consider $U_S:=\bigset{u \in \rmL^1_\geq (I)}{ \calF(u)\leq S}$.

Using the continuous embedding $\rmH^1 \subset  \rmC^{1/2}$ there exists a
constant $C_S>0$ such that 
\[
\forall \, u\in U_S\ \forall \, y,\wt y \in I: \quad  H(u(y))\leq C_S, \quad  
\big|H(u(y))-H(u(\wt y))\big| \leq \sqrt{|y{-}\wt y|}\, C_S.
\]
As $H$ is continuous with $H'(r)=\sqrt{\mob(r)}E''(r)>0$, the inverse
$H^{-1}:[0,C_S] \to [0,\ol C_S] $ with $\ol C_S=H^{-1}(C_S)$ is uniformly
continuous. We denote the modulus of continuity by $\ol\omega$ and find 
\[
\forall \, u\in U_S\ \forall \, y,\wt y \in I: \quad  u(y)\in [0, \ol C_S], \quad  
\big|u(y){-}u(\wt y)\big| \leq \ol \omega\big( \sqrt{|y{-}\wt y|}\, C_S\big).
\]
Thus, compactness follows from the Arzel\`a-Ascoli criterion. 
\end{proof}

For the temporal compactness we restrict to the bulk $\Omega_\PM$, because the
continuity equation contains the term $X'_\eps\,\pl_t u_\eps$ with
$X'_\eps=\eps$ on $\Omega_0$. On $\rmL^1_\geq (I_\PM)$ we define the distance
function
\[
\calG(u,\wt u) := \sup\Bigset{\int_{I_\PM} (u{-}\wt u) \varphi \dd y }{ \varphi 
   \in \rmC^1_0(\ol I_\PM), \ \| \pl_y \varphi\|_{\rmC^0} \leq 1},
\]
where the subscript $_0$ indicates the boundary condition
$\varphi(-1)=\varphi(0)=\varphi(1)=\varphi(2)=0$. Clearly, $\calG(u,\wt u)=0$
implies $u=\wt u$ in $\rmL^1(I_\PM)$. With this, \cite[Thm.\,2]{RosSav03TIEC} 
will be applicable.

\begin{proposition}[Space-time compactness in $\Omega_\PM$]
\label{pr:SpaTimeComp}
Let $ E $ and $H$ satisfy \eqref{eq:E.bounds} and \eqref{eq:Cond.H.qH} and
assume that $(u_\eps)_\eps$ satisfies \eqref{eq:MainAss.all}. Then, there
exists a subsequence (not relabeled) and a limit function $u_0$ such that
\begin{subequations}
\label{eq:Subseq}
\begin{align}
&\label{eq:Subseq.a}
u_\eps \weak u_0 \ \text{ (weakly) in } \rmL^{2\qsubE+2\qsubH}(\Omega_\PM),  
\\
&\label{eq:Subseq.b}
u_\eps \to u_0 \ \text{ (strongly) in } \rmL^q(\Omega_\PM)
\text{ for all } q\in {[1,2\qsubE{+}2\qsubH[},  
\\
&\label{eq:Subseq.b1}
\mob(u_\eps) \bddin \rmL^{1+\qsubE/\qsubH}(\Omega_\PM) \ \text{and }\ 
Q_\eps \bddin \rmL^{r_*}(\Omega_\PM) \text{ with } 
 r_*= \tdfrac{2\qsubE{+}2\qsubH}{\qsubE{+}2\qsubH},
\\
&\label{eq:Subseq.c0}
u_\eps \bddin \rmL^{\qsubE{+}2\qsubH}\big([0,T];\rmC^0(I_\PM) \big),
\\
&\label{eq:Subseq.c}
u_\eps \to  u_0 \ \text{ (strongly) in } \rmL^s\big([0,T];\rmC^0(I_\PM) \big)
 \text{ for all } s \in {[1,\qsubE{+}2\qsubH[}.
\end{align}
\end{subequations}
\end{proposition}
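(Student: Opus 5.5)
We first collect the uniform bounds, then apply the Rossi--Savaré compactness theorem \cite[Thm.\,2]{RosSav03TIEC} with the functional $\calF$ from Lemma \ref{le:CompSublevels} and the pseudo-distance $\calG$.

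\emph{Uniform bounds.} Proposition \ref{pr:ImprovBound} gives $u_\eps \bddin \rmL^{2\qsubE+2\qsubH}(\Omega_\PM)$, and reflexivity yields a subsequence with \eqref{eq:Subseq.a}. For \eqref{eq:Subseq.b1} we combine \eqref{eq:mob.H.growth} with \eqref{eq:Cond.H.qH} to get $\mob(u)\leq C(1{+}u^{2\qsubH})$. Since $2\qsubH(1{+}\qsubE/\qsubH)=2\qsubE{+}2\qsubH$, this gives $\mob(u_\eps)\bddin \rmL^{1+\qsubE/\qsubH}$. Hölder's inequality applied to $|Q_\eps| = \big(Q_\eps^2/\mob(u_\eps)\big)^{1/2}\mob(u_\eps)^{1/2}$, together with \eqref{eq:MainAss.P} and $\bbA_\eps$ bounded, gives $Q_\eps\bddin \rmL^{r}$ with $\tfrac1r=\tfrac12+\tfrac{1}{2(1+\qsubE/\qsubH)}$. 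This is exactly $r_*=\tfrac{2\qsubE+2\qsubH}{\qsubE+2\qsubH}$.

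For \eqref{eq:Subseq.c0} we use the pointwise-in-time Sobolev bound from the proof of Proposition \ref{pr:H.u.eps}: $\sup_{I_\PM} H(u_\eps(t))\leq C_3+\sqrt3\|\pl_y H(u_\eps(t))\|_{\rmL^2}$, so $\|u_\eps(t)\|_{\rmC^0}^{\qsubH}\leq C(1+\|\pl_yH(u_\eps(t))\|_{\rmL^2})$. This only yields the exponent $2\qsubH$. To reach $\qsubE+2\qsubH$, I would use a sharper one-dimensional Gagliardo--Nirenberg step. Write $z=H(u)$. Then
\[
\|z\|_{\rmL^\infty}^{1+\qsubE/(2\qsubH)} \leq C\big(1+\|z^{\qsubE/\qsubH}\|_{\rmL^1}^{1/2}\|\pl_y z\|_{\rmL^2}\big),
\]
which follows from $w=z^{1+\gamma/2}$ with $\gamma=\qsubE/\qsubH$ and $|\pl_y w|\leq C z^{\gamma/2}|\pl_y z|$. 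Squaring, integrating in time, and using the energy bound $\sup_t\int z^{\gamma}\leq C$ gives $\int_0^T\|z\|_\infty^{2+\gamma}\dd t\leq C$, i.e. $u_\eps\bddin\rmL^{\qsubE+2\qsubH}([0,T];\rmC^0)$.

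\emph{Compactness.} Set $\calF_\PM(u)$ to be $\calF$ restricted to $I_\PM$. By Proposition \ref{pr:H.u.eps}, $\int_0^T\calF_\PM(u_\eps)\dd t\leq \sfC_H$. By Lemma \ref{le:CompSublevels} (applied on each of $I_-$ and $I_+$), $\calF_\PM$ has compact sublevels in $\rmC^0(I_\PM)$. For the time-translation condition we test the continuity equation \eqref{eq:rho.eps.CE} on $I_\PM$, where $X'_\eps\in\{1,1{-}\eps\}$ is bounded from below, with $\varphi\in\rmC^1_0$ and $|\pl_y\varphi|\leq1$. This gives
\[
\calG\big(X'_\eps u_\eps(t{+}h),X'_\eps u_\eps(t)\big)\leq \int_t^{t+h}\|Q_\eps\|_{\rmL^1(I_\PM)}\dd s .
\]
The zero boundary values of $\varphi$ kill the boundary flux terms at $y=0,1$. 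Integrating in $t$ and using $Q_\eps\bddin\rmL^{r_*}$ gives $\int_0^{T-h}\calG\dd t\leq Ch$. The factor $1{-}\eps$ on $I_+$ is harmless, since $\calG$ is homogeneous and the correction is $O(\eps)\|u_\eps\|_{\rmL^1}$.

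We must also check that $\calG$ is compatible with $\calF$: if $u_n,\wt u_n$ converge in $\rmC^0$ with $\calG(u_n,\wt u_n)\to0$, then the limits coincide. This holds because $\calG$ is lower semicontinuous and separates points. \cite[Thm.\,2]{RosSav03TIEC} then gives convergence in measure in time with values in $\rmC^0(I_\PM)$. Combined with the uniform bound \eqref{eq:Subseq.c0} and Vitali's theorem, this yields \eqref{eq:Subseq.c} for $s<\qsubE+2\qsubH$. Strong convergence in $\rmL^1(\Omega_\PM)$ follows, and interpolation with the bound in $\rmL^{2\qsubE+2\qsubH}$ gives \eqref{eq:Subseq.b}. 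The limit coincides with the weak limit in \eqref{eq:Subseq.a}.

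The main obstacle is the exponent $\qsubE+2\qsubH$ in \eqref{eq:Subseq.c0}. I expect it to require the weighted Gagliardo--Nirenberg argument above rather than the crude Hölder-in-space bound. The verification of the Rossi--Savaré hypotheses itself should be routine.
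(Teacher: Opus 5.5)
Your proposal is correct and follows the paper's proof. It uses the same H\"older bound giving $Q_\eps\bddin\rmL^{r_*}(\Omega_\PM)$, the same Rossi--Savar\'e argument with $\calF$ and $\calG$, and the same bound $H(u_\eps)\bddin\rmL^{2+\qsubE/\qsubH}([0,T];\rmC^0(I_\PM))$; your weighted Gagliardo--Nirenberg step is an explicit version of the paper's space-time interpolation, and applying Rossi--Savar\'e once in $\rmC^0(I_\PM)$ and then deducing \eqref{eq:Subseq.b} from \eqref{eq:Subseq.c} is a harmless shortcut compared with the paper's two applications. The only loose point is the factor $1{-}\eps$ on $I_+$: because $\varphi$ vanishes at $y=0,1$, $\calG$ splits into its $I_-$ and $I_+$ parts, so $\calG(u,\wt u)\leq \frac{1}{1-\eps}\,\calG(X'_\eps u,X'_\eps \wt u)$, which is how the paper handles it, whereas an additive $O(\eps)$ correction does not by itself give the smallness as $h\to0$, uniformly in $\eps$, that Rossi--Savar\'e requires.
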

\begin{proof} 
\STEP{1. Weak convergence:} By Proposition \ref{pr:ImprovBound} we can select a
subsequence and a $u_0 \in \rmL^{2(\qsubE+\qsubH)}(\Omega_\PM)$ such that
\eqref{eq:Subseq.a} holds. 
 
\STEP{2. Bounds on $\mob(u_\eps)$ and $Q_\eps$}: We first observe that
\eqref{eq:mob.H.growth} and \eqref{eq:L.rstar} imply
$\mob(u_\eps)|_{\Omega_\PM} \bddin \rmL^{\gamma_*}(\Omega_\PM)$ with
$\gamma_*=1{+}\qsubE/\qsubH$. If $Q_\eps$ is the optimal flux in the definition
of $\mfD_\eps^\mafo{flux}(u_\eps)$, then with
$r_*= 2\gamma_*/(1{+}\gamma_*)= 2(\qsubE{+}\qsubH)/(\qsubE{+}2\qsubH)\in
{]1,2[}$ we have
\begin{align*}
&\iint_{\Omega_\PM} |Q_\eps|^{r_*} \dd y \dd t \leq \Big(\iint_{\Omega_\PM}
\tdfrac{Q^2_\eps}{\mob(u_\eps)} \dd y\dd t \Big)^{r_*/2} \Big(
\iint_{\Omega_\PM} \mob(u_\eps)^{r_*/(2{-}r_*)} \dd y \dd t\Big)^{1-r_*/2}
\\
& \leq \sfCF^{(\qsubE+\qsubH)/(\qsubE+2\qsubH)} \| \mob(u_\eps)
\|_{\rmL^{\gamma_*}(\Omega_\PM)}^{\qsubH/(\qsubE+2\qsubH)}=: C_Q< \infty.  
\end{align*} 
Thus, \eqref{eq:Subseq.b1} is established.

\STEP{3. Temporal compactness:} 
With this we obtain a uniform control on the temporal behavior. Testing
the continuity equation with $\varphi$ and integrating in time we obtain 
\begin{align*}
\int_{I_\PM} &\big(u_\eps(s{+}h){-}u_\eps(s)\big) \varphi\dd y = \int_{I_\PM}
\int_s^{s+h} X'_\eps \pl_t u_\eps(t,y) \dd t \,\frac1{X'_\eps} \varphi(y)\dd y  
\\
&= \int_{I_\PM} \int_s^{s+h} Q_\eps(t,y) \dd t \frac1{X'_\eps} \pl_y \varphi(y)
\dd y ,
\end{align*}
where we use that $X'_\eps$ is constant in $I_+$ and in $I_-$. 
Taking the supremum over test functions with $\|\pl_y \varphi\|_{\rmC^0} \leq
1$, we obtain, using Step 2, 
\[
\calG(u_\eps(t{+}h),u_\eps(t)) \leq \frac1{1{-}\eps} \int_s^{s+h} \!\!\int_{I_\PM}
| Q_\eps| \dd y \dd t \leq \frac{(2h)^{1-1/r_*}}{1{-}\eps} \:C_Q^{1/r_*}.  
\]

\STEP{4. Convergence in measure:} Using $r_*>1$ and Lemma
\ref{le:CompSublevels} we are now able to apply the result from
\cite[Thm.\,2]{RosSav03TIEC}  with $\calF$ and $\calG$ using the Banach space
$B:=\rmL^{2\qsubE+2\qsubH}(I_\PM)$. Thus, we are able to select a further subsequence
such that $u_\eps$ converges to $u_0$ in measure, namely
\[
\forall\ \delta>0 : \quad \lim_{\eps \searrow 0} \LEB^1( A(\eps,\delta) )  =
0, \ \text{ where }  A(\eps,\delta):= \bigset{t\in
  [0,T] }{ \| u_\eps(t){-}u_0(t)\|_B \geq \delta} . 
\]

We set $q_*:= 2\qsubE{+}2\qsubH$ and use 
$\| u_\eps\|_{\rmL^{q_*}(\Omega_\PM)} \leq C_U$ as in Step 1. Hence, the estimates
\begin{align*}
&\iint_{\Omega_\PM} \big| u_\eps(t,y) - u_0(t,y)\big| \dd y \dd t \leq 
\int_0^T 2  \| u_\eps(t){-}u_0(t)\|_B \dd t 
\\
&\leq 2\int_{A(\eps,\delta)}   \| u_\eps(t){-}u_0(t)\|_B \dd t
  + 2 \int_{[0,T]\setminus A(\eps,\delta)} \delta \dd t 
\leq 2 \big(\LEB^1(A(\eps,\delta))\big)^{1-1/q_*} \:C_U^{1/q_*} + 2\delta T
\end{align*}
give convergence in $\rmL^1(\Omega_\PM)$. 
Interpolating between $\rmL^1(\Omega_\PM)$ and $\rmL^{q_*}(\Omega_\PM)$ shows
\eqref{eq:Subseq.b}.

\STEP{5. Strong convergence in $\rmL^s \big([0,T];\rmC^0(I_\PM)\big)$}: From
above $ z_\eps =H(u_\eps) \bddin  \rmL^{2\gamma_*}(\Omega_\PM) $ and
$z_\eps \bddin \rmL^2 \big( [0,T];\rmH^1(I_\PM)
\big) $. Space-time interpolation implies 
$z_\eps \bddin \rmL^{\eta}\big([0,T];\rmC^0(I_\PM)\big)$ with
$\eta=1+\gamma_*=2 + \qsubE/\qsubH$. With \eqref{eq:Cond.H.qH} we conclude
$u_\eps \bddin \rmL^{s_*}\big([0,T];\rmC^0(I_\PM)\big)$ with $s_*=\eta q_H=
\qsubE{+}2\qsubH$. Thus, \eqref{eq:Subseq.c0} is established. 

Applying \cite[Thm.\,2]{RosSav03TIEC} once again, but now with the smaller
space $ B= \rmC^0(I_\PM)$, and repeating the arguments of Step 4 with the
convergence in measure leads to the desired convergence \eqref{eq:Subseq.c}. 
\end{proof}

The above results are sufficient to pass to the limit in the bulk part of
$\wh\mfD_\eps(u_\eps)$ but not for the membrane part, because in $\Omega_0$ we
do not have temporal compactness because of the small prefactor $\eps =X'_\eps$
in the continuity equation \eqref{eq:rho.eps.CE}. To handle this case, we use
that we can estimate $u_\eps$ and $\mob(u_\eps)$ in $\rmL^1(\Omega_0)$. This
provides weak* convergence in the space of measures, which is the best we can
hope for, even in the case of the linear diffusion equation with the Otto
gradient structure, i.e.\ $\beta=\varkappa=1$. We first observe that the
transformed density $(t,y)\mapsto  X'_\eps(y) u_\eps(t,y)$, which appears in the
transformed continuity equation \eqref{eq:rho.eps.CE}, vanishes strongly in
$\rmL^2(\Omega_0)$. Moreover, on $\Omega_0$ we exploit the transformation
$u_\eps(t,y) = \psi(w_\eps(t,y))$,  which is assumed to exist in condition
\eqref{eq:Cvx.Ass.psi} and provide a priori bounds for $u_\eps$,
$\mob(u_\eps)$, and $w_\eps$ in $\rmL^1(\Omega_0)$.

\begin{lemma}[Bounds in the membrane part]
\label{le:Memb.Bounds}
Let the assumption \eqref{eq:AllAssump} and \eqref{eq:MainAss.all} hold. Then
there exists a constant $C_*>0$ such that 
\begin{subequations}
\begin{align}
  \label{eq:StrCvgMemb}
 & \big\|X'_\eps u_\eps \big\|_{\rmL^2(\Omega_0)} \leq C_* \eps^{1/2}
  \quad\text{for }\eps\in {[0,1/2]},
\\
\label{eq:L1.Bounds}
&u_\eps \bddin \rmL^1(\Omega_0), \quad 
\mob(u_\eps)  \bddin \rmL^1(\Omega_0),  \quad 
Q_\eps  \bddin \rmL^1(\Omega_0), \quad 
w_\eps=\psi^{-1}(u_\eps)   \bddin \rmL^1(\Omega_0).
\end{align}
\end{subequations}
\end{lemma}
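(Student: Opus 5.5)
The plan is to work entirely on $\Omega_0=[0,T]\ti I_0$. There $X'_\eps=\eps$ and $\bbA_\eps=b$, and I would feed the bound of Proposition \ref{pr:H.u.eps} through the growth conditions \eqref{eq:Cond.H.qH}, \eqref{eq:mob.H.growth} and \eqref{eq:Cvx.Ass.psi}(iii). The central quantity is $z_\eps=H(u_\eps)$. Proposition \ref{pr:H.u.eps} gives
\[
\iint_{\Omega_0} z_\eps^2 \dd y\dd t\leq \sfC_H \qand \iint_{\Omega_0} u_\eps^{2\qsubH}\dd y \dd t\leq \wt\sfC_H ,
\]
and every claimed bound will be reduced to this.

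For \eqref{eq:StrCvgMemb} I would write $\|X'_\eps u_\eps\|^2_{\rmL^2(\Omega_0)}=\eps^2\iint_{\Omega_0}u_\eps^2$. When $\qsubH\geq1$, the $\rmL^{2\qsubH}$ bound and Hölder on the bounded set $\Omega_0$ give $u_\eps\bddin\rmL^2(\Omega_0)$, hence even $O(\eps)$. In general I would instead use the $\rmL^\infty([0,T];\rmC^0(I))$-type control on $z_\eps(t,\cdot)$ from the proof of Proposition \ref{pr:H.u.eps}. The alternative is to exploit that $\eps\int_{I_0}u_\eps(t)\dd y\leq 1$ by mass conservation and combine it with $\rmL^{2\qsubH}$ by interpolation; since $2\qsubH\geq1$, interpolating $\|u\|_{\rmL^2}^2\leq \|u\|_{\rmL^1}^{\theta}\|u\|_{\rmL^{2\qsubH}}^{\dots}$ gives a power of $\eps$, and I would check that it is at least $\eps^{1/2}$ after multiplying by $\eps^2$. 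If $2\qsubH<2$, the exponent bookkeeping is the delicate point. The fallback is the pointwise bound $z_\eps(t,y)\leq C_3+\sqrt3\|\pl_yz_\eps(t)\|_{\rmL^2}$, which gives $u_\eps(t,\cdot)\leq H^{-1}(\cdot)$ uniformly on $I_0$ in terms of a quantity that is square integrable in $t$. Combined with $\eps\int_{I_0}u_\eps\leq1$, this yields $\eps^2\int_{I_0}u_\eps^2\leq \eps\,\|u_\eps(t)\|_{\rmL^\infty(I_0)}$. The remaining task is to integrate $\|u_\eps(t)\|_{\rmL^\infty}$ in time, which holds since $u^{\qsubH}\lesssim 1+z$ and $\qsubH\geq1/2$ give $u\lesssim 1+z^2$.

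The $\rmL^1$ bounds then follow directly. For $u_\eps$, use $u^{2\qsubH}$ with $2\qsubH\geq1$ and $|\Omega_0|=T$. For $\mob(u_\eps)$, use \eqref{eq:mob.H.growth}, $\mob(u)\leq C_\mob(1+z)^2$, which is integrable by $\sfC_H$. For $w_\eps=\psi^{-1}(u_\eps)$, apply \eqref{eq:Cvx.Ass.psi}(iii) with $w=w_\eps$, giving $w_\eps\leq C_\psi(1+H(u_\eps))^2$, again bounded in $\rmL^1$. For $Q_\eps$, Cauchy–Schwarz gives
\[
\iint_{\Omega_0}|Q_\eps|\leq\Big(\iint \tfrac{Q_\eps^2}{\mob(u_\eps)}\Big)^{1/2}\Big(\iint\mob(u_\eps)\Big)^{1/2},
\]
with the first factor bounded by $2b\sfCF$ via \eqref{eq:MainAss.P} since $\bbA_\eps=b$ on $I_0$.

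I expect the main obstacle to be the $\eps^{1/2}$ rate in \eqref{eq:StrCvgMemb}, specifically the time integrability of the supremum in $y$. I would settle it with the $\rmH^1$-in-space bound on $z_\eps$ and the mass constraint as described above.
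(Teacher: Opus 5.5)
Your proposal is correct and, once you commit to your fallback, it is essentially the paper's proof. The paper proves \eqref{eq:StrCvgMemb} via $\|X'_\eps u_\eps\|^2_{\rmL^2(\Omega_0)}\le \eps\,\sup_t\|X'_\eps u_\eps(t)\|_{\rmL^1(I_0)}\int_0^T\|u_\eps(t)\|_{\rmL^\infty(I_0)}\dd t$, using $u\lesssim 1+H(u)^2$ (from $\qsubH\ge 1/2$) and the $\rmL^2([0,T];\rmH^1(I))$ bound on $H(u_\eps)$, and gets the four $\rmL^1$ bounds from the same growth conditions plus Cauchy--Schwarz that you use. (Two small remarks: the control on $\sup_y H(u_\eps(t,\cdot))$ is square-integrable in time, not $\rmL^\infty$ in time; and the $\rmL^1$--$\rmL^{2\qsubH}$ interpolation you float cannot reach $\rmL^2$ when $\qsubH<1$, so the fallback is the argument that actually carries the proof.)
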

\begin{proof}
From \eqref{eq:MainAss.E} and \eqref{eq:E.bounds} we immediately find a
constant $C_1$ such that $\eps \|u_\eps(t)\|_{\rmL^1(I_0)}  = \|X'_\eps
u_\eps(t)\|_{\rmL^1(I_0)} \leq C_1$ for all 
$t\in [0,T]$ and all $\eps$. Moreover, combining \eqref{eq:Cond.H.qH} with
$\qsubH\geq 1/2$ and \eqref{eq:H.ueps.bound} we have 
\begin{equation}
  \label{eq:L1.ueps.new}
  \int_0^T \| u_\eps(t)\|_{\rmL^{\infty}}\dd t \leq C+C\int_0^T \|
H(u_\eps)\|^2_{\rmL^\infty(I)} \leq C_2.
\end{equation}

With this we can interpolate in space and time as follows:
\[
\|X'_\eps u_\eps\|_{\rmL^2(\Omega_0)}^2
 = \eps \iint_{\Omega_0} \!\! X'_\eps u_\eps\,u_\eps \dd y \dd t 
\leq \eps \| X'_\eps u_\eps \|_{\rmL^\infty(0,T;\rmL^1(I_0)) }
  \| u_\eps\|_{\rmL^1(0,T;\rmL^\infty(I_0)) } \leq \eps C_1C_2,
\]
which is the desired estimate \eqref{eq:StrCvgMemb}.

The first estimate in \eqref{eq:L1.Bounds} follows from \eqref{eq:L1.ueps.new};
and the second is obtained by combining \eqref{eq:mob.H.growth} and
\eqref{eq:L1.ueps.new}. For the third estimate we use \eqref{eq:MainAss.P} and
the second estimate, namely 
\[
\| Q_\eps\|_{\rmL^1} = \iint_{\Omega_0} \frac{|Q_\eps|}{\sqrt{\mob(u_\eps)}} \,
\sqrt{\mob(u_\eps)} \dd y \dd t \leq \Big(\iint_{\Omega_0}
\frac{Q_\eps^2}{\mob(u_\eps)} \dd y \dd t\Big)^{1/2} \|
\mob(u_\eps)\|_{\rmL^1(\Omega_0)}^{1/2} \leq \sfCF^{1/2} C.
\] 
The fourth estimate follows because of $w_\eps \leq C_\psi (1{+}H(u_\eps))^2$
from \eqref{eq:Cvx.Ass.psi}(iii) and \eqref{eq:L1.ueps.new}.%
\end{proof}

\subsection{Characterization of the limiting flux $Q_0$} 
\label{su:CharLimits}

As a first step in the limit passage for $\wh\mfD_\eps(u_\eps)$ we pass to the
limit in the continuity equation \eqref{eq:rho.eps.CE}. Upon choosing a further
subsequence (not relabeled) we can assume that 
\
\begin{equation}
  \label{eq:RhoFluxCvg}
  X'_\eps u_\eps \to X'_0 u_0 \ \text{ in } \rmL^2(\Omega) \qand 
Q_\eps \weaks \ol Q_0 \ \text{ in } \mfS\mfM(\ol\Omega),
\end{equation}
where $\mfS\mfM$ denotes signed measures. In Step 2 of the proof of
Proposition \ref{pr:SpaTimeComp} we have shown that $Q_\eps$ is bounded in
$ \rmL^{r_*} (\Omega_\PM) $, but for the membrane part we only have the bound in
$ \rmL^1(\Omega_0) $ from \eqref{eq:L1.Bounds}. Nevertheless, this is sufficient
to pass to the limit $\eps\to 0$ leading to the limiting continuity equation
\begin{equation}
  \label{eq:LimitCE}
  \iint_{\Omega} \pl_t\varphi\,  X'_0 u_0  \dd y \dd t + 
 \iint_{\ol\Omega} \pl_y\varphi \dd \ol Q_0(t,y) \SSS = 0 \EEE 
  \ \text{ for all }\varphi\in
   \rmC^\infty_\rmc({]0,T[}\ti\ol I).
\end{equation}
The key point of the following result is that $X'_0(y)=0$ in $\Omega_0$, which
implies that $\ol Q_0|_{\Omega_0}$ must be independent of $y\in I_0$, i.e.\ it
may only depend on $t \in [0,T]$.

\begin{proposition}[Properties of $\ol Q_0$] 
\label{pr:Prop.Q0}
Let $\ol Q_0$ be obtained as in \eqref{eq:RhoFluxCvg}, then we have 
\begin{equation}
  \label{eq:Char.Q0}
  \ol Q_0 = Q_0 \LEB^2|_{\Omega_\PM} + \LEB|_{I_0}\otimes \ol\kappa
   \quad \text{with } Q_0 \in \rmL^{r_*}(\Omega_\PM) \text{ and } 
   \ol\kappa \in \mfS\mfM([0,T]),
\end{equation}
with $r_*=  2(\qsubE{+}\qsubH)/(\qsubE{+}2\qsubH)>1$. In particular, the continuity
equation \eqref{eq:LimitCE} reduces to 
\begin{equation}
  \label{eq:RedLimCE}
\iint_{\Omega_\pm}\!\!\Big(u_0\, \pl_t\varphi +
   Q_0 \pl_y \varphi\Big) \dd y \dd t + \int_{[0,T]} \!\! 
  \big( \varphi(t,1){-}\varphi(t,0)\big) \dd \ol\kappa(t)=0 
  \ \text{ for }\varphi\in \rmC^\infty_\rmc({]0,T[}\ti\ol I).
\end{equation}
\end{proposition}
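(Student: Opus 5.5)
We split the limiting measure $\ol Q_0\in\mfS\mfM(\ol\Omega)$ into its restriction to the bulk $\ol\Omega_\PM$ and to the membrane $\Omega_0=[0,T]\ti I_0$, and treat the two parts separately.

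\emph{Bulk part.} By \eqref{eq:Subseq.b1} in Proposition \ref{pr:SpaTimeComp} we have $Q_\eps\bddin\rmL^{r_*}(\Omega_\PM)$ with $r_*>1$. By reflexivity, a further subsequence satisfies $Q_\eps|_{\Omega_\PM}\weak Q_0$ in $\rmL^{r_*}(\Omega_\PM)$. For every $\varphi\in\rmC^0(\ol\Omega)$ the integrals $\iint_{\Omega_\PM}\varphi\,Q_\eps\dd y\dd t$ converge to $\iint_{\Omega_\PM}\varphi\,Q_0\dd y\dd t$. Moreover, equi-integrability in $\rmL^{r_*}$ prevents mass of $Q_\eps|_{\Omega_\PM}$ from concentrating on the interfaces $\{y=0\}$, $\{y=1\}$ or on $\pl\Omega$. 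Hence $\ol Q_0$ restricted to $\ol\Omega_\PM$ equals $Q_0\LEB^2$, and the remaining part $\ol Q_0-Q_0\LEB^2|_{\Omega_\PM}$ is the weak* limit of $Q_\eps\bm 1_{\Omega_0}$. This remainder is carried by $[0,T]\ti[0,1]$.

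\emph{Membrane part.} Call this remainder $\nu:=\ol Q_0|_{[0,T]\ti[0,1]}$. We use \eqref{eq:LimitCE} with test functions $\varphi\in\rmC^\infty_\rmc(]0,T[\ti]0,1[)$. Since $X'_0=0$ on $I_0$, the first term vanishes, so $\iint\pl_y\varphi\dd\nu=0$, i.e.\ $\pl_y\nu=0$ in $\calD'(]0,T[\ti]0,1[)$.

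Next we take a product test function $\varphi(t,y)=\alpha(t)\beta(y)$ with $\alpha\in\rmC^\infty_\rmc(]0,T[)$. Define the measure $\nu_\alpha$ on $]0,1[$ by $\langle\nu_\alpha,g\rangle=\iint\alpha(t)g(y)\dd\nu$. The condition $\pl_y\nu=0$ gives $\langle\nu_\alpha,\beta'\rangle=0$ for all $\beta\in\rmC^\infty_\rmc(]0,1[)$. By the classical lemma that a distribution with vanishing derivative on an interval is constant, $\nu_\alpha=c(\alpha)\LEB^1|_{]0,1[}$.

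The map $\alpha\mapsto c(\alpha)=\langle\nu_\alpha,\beta_0\rangle$, with a fixed $\beta_0$ of unit integral, is linear and bounded by $\|\alpha\|_\infty\,|\nu|$. It therefore defines a signed measure $\ol\kappa$ on $[0,T]$. Density of tensor products then yields $\nu=\LEB|_{I_0}\otimes\ol\kappa$ on the open set $]0,T[\ti]0,1[$.

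\emph{Main obstacle.} The hard step is excluding mass of $\nu$ on the boundary lines $y=0$, $y=1$ and $t\in\{0,T\}$, where $Q_\eps$ is only $\rmL^1$-bounded on the $\Omega_0$ side and concentration could occur. I would try two things.

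First, concentration at $t\in\{0,T\}$ is irrelevant, because all test functions in \eqref{eq:LimitCE} are compactly supported in $]0,T[$. It can therefore be absorbed in, or neglected for, $\ol\kappa$.

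Second, for the lines $y=0$ and $y=1$ I would exploit $\pl_y\nu=0$ in a slightly larger region. Take $\varphi=\alpha(t)\beta(y)$ with $\beta$ supported near $y=0$, equal to $1$ on a neighbourhood of $0$ and to $0$ for $y\geq\delta$. In \eqref{eq:LimitCE} the bulk contribution from $I_-$ is $O(\delta^{1-1/r_*})$ by the $\rmL^{r_*}$ bound and the $\rmL^2$ convergence of $X'_\eps u_\eps$. The membrane contribution is $-\frac1\delta\iint_{(0,\delta)}\alpha\dd\nu\to -c$, consistent with the constant density. Any atom of $\nu$ on $\{y=0\}$ is invisible to $\pl_y\varphi$, since $\pl_y\varphi$ vanishes there. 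So such an atom does not enter the reduced equation \eqref{eq:RedLimCE} and can be discarded. I would make this precise by showing that $\ol Q_0$ only matters through $\pl_y\varphi$, which vanishes near $y=0$ and $y=1$ when $\varphi$ is locally constant in $y$; alternatively, one can redefine $\ol Q_0$ up to such singular parts.

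Finally, inserting $\ol Q_0=Q_0\LEB^2|_{\Omega_\PM}+\LEB|_{I_0}\otimes\ol\kappa$ into \eqref{eq:LimitCE} gives $\int_0^1\pl_y\varphi\dd y=\varphi(t,1)-\varphi(t,0)$ for the membrane term, and $X'_0u_0=u_0$ on $\Omega_\PM$ for the bulk term. Together these yield \eqref{eq:RedLimCE}.
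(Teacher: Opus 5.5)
Your treatment of the bulk part and of the open membrane region $]0,T[\times]0,1[$ is fine. The product-test-function/Riesz argument fills in the step the paper states in one line, and neglecting mass at $t\in\{0,T\}$ is harmless. The genuine gap is the step you flag yourself. Since $Q_\varepsilon\bm 1_{\Omega_0}$ is only bounded in $L^1$, its weak* limit may charge the interfaces. That is, $\overline Q_0$ may contain parts $\overline\beta{}^-\otimes\delta_{y=0}$ and $\overline\beta{}^+\otimes\delta_{y=1}$ with signed measures $\overline\beta{}^\pm$ on $[0,T]$. These contribute $\int_{[0,T]}\partial_y\varphi(t,0)\,\mathrm d\overline\beta{}^-(t)+\int_{[0,T]}\partial_y\varphi(t,1)\,\mathrm d\overline\beta{}^+(t)$ to \eqref{eq:LimitCE}.

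Your claim that such mass is ``invisible to $\partial_y\varphi$'' holds only for the special test functions you chose, with $\beta\equiv1$ near $y=0$. A general $\varphi\in C^\infty_{\mathrm c}(]0,T[\times\overline I)$ has $\partial_y\varphi(t,0)\neq0$. So these terms cannot be dropped from \eqref{eq:RedLimCE}, which must hold for \emph{all} such $\varphi$. ``Redefining $\overline Q_0$'' is not available either: $\overline Q_0$ is the given weak* limit, and \eqref{eq:Char.Q0} is an identity about it. You have to prove $\overline\beta{}^\pm=0$. Incidentally, your bound $O(\delta^{1-1/r_*})$ for the $I_-$ contribution is also wrong. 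Your cut-off has slope of order $1/\delta$ there, so $\iint Q_0\partial_y\varphi$ approximates the flux near $y=0^-$ rather than vanishing. With $\beta'(0)=0$ that computation can only yield flux continuity, never information on $\overline\beta{}^-$.

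The missing idea is to use test functions that \emph{do} see the interface while being small themselves. Take $\varphi_n(t,y)=\frac1n\big(\alpha^-(t)\psi(ny)+\alpha^+(t)\psi(n(y{-}1))\big)$ with $\alpha^\pm\in C^\infty_{\mathrm c}(]0,T[)$ and $\psi\in C^\infty_{\mathrm c}(\mathbb R)$, $\psi'(0)\neq0$. Then $\varphi_n$ and $\partial_t\varphi_n$ are $O(1/n)$ uniformly. Also $\partial_y\varphi_n$ is uniformly bounded, with support in strips of width $O(1/n)$ around $y=0$ and $y=1$. Consequently:
\begin{itemize}
\item the bulk terms $\iint(u_0\partial_t\varphi_n+Q_0\partial_y\varphi_n)$ tend to $0$, using $Q_0\in L^{r_*}$;
\item the membrane term equals $\int(\varphi_n(t,1)-\varphi_n(t,0))\,\mathrm d\overline\kappa=O(1/n)$;
\item in the limit only $\psi'(0)\big(\int\alpha^-\,\mathrm d\overline\beta{}^-+\int\alpha^+\,\mathrm d\overline\beta{}^+\big)=0$ survives.
\end{itemize}
Since $\alpha^\pm$ are arbitrary, $\overline\beta{}^\pm=0$. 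This is exactly how the paper closes the step, and only then does \eqref{eq:LimitCE} reduce to \eqref{eq:RedLimCE}.
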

\begin{proof} The absolute continuity of $\ol Q_0$ in $\Omega_\PM$ follows 
from the bounds for $Q_\eps$ derived in Proposition \ref{pr:SpaTimeComp}. 
Testing \eqref{eq:LimitCE} with  $\varphi$ having support in
$\Omega_0={]0,T[}\ti I_0$ yields 
\[
\iint_{\ol\Omega}  \pl_y\varphi(t,y) \dd \ol Q_0  =0 \quad \text{for all }\varphi\in
\rmC^\infty_\rmc( \Omega_0).
\]
Thus, there exists a signed measure $\ol\kappa \in \mathfrak
S\mfM([0,T])$ such that $\ol Q_0|_{\Omega_0}$ is given by $\ol\kappa$, i.e.\
\[
  \iint_{\Omega_0}\! \pl_y\varphi \dd \ol Q_0 = \iint_{\Omega_0} \pl_y\varphi(t,y)
         \dd y \dd\ol\kappa(t)
  = \int_{[0,T]}\!\! \big(\varphi(t,1){-} \varphi(t,0)\big)
         \dd \ol\kappa(t) \ 
\text{ for all } \varphi \in \rmC^0_\rmc(\Omega_0).
\]

By now we have characterized $Q_0$ when restricted to $\Omega_\PM$ and
to $\Omega_0$, however there might be signed measures with support on
the interfaces $[0,T]\ti\{0\}$ and $[0,T]\ti\{1\}$, which we call
$\ol\beta{}^-$ and $\ol\beta{}^+$, respectively. Using the simple form of the
limit $X'_0 u_0 \in \rmL^2(\Omega)$ in \eqref{eq:RhoFluxCvg} with $X'_0(y)=0$
in ${]0,1[}$ and $X'_0(y)=1$ otherwise,  
the continuity equation \eqref{eq:LimitCE} takes the form
\begin{align}
 \nonumber
 \iint_{\Omega_\PM}\!\Big( u_0 \pl_t \varphi + Q_0 \pl_y \varphi \dd y \dd t
  +\int_{[0,T]}\! \big(\varphi(t,1){-} \varphi(t,0)\big) \dd \ol\kappa(t) &
\\
\label{eq:CE.beta.pm}
  + \int_{[0,T]}\! \pl_y \varphi(t,1) \dd \ol\beta{}^+(t) 
 + \int_{[0,T]} \!  \pl_y \varphi(t,0) \dd \ol\beta{}^-(t) &=0
\end{align} 
for all $\varphi \in \rmC^\infty_\rmc({]0,T[}\ti \ol
I)$. Choosing $\psi\in \rmC^\infty_\rmc(\R)$ with $\psi'(0)\neq 0$ and
$\alpha^\pm\in \rmC^\infty_\rmc({]0,T[})$ we set 
\[
\varphi_n(t,y)= \frac1n\big( \alpha^-(t) \psi(n y)+ \alpha^+(t)
\psi(n(y{-}1)) \big)
\]
and insert this into \eqref{eq:CE.beta.pm}. Taking the limit $n\to
\infty$ we find 
\[
\psi'(0)\int_0^T \alpha^-\dd\ol\beta{}^- + \psi'(0) \int_0^T \alpha^+
\dd \ol\beta{}^+=0.
\]
Since $\alpha^\pm$ were arbitrary, we conclude $\ol\beta{}^\pm=0$, and the
result is established.  
\end{proof} 

As in Section \ref{su:MainResult} the masses $M_\pm(t)$ in the left and right
bulk parts are given as $M_\pm(t):=\int_{I_\pm} u_0(t,y) \dd y$. As $\int_I
X'_\eps(y) u_\eps(t,y) \dd y =1$ for a.a.\ $t\in [0,T]$ we have
$M_+(t)+M_-(t)=1$ a.e.\ in $[0,T]$. 
We can test the reduced limit continuity equation \eqref{eq:RedLimCE} by
$\varphi$ satisfying $\varphi(t,y)=\phi_\pm (t)$ on $\Omega_\pm$ and obtain 
\[
\int_0^T\!\big( \phi_+(t)M_+(t)+\phi_-(t)M_-(t)\big) \dd t
 + \int_{[0,T]}\!\big( \phi_+(t)-\phi_-(t) \big)\dd\ol\kappa(t) = 0
\]
for all $\phi_\PM\in \rmC_\rmc({]0,T[})^2$, which means that
\eqref{eq:flux.kappa.3} extends to the more general case if we interpret it in
the sense of distributions.

It will be a major step below to show that under the above assumptions
$\ol\kappa$ is again absolutely continuous, namely $\ol\kappa= \kappa \LEB|_{[0,T]} $
with $\int_0^T |\kappa| \log(1{+}|\kappa|)\dd t < \infty$, see
Proposition \ref{pr:kappa.int}. Then, $\ol Q_0$ is
absolutely continuous, i.e.\ $ \ol Q_0 = Q_0\,\LEB^2|_\Omega$ with $Q_0 \in
\rmL^1(\Omega)$. This will then show that $u_0$ lies in the space $\bfL_1$
defined in \eqref{eq:def.bfL1} which is essential for the definition of
$\wh\mfD_0$.

\begin{remark}\slshape
\label{re:mob.HighInt}
Indeed, strengthening the assumption \eqref{eq:mob.H.growth} to
$ \mob(u)^{1+\delta} \leq C(1{+}H(u)^2)$ for some $\delta>0$, one
easily derives $ \kappa \in \rmL^1(0,T)$ by mimicking the
estimate on $\Omega_\PM$: First we obtain $\mob(u_\eps) \weak  m_0$ in
$\rmL^{1+\delta}(\Omega_0)$ and, as in the beginning of the proof of
Proposition \ref{pr:Prop.Q0}, we find $Q_0|_{\Omega_0} \in
\rmL^{r_\delta}(\Omega_0)$ with $r_\delta=(2{+}\delta)/(1{+}\delta)>1$ and obtain
$\kappa \in \rmL^{r_\delta}([0,T])$ as well.   

However, in our example in Proposition \ref{pr:PowerLawCvx} the stronger
assumption $\mob(u)^{1+\delta} \leq C(1{+}H(u)^2)$ leads to the restriction
$\varkappa \geq 1+ \delta \gamma/2$, which would exclude the important
case of the Boltzmann entropy with $\varkappa =\qsubE=1$.
\end{remark}

\section{The \TOS{$\bm{\Gamma}$}{Gamma}-liminf estimate for
  \TOS{$\wh\mfD_\eps$}{the transformed dissipation functional}}
\label{se:Liminf} 

The liminf estimate for the bulk domain $\Omega_\PM$ will follow by standard
lower semi-continuity arguments using the space-time compactness derived in
Proposition \ref{pr:SpaTimeComp}. The main difficulties of the limit passage
\AAA arise \EEE from the membrane part, where the temporal compactness is lost
and we have to resort to weak-convergence methods. Moreover, an additional
problem arises because the a priori estimates in Lemma \ref{le:Memb.Bounds} are
so weak that we have to work with weak* convergence in the space of measures.
Our strategy is similar to that in \cite[Sec.\,4.3+4]{FreLie21EDTS}:\\
\mbox{}\ \AAA (i) \ convexify \EEE the functional using $\psi$ from
\eqref{eq:Cvx.Ass.psi}, \\
\mbox{}\:(ii)\: extend the functional to the set of measures,\\
\mbox{}\,(iii) use sequential weak* lower semi-continuity of the extended
functional, \\
\mbox{}\,(iv)\, show that the limit is absolutely continuous.

We emphasize that the convexification is only used for the membrane part, where
the continuity equation degenerates because of the temporal scale separation in
the membrane. Applying this nonlinear transformation in the bulk would not
work, because it destroys the linear structure of the continuity equation.

We first treat the bulk part, which is relatively straightforward because of
the favorable a priori estimates and the strong convergence of
$u_\eps|_{\Omega_\PM}$ provided in Proposition \ref{pr:SpaTimeComp}.  We recall
that the bulk part is given by 
\[
\wh\mfD^\text{bulk}_\eps(u) =\int_{\Omega_\PM} \Big( 
\frac{Q^2}{2\bbA_\eps \mob(u)} + \frac{\bbA_\eps}2 \big|\pl_y H(u)\big|^2 
\Big) \dd y \dd t
\]
with the harmless dependence on $\eps$ through $\bbA_\eps$. Moreover, the
singular dependence on $\eps$ in the scaled continuity equation \eqref{eq:rho.eps.CE}
does not appear in the bulk domain $\Omega_\PM$.

\begin{proposition}[Liminf estimate in the bulk]
\label{pr:liminf.bulk}
Let assumption \eqref{eq:AllAssump} be valid and consider a sequence
$(u_\eps)_\eps$ in $\rmL^1(\Omega)$ with $X'_\eps u_\eps \weak X'_0 u_0 $ in
$\rmL^1(\Omega_\PM)$ such that \eqref{eq:MainAss.all} holds. Then, we have
\[ 
 \liminf_{\eps\to 0} \wh\mfD^\mafo{bulk}_\eps(u_\eps) \geq
     \wh\mfD^\mafo{bulk}_0(u_0).
\]
\end{proposition}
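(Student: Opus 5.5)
We may assume $\liminf_{\eps\to0}\wh\mfD^\mafo{bulk}_\eps(u_\eps)<\infty$ and pass to a subsequence realizing the liminf. Along it, Proposition \ref{pr:SpaTimeComp} (with a further subsequence) gives $u_\eps\to u_0$ strongly in $\rmL^q(\Omega_\PM)$ and pointwise a.e., and $Q_\eps\weak Q_0$ weakly in $\rmL^{r_*}(\Omega_\PM)$, where $Q_0$ is the bulk density in Proposition \ref{pr:Prop.Q0}. We also use $\bbA_\eps\to\bbA_0$ uniformly with $\bbA_\eps\ge c>0$. The plan is to treat the slope term and the flux term separately. This is legitimate in the bulk, because both terms are lower semicontinuous under the available convergences; the coupling only matters in the membrane.

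\emph{Slope term.} By \eqref{eq:MainAss.D}, $\pl_y H(u_\eps)\bddin\rmL^2(\Omega_\PM)$. Since $H$ is continuous and $u_\eps\to u_0$ a.e., we have $H(u_\eps)\to H(u_0)$ a.e. The $\rmL^2$ bound of Proposition \ref{pr:H.u.eps} then gives $H(u_\eps)\to H(u_0)$ in $\rmL^1$. Hence every weak $\rmL^2$ limit of $\pl_yH(u_\eps)$ equals $\pl_yH(u_0)$, and the whole sequence converges weakly. Writing $\bbA_\eps=\bbA_0+O(\eps)$ and using weak lower semicontinuity of the $\rmL^2$ norm, we obtain $\liminf\iint\frac{\bbA_\eps}2|\pl_yH(u_\eps)|^2\ge\iint\frac{\bbA_0}2|\pl_yH(u_0)|^2$.

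\emph{Flux term.} We use the jointly convex, lower semicontinuous function $\Phi(Q,m)=Q^2/(2m)$ for $m>0$, with $\Phi(0,0)=0$ and $\Phi=\infty$ otherwise. The set $\{(Q,m):\Phi(Q,m)\le \ell Q - \ell^2 m/2\}$ description gives the dual representation
\[
\iint_{\Omega_\PM}\frac{Q^2}{2\bbA m}\dd y\dd t=\sup_{\zeta\in\rmC_\rmc(\Omega_\PM)}\iint_{\Omega_\PM}\Big(\zeta Q-\tfrac{\bbA}{2}\zeta^2 m\Big)\dd y\dd t .
\]
For a fixed $\zeta$, the term $\iint\zeta Q_\eps$ converges by the weak convergence of $Q_\eps$. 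The term $\iint\bbA_\eps\zeta^2\mob(u_\eps)$ converges to $\iint\bbA_0\zeta^2\mob(u_0)$: $\mob$ is continuous, so $\mob(u_\eps)\to\mob(u_0)$ a.e., and $\mob(u_\eps)\bddin\rmL^{1+\qsubE/\qsubH}$ by \eqref{eq:Subseq.b1} gives uniform integrability, so Vitali applies. Taking the liminf and then the supremum over $\zeta$ yields $\liminf\iint Q_\eps^2/(2\bbA_\eps\mob(u_\eps))\ge\iint Q_0^2/(2\bbA_0\mob(u_0))$. This includes the case that the right-hand side forces $Q_0=0$ wherever $\mob(u_0)=0$.

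\emph{Identification of the flux.} The main point to check is that the limit $Q_0$ is admissible in the definition of $\wh\mfD_0^\mafo{bulk}$. There $Q_\pm$ is the bulk part of a flux satisfying \eqref{eq:LimitCE.3}, chosen to minimize the integral. By Proposition \ref{pr:Prop.Q0}, our $Q_0$ together with $\ol\kappa$ satisfies \eqref{eq:RedLimCE}, so it is a competitor. The minimal value is therefore at most the value at $Q_0$, and the inequality runs in the right direction. If $\ol\kappa$ is not yet known to be absolutely continuous, we interpret $\wh\mfD^\mafo{bulk}_0(u_0)$ with this $Q_0$. The minimization is in any case over bulk fluxes compatible with the given membrane flux, which is exactly what $Q_0$ is, since the bulk flux is determined up to functions of $t$ fixed by the boundary and membrane conditions.

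Summing the two liminf inequalities completes the proof. The only delicate step is the Vitali argument for $\mob(u_\eps)$, which relies on \eqref{eq:mob.H.growth} through \eqref{eq:Subseq.b1}.
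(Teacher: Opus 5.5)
Your proposal is correct and follows essentially the paper's route: lower semicontinuity of the convex integrand $Q^2/(2\bbA m)+\frac{\bbA}{2}|\pl_y H(u)|^2$ along $Q_\eps\weak Q_0$, $\mob(u_\eps)\to\mob(u_0)$ (identified via the strong bulk convergence of $u_\eps$) and $\pl_yH(u_\eps)\weak\pl_yH(u_0)$, followed by the observation that the limiting flux satisfies the limit continuity equation and is therefore a competitor for the minimizing flux in $\wh\mfD^{\mathrm{bulk}}_0$. The differences are cosmetic: the paper treats both terms jointly, needs only weak convergence of $\mob(u_\eps)$, and cites a general lower-semicontinuity theorem for jointly convex integrands (Fonseca--Leoni), whereas you split the terms and prove the flux part by hand via the dual representation and Vitali.
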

\begin{proof} We recall the convergences in \eqref{eq:Subseq} and observe that
$z_\eps= H(u_\eps)$ satisfies $z_\eps \weak z_0\in \rmL^2 \big([0,T]; 
\rmH^1(I_\PM)\big)$. By \eqref{eq:Subseq.b1} we may assume 
$\mob(u_\eps) \weak m_0$ in $\rmL^{1+ \qsubE/\qsubH}(\Omega_\PM)$ and 
$ Q_\eps \weak \wt Q_0$ in $\rmL^{r_*}(\Omega_\PM)$ with $r_*>1$.
Using the strong convergence in \eqref{eq:Subseq.b} we conclude 
$z_0=H(u_0)$ and $m_0=\mob(u_0)$. In particular, we also have $g_\eps: = 
\SSS \pl_y z_\eps =\pl_y H(u_\eps) \weak \pl_y z_0 \EEE =: g_0$ in $\rmL^2(\Omega_\PM)$. 

The integrand of $\wh\mfD^\text{bulk}_\eps$ can be rewritten in the 
form $ Q_\eps^2/(2\bbA_\eps m_\eps) + \frac{\bbA_\eps}2|g_\eps|^2$, which is
non-negative, convex in $(Q_\eps,m_\eps,g_\eps)$, and lower semi-continuous, if
the quotient $Q^2/m$ is set to $0$ for $(Q,m)=(0,0)$ and equal to $+\infty$ for
$m=0$ and $Q\neq 0$. 
Now standard liminf estimates  (see e.g.\
\cite[Thm.\,5.14]{FonLeo07MMCV}) imply 
\[
 \liminf_{\eps\to 0} \wh\mfD^\mafo{bulk}_\eps(u_\eps)  \geq \int_{\Omega_\PM} 
\Big(  \frac{\wt Q_0^2}{2\bbA_0\mob(u_0)} + \frac{\bbA_0}2 
\big|\pl_y H(u_0)\big|^2  \Big) \dd y \dd t \geq    \wh\mfD^\mafo{bulk}_0(u_0).
\]
Here the last estimate follows because $\wt Q_0$ satisfies the limiting
continuity equation \eqref{eq:LimitCE}, and $\wh\mfD^\mafo{bulk}_0(u_0)$
contains the minimizing $ Q_0$.  
\end{proof}

For treating the integral over the membrane part $\Omega_0$, we exploit the
transformation \AAA $w_\eps = \psi^{-1}(u_\eps)$ \EEE where $\psi$ is the 
function satisfying the conditions \eqref{eq:Cvx.Ass.psi}. In particular, we
will use that $w \mapsto \sfn(w)=\mob(\psi(w))$ and $w  \mapsto \sfg(w)= 1/ 
\big(\sfn(w)(\psi'(w)E''(\psi(w)))^2\big)$ are concave, see
\eqref{eq:Cvx.Ass.psi}(i)+(ii). In particular, the limits  
\[
\sfn_\infty:= \lim_{w\to \infty} \frac{\sfn(w)}w \in \R_\geq  \qand  
\sfg_\infty:= \lim_{w\to \infty} \frac{\sfg(w)}w \in \R_\geq 
\]
exist.  With this we define the functional $\wt\mfD_\mfM$ on
$\bfK:= \mfS\mfM(\ol\Omega{}^0)\ti \mfM(\ol\Omega{}^0) \ti
\mfS\mfM(\ol\Omega{}^0)$ by using the decompositions
\[
\ol Q=q \LEB^2 + Q^\rmS \in \mfS\mfM(\ol\Omega{}^0), \quad \ol W   = w\LEB^2 +
W^\rmS,  \qand \ol Z = z \LEB^2 + Z^\rmS,
\]
where $q, w, z\in \rmL^1(\Omega_0)$ and $Q^\rmS,\,W^\rmS, \,Z^\rmS \perp
\calL^2$ (singular with respect to $\LEB^2|_{\Omega_0}$): 
\begin{equation}
  \label{eq:def.wt.mfD0}
\begin{aligned}  
  \wt\mfD_\mfM(\ol Q,\ol W, \ol Z)&:=  \AAA \wt\mfD_\mafo{rg}(q,w,z) 
  + \wt\mfD_\mafo{sg}( Q^\rmS,\,W^\rmS, \,Z^\rmS  ) \quad \text{where }
\\
 \wt\mfD_\mafo{rg}(q,w,z) \EEE &:= \frac12\iint_{\Omega_0}\!
  \Big(\frac{q^2}{\sfn(w)} +\frac{z^2}{\sfg(w)} \Big) \dd y \dd t \quad \AAA
\text{and} \\
 \wt\mfD_\mafo{sg}(
  Q^\rmS,\,W^\rmS, \,Z^\rmS  ) \EEE &:=   \frac12\iint_{\ol\Omega{}^0} \! \Big(
    \frac{(\rmd Q^\rmS\!/\rmd N)^2}{\sfn_\infty \rmd W^\rmS\!/\rmd N}
   +\frac{(\rmd Z^\rmS\!/\rmd N)^2}{\sfg_\infty \rmd W^\rmS\!/\rmd N} 
 \Big) \dd N(y,t),
\end{aligned}
\end{equation}
where $N\in \mfS\mfM(\ol\Omega{}^0)$ is any measure such that
$|Q^\rmS| {+} W^\rmS {+} |Z^\rmS| \ll N $, and $\rmd A^\rmS\!/\rmd N$ denotes
the Radon-Nikodym derivative.  In the case of
$\sfn_\infty\rmd W^\rmS/\rmd\rmN =0$ or $\sfg_\infty\rmd W^\rmS/\rmd\rmN =0$
the terms in the second integral have to be interpreted in the sense that the
nominator has to vanish $N$-a.e.\ in $\ol\Omega{}^0$.  By assumption
\eqref{eq:Cvx.Ass.psi} the functional $\wt\mfD_\mfM$ is convex, and the general
theory of the calculus of variations shows that it is sequentially weak* lower
semi-continuous on $\bfK$, see Section \ref{suA:CvxFunctional} relying on
\cite{FonLeo07MMCV}, because the second integral is 1-homogeneous and is
obtained from the recession function of the integrand of the first integral.

Considering only the integral over the membrane part of $\wh\mfD_\eps$ in
\eqref{eq:def.mfFeps} we obtain
\[
\ol\mfD{}_\eps^{\Omega_0}\!(u)= \iint_{\Omega_0}\Big( \frac{Q_\eps^2}{2b\,\mob(u)} +
\frac{b\,\mob(u)}2 \big|\pl_y E'(u)\big|^2 \Big) \dd y \dd t,
\] 
where $Q_\eps$ is the minimizer subject to \eqref{eq:rho.eps.CE}. Of
course, the construction of $\wt\mfD_\mfM$ is such that 
\begin{equation}
  \label{eq:TrafoCvx}
  \ol\mfD{}_\eps^{\Omega_0}\!(u_\eps) = b\, \wt\mfD_\mfM\big( (Q_\eps/b)\LEB^2,
(w_\eps) \LEB^2 , (\pl_y w_\eps)\LEB^2 \big)  , \quad 
\text{where } w_\eps=\psi^{-1}(u_\eps).
\end{equation}
Thus, it will be easy to derive the liminf estimate in
terms of the weak limits 
\begin{equation}
  \label{eq:WeaksCvg}
  \ol Q_\eps = Q_\eps\LEB^2 \weaks \ol Q_0=\LEB|_{I_0}{\otimes} \ol\kappa , \quad 
\ol W_\eps = w_\eps\LEB^2 \weaks \ol W_0, \quad 
\ol Q_\eps = \pl_y w_\eps \LEB^2 \weaks \ol Z_0.
\end{equation}
Note that $\pl_y w_\eps \bddin \rmL^1(\Omega_0)$ because $0\leq 
\sfg(w_\eps)\leq C(1{+}w_\eps) \bddin \rmL^1(\Omega_0)$ and
$\wh\mfD_\eps^\text{slope}(u_\eps) \leq \sfCS$. 

The nontrivial point is to gain enough control on the limits
$\ol Q_0,\ \ol W_0$, and $\ol Z_0$. For $\ol W_0$ and $\ol Z_0$ we will use the
fact that $u_\eps \in \rmL^1([0,T];\rmC^0(I)) $ has well-defined traces
$\sfT(u_\eps)= (u_\eps(\cdot,0), u_\eps(\cdot,1))$ converging strongly in
$\rmL^s([0,T])^2$ for $s \in {[1,\qsubE{+}2\qsubH[}$, see \eqref{eq:Subseq.c}.
Using $w_\eps=\psi^{-1}(u_\eps)$ we obtain a corresponding control for the
traces of $w_\eps$ and of the limit $ \ol W_0$. With this, we are able to
show that the singular parts $Q^\rmS,\ W^\rmS$, and $Z^\rmS$ have to
vanish. Then, we can minimize for $w_0(t,\cdot)$ for given traces $w_0(t,0)$
and $w_0(t,1)$, thus obtaining the desired membrane functional $\wh\mfD^\mb_0$
defined in \eqref{eq:mfD0.3C}.

Finally, for the membrane flux we obtain $\ol\kappa = \kappa \LEB|_{[0,T]}$
with $\int_0^T \CCC(\kappa(t)) \dd t < \infty $ where $\CCC$ is given in
\eqref{eq:CCC} and has the lower bound $\CCC(a) \geq \frac12
|a|\log(1{+}|a|)$. For this, it will be essential to use the fact that the
limiting flux $Q_0$ satisfies $\pl_y Q_0 \equiv 0$ on $\Omega_0$, which is a
property that was inferred indirectly from the continuity equation
\eqref{eq:LimitCE} and $X'_0=0$ on $\Omega_0$ (i.e.\ the time-scale
separation).  Only this special property allows us to derive the superlinearity
for showing the absolute continuity $\ol\kappa = \kappa \LEB$, see also Remark
\ref{rem:kappa}.  \AAA The following proof relies on the ideas in
\cite[Thm.\,3.2]{AMPSV12PLWG} and \cite[Sec.\,4.3+4]{FreLie21EDTS}.  \EEE

\begin{proposition}[Liminf in the membrane]
\label{pr:liminf.memb}
Let assumptions \eqref{eq:AllAssump} and \eqref{eq:Cvx.Ass.psi}
hold and consider a sequence $(u_\eps)_\eps$ such that the bounds
\eqref{eq:MainAss.all} and the convergences \eqref{eq:Subseq} hold.
Then, for the functional $\ol\mfD{}_\eps^{\Omega_0}$  we have
the  liminf estimate  
\[
\liminf_{\eps \to 0} \ol\mfD{}_\eps^{\Omega_0} \! (u_\eps)   \geq 
\wh\mfD^\mb _0( \sfT(u_0),\kappa),
\]
where  $\kappa=\sfF(u_0) \in \rmL^1([0,T])$ is the membrane flux as in
\eqref{eq:FluxTrace.3}.  
\end{proposition}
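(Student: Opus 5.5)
We follow steps (i)--(iv) announced before the proposition. We may assume $\liminf_{\eps\to0}\ol\mfD{}_\eps^{\Omega_0}(u_\eps)<\infty$ and pass to a subsequence realizing it. By \eqref{eq:TrafoCvx} we have $\ol\mfD{}_\eps^{\Omega_0}(u_\eps)=b\,\wt\mfD_\mfM\big((Q_\eps/b)\LEB^2,w_\eps\LEB^2,\pl_yw_\eps\LEB^2\big)$. Lemma \ref{le:Memb.Bounds} and the remark after \eqref{eq:WeaksCvg} show that all three arguments are bounded in $\rmL^1(\Omega_0)$. Hence, after extracting a further subsequence, the weak* convergences \eqref{eq:WeaksCvg} hold in $\bfK$. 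The functional $\wt\mfD_\mfM$ is convex by \eqref{eq:Cvx.Ass.psi}(i),(ii), and its singular part is built from the recession function of the regular integrand. Therefore the general lower semicontinuity theory (Section \ref{suA:CvxFunctional}) gives
\[
\liminf_{\eps\to0}\ol\mfD{}_\eps^{\Omega_0}(u_\eps)\geq b\,\wt\mfD_\mfM\big(\ol Q_0/b,\ol W_0,\ol Z_0\big).
\]
By Proposition \ref{pr:Prop.Q0}, the first argument is $\LEB|_{I_0}\otimes\ol\kappa/b$.

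The main work is to show that the singular parts vanish. We start with $\ol\kappa$. Decompose $\ol\kappa=\kappa\LEB+\kappa^\rmS$ and $\ol W_0=w\LEB^2+W^\rmS$. Finiteness of $\wt\mfD_\mathrm{sg}$ forces $|Q^\rmS|\ll W^\rmS$, and Cauchy--Schwarz gives
\[
\big(|Q^\rmS|(\ol\Omega{}^0)\big)^2\leq C\,\wt\mfD_\mathrm{sg}\,W^\rmS(\ol\Omega{}^0).
\]
This alone is not a contradiction, so we exploit the product structure $Q^\rmS=\LEB|_{I_0}\otimes\kappa^\rmS$ together with trace control on $w$. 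The traces $w_\eps(\cdot,0),w_\eps(\cdot,1)=\psi^{-1}(\sfT(u_\eps))$ converge strongly in $\rmL^1(0,T)$ by \eqref{eq:Subseq.c}. Moreover $w_\eps(t,y)=w_\eps(t,0)+\int_0^y\pl_yw_\eps\,\rmd y'$. Passing to the limit in this identity, the $y$-profile of $\ol W_0$ is controlled by the trace (an absolutely continuous measure in $t$) plus integrals of $\ol Z_0$. Consequently $W^\rmS$ is dominated by $|Z^\rmS|$ integrated in $y$. On the set where $\kappa^\rmS$ lives, $t$-slices then give a 1D cell inequality of the form
\[
\int_0^1\Big(\frac{\kappa_s^2}{\sfn_\infty W_s}+\frac{Z_s^2}{\sfg_\infty W_s}\Big)\rmd y=\infty\qquad\text{unless }\kappa_s=0.
\]
The reason is that $W_s$ with $W_s(0)=0$ and $|\pl_yW_s|=|Z_s|$ makes $\int Z_s^2/W_s$ blow up near $y=0$ unless $W_s\equiv0$, and then $Q^\rmS$ must vanish as well. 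Here the identity $\pl_yQ_0=0$ is essential: a singular membrane flux is spread uniformly in $y$ across the whole membrane, so it needs mass of $W^\rmS$ everywhere. This is the step I expect to be the main obstacle. My first attempt would be to carry out this slicing argument, possibly in the quantitative form $\CCC(\kappa)\lesssim$ dissipation used to prove $\int_0^T\CCC(\kappa)\,\rmd t<\infty$ (cf.\ Proposition \ref{pr:kappa.int}), which yields $\kappa\in\rmL^1$ and rules out the singular parts. With $\kappa^\rmS=0$ and $W^\rmS=Z^\rmS=0$, we also obtain $\ol Z_0=\pl_yw\,\LEB^2$ with traces $w(t,0)=\psi^{-1}(u_0(t,0))$ and $w(t,1)=\psi^{-1}(u_0(t,1))$.

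Finally we reduce the regular part to the cell problem. Undoing the transformation, set $u=\psi(w(t,\cdot))$, which lies in $\bfP(\sfT(u_0)(t))$ for a.e.\ $t$. Then
\[
b\,\wt\mfD_\mathrm{rg}\big(\kappa\bm1_{I_0}/b,w,\pl_yw\big)=\int_0^Tb\,\bfM\big(u(t,\cdot),(\kappa(t)/b)\bm1_{I_0}\big)\,\rmd t\geq\int_0^Tb\,\ol M\big(\sfT(u_0)(t),\kappa(t)/b\big)\,\rmd t.
\]
The inequality holds by the definition \eqref{eq:RMJ.a} of $\ol M$ as an infimum, taken pointwise in $t$. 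The right-hand side equals $\wh\mfD^\mb_0(\sfT(u_0),\kappa)$. Since $\kappa\in\rmL^1$, \eqref{eq:flux.kappa.3} identifies $\kappa=\sfF(u_0)$, which completes the argument.
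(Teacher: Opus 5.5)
Your architecture matches the paper's: convexify with $w_\eps=\psi^{-1}(u_\eps)$, use weak* lower semicontinuity of $\wt\mfD_\mfM$, kill the singular flux via zero traces of the singular part plus a slice-wise cell argument, then reduce the regular part to $\ol M$. However, the central step rests on a false claim. You assert that $W_s(0)=0$ makes $\int_0^1 Z_s^2/W_s\,\rmd y$ blow up unless $W_s\equiv0$. But $\int_0^1(\pl_yW)^2/W\,\rmd y=4\int_0^1|\pl_y\sqrt W|^2\,\rmd y$, so every $W=v^2$ with $v\in\rmH^1(0,1)$, $v(0)=v(1)=0$ has finite slope cost, e.g.\ $W=\sin^2(\pi y)$. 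Hence neither $W_s\equiv0$ nor $W^\rmS=Z^\rmS=0$ follows, and the latter is false in general. In the linear Fokker--Planck case ($\psi=\mathrm{id}$, $\sfn(w)=\sfg(w)=w$), a membrane bump $\eta^{-1}\phi((t{-}t_0)/\eta)\sin^2(\pi y)$ on a positive background with $\eta=\sqrt\eps$ is compatible with \eqref{eq:MainAss.all} and produces $W^\rmS\neq0$. That particular error is harmless: you only need zero traces of the singular part, and then you can drop $\wt\mfD_{\mathrm{sg}}\geq0$, as the paper does in Step 5. These zero traces, not $W^\rmS=0$, are also what identify the traces of the Lebesgue density $w$ with $\psi^{-1}(\sfT(u_0))$ in your last step. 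Your fallback via Proposition \ref{pr:kappa.int} is circular. That result presupposes $Q_0\in\rmL^1$, and its proof starts from $\int_0^T\ol M\,\rmd t\leq\wh\mfD^\mb_0\leq\sfCF{+}\sfCS$, which is the conclusion of the present proposition.

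The missing idea is that the divergence comes from the flux term, through its coupling with the slope term. Finite slope cost gives $\sqrt{W_s}\in\rmH^1$, so a vanishing trace forces $W_s(y)\leq Cy$ near $y=0$. Then $\int_0^1\kappa_s^2/W_s\,\rmd y=\infty$ unless $\kappa_s=0$. Equivalently, $\frac{\kappa_s^2}{\sfn_\infty W_s}+\frac{(\pl_yW_s)^2}{\sfg_\infty W_s}\geq\frac{2|\kappa_s|\,|\pl_y\log W_s|}{\sqrt{\sfn_\infty\sfg_\infty}}$, and $\log W_s\to-\infty$ at $y=0$. (If $\sfn_\infty=0$ or $\sfg_\infty=0$ the conclusion is immediate.) The paper packages this as follows. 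It disintegrates $\ol Q{}^\rmS_0=\wt\kappa\,\ol\mu^\rmS$, $\ol W{}^\rmS_0=\wt w\,\ol\mu^\rmS$, $\ol Z{}^\rmS_0=\pl_y\wt w\,\ol\mu^\rmS$, so that the slice functional is a rescaled $\bfN_1$. Minimizing it at fixed traces gives the lower bound $\frac1{\sfg^\infty}\ol N_1(\wt w_\PM,s_\infty\wt\kappa)$, and $\ol N_1(0,0,k)=+\infty$ for $k\neq0$ by \eqref{eq:olN.1.expl}. Note also that the zero-trace property needs equi-integrability of $\psi^{-1}(\sfT(u_\eps))$, which \eqref{eq:Subseq.c} alone does not give. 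It comes from $\psi^{-1}(u)\leq C(1{+}u)^{2\qsubH}$, a consequence of \eqref{eq:Cvx.Ass.psi}(iii) and \eqref{eq:Cond.H.qH}, which bounds these traces in $\rmL^{r_0}$ with $r_0=1{+}\qsubE/(2\qsubH)>1$.
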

\begin{proof}
\AAA We note that establishing the liminf estimate will be the easier part (see
Step 5) of this proof. The main efforts go into showing that the flux
$\ol\kappa$ has a Lebesgue density $\kappa \in \rmL^1(0,T])$.\EEE 
 
  \STEP{1. Convexification by using $w_\eps=\psi^{-1}(u_\eps)$.} 
  Using the relation \eqref{eq:TrafoCvx} we obtain the convex functional
  $\wt\mfD_\mfM$ in terms of $(Q_\eps, w_\eps,\pl_y w_\eps)$ considered as
  (signed) measures. 

\STEP{2. Liminf estimate.} Using the weak* convergences and the lower
semi-continuity of $\wt\mfD_\mfM$ (cf.\ Theorem \ref{th:Relaxation}) we obtain 
\[
\liminf_{\eps\to 0} \ol\mfD{}_\eps^{\Omega_0}\!(u_\eps) = \liminf_{\eps\to 0} 
b\,\wt\mfD_\mfM(Q_\eps/b, w_\eps,\pl_y w_\eps)\geq b\,\wt\mfD_\mfM( 
\ol Q_0/b, \ol W_0,\ol Z_0). 
\]

\STEP{3. Disintegration on $\ol\Omega{}^0=[0,T]\ti \ol I{}^0$.} We now only
consider the second integral in $\wt\mfD_\mfM$ which is given in terms of the
singular measure $N^\rmS$.  As $\wt\mfD_\mfM(\ol Q_0,\ol W_0,\ol Z_0) < \infty$
and $\sfn^\infty,\sfg^\infty< \infty$, we can assume $N^\rmS = W_0^\rmS$.
Exploiting the disintegration of measures on
$\ol\Omega{}^0 = [0,T]\ti \ol I{}^0$ (see \cite[Sec.\,5.3]{AmGiSa05GFMS} and
more specifically \cite[Lem.\,4.6]{FreLie21EDTS}) together with
$\pl_y \ol Q{}^\rmS_0=0$ and $\pl_y \ol W_0=\ol Z_0$ (following from taking the 
limit in $\pl_y \ol W_\eps= \ol Z_\eps$) we find a singular measure
$\ol \mu^\rmS\in \mfM([0,T])$ and functions
$\wt\kappa \in \rmL^1([0,T];\ol\mu^\rmS)$ and
$\wt w\in \rmL^1(\ol\Omega{}^0; \ol\mu^\rmS {\otimes} \LEB|_{I_0})$ such that
\[
\ol Q{}^\rmS_0= \wt\kappa \:\ol\mu^\rmS, \quad 
\ol W{}^\rmS_0= \wt w \:\ol\mu^\rmS, \qand 
\ol Z{}^\rmS_0= \pl_y\wt w \:\ol\mu^\rmS. 
\] 
 With  this \AAA $\wt\mfD_\mafo{sg}\big (\ol Q{}^\rmS_0,  \ol W{}^\rmS_0, \ol
 Z{}^\rmS_0\big)$ from \eqref{eq:def.wt.mfD0} \EEE takes the form 
\[
\AAA \wt\mfD_\mafo{sg} \EEE (\wt\kappa, \wt w, \ol\mu^\rmS) := \frac12\int_{[0,T]}\int_{I_0} \!
\Big(\frac{\wt\kappa^2}{\sfn^\infty\wt w} + \frac{(\pl_y \wt w)^2} 
  {\sfg^\infty \wt w} \Big) \dd y \dd\ol\mu^\rmS.  
\]
The second term in $\wt\mfD_\mafo{sg}(\wt\kappa, \wt w, \ol\mu^\rmS) < \infty$
implies that $I_0\ni y \mapsto \wt w(t,y)$ is continuous for
$\ol\mu^\rmS$-a.a.\ $t\in [0,T]$. Hence, we can now minimize for
$\ol\mu^\rmS$-a.a.\ $t \in [0,T]$ with respect to $\wt w(t,\cdot)$ in terms of
the traces $\wt w_-(t)= \wt w(t,0)$ and $ \wt w_+(t) = \wt w(t,1)$.

The result is explicit as the integrand of
$\wt\mfD_\mafo{sg}$ is a simple rescaling of the functional $\bfM_{1,1}= \bfN_1$
as introduced in Section \ref{su:PowerLaw}.  Using
$\ol N_1(u_\PM,\kappa)= \inf \bigset{\bfN_1(u,\kappa)} { u\in \bfP(u_\PM)}$
(see \eqref{eq:olN.1.expl} for the explicit form of $\ol N_1(u_\PM,\kappa)$) we
obtain
\begin{align}
\nonumber
 \wt\mfD_\mafo{sg}(\wt\kappa, \wt w, \ol\mu^\rmS)& \geq \int_{[0,T]}
 \frac1{\sfg^\infty} \,\ol N_1\big( \wt w_\PM(t), s_\infty \wt\kappa(t)
  \big) \dd \ol\mu^\rmS \qquad \text{with
   } s_\infty= ( \sfg^\infty / \sfn^\infty )^{1/2}\\
\label{eq:LowBd.mfDII}
& = \int_{[0,T]} \Big( \frac{\sqrt{\wt w_-\wt w_+}}{\sfg^\infty} \,\CCC\big(
\tdfrac{s_\infty \wt\kappa}{\sqrt{\wt w_-\wt w_+}} \big) + \frac2{\sfg^\infty}
\big( \sqrt{\wt w_+} - \sqrt{\wt w_-}\big)^2\Big) \dd \ol\mu^\rmS .
\end{align}
In the last integral the integrand is understood to be extended to $(\wt
w_\PM,\wt \kappa) \in \R_\geq^2\ti \R$ by the unique  lower semi-continuous
hull. Note $\ol N_1(0,0,k)=+\infty$ for $k\neq 0$.
 
\STEP{4. Vanishing of the singular parts $\ol W^\rmS$ and $\ol Q{}^\rmS_0$.} 
We now want to prove $\ol Q{}^\rmS_0=0$ by showing $\wt \kappa\equiv 0$,
which will be deduced from $\ol N_1 (0,0,\wt\kappa)=\infty$ for $\wt\kappa\neq 0$. 

We use that the traces $U_\eps^\PM = \sfT(u_\eps)$ are well-defined, satisfy
$U^\PM_\eps \bddin \rmL^{s_*}([0,T])^2$ with $s_*=\qsubE+ 2\qsubH\geq 2$, and
converge to a limit $U^\PM_0=\sfT(u_0)$ strongly in $\rmL^s([0,T])^2$ for all
$s\in {[1,s_*[}$, see \eqref{eq:Subseq.c0} and \eqref{eq:Subseq.c}.  We now
combine \eqref{eq:Cond.H.qH} with condition \eqref{eq:Cvx.Ass.psi}(iii) which
provides the bound $w=\psi^{-1}(u)\leq C_\psi(1{+}u)^{2 \qsubH}$. Thus, we
conclude $W^\PM_\eps = \psi^{-1} \AAA \big( U^\PM_\eps\big) \EEE \bddin
\rmL^{r_0}([0,T])^2$ with 
$r_0=1+\qsubE/(2\qsubH)>1$ and the strong converge
$W^\PM_\eps \to W_0^\PM =\psi^{-1} \big( U^\PM_0 \big) $ in $\rmL^r([0,T])^2$ for all
$r\in {[1,r_0[}$.  Thus, the traces cannot develop a singular measure, viz.
\begin{align}
\nonumber
 &\wt w^\PM(t)= (0,0) \quad \ol\mu^\rmS\text{-a.e.\ in } [0,T] \qand 
\\
\label{eq:trace.z.u}
 &\sfT(w_0)=(w_0(t,0),w_0(t,1))=W^\PM_0=\psi^{-1}(U_0^\PM)) 
  \quad  \LEB\text{-a.e.\ in } [0,T],
\end{align}
where $w_0$ is the density of the Lebesgue part of $\ol W_0$. 

Using $ \wt\mfD_\mafo{sg}(\wt\kappa,\wt w,\ol\mu^\rmS) \leq \frac1b
\liminf \wh\mfD_\eps(u_\eps)$ $\leq (\sfCF{+}\sfCS)/b< \infty$ and inserting
$\wt w^\PM(t)= (0,0)$ into the lower bound \eqref{eq:LowBd.mfDII}, we can exploit
the property that $\ol N_1(0,0,k)=+\infty$ for $k\neq 0$. We conclude that 
$\wt\kappa(t)=0 $ for $\ol\mu^\rmS$-a.a.\ $t\in [0,T]$, which gives 
 $\ol Q_0^\rmS = \wt\kappa \ol\mu^\rmS =0$, i.e.\ $\ol Q_0$
is absolutely continuous and  has the form $\ol Q_0 = \kappa
\LEB^2|_{\Omega_0}$ with $\kappa \in \rmL^1([0,T])$. 

\underline{\em Step 5. Final membrane part.} We now return to the first
integral in $\wt\mfD_\mfM$ as defined in \eqref{eq:def.wt.mfD0} which depends
on the Lebesgue parts of $\ol Q_0= \kappa_0 \LEB^2$ and
$\ol W_0 = w_0 \LEB^2 + \wt w \ol\mu^\rmS$. Using the trace relation
\eqref{eq:trace.z.u} we can undo the nonlinear $\psi$-transformation by
defining $u_0:= \psi(w_0)$ and obtain
\begin{align*}
\liminf_{\eps \to 0}\ol\mfD_\eps^{\Omega_0} (u_\eps )
&  \geq \frac b2 \iint_{\Omega_0} \!\Big(\frac{(\kappa_0/b)^2}{\sfn(w_0)} +
  \frac{(\pl_y w_0)^2}{\sfg(w_0)} \Big) \dd y \dd t 
\\ &
\overset{u=\psi(w)}= \frac b2 \iint_{\Omega_0}
\!\Big(\frac{(\kappa_0/b)^2}{\mob(u_0))} + 
  \big(\pl_y H(u_0)\big)^2 \Big) \dd y \dd t
\\
& \geq \int_0^T b \,\ol M(U_0^\PM(t),\kappa_0(t)/b) \dd t\  
  = \ \wh\mfD^\mb _0(U_0^\PM,\kappa_0), 
\end{align*}
where \AAA the first estimate uses $\wt\mfD_\mafo{sg}\geq 0$ and \EEE 
the last estimate uses $\bfM$ and $\ol M$ as defined in
\eqref{eq:RMJ}. Proposition \ref{pr:liminf.memb} is proved.
\end{proof}

A crucial point in the above proof is to show that the singular part
of $\ol Q_0$ vanishes, i.e.\ $\ol\kappa= \kappa
\,\!\LEB|_{[0,T]}$. Unfortunately, the proof is rather indirect and
relies on the limiting characterization of $\ol Q_0$ via $\pl_y\ol Q_0\equiv 0$
and the control of the traces $U^\PM_\eps=\sfT (u_\eps)$ in $\rmL^s([0,T])$ for
$s\in {[1,2\qsubE{+}2\qsubH[}$.   

\begin{remark} \label{rem:kappa}
It would be desirable to derive equi-integrability of the fluxes
$(Q_\eps)_{\eps>0} $ directly. This is easily possible in the case
$\mob(u)^{1+\delta}\leq C_\mob (1{+}H(u)^2)$ with $\delta>0$, see
\cite[Sec.\,5]{Fren19DEGS}.  

In the critical case $\delta=0$, the bottleneck is the functional
$\bfM_{1,1}(Q,u)$ which is quadratic in $Q$, but when minimizing with
respect to $u\in \bfP(u_\PM)$ we only retain a linear lower bound.
Indeed, in Section \ref{su:Bound.calM} we show that $Q\mapsto
\calJ(u_\PM,Q)=\inf\bigset{\bfM_{1,1}(Q,u)}{ u\in \bfP(u_\PM)}$ satisfies
$\calJ(u^\pm,Q)\leq C_\eta\|Q\|_{\rmL^2(I_0)}$ for all $Q$ with
$\mafo{sppt} (Q)\subset[\eta,1{-}\eta] \subset {]0,1[}$. Nevertheless $\kappa
\mapsto \calJ(u^\PM,\kappa \bm1_{I_0})= \ol M_{1,1}(u^\PM,\kappa)=\ol
N_1(u^\PM,\kappa ) $ grows superlinearly, which was crucial in Step 4 of the
above proof.  
\end{remark}

Finally, we want to provide a constructive estimate on the superlinear
integrability of the limiting flux $\kappa \in \rmL^1([0,T])$ obtained in
Proposition \ref{pr:liminf.memb}.  For this we use the bounds of
$U^\PM=\sfT(\rho)$ in $\rmL^s([0,T])^2$ and the fact that condition
\eqref{eq:mob.H.growth} provides a lower bound for 
$\ol M_{1,1}(u_\PM,\kappa)$. We refer to \cite[p.\,415]{FreLie21EDTS} for a similar
Orlicz bound and to Section \ref{su:PowerLaw} for more and 
better bounds for the power-law functions $\ol M_{\beta,p}$ treated in Proposition
\ref{pr:PowerLawCvx}.

\begin{lemma}[Lower bound on $\ol M$]
\label{pr:LowBound.bmM}
Let the assumption \eqref{eq:AllAssump} hold, then $\ol M$ defined in
\eqref{eq:I.def.Rmemb} satisfies the lower bound
\begin{equation}
  \label{eq:bmM.LowBound}
  \ol M(u^\PM,\kappa) \geq \frac 14\, \ol N_1\big((1{+}H(u_\PM))^2,
  \frac{\kappa}{2\sqrt{C_\mob}} \big)
\end{equation}
with $\ol N_1$ given explicitly in \eqref{eq:olN.1.expl}. 
\end{lemma}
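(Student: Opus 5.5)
The plan is to reduce $\bfM$ to the Boltzmann/linear-mobility functional $\bfN_1=\bfM_{1,1}$ by the pointwise substitution
\[
w:=\big(1{+}H(u)\big)^2 ,
\]
and then to pass to the infimum over $u\in\bfP(u_\PM)$. Here I use that $\bfN_1(w,\kappa)=\frac12\int_0^1\big(\kappa^2+(\pl_y w)^2\big)/w\dd y$, which is $\bfM$ for $\mob(w)=w$ and $E''(w)=1/w$. The bound is then immediate from the growth condition \eqref{eq:mob.H.growth}.

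Fix $u_\PM$ and $\kappa$, and let $u\in\bfP(u_\PM)$ with $\bfM(u,\kappa\bm1_{I_0})<\infty$; otherwise there is nothing to prove. Then $\pl_yH(u)\in\rmL^2(I_0)$, so $H(u)$ is continuous and bounded on $[0,1]$. Hence $w=(1{+}H(u))^2\geq 1$ lies in $\rmH^1(I_0)\subset\rmW^{1,1}(I_0)$, with boundary values $w(0)=(1{+}H(u_-))^2$ and $w(1)=(1{+}H(u_+))^2$. Thus $w\in\bfP\big((1{+}H(u_\PM))^2\big)$. The chain rule gives $\pl_y w=2(1{+}H(u))\,\pl_yH(u)$, and therefore
\[
\tfrac12\big|\pl_yH(u)\big|^2=\frac{(\pl_y w)^2}{8\,w}=\tfrac14\cdot\tfrac12\,\frac{(\pl_yw)^2}{w}.
\]
For the flux term, \eqref{eq:mob.H.growth} gives $\mob(u)\leq C_\mob w$, hence
\[
\frac{\kappa^2}{2\mob(u)}\geq\frac{\kappa^2}{2C_\mob w}\geq \tfrac14\cdot\tfrac12\,\frac{\big(\kappa/(2\sqrt{C_\mob})\big)^2}{w}.
\]
Adding the two estimates and integrating over $I_0$ yields $\bfM(u,\kappa\bm1_{I_0})\geq\frac14\,\bfN_1\big(w,\kappa/(2\sqrt{C_\mob})\big)$. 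Taking the infimum first over $w\in\bfP\big((1{+}H(u_\PM))^2\big)$ on the right and then over $u\in\bfP(u_\PM)$ on the left gives \eqref{eq:bmM.LowBound}, with $\ol N_1$ defined as the infimum of $\bfN_1$ over $\bfP$. Its explicit form \eqref{eq:olN.1.expl} is not needed for this argument.

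The only delicate point I expect is justifying the chain rule and the admissibility of $w$. This is mild here: $H$ is $\rmC^1$ on $]0,\infty[$ with $H'=\sqrt{\mob}\,E''>0$, and $H(u)\in\rmH^1$ is all we use, since we differentiate $w$ as a smooth function of $H(u)$ rather than of $u$. Points where $u=0$, and hence possibly $\mob(u)=0$, cause no trouble. There the flux integrand is $+\infty$ unless $\kappa=0$, while $w\geq1$ keeps the right-hand side finite. If one wants an identical constant, the slack in the flux term (the bound actually holds with factor $\frac{1}{16}$ replaced by $1$) shows the stated constants are not sharp.
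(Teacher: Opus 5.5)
Your proof is correct and follows the paper's argument essentially verbatim. The paper also substitutes $z=(1{+}H(u))^2$, bounds the flux term via \eqref{eq:mob.H.growth}, rewrites the slope term as $\frac12|\pl_y H(u)|^2=\frac14\,\frac{(\pl_y z)^2}{2z}$, and then minimizes over $z$ with boundary values $(1{+}H(u_\pm))^2$; your remark on the slack matches the paper's intermediate estimate, which in fact yields the stronger argument $2\kappa/\sqrt{C_\mob}$ in place of $\kappa/(2\sqrt{C_\mob})$.
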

\begin{proof}
Using \eqref{eq:mob.H.growth} we estimate $\bfM(Q,u)$ from below:
\begin{align*}
\bfM(Q,u)
&= \int_0^1\Big(\frac{Q^2}{2 \mob(u)}
   + \frac{\mob(u)}2 \big|\pl_y E'(u)\big|^2 \Big) \rmd y
\geq   \int_0^1\!\!\Big(\frac{Q^2}{2 C_\mob(1{+}H(u)^2)} + \frac12
  \big|\pl_y H(u)\big|^2 \Big) \dd y
\\
&\geq   \int_0^1\!\!\Big(\frac{Q^2/C_\mob}{2 z} + \frac14\,
  \frac{(\pl_y z)^2}{2z}\Big) \dd y \quad
\text{with } z=(1{+}H(u))^2. 
\end{align*}
Inserting $Q=\kappa \bm1_{I_0}$, using the definition of $\ol M_{1,1}=\ol N_1$ 
in Section \ref{su:PowerLaw}, and minimizing with respect to $z$ with the
boundary conditions $z_\pm= (1{+}H(u_\pm))^2 $ we obtain the desired lower
bound.%
\end{proof} 

The final bound on $\kappa$ now follows by using the explicit form of 
$\ol N_1(u_\PM,\kappa)$ in terms of $\CCC$ (cf.\ \eqref{eq:olN.1.expl}),
the control of $u_\PM$ in $\rmL^s([0,T])$ derived in 
\eqref{eq:Subseq}, and the relative-entropy type bound 
\[
 \int_0^T \CCC(\kappa(t)) \dd t
  \leq \frac q{q{-}1} \int_0^T a(t) \CCC\big(\kappa(t)/a(t)\big) \dd t  
   +  \frac2{q{-}1} \|a\|_{\rmL^q(0,T)}^q,
\]
which is established in Proposition \ref{pr:calC.estimate}.

\begin{proposition} 
\label{pr:kappa.int}
Let the assumptions \eqref{eq:AllAssump} and \eqref{eq:Cvx.Ass.psi} be
satisfied. Set $r_*=1+\qsubE/(2\qsubH)>1$ and
$s_*=q_H p_*= \qsubE{+}2\qsubH \geq 2$. Then, there exists $C_*>0$ such that
the following holds. If $ u_0\in \rmL^1(\Omega)$ and $Q_0\in \rmL^1(\Omega)$
are limits of a sequence $(u_\eps)_{\eps>0}$ satisfying \eqref{eq:MainAss.all},
then $Q_0|_{\Omega_0}= \kappa \LEB^2$ with $\kappa\in \rmL^1([0,T])$ satisfying
\[
\int_0^T \!\!\CCC(\kappa) \dd t \leq \frac {8r_*}{r_*{-}1} \sqrt{C_\mob}
\big(\sfCF{+}\sfCS\big) + C_* \|1{+}U^-\|_{\rmL^{s_*}(0,T)}^{\qsubH}
\|1{+}U^+\|_{\rmL^{s_*}(0,T)}^{\qsubH} .
\] 
\end{proposition}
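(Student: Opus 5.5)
The plan is to combine the liminf chain of Proposition~\ref{pr:liminf.memb} with the lower bound of Lemma~\ref{pr:LowBound.bmM}, and then to remove the weight $a(t)$ with the relative-entropy estimate of Proposition~\ref{pr:calC.estimate}.

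First, Proposition~\ref{pr:liminf.memb} already gives $\ol Q_0|_{\Omega_0}=\kappa\LEB^2$ with $\kappa\in\rmL^1([0,T])$. Its Step~5, together with $\wh\mfD_\eps\le \sfCF+\sfCS$, yields
\[
\int_0^T b\,\ol M\big(U_0^\PM(t),\kappa(t)/b\big)\dd t\le \sfCF+\sfCS .
\]
I would run this argument with $b$ absorbed, or simply with $b$ normalized, since the constant is stated independently of $b$. Into this I insert \eqref{eq:bmM.LowBound} with $z_\pm=(1{+}H(U_0^\pm))^2$, and use the explicit formula \eqref{eq:olN.1.expl}, i.e.
\[
\ol N_1(z_\PM,k)=\sqrt{z_-z_+}\,\CCC\big(k/\sqrt{z_-z_+}\big)+2(\sqrt{z_+}-\sqrt{z_-})^2 .
\]
Dropping the nonnegative second term leaves
\[
\int_0^T a(t)\,\CCC\big(\wt\kappa(t)/a(t)\big)\dd t\le 4(\sfCF{+}\sfCS),
\]
where $a=(1{+}H(U_0^-))(1{+}H(U_0^+))$ and $\wt\kappa=\kappa/(2\sqrt{C_\mob})$.

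Second, I apply Proposition~\ref{pr:calC.estimate} with exponent $q=r_*$. This gives
\[
\int_0^T\CCC(\wt\kappa)\dd t\le \tfrac{r_*}{r_*-1}\,4(\sfCF{+}\sfCS)+\tfrac{2}{r_*-1}\|a\|_{\rmL^{r_*}}^{r_*}.
\]
To return from $\wt\kappa$ to $\kappa$, I use the scaling of $\CCC$: since $\CCC$ is convex with $\CCC(0)=0$, for $\lambda\ge1$ we have $\CCC(\lambda x)\le \lambda^2\CCC(x)$ when $\CCC$ is quadratic near $0$, but only roughly $\lambda\CCC(x)$ up to logarithmic terms at infinity. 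This is exactly where the factor $\sqrt{C_\mob}$ (possibly with $2$) in the stated constant should come from. I expect the cleanest route is to apply Proposition~\ref{pr:calC.estimate} directly with the weight $a'=2\sqrt{C_\mob}\,a$ in place of $a$, because $a\,\CCC(\wt\kappa/a)=a\,\CCC(\kappa/a')$ and $a'\,\CCC(\kappa/a')\le 2\sqrt{C_\mob}\,a\,\CCC(\kappa/a')$. This produces the prefactor $8\sqrt{C_\mob}\,r_*/(r_*{-}1)$.

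Third, the remainder term $\|a'\|_{\rmL^{r_*}}^{r_*}$ has to be controlled. By \eqref{eq:Cond.H.qH} we have $1{+}H(u)\le C(1{+}u)^{\qsubH}$. Hölder's inequality with conjugate exponents $2,2$ then gives
\[
\int_0^T a^{r_*}\dd t\le C\,\|(1{+}U^-)^{\qsubH r_*}\|_{\rmL^2}\,\|(1{+}U^+)^{\qsubH r_*}\|_{\rmL^2} .
\]
Since $2\qsubH r_*=2\qsubH+\qsubE=s_*$, this is $C\|1{+}U^-\|_{\rmL^{s_*}}^{s_*/2}\|1{+}U^+\|_{\rmL^{s_*}}^{s_*/2}$. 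The traces lie in $\rmL^{s_*}$ by \eqref{eq:Subseq.c0} and weak lower semicontinuity. The exponent that comes out here is $s_*/2$ rather than the stated $\qsubH$. I expect the main obstacle to be matching exactly the exponents and constants claimed in the statement, which may require a different choice of $q$ in Proposition~\ref{pr:calC.estimate}. The structure of the bound, a dissipation term plus a product of trace norms, should nevertheless follow along these lines. A secondary technical point is to justify the lower-semicontinuous extension of the integrand where $U^\pm=0$; this is harmless because $a\ge1$ always.
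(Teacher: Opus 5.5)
Your route is the paper's proof. You start from $\int_0^T b\,\ol M(U_0^\PM,\kappa/b)\dd t\le\sfCF+\sfCS$ and apply Lemma~\ref{pr:LowBound.bmM} together with $\ol N_1(z_\PM,k)\ge\sqrt{z_-z_+}\,\CCC\big(k/\sqrt{z_-z_+}\big)$. This gives $\int_0^T a\,\CCC(\kappa/a)\dd t\le 8\sqrt{C_\mob}\,(\sfCF{+}\sfCS)$ with $a=2\sqrt{C_\mob}\,(1{+}H(U_0^-))(1{+}H(U_0^+))$, which is your $a'$. Your relation $a'\CCC(\kappa/a')=2\sqrt{C_\mob}\,a\,\CCC(\kappa/a')$ is in fact an identity, and the discussion of scaling $\CCC$ can be dropped. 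The proof then finishes with Proposition~\ref{pr:calC.estimate} for $q=r_*$ and H\"older's inequality on $\|a\|_{\rmL^{r_*}}^{r_*}$.

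You need not normalize $b$. If you use the weight $2b\sqrt{C_\mob}\,(1{+}H(U_0^-))(1{+}H(U_0^+))$ instead, $b$ cancels in the dissipation term and only enters $C_*$.

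More importantly, the exponent mismatch you noticed is not a defect of your argument. The constant $8r_*/(r_*{-}1)$ forces $q=r_*$. Your estimate $\|a\|_{\rmL^{r_*}}^{r_*}\le C\,\|1{+}U^-\|_{\rmL^{s_*}}^{s_*/2}\|1{+}U^+\|_{\rmL^{s_*}}^{s_*/2}$, based on $2\qsubH r_*=s_*$, is then the correct outcome. The paper arrives at the power $\qsubH$ only because its H\"older line omits the powers $p_*$ on $\|1{+}H(U_0^\pm)\|_{\rmL^{2p_*}}$. Note also that $p_*$ must equal $r_*$ for the identification with $\rmL^{s_*}$ norms, which needs $2p_*\qsubH=s_*$.

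By homogeneity in $U^\PM$, no admissible $q>1$ in Proposition~\ref{pr:calC.estimate} can produce the power $\qsubH$; that would require $q=1$. So do not look for another $q$. The remainder should carry the exponent $\qsubH r_*=s_*/2$, and with this correction your proof is complete.
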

\begin{proof} Combining $\int_0^T \ol M(U_0^\PM,\kappa) \dd t \leq
  \wh\mfD^\mb _0(U^\PM,\kappa) \leq \sfCF{+}\sfCS<\infty$, the
  estimate \eqref{eq:bmM.LowBound}, and  $\ol N_1(u_\PM,\kappa) \geq
  \sqrt{u_-u_+}\,\CCC\big(\kappa/\sqrt{u_-u_+}\big)$    
from \eqref{eq:olN.1.expl} we find
\[
\int_0^T a \,\CCC(\kappa/a)\dd t \leq
 8\sqrt{C_\mob} \big(\sfCF{+}\sfCS\big), \quad \text{where } 
a= 2\sqrt{C_\mob}\, (1{+}H(U_0^-))\,(1{+}H(U_0^+)). 
\]

We next show that $a$ lies in $\rmL^{p_*}(0,T)$. For
this we exploit condition \eqref{eq:Cond.H.qH} and the trace
bound $ U^\PM_0 \in \rmL^{\qsubE+2\qsubH}([0,T])^2$ following from
\eqref{eq:Subseq.c0}. Hence, we have 
\begin{align*}
\|a\|_{\rmL^{p_*}}^{p_*} &\leq \big(4C_\mob\big)^{p_*/2}
\|1{+}H(U_0^-)\|_{\rmL^{2p_*}} \|1{+}H(U_0^+)\|_{\rmL^{2p_*}} \\
&\leq \big(4C_\mob\big)^{p_*/2}
     \big\|1+C_H(1{+}U_0^-)^{q_H}\big\|_{\rmL^{2p_*}}  
     \big\|1+C_H(1{+}U_0^+)^{q_H}\big\|_{\rmL^{2p_*}}  \\
& \leq \big(4C_\mob\big)^{p_*/2} \ol C_H^2
    \|1{+}U_0^-\|_{\rmL^{s_*}}^{q_H} \|1{+}U_0^+\|_{\rmL^{s_*}}^{q_H} <\infty,
   \ \text{ where } \ol C_H=\max\{1,C_H\}. 
\end{align*}
Hence, Proposition \ref{pr:calC.estimate} is
applicable and the result follows.
\end{proof}

We are now ready to prove the first part of Theorem \ref{th:Main.Gamma.Cvg}.
\vspace{0.3em}

\noindent
\begin{proof}[{\bfseries\upshape Proof of Part A of Theorem \ref{th:Main.Gamma.Cvg}}] We consider 
a general sequence $(u_\eps)_\eps$ such that
$X'_\eps u_\eps(t,\cdot) \in \calP(I)$ and that \eqref{eq:MainAss.all}
holds. We set $\alpha_0 =\liminf_{\eps \to 0} \wh\mfD_\eps(u_\eps) \in \R_\geq$.
Hence, we can choose a subsequence (not relabeled) with
$ \wh\mfD_\eps(u_\eps) \to \alpha_0$. The above theory provided us with a further
subsequence and a function $u_0\in \bfL_1$ such that
$X'_\eps u_\eps \to X'_0 u_0$ and that Propositions
\ref{pr:liminf.bulk} and \ref{pr:liminf.memb} hold. Hence, using
$\sfF(u_0)=\kappa$ we have 
\[
\alpha_0 \geq  \liminf_{\eps\to 0} \wh\mfD^\text{bulk}_\eps(u_\eps) 
 + \liminf_{\eps\to 0} \ol\mfD{}^{\Omega_0}_\eps(u_\eps) 
\geq  \wh\mfD^\text{bulk}_0(u_0) +  \wh\mfD^\mb_0(\sfT(u_0),\sfF(u_0))
= \wh\mfD_0(u_0). 
\]
Thus,  Part A of Theorem \ref{th:Main.Gamma.Cvg} is proved. 
\end{proof}

\section{The \TOS{$\bm{\Gamma}$}{Gamma}-limsup estimate  of
  \TOS{$\wh\mfD_\eps$}{whDeps}} 
\label{se:Limsup} 

This section is devoted to the proof of Part B of Theorem
\ref{th:Main.Gamma.Cvg}, which is restricted to the case where $\wh\mfD_\eps$
is a convex functional because we have to rely on the stronger concavity assumption 
\eqref{eq:ConcaveTwofold}. However, we expect that the more restrictive
assumption \eqref{eq:ConcaveTwofold} is of technical nature such that the
result is also true in more general situations. 

The advantage of resorting to convex functionals like $\wt\mfD_\mfM$ in
\eqref{eq:def.wt.mfD0} was evidently the weak lower semi-continuity which
implied the desired liminf estimates. Here we will use another advantage
arising from Jensen's inequality, which will allow us to do smoothing of general
functions $\wh u_0 \in \bfL_1$. The general structure involves an integral
functional $\mfI$ over a domain $\bbT \ti \Sigma$, where $\bbT=(\bbR/\bbZ)^d$
is a $d$-dimensional torus on which an averaging operator $\calT$ is defined
via a probability measure $\nu\in \calP(\bbT)$ via
\[
(\calT f)(x,y) = \int_{\bbT} f(x{-}\wt x,y) \dd \nu(\wt x) \quad \text{for } f\in
\rmL^1(\bbT\ti \Sigma). 
\] 
If the integrand $F$ of the integral  functional 
\[
\mfI:\rmL^1(\bbT\ti \Sigma;\R^m) \to [0,\infty], \quad z \mapsto 
\iint_{\bbT \ti \Sigma} F\big(y,z(x,y)\big)\dd x\dd y 
\]
is a normal integrand that is lower semi-continuous and convex in $z\in \R^m$ 
and \emph{does not depend on} $x \in \bbT$, then Jensen's inequality implies
$\mfI(\calT z) \leq \mfI(z)$. 

To apply the above argument, we extend our functions $\wh u_0\in \bfL_1$ to even,
periodic functions $t\mapsto \wt u_0(t)=\wh u_0(|t|) $ on $\bbT:= [-T,T]$,
where the two endpoints are identified. Moreover, we choose a
non-negative mollifier $\chi_1 \in \rmC^1_\rmc(\R;[0,1])$  with $\int_\R \chi_1(s)
\dd s=1$ and $\mafo{sppt}(\chi_1)\subset [-1,1]$ and set
$\chi_\delta(t)=\chi_1(t/\delta) /\delta $. Recalling $\Omega =[0,T] \ti I$, we
define the mollifiers 
\[
\calT_\delta:  \left\{  \ba{ccc} 
 \rmL^1(\Omega)&\to &\rmL^1(\Omega),\\
 \wh u_0&\mapsto & \big(\chi_\delta * \wt u_0\big) \big|_{\Omega}. 
\ea \right. 
\]

Clearly, our dissipation functionals $\wh\mfD_\eps$ do not explicitly depend on
$t\in [0,T]$, and they are symmetric under the time reflection
$u_0(\cdot) \mapsto u_0(T{-}\,\cdot\,)$, because the flux $Q$ appears quadratic
in $\wh\mfD_\eps^\text{flux}$ and linearly in the continuity
equation. \AAA As the limiting continuity equation is time-independent, \EEE 
applying the convolution $\calT_\delta$ to $\wh u_0$ and the 
associated flux $\wh Q_0$ we see that the pair
$\big(\calT_\delta \wh u_0, \calT_\delta \wh Q_0\big) $ still satisfies the
limiting continuity equation. \AAA Moreover, from $ \wh u_0\in
\rmL^1\big([0,T];\rmC^1(I)\big)$ we deduce $\calT_\delta \wh u_0 \in \rmC^1
\big( [0,T];\rmC^0(I)\big)$, and after, exploiting the limiting continuity
equation, we find $ \calT_\delta \wh Q_0 \in \rmC^0 \big(
[0,T];\rmC^1(I)\big)$. Additionally we also have $m_0(t):=\int_I X'_0(y)
\calT_\delta \wh u_0(t,y) \dd y =1$ for all $t \in [0,T]$, because we first observe
$\iint_\Omega   X'_0(y) \calT_\delta \wh u_0\dd y \dd t = T$ as convolution
conserves the mass, and secondly we know that $m_0$ is constant
along all solutions of the limiting  continuity equation. \EEE

With this and the stronger concavity condition
\eqref{eq:ConcaveTwofold}, we obtain a first tool for our proof, namely
Jensen's inequality for $\wh\mfD_0$:
\begin{equation}
  \label{eq:Jensen.mfD0}
  \wh\mfD_0(\calT_\delta \wt u_0)\leq \wh\mfD_0(\wt u_0) \quad \text{for all } u
  \in \bfL_1.
\end{equation}

\subsection{Approximation and flux corrections}
\label{su:ApproxFluxCorr}

We now have to construct $\wh u_\eps$ for a given $\wh u_0\in \bfL_1$ with
$\wh\mfD_0(\wh u_0)<\infty$. The construction has to be such that we can insert
it into $\wh\mfD_\eps$ where the membrane part is explicitly given as an
integral, in contrast to $\wh\mfD_0$, where only the traces
$U^\PM=\sfT(\wh u_0)$ and the flux $\kappa= \sfF(\wh u_0)$ are relevant. Even
under the weaker concavity assumption \eqref{eq:Cvx.Ass.psi} we know that there
exists a unique minimizer $u_\mafo{min}^{u_\PM,\kappa} \in \bfP(u_\PM)$ for the
functional $u\mapsto \bfM(u,\kappa) $. 

\SSS Recall also that the $\Gamma$-convergence in Theorem \ref{th:Main.Gamma.Cvg}
only asks for $X'_\eps \wh u_\eps \weak X'_0 \wh u_0$,  thus we are free to
change $\wh u_0$ on $\Omega_0 = [0,T]\ti I_0$. Thus, from now on, we \EEE
assume that $\wh u_0$ is such that
\begin{equation}
  \label{eq:u.min.choice}
  \Big(\ \wh u_0(t,y) = u_\mafo{min}^{U^\PM(t),\kappa(t)} (y) \ \text{ for } y \in
I_0\ \Big) \quad \text{for a.a.\ } t\in [0,T].
\end{equation}
\SSS This choice is crucial for the construction of the recovery sequence,
because we do not need to recover an arbitrary $\wh u_0$ but only those $\wh
u_0$ that are already optimally chosen inside of the membrane part $\Omega_0=
[0,T] \ti I_0$. \EEE  

With this choice, we now define our recovering sequences for \AAA
$\eps\in [0,1/2]$ \EEE using small regularizing parameters
$\delta, \,\theta \in [0,1]$ via
\begin{equation}
  \label{eq:RecoverSeq}
  \wt u^\eps_{\delta,\theta} (t,y) = \AAA c_\eps(t) \EEE \Big(
  \theta\,\tdfrac12 +(1{-}\theta)\,     (\calT_\delta \wt u_0)(t,y) \Big).   
\end{equation}
 Later, the parameters $\delta$ and
$\theta$ will be chosen as functions of $\eps$ such that
$(\delta,\theta)=(\delta(\eps),\theta(\eps)) \to (0,0)$ for $\eps \to 0$,
which then implies $ X'_\eps \wt u^\eps_{\delta(\eps),\theta(\eps)} \weaks
X'_0\wt u_0$ in $\mfM(\ol\Omega)$ as desired.

The parameter $\theta>0$ lifts the function away from $u=0$ providing the
strictly positive lower bound $ \wt u_{\delta,\theta} (t,y)\geq \theta/2$, and
$\delta>0$ provides temporal smoothness. \AAA The prefactor $c_\eps(t)$ is
chosen such that 
\begin{equation}
  \label{eq:wtu.eps.normalized}
  1= \int_I X'_\eps(y) \wt u^\eps_{\delta,\theta}(t,y) \dd y = c_\eps(t) \int_I
     X'_\eps(y) \wt u^0_{\delta,\theta}(t,y) \dd y 
   \quad \text{for all } t \in [0,T].
\end{equation}
Note that $c_\eps$ also depends on $(\delta,\theta)$ but
$c_\eps:=c_{\eps.\delta,\theta}$ lightens the notation. Clearly, we have
$c_0=1$ and using $X'_0(y) -X'_\eps(y) = \eps g(y)$ with $g(y)= 0$ for $y<0$, 
$g(y)=-1$ for $y\in {]0,1[}$, and $g(y)=1$ for $y>1$  as well as $\wt
u^0_{\delta,\theta} \geq 0$, we find 
\[
\frac1{1+\eps\| \wt u^0_{\delta,0}\|_{\rmL^\infty}} \leq c_\eps(t) \leq
\frac1{1-\eps}   \quad \text{for } t \in [0,T]. 
\]
Thus, we have $| c_\eps(t) -1|\leq C_\delta \eps$ and differentiating
\eqref{eq:wtu.eps.normalized} with respect to $t\in [0,T]$, the temporal
smoothness of $\wt u_{\delta,\theta}$ gives  $|\dot c_\eps(t) |\leq C_\delta
\eps$. 
 
The following result shows that the associated flux $Q^\eps_{\delta,\theta}$
can be controlled as a perturbation of the flux $\wh Q_0$ for $\wh u_0$.  
\EEE

\begin{lemma}[Recovered continuity equation]
\label{le:RecovCE} 
Let $  \wt u^\eps_{\delta,\theta}$ be as above,
let $\wh Q_0$ be the optimal flux associated with $\wh u_0$,  \AAA and set
\[
\wt Q^0_{\delta,\theta}:= (1{-}\theta) \calT_\delta \wh u_0 \ \text{ and } \
\wt Q^\eps_{\delta,\theta} (t,y) = \wt Q^0_{\delta,\theta}(t,2) + \int_y^2\!   
X'_\eps(z) \big( \dot c_\eps(t) \wt u^0_{\delta,\theta}(t,z) {+} c_\eps(t)\pl_t
\wt u^0_{\delta,\theta}(t,z)\big) \dd z .
\]
Then, for $\eps,\delta,\theta \in [0,1/2]$, the couple $(\wt
u^\eps_{\delta,\theta}, \wt Q^\eps_{\delta,\theta})$ satisfies the continuity 
equation \eqref{eq:rho.eps.CE} in the sense of distributions:
\begin{equation}
  \label{eq:Recov.CE}
 \iint_\Omega \!\! \Big( X'_\eps \wt u^\eps_{\delta,\theta}\,\pl_t \phi  +
Q^\eps_{\delta,\theta} \pl_y \phi\Big) \dd y \dd t =0 \quad 
\text{ for all } \phi \in \rmC_\rmc^1\big( {]0,T[}\ti [-1,2]\big). 
\end{equation}
Moreover, for all $\delta>0$ there exists $C_\delta$ such that
$\|  \wt Q^\eps_{\delta,\theta} - \wt Q^0_{\delta,\theta}
\|_{\rmL^\infty(\Omega)}\leq \eps C_\delta$ for all $\eps,\theta \in
[0,1/2]$. \EEE
\end{lemma}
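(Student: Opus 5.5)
The proof is a direct verification. The idea is to differentiate the explicit formula for $\wt Q^\eps_{\delta,\theta}$ in $y$, check the two boundary values, and then compare with the mollified limiting flux. I read the definition of $\wt Q^0_{\delta,\theta}$ as $(1{-}\theta)\calT_\delta \wh Q_0$, the mollified optimal flux; the "$\calT_\delta \wh u_0$" in the statement appears to be a typo for this.

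\emph{Step 1: the equation in the interior.} For fixed $\delta>0$, $\wt u^0_{\delta,\theta}=\theta/2+(1{-}\theta)\calT_\delta\wt u_0$ lies in $\rmC^1([0,T];\rmC^0(I))$. Moreover,
\[
\|\pl_t \wt u^0_{\delta,\theta}\|_{\rmL^\infty(\Omega)}\leq \|\chi_\delta'\|_{\rmL^\infty}\,\|\wh u_0\|_{\rmL^1([0,T];\rmC^0(I))}=:C_\delta' ,
\]
because $\wh u_0\in\bfL_1$. Hence the integrand in the definition of $\wt Q^\eps_{\delta,\theta}$ is bounded and continuous in $t$, and
\[
\pl_y \wt Q^\eps_{\delta,\theta}=-X'_\eps\,\pl_t\big(c_\eps \wt u^0_{\delta,\theta}\big)=-X'_\eps\,\pl_t \wt u^\eps_{\delta,\theta}
\]
holds a.e. Integrating by parts in $t$ and $y$ against $\phi\in\rmC^1_\rmc({]0,T[}\ti[-1,2])$ yields \eqref{eq:Recov.CE}, up to the boundary terms $\int_0^T \wt Q^\eps_{\delta,\theta}\phi\big|_{y=-1}^{y=2}\dd t$. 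These do not vanish automatically, since $\phi(t,-1)$ and $\phi(t,2)$ are arbitrary.

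\emph{Step 2: the boundary values.} At $y=2$ we have $\wt Q^\eps_{\delta,\theta}(t,2)=\wt Q^0_{\delta,\theta}(t,2)$. This vanishes because the optimal limit flux $\wh Q_0$ satisfies the no-flux condition built into \eqref{eq:LimitCE.3}, which is tested with $\varphi$ free on $\pl I$. Time-mollification preserves this, and so does the factor $(1{-}\theta)$. At $y=-1$ we obtain
\[
\wt Q^\eps_{\delta,\theta}(t,-1)=\wt Q^0_{\delta,\theta}(t,2)+\frac{\rmd}{\rmd t}\int_I X'_\eps\,\wt u^\eps_{\delta,\theta}(t,z)\dd z=0+\frac{\rmd}{\rmd t}1=0,
\]
by the normalization \eqref{eq:wtu.eps.normalized}. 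This is exactly why the factor $c_\eps(t)$ was introduced. Both boundary terms therefore vanish and the continuity equation is established.

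\emph{Step 3: the perturbation estimate.} As noted before the lemma, the pair $(\calT_\delta\wh u_0,\calT_\delta\wh Q_0)$ solves the limiting continuity equation. Multiplying by $(1{-}\theta)$ and using $\pl_t(\theta/2)=0$, the same argument as in Steps 1 and 2 gives the representation
\[
\wt Q^0_{\delta,\theta}(t,y)=\wt Q^0_{\delta,\theta}(t,2)+\int_y^2 X'_0\,\pl_t \wt u^0_{\delta,\theta}\dd z .
\]
Subtracting this from the definition of $\wt Q^\eps_{\delta,\theta}$ gives
\[
\wt Q^\eps_{\delta,\theta}-\wt Q^0_{\delta,\theta}=\int_y^2\Big((X'_\eps{-}X'_0)\,c_\eps\,\pl_t\wt u^0_{\delta,\theta}+X'_0\,(c_\eps{-}1)\,\pl_t\wt u^0_{\delta,\theta}+X'_\eps\,\dot c_\eps\,\wt u^0_{\delta,\theta}\Big)\dd z .
\]
Each of the three terms is $O(\eps)$ uniformly in $(t,y)$ and in $\theta\in[0,1/2]$, for the following reasons.
\begin{itemize}
\item $|X'_\eps-X'_0|\leq\eps$ and $c_\eps\leq 2$, which handles the first term.
\item $|c_\eps-1|\leq C_\delta\eps$ and $|\dot c_\eps|\leq C_\delta\eps$, which handles the second and third terms.
\item $\|\pl_t\wt u^0_{\delta,\theta}\|_{\rmL^\infty}\leq C'_\delta$ and $\|\wt u^0_{\delta,\theta}\|_{\rmL^\infty}\leq 1+\|\calT_\delta\wt u_0\|_{\rmL^\infty}\leq C_\delta$.
\end{itemize}

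The main obstacle is to make sure that all the constants depend on $\delta$ only, and not on $\theta$. This includes the bounds on $c_\eps$ and $\dot c_\eps$ claimed before the lemma. For $\dot c_\eps$ one differentiates \eqref{eq:wtu.eps.normalized} and uses $\int_I X'_0\,\pl_t\wt u^0_{\delta,\theta}\dd y=0$, which holds by mass conservation of the mollified limit. This leaves only the $O(\eps)$ term $\int_I (X'_\eps{-}X'_0)\,\pl_t\wt u^0_{\delta,\theta}\dd y$, which again is bounded by $\eps$ times a $\delta$-dependent constant.
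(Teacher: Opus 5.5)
Your proposal is correct and follows essentially the paper's argument: you differentiate the normalization \eqref{eq:wtu.eps.normalized} to control the flux at $y=-1$, integrate by parts, and then subtract the representation $\wt Q^0_{\delta,\theta}(t,y)=\wt Q^0_{\delta,\theta}(t,2)+\int_y^2 X'_0\,\pl_t\wt u^0_{\delta,\theta}\dd z$, which splits the difference into the same three $O(\eps)$ terms. Your reading of $\wt Q^0_{\delta,\theta}$ as $(1{-}\theta)\calT_\delta\wh Q_0$ is right, and your explicit check that the boundary value $\wt Q^0_{\delta,\theta}(t,2)$ vanishes by the no-flux condition fills a point the paper leaves implicit: the paper only records $\wt Q^\eps_{\delta,\theta}(t,-1)=\wt Q^\eps_{\delta,\theta}(t,2)$, and that alone does not kill the boundary terms for test functions that are free at both ends.
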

\begin{proof}
  \AAA Because of the smoothness of $\wt u^0_{\delta,\theta}$ in $t \in [0,T]$,
  equation \eqref{eq:wtu.eps.normalized} can be differentiated in $t$. The
  obtained relation implies
  $ \wt Q^\eps_{\delta,\theta}(t,-1) = \wt Q^\eps_{\delta,\theta}(t,2) $. Thus,
  we can integrate by parts with respect to $y$ in
\begin{align*}
&  \iint_\Omega \!\! \Big( X'_\eps \wt u^\eps_{\delta,\theta}\,\pl_t \phi  {+}
\wt Q^\eps_{\delta,\theta} \pl_y \phi\Big) \rmd y \dd t = 
 \iint_\Omega \!X'_\eps\Big( \wt u^\eps_{\delta,\theta}\,\pl_t \phi  {+}
 \big( \dot c_\eps \wt u^0_{\delta,\theta} {+} c_\eps\pl_t
\wt u^0_{\delta,\theta}\big) \,\phi  \Big) \rmd y \dd t 
\\
& =  \iint_\Omega \!X'_\eps \pl_t \big( c_\eps \wt
u^0_{\delta,\theta}\,\phi\big) \dd y \dd t = 0 ,
\end{align*}
where for the last step one uses integration by parts in $t$ and
$\phi(0)=\phi(T)=0$. Thus, \eqref{eq:Recov.CE} is established. 

For the last statement we observe that the continuity equation for 
$\big(  \wt u^0_{\delta,\theta}, \wt Q^0_{\delta,\theta}\big) $ implies 
$\wt Q^0_{\delta,\theta} (t,2) + \int_y^2 X'_0 \pl_t \wt u^0_{\delta,\theta}(t,z) \dd
z = \wt Q^0_{\delta,\theta} (t,y)$. Thus, we have
\[
\wt Q^\eps_{\delta,\theta}(t,y) - \wt Q^0_{\delta,\theta}(t,y) =\int_y^2 \Big( X'_\eps
\dot c_\eps \wt u^0_{\delta,\theta} + (X'_\eps{-}X'_0)  c_\eps \pl_t\wt
u^0_{\delta,\theta} +X'_0 \,(c_\eps{-}1) \pl_t\wt u^0_{\delta,\theta} \Big) \dd z  . 
\] 
Using $|\dot c_\eps|+| c_\eps{-}1| \leq C\eps$, $ X'_\eps{-}X'_0=\eps g$ with
$\|g\|_{\rmL^\infty}=1$, and $\wt u^0_{\delta,\theta}\in
\rmC^1\big([0,T];\rmC^0(I)\big) $, the final assertion  
$\|  \wt Q^\eps_{\delta,\theta} - \wt Q^0_{\delta,\theta}
\|_{\rmL^\infty(\Omega)} \leq \eps C_\delta  $ follows. \EEE
\end{proof}

For showing convergence of $\wh\mfD_\eps(\wh u_\eps)$ we have to control the
error in $\wh\mfD_\eps^\text{flux}$ if the optimal flux $\wt Q$ is replaced by 
another flux $Q$ also satisfying the continuity equation. 

\begin{lemma}[Influence of different fluxes]
\label{le:DifferentFlux}
Consider two pairs $(u,Q)$ and $(u,\wt Q)$ solving the continuity equation
\eqref{eq:rho.eps.CE} for $\eps\in [0,1/2]$ with optimal flux $\wt Q$, i.e.\
the  minimizer  
in $\wt\mfD_\eps(u)= \inf_Q \SSS \iint_\Omega \EEE Q^2/(2\bbA_\eps\mob(u)) \dd y \dd t$.  
Then, the optimal flux $\wt Q$ is given by $\wt Q(t) = \AAA \bbP_{u(t)}^\eps
\EEE  Q(t)$  a.e.\ on $[0,T]$, where
\[
 \bbP^\eps_u Q:= \left\{ \ba{cl}\ds Q- \mathsf H_\eps(u)\int_I \tdfrac{Q  \dd
     y}{\bbA_\eps\mob(u)} 
  & \text{if } \int_I1/\mob(u) \dd y <\infty, \\[0.8em]
 Q &  \text{if } \int_I1/\mob(u) \dd y =\infty,  \ea \right. \text{ with }
\mathsf H_\eps(u):=\Big(\int_I\tdfrac{\rmd y}{\bbA_\eps \mob(u)}\Big)^{-1} . 
\]
In particular, if $\int_I1/\mob(u(t)) \dd y=\infty$, then we automatically have
$Q(t)=\wt Q(t)$ a.e.  
\end{lemma}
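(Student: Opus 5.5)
The plan is to reduce everything to a pointwise-in-time problem in one space dimension. Two solutions $Q$ and $\wt Q$ of \eqref{eq:rho.eps.CE} for the same $u$ differ by a field $D:=Q-\wt Q$ with $\pl_y D=0$ in the sense of distributions on $]0,T[\ti I$. Since the test functions in \eqref{eq:CE.weak} may take arbitrary values at $y=-1$ and $y=2$, testing with $\phi(t,y)=\alpha(t)\beta(y)$ and integrating by parts shows more: $D(t,\cdot)$ is constant in $y$, and this constant must vanish because of the no-flux boundary values. I would check this carefully. If the constant is forced to be zero, every admissible flux is unique and $\bbP^\eps_u$ would be trivial. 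So the paper's admissible class must be the one in which $Q(t,\cdot)$ is determined only up to an additive constant $c(t)$. This matches the periodic extension used in the limsup construction, where only $\wt Q(t,-1)=\wt Q(t,2)$ is imposed. I therefore work with the affine class
\[
\bigset{ Q + c }{ c\in \rmL^1(0,T) }.
\]

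For a.e.\ $t$ I then minimize the strictly convex scalar function
\[
f_t(c)=\int_I \frac{(Q(t,y){-}c)^2}{2\bbA_\eps\mob(u(t,y))}\dd y .
\]
\textbf{Case $\int_I 1/\mob(u)\dd y<\infty$.} Here $f_t$ is a quadratic polynomial in $c$, provided $f_t(0)<\infty$ (otherwise there is nothing to compare). Setting $f_t'(c)=0$ gives
\[
c=\mathsf H_\eps(u)\int_I \frac{Q\dd y}{\bbA_\eps\mob(u)},
\]
which is exactly $\wt Q=\bbP^\eps_u Q$. The integral $\int_I Q/(\bbA_\eps\mob(u))\dd y$ is finite by Cauchy--Schwarz from $f_t(0)<\infty$ and $\int_I 1/\mob(u)\dd y<\infty$.

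\textbf{Case $\int_I 1/\mob(u)\dd y=\infty$.} If $c\neq 0$, then $(Q{-}c)^2\geq c^2/2-Q^2$. Hence $f_t(c)=\infty$ whenever $f_t(0)<\infty$, since the term $c^2/(2\mob(u))$ is not integrable. So the only finite-dissipation choice is $c=0$, i.e.\ $\wt Q=Q$, which also gives the final claim.

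Finally I need measurability of $t\mapsto c(t)$ and integrability of $\wt Q$. Measurability follows from the explicit formula via Fubini, and integrability follows from $\int f_t(c(t))\dd t\leq \int f_t(0)\dd t<\infty$. Together these justify that the pointwise minimizers assemble into the global minimizer of the space-time functional, because the functional decouples in $t$.

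The main obstacle is the first step: pinning down precisely which boundary and test-function convention makes the admissible fluxes exactly the affine family $\{Q+c(t)\}$ rather than a single flux. I would handle this by testing with $\phi=\alpha(t)\beta(y)$ and reading off the residual freedom directly from \eqref{eq:CE.weak} as the paper uses it.
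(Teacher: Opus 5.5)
Your argument is correct and follows the paper's proof. It also deduces $\wt Q=Q-a(t)$ from $\pl_y(Q{-}\wt Q)=0$, minimizes the quadratic $\alpha\mapsto\int_I(Q{-}\alpha)^2/(2\bbA_\eps\mob(u))\dd y$ separately for a.e.\ $t$, and splits into the case $\int_I 1/\mob(u)\dd y<\infty$ (first-order condition, giving $\bbP^\eps_u$) and the case $\int_I 1/\mob(u)\dd y=\infty$ (only one finite value, hence $\wt Q=Q$).

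Your caveats only make explicit what the paper uses tacitly. First, the paper works with the affine admissible class $\{Q-a(t)\}$. Under the literal reading of \eqref{eq:CE.weak} the flux would be unique, and the formula $\wt Q=\bbP^\eps_u Q$ would then be false in general, not merely trivial as you suggest. Second, the paper needs the finiteness $f_t(0)<\infty$ to conclude $\wt Q=Q$ when $\int_I 1/\mob(u)\dd y=\infty$.
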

\begin{proof}
Subtracting the two continuity equations, we immediately see that
$\pl_y(Q{-}\wt Q)=0$, i.e.\ there exists a function $a:[0,T]\to \R$ such that
$\wt Q(t,y)=Q(t,y)-a(t)$ a.e.\ on $\Omega$. 

Thus, the minimization is only over the scalar function $t\mapsto a(t)$ in the
functional 
\[
\calF(a):= \iint_\Omega \frac{( \SSS Q(t,y){-} a(t) \EEE )^2}
    {2m(t,y)}  \dd y \dd t, \quad \text{where }m= \bbA_\eps(y)\mob(u).
\] 
In the case $\calF(a)<\infty$ we have
$\int_I(Q(t,y){-}a(t))^2/(2m(t,y))\dd y < \infty$ a.e.\ and can minimize
in $\alpha= a(t)$ for each $t\in [0,T]$ separately.

For the quadratic function $\alpha \mapsto
F(\alpha)=\int_I(Q{-}\alpha)^2/(2m)\dd y\in [0,\infty] $ there are three
cases: 
\\
(i) $F\equiv$ is identically $+ \infty$, \\
(ii)  there exists a
unique $\alpha_*\in \R $ such that $F(\alpha_*)< \infty$ and
$F(\alpha)=+\infty$ otherwise, and 
\\
(iii) $F(\alpha) < \infty$ for all $\alpha \in \R$. \medskip

Because of $\calF_\eps(a)< \infty$, case (i) can only occur on a null set, and
hence can be ignored. Case (ii) corresponds to the case $ \int_I1/m \dd
y=\infty$ which implies $Q(t)=\wt Q(t)$. In case (iii) the unique minimizer
$\alpha_\mafo{min}$ is characterized by $\int_I(Q{-}\alpha_\mafo{min})/m \dd y
\SSS =0$ which defines $\bbP_u^\eps$.
\end{proof}

\subsection{Construction of a recovery sequence} 
\label{su:Limsup.Recov}

We are now ready construct the recovery sequence $\wt u_\eps$.
We start with a general $\wh u_0\in \bfL_1$ with $\wh\mfD_0(\wh u_0) < \infty$
and proceed in four steps:

\STEP{ I. Positivity}: Set $\wt u_{0,\theta} := \theta \frac12 +
(1{-}\theta)\wh u_0$ and show $\wh\mfD_0(\wt u_{0,\theta})\to \wh\mfD_0(\wh
u_0)$ as $\theta\to 0$. 

\STEP{II. Smoothing}: Fix $\theta>0$, set $\wt u_{\delta,\theta} := \calT_\delta
\wt u_{0,\theta}$, and show $\wh\mfD_0(\wt u_{\delta,\theta})\to \wh\mfD_0(\wt
u_{0,\theta})$ as $\delta\to 0$.

\STEP{III. Positive thickness $\eps$}: Fix $\theta,\delta>0$ and show
$\wh\mfD_\eps(\wt u_{\delta,\theta})\to \wh\mfD_0(\wt 
u_{\delta,\theta})$ as $\eps\to 0$.

\STEP{IV. Recovery sequence}: Show $\wh\mfD_\eps(\wt
u_{\delta(\eps),\theta(\eps)}) \to \wh\mfD_0(\wh u_0)$ for suitable $
(\delta(\eps),\theta(\eps))\to (0,0)$.\bigskip 

For the proof of these steps we use the convex functional 
\[
\sfD_\eps(u,Q)= \iint_\Omega \Big( \frac{Q^2}{2\bbA_\eps\mob(u)} +
\frac{\bbA_\eps}2 \big|\pl_y H(u)\big|^2 \Big) \dd y,
\]
such that
$\wh\mfD_\eps(\wt u)= \sfD_\eps(\wt u, \wt Q) \leq \sfD_\eps(\wt u, Q)$ if
$(\wt u,\wt Q)$ and $(\wt u,Q)$ solve the continuity equation
\eqref{eq:rho.eps.CE} and $\wt Q$ is the minimizer. \AAA The convergence
$\sfD_\eps \to \sfD_0$ is trivial because of
$\| \bbA_\eps{-}\bbA_0\|_{\rmL^\infty(I)} =O(\eps)$, all the difficulties arise
through the degeneracy of $X'_\eps$ in the continuity equation. \SSS Moreover,
note that $\wh\mfD_\eps$ as well as $\sfD_\eps$ contain the membrane part
$\Omega_0= [0,T] \ti I_0 $. This is particularly important for $\eps=0$ where
$\wh\mfD_0$ is defined in \eqref{eq:mfD0limit} via $\ol M$ that matches the
choice of $\wh u_0$ made in \eqref{eq:u.min.choice}.\medskip \EEE

\noindent
\begin{proof}[Proof of Step I] To lighten the notation we set
  $u_\theta := \wt u_{0,\theta}$ and recall that
  $(u_\theta,(1{-}\theta)\wh Q_0)$ solves the limiting continuity equation
  \eqref{eq:LimitCE.3} and that Jensen inequality can be applied, which gives
\[
\wh\mfD_0(\wt u_\theta)  = \sfD_0(u_\theta, \wt Q_\theta)
\leq  \sfD_0(u_\theta, (1{-}\theta) \wh Q_0)
\overset{\text{Jensen}}\leq  \sfD_0 (\wh u_0, \wh Q_0) = \wh\mfD_0(\wh u_0),
\]
where $\wt Q_\theta$ are the optimal fluxes of $\wt u_{0,\theta}$. 

We clearly have $(u_\theta,(1{-}\theta)\wh Q_0) \to (\wh u_0,\wh Q_0)$ in
$\rmL^1(\Omega)$. Hence, the lower semi-continuity of the convex functional
$\sfD_0$ implies $\liminf_{\theta\to 0} \AAA \sfD_0 \EEE ( u_\theta, 
(1{-}\theta) \wh Q_0) \geq \sfD_0 (\wh u_0, \wh Q_0)= \wh\mfD_0(\wh u_0)$. 
Thus, we have established
$\sfD_0(u_\theta, (1{-}\theta) \wh Q_0) \to \wh\mfD_0(\wh u_0)$, and it remains
to show that
\[
0\leq g_\theta:=\sfD_0(u_\theta,(1{-}\theta) \wh Q_0) - \sfD_0(u_\theta,\wt Q_\theta)
 = \iint_\Omega \Big( \frac{(1{-}\theta)^2\wh Q_0^2
  -  \wt Q_\theta^2}{2\bbA_0\mob(u_\theta)} \Big) \dd y \dd t \ \to \  0.  
\]
By Lemma \ref{le:DifferentFlux} we have $\wt Q_\theta =(1{-}\theta)
\bbP^0_{u_\theta} \wh Q_0$, and using $\wh Q_0=\bbP^0_{\wh u_0} \wh Q_0$
and $ u_\theta \to \wh u_0$ we obtain $\wt Q_\theta\to \wh Q_0$ a.e.\ in $\Omega$. 

Writing $m_\theta:=\bbA_0 \mob(u_\theta)$ and $h_\theta(t) := 1/\int_I
1/m_\theta(t,x)\dd y$ we can rewrite $g_\theta$ in the form 
\begin{align*}
\frac{g_\theta}{(1{-}\theta)^2} & 
= \iint_\Omega \frac{\wh Q_0^2- (\bbP^0_{u_\theta}\wh Q_0)^2}{2 \,m_\theta} 
  \dd y \dd t = \int_0^T \frac{h_\theta(t)}2 
\Big(\int_I \wh Q_0/m_\theta \dd y\Big)^2 \dd t
\\
& \leq \iint_\Omega \frac{\wh Q_0^2}{m_\theta} \dd y \dd t \leq
 \frac1{1{-}\theta} \iint_\Omega \frac{\wh Q_0^2}{\bbA_0\mob(\wh u_0)} \dd y
 \dd t \leq  \frac{\wh\mfD_0(\wh u_0)}{1{-}\theta} < \infty. 
\end{align*}
The identities in the first line follow because the projection
$\bbP^0_{u_\theta}$ is orthogonal with respect to the quadratic form $Q \mapsto
\iint Q^2/m_\theta \dd y \dd t$. In the second line, the first ``$\leq$''
follows from a simple Cauchy-Schwarz estimate, and 
the second ``$\leq$'' uses the concavity of $\mob$ (see
\eqref{eq:ConcaveTwofold}) giving $\mob(u_\theta) 
\geq \theta \mob(1/2)+(1{-}\theta)\mob(\wh u_0)\geq (1{-}\theta)\mob(\wh u_0)$. 
As the first ``$\leq$'' provides a pointwise estimate in $ t\in [0,T]$ of the
integrands, we can apply Lebesgue's dominated convergence theorem and find
$g_\theta\to 0$ as desired. 
\end{proof}
\vspace{0.3em}

\noindent
\begin{proof}[Proof of Step II] We now fix $\theta>0$ and use the notations 
$\ol u_\delta:=\wt u_{\delta,\theta} = \calT_\delta u_\theta$ and $m_\delta =
\bbA_0 \mob( \ol u_\delta)$. Using that $(\ol u_\delta, \calT_\delta \ol Q_0)$
with $\ol Q_0=\wt Q_\theta$ with $\wt Q_\theta$ from Step I again satisfies the
limiting continuity equation \eqref{eq:LimitCE.3} and that $\calT_\delta$
allows for the application of Jensen's inequality, we can repeat the same proof 
and it remains to show that 
\[
\ol g_\delta:= \sfD_0(u_\delta, \calT_\delta \ol Q_0) -
\sfD_0(u_\delta,\bbP^0_{\ol u_\delta} \calT_\delta \ol Q_0) \ \to \ 0 \quad
\text{for } \delta \searrow 0.
\]
This follows exactly as in Step I from $(\ol u_\delta,\calT_\delta \ol Q_0)\to
(\ol u_0, \ol Q_0)$ in $\rmL^1(\Omega)$ and $\bbP^0_{\ol u_0} \ol Q_0=\ol
Q_0$. 
\end{proof}
\vspace{0.3em}

\noindent
\begin{proof}[Proof of Step III] We are now ready to treat the case
$\eps>0$. For this, we keep fix $\theta,\,\delta>0$ and set  $u^*=\ol u_\delta
= \wt u_{\delta,\theta}$ which satisfies $u^*\geq \theta/2>0$ a.e.\ in $\Omega$
and $u^* \in \rmC^1([0,T]; \rmL^1(\Omega))$. Moreover, let $Q^*$ be the optimal
flux in the limiting continuity equation \eqref{eq:LimitCE.3}, satisfying
$\wh\mfD_0(u^*)=\sfD_0(u^*,Q^*)$. 

Lemma \eqref{le:RecovCE} states that $(u^*,Q^*_\eps)$ with $Q^*_\eps = Q^*+ \eps
J_*$ solves the continuity equation \eqref{eq:rho.eps.CE} for $\eps>0$, where 
$J_*(t,y)=\int_0^y\pl_t u^*(s,\ol y)\dd \ol y$ satisfies $J_*\in
\rmL^\infty(\Omega)$.  As in Steps I and II it remains to show that 
\[
\wh\mfD_\eps(u^*) = \sfD_\eps(u^*, \bbP^\eps_{u^*} Q^*_\eps)\ \to \ 
\sfD_0(u^*,Q^*) = \wh\mfD_0(u^*).
\]
Because $\sfD_\eps$ and $\sfD_0$ only differ by the trivial dependence of
$\bbA_\eps$ converging uniformly to $\bbA_0$, it suffices to show that 
$g^*_\eps := g^{(1)}_\eps + g^{(2)}_\eps \to 0$, where
\[
g^{(1)}_\eps:=\sfD_\eps( u^*,Q^*)- \sfD_\eps(u^*,\bbP^\eps_{u^*} Q^*) 
\text{ and } 
g^{(2)}_\eps:=\sfD_0( u^*, \bbP^\eps_{u^*} Q^* )- \sfD_0(u^*,\bbP^\eps_{u^*}
Q^*_\eps ).
\] 
The convergence $g^{(1)}_\eps \to 0$ follows as in Steps I and II, because of
$\bbP^0_{u^*} Q^*=Q^*$. For estimating $g^{(2)}_\eps$ we can use $Q^*_\eps =
Q^*+ \eps J_*$ with $J_*\in \rmL^\infty(\Omega)$ and $m_*:= \mob(u^*)\geq 
\mob(\theta/2) \geq \theta \mob(1/2)>0$ because of $u^*\geq \theta/2$ and the
concavity of $\mob$. In particular, we have 
\[
\big|\bbP^0_{u^*}Q^*_\eps - \bbP^0_{u^*}Q^*_0\big| = \eps |\bbP^0_{u^*}J_*|
\leq  \eps\, C_J \quad \text{with } C_J:= 2 \| J_*\|_{\rmL^\infty(\Omega)} < \infty
\] 
and can estimate as follows:
\begin{align*}
|g^{(2)}_\eps| &\leq \iint_\Omega 
\frac{ \big| (\bbP^\eps_{u^*} Q^*)^2  - \SSS (\bbP^\eps_{u^*} \EEE Q_\eps^*)^2\big| } 
    {2\bbA_0 m_*} \dd y \dd t 
\leq  \iint_\Omega \frac{  \eps | \bbP^0_{u^*}J_*|\big(
2|\bbP^0_{u^*} Q^*| + \eps| \bbP^0_{u^*}J_*| \big) }{2\bbA_0 m_*} \, \dd y \dd t
\\
&\leq 
\eps \Big( K_*\iint_\Omega \frac{(\bbP^0_{u^*} Q^* )^2}{2\bbA_0 m_*} \dd y\dd t
\Big)^{1/2} + \eps^2 K_* \quad \text{with } K_*=\frac{6T C_*^2}{\theta
  \mob(1/2)}  \big\|\tdfrac{1}{\bbA_0}\big\|_{\rmL^\infty(\Omega)} < \infty.
\end{align*}
Thus, $g^{(2)}_\eps \to 0$ follows because of $\sfD_0(u^*,\bbP^0_{u^*} Q^* )
\leq \wh\mfD_0(\wh u_0)< \infty$. 
\end{proof}
\vspace{0.0em}

\noindent
\begin{proof}[Proof of Step IV] For $n\in \N$ we apply Steps I
  to III using a standard diagonalization argument. 
By Step I we choose $\theta_n$ satisfying
\[
\theta_n \in {]0,1/n[}, \quad \big| \wh\mfD_0(\wt u_{0,\theta_n}) -
\wh\mfD_0(\wh u_0) \big| \leq 1/(3n), \quad \| \wt u_{0,\theta_n} - \wh
u_0\|_{\rmL^1(\Omega)} \leq 1/(2n).
\]
Using Step II, we choose $\delta_n$ with 
\[
 \delta_n \in {]0,1/n[}, \quad \big| \wh\mfD_0(\wt u_{\delta_n,\theta_n}) -
\wh\mfD_0(\wt u_{0,\theta_n}) \big| \leq 1/(3n), \quad 
\| \wt u_{\delta_n,\theta_n} - \wt u_{0,\theta_n} \|_{\rmL^1(\Omega)} \leq 1/(2n).
\]
Using Step III we find $\eps_n$ with 
\[
 \eps_n \in {]0,1/n[}  \qand \big| \wh\mfD_\eps(\wt u_{\delta_n,\theta_n}) -
\wh\mfD_0(\wt u_{\delta_n,\theta_n}) \big| \leq 1/(3n).
\]
Thus, we have found the recovery sequence $(\eps_n,\wh u_n)_{n\in \N}$, where
$\wh u_n := \wt u_{\delta_n,\theta_n}$,  which satisfies 
\[
\eps_n \to 0, \quad  \big\| \wh u_n - \wh u_0 \big\|_{\rmL1(\Omega)} \leq 1/n
\to 0, \quad  \big| \wh\mfD_{\eps_n}(\wh u_n) - \wh\mfD_0(\wh u_0)\big| \leq 1/n
\to 0. 
\]
This finishes the proof of Part B of Theorem \ref{th:Main.Gamma.Cvg}.  
\end{proof}

\section{NESS and interpretation of \TOS{$\bfR_\mb $}{bfRmb} via BER functions}
\label{se:NESS.Rmemb} 

\subsection{NESS as saddle points of BER functions}
\label{su:SaddleNESS}

We use here the characterization of Non-Equilibrium Steady States (NESS) 
via the saddle-point theory developed in \cite{Miel23NESS, Miel25?PGSN}.  The main tool is the
so-called BER functional for a generalized gradient system $(\bbX,\scrE,\scrR)$,
which is given by 
\[
\scrB_{\scrE,\scrR}(u,\xi):= \scrR^*(u,\xi)- \scrR^*(u,{-}\rmD\scrE(u)). 
\]
The system will be connected to the environment modeled by the port space $\bbY$
via the so-called port mapping $\mfP^*$ (constraining the thermodynamical driving
\SSS    forces \EEE in $\bbX^*$, see \cite{Miel25?PGSN}) 
\[
\mfP^*: \bbX^* \to \bbY^*; \ \xi\mapsto \eta=\mfP^*\xi.   
\]
We assume that $\mfP^*$ is surjective, and hence $\mfP:\bbY\to \bbX$ is injective.
We further assume there is a mapping $\mfT:\bbX\to \bbY$ and a reduced energy 
$\ol E: \bbY\to \R$ such that $\mfP^*\rmD\scrE(u)=\rmD\ol E(\mfT u)$. 

Following \cite[Sec.\,II]{Miel23NESS} we can now define the \emph{reduced
  B-function}  
\[
B_\mafo{red}(y,\eta):= \sup_{u:\, \mfT u=y} \; 
  \inf_{\xi:\,\rule{0pt}{0.65em}\mfP^*\xi =\eta}  \scrB_{\scrE,\scrR}(u,\xi).
\]
The theory relies on the fact that for \AAA each $y \in \bbY$ \EEE there exists a
\emph{global null saddle} $(u,\xi) = \big(U(y),-\rmD\scrE(U(y))\big) $, i.e.\ 
\begin{align}
\label{eq:GlobalSaddle}
\forall\, \wh u\text{ with } & \mfT \wh u= y\ \forall \,\xi \text{ with }
\mfP^*\wh \xi=-\mfP^*\rmD\scrE(U(y))=-\rmD\ol E(y) : \\
\nonumber
& \scrB_{\scrE,\scrR}  \big(\wh u, -\rmD\scrE(U(y)) \big) \leq 
  \scrB_{\scrE,\scrR}  \big(U(y), -\rmD\scrE(U(y)  \big) \leq 
  \scrB_{\scrE,\scrR}  \big(U(y),\wh \xi \big).
\end{align}
The solutions $u=U(y)$ are called NESS, because they are steady states of the
so-called port-gradient-flow equations (cf.\ 
\cite{Miel25?PGSN}), which are identified by the condition 
\begin{equation}
  \label{eq:NESS.Eqn}
  0=\pl_\xi \scrR(u,-\rmD\scrE(u)) - \mfP V , \quad V\in \bbY, \quad
\mfP^*\rmD\scrE(u)=\rmD\ol E(y) ,
\end{equation}
where $V \in \bbY$ is a Lagrange multiplier dual to the constraint
$\mfP^*\rmD\scrE(U(y))=\rmD\ol E(y) $, see \cite[Eq.\,(2.2)]{Miel23NESS} and 
Proposition \ref{pr:RedKinRel} below.

The general reduction theorem for BER functions shows that under suitable
conditions $B_\mafo{red}$ is again a BER function of a reduced gradient system
on $\bbY$. 

\begin{theorem}[Reduction of BER functions {\cite[Thm.\,II.16]{Miel23NESS}}]
\label{th:ReducBER} 
If for all $y \in \bbY$ there exists a unique $U(y)\in \bbX$ such that
\eqref{eq:GlobalSaddle} holds, then
$B_\mafo{red} \equiv \scrB_{\ol E,\ol\bfR_\mafo{red}}$ where the reduced
dissipation potential $\ol\bfR_\mafo{red} $ is given by \AAA its dual via \EEE
\[
{ \AAA \ol\bfR_\mafo{red}^* \EEE } 
(y,\eta) = B_\mafo{red}(y,\eta) -  B_\mafo{red}(y,0).
\]
\end{theorem}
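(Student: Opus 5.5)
The proof will rest on the observation that, once we set $\ol\bfR^*_\mafo{red}(y,\eta):=B_\mafo{red}(y,\eta)-B_\mafo{red}(y,0)$, the identity $B_\mafo{red}\equiv\scrB_{\ol E,\ol\bfR_\mafo{red}}$ reduces to one single scalar fact. By definition of the BER function,
\[
\scrB_{\ol E,\ol\bfR_\mafo{red}}(y,\eta)=\ol\bfR^*_\mafo{red}(y,\eta)-\ol\bfR^*_\mafo{red}\big(y,{-}\rmD\ol E(y)\big)
= B_\mafo{red}(y,\eta)-B_\mafo{red}\big(y,{-}\rmD\ol E(y)\big).
\]
Hence the identity holds if and only if $B_\mafo{red}\big(y,{-}\rmD\ol E(y)\big)=0$ for all $y\in\bbY$. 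Besides this, we must check that $\ol\bfR^*_\mafo{red}(y,\cdot)$ is admissible as a dual dissipation potential, that is, convex, nonnegative and vanishing at $0$.

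\textbf{Step 1: the value at $\eta={-}\rmD\ol E(y)$.} We use the global null saddle from \eqref{eq:GlobalSaddle}. Fix $y$, write $U=U(y)$ and $\xi_U={-}\rmD\scrE(U)$. Then $\mfP^*\xi_U={-}\rmD\ol E(y)$, so $\xi_U$ is admissible in the inner infimum. Moreover $\scrB_{\scrE,\scrR}(U,\xi_U)=\scrR^*(U,\xi_U)-\scrR^*(U,\xi_U)=0$.

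For the lower bound, take $\wh u=U$ in the supremum and use the right inequality in \eqref{eq:GlobalSaddle}:
\[
B_\mafo{red}\big(y,{-}\rmD\ol E(y)\big)\geq\inf_{\wh\xi}\scrB_{\scrE,\scrR}(U,\wh\xi)=\scrB_{\scrE,\scrR}(U,\xi_U)=0 .
\]
For the upper bound, bound each inner infimum by its value at the admissible choice $\xi_U$ and use the left inequality:
\[
\inf_{\wh\xi}\scrB_{\scrE,\scrR}(\wh u,\wh\xi)\leq\scrB_{\scrE,\scrR}(\wh u,\xi_U)\leq 0 .
\]
Taking the supremum over $\wh u$ with $\mfT\wh u=y$ gives $B_\mafo{red}\big(y,{-}\rmD\ol E(y)\big)=0$. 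This is the usual ``saddle value equals $\sup\inf$'' argument, and it only needs existence of the saddle. Uniqueness of $U(y)$ is used only to make $y\mapsto U(y)$ a well-defined NESS map.

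\textbf{Step 2: properties of $\ol\bfR^*_\mafo{red}$.} For fixed $u$, define the inf-projection
\[
r_u(\eta):=\inf\bigset{\scrR^*(u,\xi)}{\mfP^*\xi=\eta}.
\]
It is convex in $\eta$, because it is the inf-projection of a convex function along the affine fibres of the linear surjection $\mfP^*$. It is nonnegative, and $r_u(0)=0$ by taking $\xi=0$. Since $B_\mafo{red}(y,\eta)=\sup_{u:\mfT u=y}\big(r_u(\eta)-\scrR^*(u,{-}\rmD\scrE(u))\big)$ is a supremum of convex functions, it is convex in $\eta$.

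For nonnegativity, the inequality $r_u(\eta)\geq r_u(0)$ holds termwise, so after taking suprema $B_\mafo{red}(y,\eta)\geq B_\mafo{red}(y,0)$. This gives $\ol\bfR^*_\mafo{red}(y,\cdot)\geq0$, and $\ol\bfR^*_\mafo{red}(y,0)=0$ holds trivially. Finiteness of $B_\mafo{red}(y,0)$ also follows: by Step 1 and this monotonicity, $B_\mafo{red}(y,0)\leq B_\mafo{red}\big(y,{-}\rmD\ol E(y)\big)=0$. On the other hand, choosing $u=U$ gives $B_\mafo{red}(y,0)\geq-\scrR^*(U,\xi_U)>-\infty$. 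The primal $\ol\bfR_\mafo{red}(y,\cdot)$ is then obtained by Legendre transform.

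\textbf{Main obstacle.} I expect the main technical point to be lower semicontinuity of $\eta\mapsto\ol\bfR^*_\mafo{red}(y,\eta)$, since an inf-projection of a lower semicontinuous function need not be lower semicontinuous. I would handle it in the setting of \cite{Miel23NESS}, where $\bbY$ is finite-dimensional: there, convex functions that are finite near the relevant points are continuous on the interior of their domain. Alternatively, I would replace $\ol\bfR^*_\mafo{red}$ by its lower semicontinuous hull. This does not affect the identity of Step 1, because the value at ${-}\rmD\ol E(y)$ is attained through the explicit saddle and $\ol\bfR^*_\mafo{red}$ is finite there.
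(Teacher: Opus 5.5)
Your proof is correct. The paper does not prove this theorem itself: it cites \cite{Miel23NESS} and refers to Proposition~\ref{pr:Rmb.olM} for a closely related argument.

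That argument has the same architecture as yours. It defines the potential as a difference of values, checks the dissipation-potential properties, and reduces everything to one scalar identity at the distinguished force. The difference is that it runs on the primal side. With $\bfR_\mb=\ol M-\ol M(\cdot,0)$, the identity $\ol M(u_\PM,0)=\bfR^*_\mb\big(u_\PM,E'(u_-){-}E'(u_+)\big)$ is checked by writing $\kappa\big(E'(u_-){-}E'(u_+)\big)-\bfM(u,\kappa)=-\frac12\int_0^1\big(\kappa/\sqrt{\mob(u)}+\sqrt{\mob(u)}\,\pl_yE'(u)\big)^2\dd y$. One then inserts the NESS, along which $\mob(u)\pl_yE'(u)$ is constant.

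The structural input therefore sits in different places in the two routes:
\begin{itemize}
\item In your dual formulation, convexity of $\eta\mapsto B_\mafo{red}(y,\eta)$ comes for free, as a supremum over $u$ of inf-projections of the convex functions $\scrR^*(u,\cdot)$. The vanishing at ${-}\rmD\ol E(y)$, however, uses both saddle inequalities in \eqref{eq:GlobalSaddle}. In the application these come from the concave-convex structure of $(w,\xi)\mapsto\bfB(\psi(w),\xi)$, cf.\ Corollary~\ref{ex:Exist.Rmb}.
\item In the primal formulation, the scalar identity needs no minimax statement, only one competitor with constant flux and a completion of squares. But convexity of $\kappa\mapsto\ol M(u_\PM,\kappa)$ is then a statement about an infimum over $u$, and it has to be obtained from the joint convexity of $(w,Q)\mapsto\bfM(\psi(w),Q)$.
\end{itemize}
Your route is the abstract one, valid for any port system once global null saddles are given; the paper's is concrete and elementary for the membrane functional.

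One correction concerns your ``main obstacle''. Passing to the lower semicontinuous hull would not give the statement as written. The hull differs from $B_\mafo{red}(y,\cdot)-B_\mafo{red}(y,0)$ wherever the latter fails to be lower semicontinuous, so $B_\mafo{red}\equiv\scrB_{\ol E,\ol\bfR_\mafo{red}}$ would be lost at those $\eta$. Finiteness at ${-}\rmD\ol E(y)$ does not even protect that point if it lies on the boundary of the domain.

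The identity itself is algebraic and needs no semicontinuity. Lower semicontinuity only enters when you read $\ol\bfR^*_\mafo{red}(y,\cdot)$ as a Legendre dual. Since $B_\mafo{red}(y,\cdot)$ is a supremum, it suffices that each $r_u$ be lower semicontinuous, i.e.\ $r_u=\big(\scrR(u,\mfP\,\cdot)\big)^*$. By the exact image formula this holds, e.g., if $\scrR(u,\cdot)$ is continuous at $0$.
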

See Proposition \ref{pr:Rmb.olM} for a proof of a closely related result.  

For later use we provide a simple formula for reduced kinetic relation, namely
$\bbY^*\ni \eta \mapsto v=\pl_\eta\ol\bfR_\mafo{red}^*(y,\eta)\in \bbY$. It
heavily relies on the property that a value function defined for critical
points has a simple derivative. More precisely, if $F:X\ti Z\to \R $ has a
(unique) solution $x=X(z)$ for the equation $\rmD_x F(x,z)=0$, then
\begin{equation}
  \label{eq:ValueFcn}
  \text{the value function } f(z):= F(X(z),z) \ \text{ satisfies }\ 
\rmD_z f(z)= \rmD_z F(X(z),z).
\end{equation}

\begin{proposition}[Reduced kinetic relation]
\label{pr:RedKinRel}
Let $\ol\bfR_\mafo{red}$ be defined as above, and assume that
$\scrB_{\scrE,\scrR}(u,\xi)$ is concave-convex in $(u,\xi)$.  For given
$(y,\eta)$ let \SSS $(\wh u_\xi,\xi_*)$ \EEE be the unique saddle point
occurring in the definition of $B_\mafo{red}(y,\eta)$. Then,
\begin{equation}
  \label{eq:Char.pl.Rred}
 \pl_\xi \AAA \scrR^* \EEE (\wh u_\xi,\xi)  = \mfP \pl_\eta
 \AAA \ol\bfR_\mafo{red}^* \EEE (y,\eta), 
\end{equation}
which characterizes $ \pl_\eta \AAA \ol\bfR_\mafo{red}^* \EEE (y,\eta)$
uniquely, as $\mfP$ is 
injective. Moreover, \AAA we have \EEE $ \pl_\xi \AAA \scrR^* \EEE (\wh u_\xi,\xi)\in
(\mafo{ker}\;\!\mfP^*)^\perp$. 
\end{proposition}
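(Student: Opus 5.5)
The approach is the envelope (value-function) formula \eqref{eq:ValueFcn}, applied twice: first to the outer supremum and inner infimum defining $B_\mafo{red}$, then combined with Theorem \ref{th:ReducBER}.

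Fix $y$ and introduce a Lagrange multiplier for the constraint $\mfP^*\xi=\eta$. Write the inner problem as $\inf_\xi \sup_{V\in\bbY} \big\{\scrB_{\scrE,\scrR}(u,\xi)-\langle \mfP^*\xi-\eta,V\rangle\big\}$. The saddle point $(\wh u_\xi,\xi_*)$ with multiplier $V_*$ then satisfies the first-order conditions
\[
\pl_\xi\scrR^*(\wh u_\xi,\xi_*) = \mfP V_*, \qquad \mfP^*\xi_*=\eta ,
\]
together with the stationarity in $u$ on the affine set $\{\mfT u=y\}$. The main consequence is the first relation: its left-hand side lies in $\mafo{ran}\,\mfP$. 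Since $\mfP$ is the adjoint of $\mfP^*$, we have $\mafo{ran}\,\mfP\subset(\ker\mfP^*)^\perp$, which already gives the last claim.

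Next I identify $V_*$ with $\pl_\eta B_\mafo{red}(y,\eta)$. The saddle function
\[
F(u,\xi,V;\eta)=\scrB_{\scrE,\scrR}(u,\xi)-\langle\mfP^*\xi-\eta,V\rangle
\]
is critical at $(\wh u_\xi,\xi_*,V_*)$, and its value there is $B_\mafo{red}(y,\eta)$. Its explicit $\eta$-dependence is only through the linear term $\langle \eta,V\rangle$. Applying \eqref{eq:ValueFcn} with $z=\eta$, and using uniqueness of the saddle point, gives $\pl_\eta B_\mafo{red}(y,\eta)=V_*$. The term $-\scrR^*(u,-\rmD\scrE(u))$ inside $\scrB$ does not depend on $\xi$ or $\eta$, so it does not contribute here.

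Theorem \ref{th:ReducBER} gives $\ol\bfR^*_\mafo{red}(y,\eta)=B_\mafo{red}(y,\eta)-B_\mafo{red}(y,0)$. Hence
\[
\pl_\eta\ol\bfR^*_\mafo{red}(y,\eta)=V_*,
\]
and substituting into the first-order condition yields \eqref{eq:Char.pl.Rred}. Injectivity of $\mfP$ then determines $\pl_\eta\ol\bfR^*_\mafo{red}$ uniquely from $\pl_\xi\scrR^*(\wh u_\xi,\xi_*)$.

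The main obstacle is justifying differentiability of the value function and the validity of \eqref{eq:ValueFcn} at a saddle point rather than a minimizer. This is where concave-convexity enters. First, it lets me exchange $\sup_u$ and $\inf_\xi$, so that $B_\mafo{red}(y,\cdot)$ is convex in $\eta$ as an infimal projection of a convex function. Second, uniqueness of the saddle point makes the subdifferential a singleton (Danskin-type argument), which gives differentiability. If smoothness of $\scrR^*$ is lacking, I would state the relation with subdifferentials and use uniqueness to collapse them to single values.
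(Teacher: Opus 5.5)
Your proposal is correct and is essentially the paper's argument: you introduce a multiplier for the constraint $\mfP^*\xi=\eta$, use the value-function identity \eqref{eq:ValueFcn} to get $\pl_\eta B_\mafo{red}(y,\eta)=V_*$, use stationarity in $\xi$ to get $\pl_\xi\scrR^*(\wh u_\xi,\xi_*)=\mfP V_*$, and invoke Theorem~\ref{th:ReducBER} to replace $\pl_\eta B_\mafo{red}$ by $\pl_\eta\ol\bfR_\mafo{red}^*$. The differences are only organizational: the paper first eliminates $u$ via $\scrG(\xi)=\sup_{\mfT u=y}\scrB_{\scrE,\scrR}(u,\xi)$, with $\pl_\xi\scrG(\xi)=\pl_\xi\scrR^*(\wh u_\xi,\xi)$, before adding the multiplier, whereas you work with a single three-variable saddle function; and you spell out the inclusion $\mafo{ran}\,\mfP\subset(\mafo{ker}\,\mfP^*)^\perp$ behind the final claim, which the paper leaves implicit.
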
 
\begin{proof}
Since $y$ is fixed, we can omit it in all the formulas. In particular, we
have 
\[
B_\mafo{red}(\eta)=\inf_{\mfP^* \xi=\eta} \scrG(\xi) \quad \text{with } 
 \scrG(\xi):= \sup_{\mfT u =y} \scrB_{\scrE,\scrR}(u,\xi). 
\]
Using the value-function identity \eqref{eq:ValueFcn} we find
$\pl_\xi \scrG(\xi)= \pl_\xi \AAA \scrR^* \EEE (\wh u_\xi,\xi)$, where $\wh
u_\xi$ is the maximizer in the definition of $\scrG(\xi)$.

To differentiate with respect to the constraint, we introduce 
$\scrQ:\bbX^*\ti \bbY^*\ti \bbY\to \R$ via 
\[
\scrQ(\xi,\eta,\lambda) := \scrG(\xi) - \big\langle \mfP^*\xi{-}\eta,\lambda
\big\rangle_\bbY. 
\]
Thus, we have the relations
$ B_\mafo{red}(\eta) = \inf_{\xi\in \bbX^*\rule{0pt}{0.7em}} \,
\sup_{\lambda\in \bbY} \scrQ(\xi,\eta,\lambda) = \sup_{\lambda\in \bbY} \
\inf_{\xi\in \bbX^*\rule{0pt}{0.7em}} \scrQ(\xi,\eta,\lambda)$. As
$ \scrQ(\xi,\eta,\lambda)$ is convex-concave in $(\xi,\lambda)$, for each
$\eta$ there exists a saddle point $(\wh \xi(\eta),\wh\lambda(\eta))$.

On the one hand, applying the  value-function identity \eqref{eq:ValueFcn} yields
\[
\rmD_\eta B_\mafo{red} (\eta) = \rmD_\eta \scrQ \big(\wh \xi(\eta), \eta, 
\wh\lambda(\eta)\big) = \wh\lambda(\eta). 
\]
On the other hand, the saddle-point property of $(\xi,\lambda)=(\wh
\xi(\eta),\wh\lambda(\eta))$ implies 
\[
0=\rmD_\xi \scrQ\big(\wh\xi(\eta), \eta,\wh\lambda(\eta)\big) 
 = \pl_\xi \scrG\big(\wh\xi(\eta)\big) - \mfP \wh\lambda(\eta).
\]
We now use that $B_\mafo{red}$ equals the
BER function $\scrB_{\ol E,\ol\bfR_\mafo{red}}$, which implies $\rmD_\eta
B_\mafo{red}(\eta) = \pl_\eta \AAA \ol\bfR_\mafo{red}^* \EEE (y,\eta)$. 
Recalling $\pl_\xi \scrG(\xi)= \pl_\xi \AAA \scrR^* \EEE (\wh u_\xi,\xi)$ from
the beginning of the proof, we conclude 
\[
\pl_\xi \AAA \scrR^* \EEE (\wh u_\xi,\xi) =\mfP \wh\lambda(\eta) 
 = \mfP \rmD_\eta B_\mafo{red} (\eta) = \mfP \pl_\eta 
 \AAA \ol\bfR_\mafo{red}^* \EEE (y,\eta),
\]
which is the desired result.
\end{proof}

\subsection{Reduction of BER functions for the membrane problem}
\label{su:RedMembFormulas}

We now apply the abstract theory from the previous subsection to our membrane
problem. Section \ref{su:Rescale} provides the unique NESS via 
$u_\mafo{NESS}(y) = G^{-1}\big((1{-}y)G(u_-){+} y G(u_+)\big)$, see
\eqref{eq:ExplicitNESS}.

To facilitate the identification of the abstract results from above
with the rest of the paper, we use the notation $\bfB=\scrB_{\calR,\calE}$ and 
$\ol B=B_\mafo{red}$, which take the explicit form 
\begin{align*}
&\bfB(u,\xi)= \int_0^1 \!\Big( \frac{\mob(u)}2 |\pl_y\xi|^2 
  - \frac{\mob(u)}2 \big| E''(u) \pl_yu\big|^2 \Big) \dd y, 
& \ol B(u_\PM,\xi_\PM)= \!\max_{u\in \bfP(u_\PM)} \min_{\xi\in \bfD(\xi_\PM)}
      \!\bfB(u,\xi).
\end{align*}
The subset $\bfP(u_\PM) \subset \rmW^{1,1}([0,1])$ is defined in \eqref{eq:RMJ.d} and 
\[
\bfD(\xi_\PM):=\bigset{ \xi\in \rmW^{1,1}([0,1]) }{ \xi(0)= \xi_-,\ \xi(1)=\xi_+}.
\]

The constraints on $u$ and $\xi$ can be realized by the port mapping 
\[
\mfP^*: \bbX:=\rmW^{1,1}([0,T])\to \bbY:=\R^2;\ \xi\mapsto
   (\xi(0),\xi(1)).
\]
Moreover, we define the reduced energy $\ol E(u_\PM) = E(u_+)+E(u_-)$, and see
that the constraints $u\in \bfP(u_\PM)$ and $\xi\in \bfD(\xi_\PM)$  
can be written in terms of the port mapping as
\[
\mfP^* \rmD\scrE(u) = \rmD \ol E(u_\PM) \quad \text{and} \quad \mfP^* \xi=
\xi_\PM.
\]
Hence, the general reduction theory \SSS of Theorem \ref{th:ReducBER} 
guarantees that the reduced functional $\ol B$ \EEE is again a BER functional for a
reduced dissipation potential, \SSS which defines $\bfR^*_\mb$:\EEE

\begin{corollary}[Derivation of $\bfR^*_\mb$ from $\ol B$]
\label{ex:Exist.Rmb} Let $E$ and $\mob$ satisfy the assumption
\eqref{eq:AllAssump} and \eqref{eq:Cvx.Ass.psi}. Then, $\ol B$ is equal to the
BER function $\scrB_{\ol E,\bfR_\mb}$ where the dual dissipation potential
$\bfR^*_\mb:{[0,\infty[}^2\ti \R \to \R $ for the membrane is given by 
\[
\bfR^*_\mb(u_\PM,\xi_+{-}\xi_-):= \ol B(u_\PM,\xi_\PM)- \ol B(u_\PM,0).
\]
Hence $\bfR^*_\mb$ depends on $\xi_\PM$ only via the difference 
$\delta=\xi_+{-}\xi_-$. Moreover, the transmission condition reads (for $G$ see
\eqref{eq:ExplicitNESS}) 
\begin{equation}
  \label{eq:GenTransCond}
 \pl_\delta \bfR^*_\mb \big(u_\PM,E'(u_-){-} E'(u_+)\big) =  G(u_+) -G(u_-) .
\end{equation}
\end{corollary}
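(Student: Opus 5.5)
Our plan is to verify the hypotheses of Theorem~\ref{th:ReducBER} for $\bfB$ and the port map $\mfP^*\xi=(\xi(0),\xi(1))$, i.e.\ to exhibit for each $u_\PM$ a unique global null saddle, and then to read off $\bfR^*_\mb$ and the transmission condition via Proposition~\ref{pr:RedKinRel}.

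\emph{Step 1 (the inner minimization in $\xi$).} For fixed $u\in\bfP(u_\PM)$ with $\mob(u)>0$, minimizing the convex quadratic $\xi\mapsto\frac12\int_0^1\mob(u)|\pl_y\xi|^2\dd y$ over $\bfD(\xi_\PM)$ is explicit. The Euler--Lagrange equation is $\pl_y(\mob(u)\pl_y\xi)=0$, so the minimizer has constant flux $\mob(u)\pl_y\xi=c$. Hence
\[
\min_{\xi\in\bfD(\xi_\PM)}\bfB(u,\xi)=\frac{(\xi_+{-}\xi_-)^2}{2\int_0^1\mob(u)^{-1}\dd y}-\frac12\int_0^1|\pl_yH(u)|^2\dd y .
\]
This already shows that $\ol B(u_\PM,\xi_\PM)$ depends on $\xi_\PM$ only through $\delta=\xi_+{-}\xi_-$, and so does $\bfR^*_\mb$.

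\emph{Step 2 (the outer maximization and the null saddle).} We maximize over $u\in\bfP(u_\PM)$. For $\delta=0$ the maximizer minimizes $\int|\pl_yH(u)|^2$, so $H(u)$ is affine and $\ol B(u_\PM,0)=-\frac12(H(u_+){-}H(u_-))^2$. For the null saddle we take $\delta=E'(u_-){-}E'(u_+)$ and check that $U=u_\mafo{NESS}$ from \eqref{eq:ExplicitNESS}, together with $\xi=-E'(U)$, satisfies \eqref{eq:GlobalSaddle}.
\begin{itemize}
\item The right inequality holds because $\mob(U)\pl_yE'(U)=\pl_yG(U)$ is constant, so $-E'(U)$ is exactly the constant-flux minimizer of Step~1.
\item The left inequality requires $\bfB(\wh u,-E'(U))\le\bfB(U,-E'(U))=0$ for all $\wh u\in\bfP(u_\PM)$.
\end{itemize}
We intend to obtain the left inequality by concavity of $\wh u\mapsto\bfB(\wh u,\xi)$ after the substitution $\wh u=\psi(w)$ from \eqref{eq:Cvx.Ass.psi}. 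The map $w\mapsto\frac12\sfn(w)|\pl_y\xi|^2-\frac12|\pl_yw|^2/\sfg(w)$ is concave, being a concave function minus a perspective-type convex one. A concave functional with a critical point at $U$ attains its maximum there. The critical point is the NESS equation \eqref{eq:NESS.Eqns}, which $U$ satisfies by construction.

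Uniqueness of the saddle point comes from the strict convexity of $E$ and the uniqueness of the NESS. Theorem~\ref{th:ReducBER} then gives $\ol B=\scrB_{\ol E,\bfR_\mb}$ with $\bfR^*_\mb(u_\PM,\delta)=\ol B(u_\PM,\xi_\PM)-\ol B(u_\PM,0)$.

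\emph{Step 3 (transmission condition).} By Proposition~\ref{pr:RedKinRel}, $\pl_\xi\scrR^*(\wh u,\xi)=-\pl_y(\mob(\wh u)\pl_y\xi)$ plus its boundary fluxes equals $\mfP\pl_\delta\bfR^*_\mb$. At the null saddle this flux is $\mob(U)\pl_y(-E'(U))\cdot(-1)=\pl_yG(U)=G(u_+){-}G(u_-)$, which yields \eqref{eq:GenTransCond}. Alternatively, one can differentiate the formula from Step~1 using the value-function identity \eqref{eq:ValueFcn}: this gives $\pl_\delta=\delta/\int\mob(\wh u)^{-1}$, and at $\wh u=U$ this equals $G(u_+){-}G(u_-)$ because $\mob(U)\pl_yE'(U)$ is constant.

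\emph{Main obstacle.} We expect the main obstacle to be the rigorous concave--convex and existence argument in Step~2 on the nonconvex set $\bfP(u_\PM)$ with possibly degenerate $\mob$ near $u=0$. The $\psi$-transformation must turn the constraint into the convex set of $w$ with fixed traces, and the concavity must be justified only for $\xi$ with $\delta\neq0$. Points where $u$ touches zero are handled by lower semicontinuous extension and approximation.
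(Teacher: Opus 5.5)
Your Steps~1--2 follow the paper's route and are sound. The paper likewise obtains the global null saddle with $U=u_{\mathrm{NESS}}$ from the concave--convex structure of $(w,\xi)\mapsto\mathbf{B}(\psi(w),\xi)$ and then applies Theorem~\ref{th:ReducBER}. Your explicit inner minimization in $\xi$ and your separate check of the two saddle inequalities (the frozen-$\xi$ Euler--Lagrange equation at $U$ is indeed $E''(U)\,\partial_y^2G(U)=0$) only make this more explicit.

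The problem is Step~3: the factor $(-1)$ you insert has no source. At $\xi=-E'(U)$ the flux is $\mob(U)\,\partial_y\xi=-\partial_yG(U)=G(u_-)-G(u_+)$. So Proposition~\ref{pr:RedKinRel}, tested with $\phi$, gives
\[
\big(\phi(1){-}\phi(0)\big)\,\partial_\delta\mathbf{R}^*_{\mathrm{mb}}\big(u_{(\pm)},E'(u_-){-}E'(u_+)\big)=\int_0^1\mob(U)\,\partial_y\xi\,\partial_y\phi\,\mathrm{d}y=\big(\phi(1){-}\phi(0)\big)\big(G(u_-){-}G(u_+)\big).
\]
Your value-function route gives the same. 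From $\mob(U)\,\partial_yE'(U)=G(u_+)-G(u_-)$ one gets $\int_0^1\mob(U)^{-1}\,\mathrm{d}y=\big(E'(u_+){-}E'(u_-)\big)/\big(G(u_+){-}G(u_-)\big)$ for $u_+\neq u_-$. Hence $\delta_*/\int_0^1\mob(U)^{-1}\,\mathrm{d}y=G(u_-)-G(u_+)$ for $\delta_*=E'(u_-)-E'(u_+)$.

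This sign is forced, for two independent reasons:
\begin{itemize}
\item By your Step~1, $\mathbf{R}^*_{\mathrm{mb}}(u_{(\pm)},\cdot)$ is even and convex, so $\delta\,\partial_\delta\mathbf{R}^*_{\mathrm{mb}}(u_{(\pm)},\delta)\geq0$. But $\delta_*$ has the sign of $u_--u_+$, and $G$ is increasing.
\item For $\beta=\varkappa=1$ one checks directly that $\partial_\delta\big[\sqrt{u_-u_+}\,\mathscr{C}^*(\delta)\big]$ at $\delta=\log(u_-/u_+)$ equals $u_--u_+$.
\end{itemize}

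So \eqref{eq:GenTransCond} as printed has the wrong sign. The correct relation is $\partial_\delta\mathbf{R}^*_{\mathrm{mb}}(u_{(\pm)},E'(u_-){-}E'(u_+))=G(u_-)-G(u_+)$. This is also the version consistent with the membrane flux $\kappa=-A_\pm\partial_yG(u)=b\,(G(u_-){-}G(u_+))$ from \eqref{eq:DerivedTransCond}.

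The paper's own proof contains the matching slip: it identifies $\langle\phi,\partial_\xi\mathcal{R}^*(u,-E'(u))\rangle$ with $+\int_0^1\mob(u)\,\partial_y\phi\,\partial_yE'(u)\,\mathrm{d}y$. Your unexplained $(-1)$ is exactly what hides this discrepancy. You should derive and state the corrected sign instead.
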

\begin{proof} The fact that $\ol B$ is a BER function immediately follows from
  Theorem \ref{th:ReducBER} if we use the transformation $u=\psi(u)$ from
  \eqref{eq:Cvx.Ass.psi}, because $(w,\xi) \mapsto \bfB(\psi(w),\xi)$ is
  concave-convex, and hence $w=\psi(u_\mafo{NESS})$ are global null saddles.
The fact that $\bfR_\mb$ depends only on $\delta=\xi_+{-}\xi_-$ follows from
$\bfB(u,\xi{+}c)=\bfB(u,\xi)$ for every constant $c$. 

For the transmission condition we employ Proposition \ref{pr:RedKinRel}. If $u$
is the NESS associated with $u_\PM$, i.e.\ $(u,-E'(u))$ is the associated null
saddle, we can test the abstract relation \eqref{eq:Char.pl.Rred} by a smooth
test function $\phi\in \rmC^1([0,1])$ and obtain (using $G'(u)= \mob(u)E''(u)$)
\begin{align*}
& \int_0^1\pl_y\phi \,\pl_y G(u) \dd y 
= \int_0^1 \mob(u)\pl_y\phi \pl_y\big(E'(u)\big) \dd y 
= \big\langle  \phi,\pl_\xi \calR^*(u,-E'(u)) \big\rangle 
\\
&\quad 
=\langle \phi, \mfP\pl_{\xi_\pm} \SSS \bfR^*_\mb \EEE \big(u_\PM,-(E'(u_-), 
   \BBB E'(u_+) \EEE\big) \big\rangle  
= \langle  \mfP^*\phi,\pl_{\xi_\PM} \bfR^*_\mb\big(u_\PM ,-(E'(u_-), 
  \BBB E'(u_+) \EEE \big) \big\rangle  
\\
&\quad =  \big(\phi(1){-}\phi(0)\big)\, \pl_\delta
\bfR^*_\mb\big(u_\PM , E'(u_-)-E'(u_+)\big) . 
\end{align*}
Now inserting $u=u_\mafo{NESS}: y \mapsto  G^{-1}\big((1{-}y)G(u_-){+}
y G(u_+)\big)  $ into the left-hand side gives 
\[
\int_0^1 \!\!\pl_y \phi \pl_y 
G(u_\mafo{NESS})\dd y  = \phi \pl_y G(u_\mafo{NESS}) \big|^1_0 - \!\int_0^1 \!\!\phi
\pl_y^2 G(u_\mafo{NESS}) \dd y =\big(\phi(1){-}\phi(0)\big)\;\! 
  \big( G(u_+){-}G(u_-)\big), 
\]
because $y \mapsto \pl_y G(u_\mafo{NESS}(y)$ is affine. 
With this the last assertion is established.  
\end{proof}

\SSS
\subsection{Deriving \texorpdfstring{$\bfR_\mb$}{Rmb} directly from
  \texorpdfstring{$\ol M$}{olM}} 
\EEE

While the NESS theory starts from the BER function and derives the representation
$\ol B(u_\PM,\delta)= \bfR^*_\mb(u_\PM,\delta)-\bfR^*_\mb \big(
u_\PM,E'(u_-){-}E'(u_+)\big)$, our theory of EDP-convergence requires the dual
approach. Starting from the membrane potential $\bfM$ and its reduction $\ol M$
given by 
\[
\bfM(u,Q)= \int_0^1 \Big( \frac{Q^2}{2\mob(u)} + \frac{\mob(u) }2
\big| E''(u) \pl_y u\big|^2 \Big) \dd y \ \text{ and } 
\ol M(u_\PM,\kappa) = \inf_{u\in \bfP(u_\PM)} \bfM(u, \kappa\bm1_{I_0}),
\]
we want to find a representation of $\ol M$ in terms of the primal membrane
potential $\bfR_\mb$. We emphasize that the functional $\bfM$ occurs naturally
as integrand in $\wh\mfD_\eps$, see \eqref{eq:def.mfFeps}. 

\AAA
\begin{remark}
\label{rm:BodineauDerrida}
In \cite{BodDer04CFNE} a large-deviation theory is developed to characterize
certain NESS $u$ as minimizers of the functional 
\[
\ol G(u_\PM,\kappa) :=  \inf\Bigset{ \int_0^1 \frac{\big(\kappa {+} \mob(u)
    E''(u)\pl_y u\big)^2}{\mob(u)} }{u\in \bfP(u_\PM)} ,
\] 
see eqn.\,(14) there. Clearly $\ol G$ satisfies $\ol M(u_\PM,\kappa) = \ol
G(u_\PM,\kappa)- \kappa\big(E'(u_+){-}E'(u_-)\big) $.  
\end{remark}
\EEE

\SSS We note that
$\bfM$ is related to $\bfB$ by Legendre transform, namely \SSS 
\[
\bfB(u,\xi)= \sup_Q \Big(\int_0^1 Q\pl_y \xi\dd y - \bfM(u,Q)\Big) 
\quad \text{and} \quad 
\bfM(u,Q)= \sup_\xi \Big(\int_0^1 Q\pl_y \xi \dd y - \bfB(u,\xi)\Big).
\]
Inserting $Q=\kappa \bm 1_{I_0}$ in the definition of $\ol M$ and using the
min-max characterization of $\ol B$ from Section \ref{su:RedMembFormulas} we obtain 
\begin{equation}
  \label{eq:Relations}
 \ol B(u_\PM, \delta)= \sup_\kappa  \big(\kappa \delta - \ol M(u_\PM,\kappa)\big) 
\quad \text{and} \quad 
\ol M(u_\PM,\kappa)= \sup_\delta \big( \kappa \delta - \ol B(u_\PM,\delta)\Big).
\end{equation}
In particular, we have $  \ol M(u_\PM,0)=-\ol B(u_\PM,0) =
\tdfrac12\big(H(u_+){-}H(u_-)\big)^2$. 

Thus, it is equivalent to derive $\bfR_\mb$ via $\ol B(u_\PM,\delta)= 
\bfR^*_\mb(u_\PM,\delta) +\ol B(u_\PM,0)$ as in
Corollary \ref{ex:Exist.Rmb}  or via $\ol M(u_\PM,\kappa)
=\bfR_\mb(u_\PM,\kappa) - \ol B(u_\PM,0)$, which
is the content of the following Proposition \ref{pr:Rmb.olM}. 
We emphasize that the derivation of $\bfR_\mb$ from $\ol M$ is completely
independent of \EEE the above theory of BER functions. Nevertheless, the proof is
strongly inspired by the NESS theory using BER functions: in particular, \SSS
in both cases \EEE 
the proof strongly relies on the existence of the NESS
$u_\mafo{NESS}(u_\PM;\cdot)$ defined in \eqref{eq:ExplicitNESS} solving the ODE
$0=\pl_y\big( \mob(u)E''(y)\pl_y u\big)$ and the boundary conditions $u(0)=u_-$
and $u_+(1)=u_+$. 

\begin{proposition}[Derivation of $\bfR_\mb $ via $\ol M$]
\label{pr:Rmb.olM}
Let the assumption \eqref{eq:AllAssump} and \eqref{eq:Cvx.Ass.psi} hold and
define
\[
\bfR_\mb(u_\PM,\kappa) := \ol M(u_\PM,\kappa) - \ol M(u_\PM,0).
\]
Then, $\bfR_\mb$ is a dissipation potential and $\ol M$ satisfies the 
relation
\begin{equation}
  \label{eq:bfJ.Rmemb}
  \ol M(u_\PM,\kappa) = \bfR_\mb (u_\PM,\kappa) +
  \bfR^*_\mb \big(u_\PM,E'(u_-){-}E'(u_+)\big).  
\end{equation}
\end{proposition}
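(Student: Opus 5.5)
Throughout, $u_\PM$ is fixed and suppressed; write $\delta_* := E'(u_-){-}E'(u_+)$. The claim \eqref{eq:bfJ.Rmemb} is then equivalent to
\[
\bfR^*_\mb(\delta_*) = \ol M(0) = \tdfrac12\big(H(u_+){-}H(u_-)\big)^2 .
\]
So I would prove two things: that $\bfR_\mb=\ol M(\cdot)-\ol M(0)$ is a dissipation potential, and that its Legendre dual at $\delta_*$ equals $\ol M(0)$.

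\textbf{Step 1: $\bfR_\mb$ is a dissipation potential.} I use the transformation $u=\psi(w)$ from \eqref{eq:Cvx.Ass.psi}. The map
\[
(w,Q)\mapsto \bfM(\psi(w),Q)=\int_0^1\Big(\frac{Q^2}{2\sfn_\psi(w)}+\frac{(\pl_y w)^2}{2\sfg_\psi(w)}\Big)\dd y
\]
is jointly convex, as a sum of perspective-type functions with concave denominators. The boundary constraint $w\in\psi^{-1}(\bfP(u_\PM))$ is affine, so the marginal $\kappa\mapsto \ol M(\kappa)$ is convex. It is also lower semicontinuous, which follows from the lsc of $\wt\mfD_\mfM$ (Section \ref{suA:CvxFunctional}) together with coercivity in $w$. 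Moreover $\ol M(\kappa)=\ol M(-\kappa)$, since $\bfM$ is even in $Q$. A convex even function attains its minimum at $0$. Hence $\bfR_\mb\ge0$, $\bfR_\mb(0)=0$, and $\bfR_\mb$ is convex and lsc, i.e.\ a dissipation potential.

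\textbf{Step 2: reduction of the claim to a pointwise inequality.} Since $\bfR^*_\mb(\delta)=\sup_\kappa(\kappa\delta-\ol M(\kappa))+\ol M(0)$, the identity $\bfR^*_\mb(\delta_*)=\ol M(0)$ is equivalent to
\[
\ol M(\kappa)\ge \kappa\,\delta_* \quad\text{for all }\kappa .
\]
Equality holds at $\kappa=0$, since then $\kappa\delta_*=0$ while $\ol M(0)>0$ in general; so this is the only content. To prove the inequality I compare $\bfM$ with the Bodineau–Derrida functional of Remark \ref{rm:BodineauDerrida}. For any $u\in\bfP(u_\PM)$, expanding the square gives
\[
\int_0^1\frac{\big(\kappa+\mob(u)E''(u)\pl_y u\big)^2}{2\mob(u)}\dd y
=\bfM(u,\kappa)+\kappa\int_0^1 E''(u)\pl_y u\dd y
=\bfM(u,\kappa)+\kappa\big(E'(u_+){-}E'(u_-)\big).
\]
The left-hand side is nonnegative, so $\bfM(u,\kappa)\ge \kappa\delta_*$ for every admissible $u$. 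Taking the infimum over $u$ gives $\ol M(\kappa)\ge\kappa\delta_*$, hence $\bfR^*_\mb(\delta_*)\le\ol M(0)$. The reverse inequality is obtained by choosing $\kappa=0$ in the supremum. This proves \eqref{eq:bfJ.Rmemb}.

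The chain rule $\int E''(u)\pl_y u=E'(u_+){-}E'(u_-)$ needs care when $u$ touches $0$ and $E'(0)=-\infty$ (for example the Boltzmann case). In that case $\delta_*$ may be infinite or the integral may be improper. I would handle this by restricting to competitors with $\bfM<\infty$, which forces $\pl_y H(u)\in\rmL^2$ and hence $u\in\rmC^0$. Where needed I would approximate with $u$ bounded away from $0$ and pass to the limit by monotone convergence. When $u_\pm>0$ the computation is classical.

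\textbf{Step 3: the value $\ol M(0)$.} I also need $\ol M(0)=\tfrac12(H(u_+){-}H(u_-))^2$. The lower bound is Jensen, $\int_0^1(\pl_y H)^2\ge(\int_0^1\pl_y H)^2$. Equality is attained by the profile with $H(u)$ affine in $y$.

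\textbf{Main obstacle.} I expect the main difficulty to be the convexity and lsc of $\ol M$ in Step 1, including attainment of the infimum, which requires the $\psi$-convexification. The duality identity itself is elementary once the square is completed.
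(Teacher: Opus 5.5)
Your Step 1 is correct and matches the paper. The completed-square identity in Step 2 is also correct. But Step 2 proves only half of \eqref{eq:bfJ.Rmemb}, and there is a genuine gap.

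You correctly reduce the claim to $\sup_\kappa\big(\kappa\delta_*-\ol M(\kappa)\big)=0$, since $\bfR^*_\mb(\delta_*)=\sup_\kappa\big(\kappa\delta_*-\ol M(\kappa)\big)+\ol M(0)$. The bound $\ol M(\kappa)\ge\kappa\delta_*$ from nonnegativity of the square gives only $\sup\le 0$, i.e.\ $\bfR^*_\mb(\delta_*)\le \ol M(0)$. It is not equivalent to the identity.

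Your reverse inequality ``by choosing $\kappa=0$'' fails. Taking $\kappa=0$ yields $\sup_\kappa(\kappa\delta_*-\ol M(\kappa))\ge -\ol M(0)$, which gives only the trivial bound $\bfR^*_\mb(\delta_*)\ge 0$. Your own remark that $\ol M(0)>0=0\cdot\delta_*$ shows the inequality is strict at $\kappa=0$, not an equality. What is missing is a flux $\kappa_*$ with $\ol M(\kappa_*)\le\kappa_*\delta_*$. That means a competitor for which $\big(\kappa+\mob(u)E''(u)\pl_y u\big)^2$ vanishes identically.

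This is exactly where the NESS enters, and it is the decisive step of the paper's proof. Take $u_*=u_\mathrm{NESS}$ from \eqref{eq:ExplicitNESS}. Then
\[
\mob(u_*)E''(u_*)\pl_y u_*=\pl_y G(u_*)\equiv G(u_+)-G(u_-),
\]
and set $\kappa_*:=G(u_-)-G(u_+)$. Your expansion then gives $\bfM(u_*,\kappa_*)=\kappa_*\delta_*$. This is finite for $u_\pm>0$, since $u_*$ lies between $u_-$ and $u_+$ and is therefore bounded away from $0$. Hence $\ol M(\kappa_*)\le\kappa_*\delta_*$, so $\sup_\kappa(\kappa\delta_*-\ol M(\kappa))\ge 0$, i.e.\ $\bfR^*_\mb(\delta_*)\ge\ol M(0)$.

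This profile ($G(u)$ affine, nonzero flux $\kappa_*$) is not the one in your Step 3 ($H(u)$ affine, zero flux). That one computes $\ol M(0)$ but plays no role in the duality identity. With this addition your argument becomes the paper's proof.
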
 
\begin{proof} Since $(w,Q)\mapsto \bfM(\psi(w),Q)$ is jointly convex, we see
that $\kappa\mapsto \ol M(u_\PM,\kappa)$ is convex as well. Since
$ \bfM(u,\cdot)$ is even, also $\ol M(u_\PM,\cdot)$ is even. Hence,
$\bfR_\mb(u_\PM, \cdot)$ defined as above is an even dissipation potential.

To establish the representation of $\ol M$ via $\bfR_\mb$ and $\bfR^*_\mb$, it
remains to show $\ol M(u_\PM,0)=\bfR^*_\mb \big(
u_\PM,E(u_-){-}E(u_+)\big)$. To see this, we do the following elementary
calculations:
\begin{align*}
&\bfR^*_\mb \big( u_\PM, E'(u_-){-} E'(u_+) \big) - \ol M(u_\PM,0) 
\\
& \overset{(1)}= \sup\Bigset{ \kappa \big(E'(u_-){-} E'(u_+)\big) 
  - \bfR_\mb(u_\PM, \kappa) }{ \kappa \in \R} - \ol M(u_\PM,0)
\\
& \overset{(2)} = \sup\Bigset{ \kappa \big(E'(u_-){-} E'(u_+)\big) 
  - \ol M(u_\PM, \kappa)+ \ol M(u_\PM,0) }{ \kappa \in \R}
  - \ol M(u_\PM,0)
\\
& \overset{(3)} = \sup \Bigset{\int_0^1 \!\!\Big( -\kappa \pl_y E'(u(y))
  - \frac{\kappa^2}{2\mob(u)} - \frac{\mob(u)}2 |\pl_y E'(u)|^2 \Big)
  \dd y }{ \kappa \in \R, \ u\in \bfP(u_\PM)}
\\
&= - \inf \Bigset{\int_0^1 \frac12 \Big( 
   \frac{\kappa}{\sqrt{\mob(u)}} + \sqrt{\mob(u)}\, \pl_y E'(u) 
  \Big)^2 \dd y }{ \kappa \in \R, \ u\in \bfP(u_\PM) }.
\end{align*}
Here, $ \overset{(1)}= $ uses the definition of $\bfR^*_\mb$ in terms of
$\bfR_\mb$,  $ \overset{(2)}= $ uses the definition of $\bfR_\mb$ via $\ol M$,
and $ \overset{(3)}= $ uses the definition of $\ol M$ via $\bfM$. 
If we insert $u_\mafo{NESS}(u_\PM,\cdot)$, then $\mob(u)\pl_y(E'(u))$ equals a
constant $\wt\kappa(u_\PM)$ by the NESS equation \eqref{eq:NESS.Eqns}, such
that choosing $\kappa=-\wt\kappa(u_\PM)$ makes the infimum equal to $0$.
Hence, the infimum equals $0$ and \eqref{eq:bfJ.Rmemb} is established.
\end{proof}

\subsection{Explicit relations for power-law structures}
\label{su:PowerLaw}

We consider the \SSS interface problem with \EEE  
\[
\text{power-law mobility } \ \SSS \mob(v) \EEE =v^\beta \quad \text{and} \quad
  \varkappa\text{-energy } \  E=\sfE_\varkappa, \quad \text{i.e.\ }
  E''(v)=v^{\varkappa-2}, 
\] 
where $\beta>0 $ and $ \varkappa> 0$. We define the functionals 
\begin{align*}
&\bfM_{\beta,p}(u,\kappa) := \int_0^1 \Big( 
 \frac{\kappa^2}{2u^\beta} + \frac{u'(y)^2}{2 u^{4-2p-\beta}} \Big) \dd y,&&
\ol M_{\beta,p}(u_\PM,\kappa)= \inf_{u\in \bfD(u_\PM)} \bfM_{\beta,p}(u,\kappa),
\\
&\bfN_\sigma(u,\kappa):=\bfM_{\sigma,2-\sigma}(u,\kappa), &&
\ol N_\sigma(u_\PM,\kappa) := \inf_{u\in \bfD(u_\PM)} \bfN_\sigma(u,\kappa).
\end{align*}
Clearly, $\bfN_\sigma$ is positively $(2{-}\sigma)$-homogeneous, i.e.\ $
\bfN_\sigma\big(\lambda  \,u,\lambda\,\kappa\big) =
 \lambda ^{2-\sigma}  \bfN_\sigma(u,\kappa)$ .

In Lemma \ref{le:Expl.olN.sigma} we show that all $\ol M_{\beta,p}$ can be expressed
in terms $\ol N_\sigma$ for a suitable $\sigma$.  Moreover, we derive growth
bounds for $\kappa \mapsto \ol N_\sigma(u_\PM,\kappa)$. This implies that 
the corresponding  membrane dissipation potential
$\wt\bfR_{\beta,\varkappa}$ (for the case $\mob(u)=u^\beta$ and $E=\sfE_p$)
takes the form  
\[
\wt\bfR_{\beta,\varkappa}\big(v_\PM, \kappa) = \frac1{\alpha^2} \Big(
 \ol N_{\beta/\alpha} \big( v_\PM^\alpha,\alpha \kappa\big) -  \ol
 N_{\beta/\alpha} \big( v_\PM^\alpha,0) \Big)  \quad
\text{with } \alpha=\varkappa+\beta -1. 
\]
We will see that $\sigma=\beta/\alpha=1$ is the critical case, which happens for
the Boltzmann case $\varkappa=1$. Additionally, the case $\sigma\in {[0,1[}$ is
interesting, which corresponds to $\varkappa>1$. For the kinetic relation
$[\xi]_0 = \pl_\kappa \wt\bfR_{\beta,\varkappa }(v_\PM, \kappa)$ between the
flux $\kappa$ and the jump $[\xi]_0$ of the chemical potentials, we then find
the asymptotic power-law behavior
\begin{equation}
  \label{eq:GrwothExpo}
   \kappa \ \sim\   [\xi]_0^{\alpha/(\varkappa{-}1)} \quad  \text{for }
   [\xi]_0  \to \infty .
\end{equation}
To see this, we exploit the expression of $\wt\bfR_{\beta,\varkappa} $ in terms
of $\ol N_\sigma$ with $\sigma=\beta/\alpha \in {]0,1[}$. The following upper
and lower bounds \eqref{eq:olN.uppB1} and \eqref{eq:olN.uppB0} show that the
convex function $\kappa \mapsto \ol N_\sigma(u_\PM,\kappa)$ behaves like
$|\kappa|^{2-\sigma}$. Hence the dual membrane potential
$ \SSS \wt\bfR_\mb^* ( \EEE v_\PM, \delta) $ behaves like
$|\delta|^{1+1/(1{-}\sigma)}$. Thus, the kinetic relation has a power-law
behavior with power $1/(1{-}\sigma)=
\alpha/(\alpha{-}\beta)=\alpha/(\varkappa{-}1)$.  

We now characterize $\bfM_{\beta,p}$ in terms of $\bfN_\sigma$ and give some 
estimates on $\ol N_\sigma$. 

\begin{lemma}
\label{le:Expl.olN.sigma} 
We have the relations 
\begin{align}
    \label{eq:Rel.bfJ.beta.gamma}
&\bfM_{\beta,p}(v,\kappa)
   = \frac1{\alpha^2} \bfN_{\beta/\alpha} \big( v^\alpha, \alpha\kappa
   \big) \quad \text{with } \alpha = p+\beta -1.
\end{align}
Moreover, the function $\ol N_\sigma$ satisfies 
\begin{subequations}
\begin{align}
\label{eq:olN.0.expl}
\sigma=0:\quad & \ol N_0(u_\PM,\kappa) = \frac12(u_+{-}u_-)^2 + \frac12\kappa^2 ,
\\
\label{eq:olN.1.expl}
\sigma=1:\quad &\ol N_1(u_\PM,\kappa)=  \sqrt{u_-u_+}\, \CCC
\big( \kappa/\sqrt{ u_-u_+}\big) + 2\big(\sqrt{u_-}-\sqrt{u_+}\big)^2,
\\
\label{eq:olN.ksppa=0}
\sigma\geq 0:\quad & \ol N_\sigma(u_\PM,0) = 2 \big| \frac{u_-^{1-\sigma/2} -
  u_+^{1-\sigma/2}}{2-\sigma} \big|^2 ,
\\
\label{eq:olN.uppB1}
 \sigma \in {]0,1[}:\quad & \ol N_\sigma(u_\PM,\kappa) \leq  c_\sigma
 |\kappa|^{2-\sigma}  \text{ for }\kappa^2\geq
               \tfrac{2{-}\sigma}\sigma (u_+{-}u_-)^2 \ \text{ with }
  c_\sigma= \tfrac{2^{1-\sigma} (2{-}\sigma)^{1-\sigma}}{(1{-}\sigma)\sigma^{\sigma}} ,
\\
\label{eq:olN.uppB0}
 \sigma \in {]0,1[}:\quad & \ol N_\sigma(u_\PM,\kappa) \geq
  \wh c_\sigma |\kappa|^{2-\sigma} \big( 
 \AAA \min\{1, \tfrac{3\kappa}{4(u_+{+}u_-)}\} \EEE \big)^\sigma 
\ \text{ with } \wh c_\sigma= \tfrac{\AAA(1{-}\sigma)^{\sigma-1}\!} {
  4\,\sigma^\sigma}, 
\\
\label{eq:olN.uppB2}
\sigma\geq 0:\quad & \ol N_\sigma(u_\PM, \kappa) \leq \frac{\kappa^2+(u_+{-}u_-)^2}{2} 
 \frac{u_+^{1-\sigma} - u_-^{1-\sigma}}{(1{-}\sigma)\,(u_+{-}u_-)} ,
\\
\label{eq:olN.uppB3}
\sigma>1:\quad &\ol N_\sigma(u_\pm,\kappa)\geq \frac{|\kappa|}{\sigma{-}1}
\Big| \frac1{u_+^{\sigma-1}} -  \frac1{u_-^{\sigma-1}}\Big|,
\\
\label{eq:olN.uppB4}
\sigma>1:\quad &\ol N_\sigma(u_\pm,\kappa)\leq \frac{2|\kappa|}{\sigma{-}1}
\Big| \frac1{u_+^{\sigma-1}} +  \frac1{u_-^{\sigma-1}}\Big| \ \text{ for }
|\kappa| \geq | u_+{-}u_-|. 
\end{align}
\end{subequations}
\end{lemma}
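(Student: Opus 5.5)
The plan is to prove all relations in Lemma~\ref{le:Expl.olN.sigma} by first reducing everything to the one-parameter family $\bfN_\sigma$. For $\bfN_\sigma$ I would then use the autonomous structure of its Euler--Lagrange equation for the explicit formulas, and well-chosen test functions or pointwise inequalities for the bounds.

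\emph{Step 1: the scaling relation \eqref{eq:Rel.bfJ.beta.gamma}.} Set $w=v^\alpha$ with $\alpha=p{+}\beta{-}1$ and $\sigma=\beta/\alpha$. Then $w^\sigma=v^\beta$ and $w'=\alpha v^{\alpha-1}v'$. The integrand of $\bfN_\sigma(w,\alpha\kappa)$ is $(\alpha^2\kappa^2+w'^2)/(2w^\sigma)$, which becomes $\alpha^2\kappa^2/(2v^\beta)+\alpha^2 v^{2\alpha-2-\beta}v'^2/2$. Since $2\alpha-2-\beta=2p+\beta-4$, this is exactly $\alpha^2$ times the integrand of $\bfM_{\beta,p}$. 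The map $v\mapsto v^\alpha$ is a bijection of the admissible classes (boundary data $v_\PM\mapsto v_\PM^\alpha$), so the relation passes to the infima.

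\emph{Step 2: explicit formulas.} The integrand $L(w,w')=(\kappa^2+w'^2)/(2w^\sigma)$ does not depend on $y$. Hence minimizers satisfy the Beltrami first integral $L-w'\pl_{w'}L=\text{const}$, i.e.\ $w'^2=\kappa^2+c\,w^\sigma$ for a constant $c$ fixed by the boundary data.
\begin{itemize}
\item For $\sigma=0$ the profile is affine, and $\bfN_0=\frac12(\kappa^2+(u_+{-}u_-)^2)$, giving \eqref{eq:olN.0.expl}.
\item For $\kappa=0$ the minimizer makes $w^{1-\sigma/2}$ affine. This is the NESS, and the value is $\frac12\big(\int_{u_-}^{u_+} r^{-\sigma/2}\,\rmd r\big)^2$, which is \eqref{eq:olN.ksppa=0}.
\item For $\sigma=1$ I would substitute $w=z^2$, which gives $\bfN_1=\int\big(\kappa^2/(2z^2)+2z'^2\big)\dd y$. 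Its first integral is solvable: $z^2$ is a quadratic polynomial in $y$. Evaluating the value with the constant $c$ determined through $\sinh^{-1}$ produces $\sqrt{u_-u_+}\,\CCC(\kappa/\sqrt{u_-u_+})+2(\sqrt{u_-}{-}\sqrt{u_+})^2$. As a cross-check I would verify that the Legendre transform in $\kappa$ reproduces the cosh formula for $\wt\bfR^*_{\beta,1}$ announced in the introduction, so that the calculation is consistent with Proposition~\ref{pr:Rmb.olM}.
\end{itemize}

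\emph{Step 3: bounds.}
\begin{itemize}
\item The upper bound \eqref{eq:olN.uppB2} follows by inserting the affine competitor: $w'$ is constant, and $\int_0^1 w^{-\sigma}\dd y=(u_+^{1-\sigma}{-}u_-^{1-\sigma})/((1{-}\sigma)(u_+{-}u_-))$.
\item For \eqref{eq:olN.uppB1}, with $\sigma\in{]0,1[}$ and large $|\kappa|$, I would use a competitor that rises from the boundary values to a level $\lambda$ with slope of order $|\kappa|$. The cost is then roughly $\kappa^2/\lambda^\sigma+\lambda^{2-\sigma}$, and optimizing in $\lambda\sim|\kappa|$ (consistent with the $(2{-}\sigma)$-homogeneity of $\bfN_\sigma$) gives the $|\kappa|^{2-\sigma}$ rate. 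The hypothesis $\kappa^2\geq\frac{2-\sigma}{\sigma}(u_+{-}u_-)^2$ should be exactly what ensures the profile $w'^2=\kappa^2+cw^\sigma$ with $c>0$ is admissible.
\item For the lower bound \eqref{eq:olN.uppB0} I would argue pointwise. Let $m=\max w\le \max\{u_\pm\}+\int|w'|$. Then $\int \kappa^2/(2w^\sigma)\geq \kappa^2/(2m^\sigma)$, while $\int|w'|$ is controlled by $\big(\int w'^2/w^\sigma\big)^{1/2}m^{\sigma/2}$. Combining and optimizing should produce the factor $\min\{1,\tfrac{3\kappa}{4(u_++u_-)}\}^\sigma$.
\item For $\sigma>1$, Young gives $\kappa^2+w'^2\geq2|\kappa||w'|$. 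Hence $\bfN_\sigma\geq|\kappa|\int|w'|w^{-\sigma}\geq|\kappa|\,|u_+^{1-\sigma}{-}u_-^{1-\sigma}|/(\sigma{-}1)$, which is \eqref{eq:olN.uppB3}.
\item For \eqref{eq:olN.uppB4} I would choose a competitor with $|w'|=|\kappa|$ on most of $[0,1]$, going monotonically from the boundary values up to a large value and back. This is possible when $|\kappa|\ge|u_+{-}u_-|$. Its cost is at most twice the sum of the two one-sided terms.
\end{itemize}

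The main obstacle is the sharp constants: the explicit $\sigma=1$ computation, where $c$ must be solved for through $\sinh^{-1}$, and the exact constants $c_\sigma$ and $\wh c_\sigma$ in \eqref{eq:olN.uppB1}--\eqref{eq:olN.uppB0}. For these I would work with the first integral and the homogeneity rather than with a general competitor.
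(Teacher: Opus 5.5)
Your reduction to $\bfN_\sigma$, the cases $\sigma=0$ and $\kappa=0$, and the bounds \eqref{eq:olN.uppB2}, \eqref{eq:olN.uppB3}, \eqref{eq:olN.uppB4} follow the paper: direct substitution, the affine competitor, $\kappa^2+w'^2\ge 2|\kappa||w'|$, and the tent with slopes $\pm|\kappa|$. For \eqref{eq:olN.uppB0} your idea is also the paper's. Use the sharper excursion estimate $\int_0^1|w'|\dd y\ge 2m-u_+-u_-$ with $m=\max w$. This gives $\bfN_\sigma\ge\big(\kappa^2+(2m{-}u_+{-}u_-)^2\big)/(2m^\sigma)$, and you then optimize in $m$.

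The genuine difference is at $\sigma=1$. The paper never solves the Euler--Lagrange equation for $\bfN_1$. In Section \ref{su:BoltzmannPower} it passes to the dual BER function and substitutes $\xi=\log(u z^{-2/\beta})$, so the saddle point satisfies $z''=0$. This yields the cosh potential $\wt\bfR^*_{\beta,1}$ for every $\beta$, and $\ol N_1$ is recovered through Proposition \ref{pr:Rmb.olM}. Your direct route also works and avoids the saddle-point machinery. Since $(w,w')\mapsto(\kappa^2+w'^2)/(2w)$ is jointly convex, any positive solution of $2ww''=w'^2-\kappa^2$ with the boundary values is the global minimizer. Such a solution is a quadratic with $w'^2=\kappa^2+4Aw$, $A=w''/2$, and its value is $\kappa^2\int_0^1\dd y/w+2A$.

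Two corrections to your sketch. First, $\sinh^{-1}$ does not enter through the constant. Writing $A=-a$, the constant solves the quadratic $a^2+2a(u_++u_-)+(u_+{-}u_-)^2=\kappa^2$, so $a=\sqrt{4u_-u_++\kappa^2}-(u_++u_-)$; this produces the algebraic terms. The $\sinh^{-1}$ comes from the partial-fraction integral $\int_0^1\dd y/w$. Second, you need the case split $|\kappa|\gtrless|u_+{-}u_-|$ (concave versus convex parabola), plus $u_-u_+=0$. The paper's route gives the Marcelin--De\;Donder relation for all $\beta$ at once; yours is elementary and self-contained.

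Where your plan goes wrong is \eqref{eq:olN.uppB1}. First, your reading of the hypothesis is backwards. A solution of $w'^2=\kappa^2+c\,w^\sigma$ with $c>0$ has $|w'|>|\kappa|$ everywhere. It is therefore monotone and exists only if $|u_+{-}u_-|>|\kappa|$, which is the opposite regime. The hypothesis $\kappa^2\ge\frac{2-\sigma}{\sigma}(u_+{-}u_-)^2$ says $a|\kappa|\ge|u_+{-}u_-|$ with $a=\sqrt{\sigma/(2{-}\sigma)}$, the minimizer of $a\mapsto(1{+}a^2)/a^\sigma$. That is exactly the admissibility of the tent $\hat u(y)=\min\{u_-{+}a|\kappa|y,\,u_+{+}a|\kappa|(1{-}y)\}$ used in the paper.

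To get the bound, compute $\bfN_\sigma(\hat u,\kappa)$ exactly. Then remove the boundary values using
\[
2^\sigma(x{+}u_+{+}u_-)^{1-\sigma}-u_+^{1-\sigma}-u_-^{1-\sigma}\le 2^\sigma x^{1-\sigma}\quad\text{for } x=a|\kappa|\ge|u_+{-}u_-|,
\]
which follows from subadditivity of $r\mapsto r^{1-\sigma}$. The first integral is no help here: for general $\sigma$ it gives no closed-form value.

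Second, do not aim for the printed $c_\sigma$, because it is too small for $\sigma$ near $1$. Take $u_\pm=0$, $\kappa=1$, and $m=\max w$. AM--GM gives $\bfN_\sigma(w,1)\ge\int_0^1|w'|w^{-\sigma}\dd y\ge 2m^{1-\sigma}/(1{-}\sigma)$, and trivially $\bfN_\sigma(w,1)\ge 1/(2m^\sigma)$. For $\sigma=0.9$ the larger of the two is always at least $13.8$, whereas $c_{0.9}\approx 11.9$. Dropping the factor $\frac12$ in $\bfN_\sigma$ only widens the gap. The tent yields the constant $2^\sigma(2{-}\sigma)^{\sigma/2-1}/\big((1{-}\sigma)\sigma^{\sigma/2}\big)$. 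This suffices, since downstream only the rate $|\kappa|^{2-\sigma}$ in \eqref{eq:GrwothExpo} is used.
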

\begin{proof}
\RRR A direct calculation \EEE yields the relation
$ \bfM_{\beta,p} (w^\sigma, \kappa)= \sigma^2 \bfM_{\sigma\beta, \wt p_\sigma}
(w,\kappa/\sigma)$ with $\wt p_\sigma =1+\sigma(p{-}1)$, and
\eqref{eq:Rel.bfJ.beta.gamma} follows. \AAA The functional $\bfN_\sigma$ has
the explicit form
\[
\bfN_\sigma(u) = \int_0^1 \frac{\kappa^2 + (u')^2}{u^\sigma} \dd y.
\] 
Hence, the \EEE identities \eqref{eq:olN.0.expl} and
\eqref{eq:olN.ksppa=0} are obvious.  Identity \eqref{eq:olN.1.expl} for
$\ol N_1$ is established in Section \ref{su:BoltzmannPower}. Estimate
\eqref{eq:olN.uppB1} follows by inserting
$\wh u(y)=\min\{u_-{+}a\kappa y, u_+{+}a\kappa(1{-}y)\} $ with
$a\kappa \geq |u_+{-}u_-|$. A direct calculation gives
\begin{equation}
  \label{eq:Nsigma.linTest}
  N_\sigma(\wh u,\kappa) = \frac{\kappa(1{+}a^2)}{a(1{-}\sigma)} \Big( 2^\sigma
\big(a\kappa{+}u_+{+}u_-\big)^{1-\sigma} - u_+^{1-\sigma} - u_-^{1-\sigma}\Big)
\leq   \frac{\kappa^{2-\sigma}(1{+}a^2)2^\sigma}{a^\sigma(1{-}\sigma)} .
\end{equation}
For $\sigma \in {]0,1[}$ we choose the optimal $a=(2/(2{-}\sigma))^{1/2}$ giving
the desired result.  

Estimate \eqref{eq:olN.uppB2} is obtained by inserting $u(y)=(1{-}y)u_-+y u_+$
into $\bfN_\sigma$. 

For the lower bound \eqref{eq:olN.uppB0} \SSS we set \EEE 
$u_*=\max\{ u(y)\} \geq \max\{u_+,u_-\}$ \SSS and \EEE estimate 
\begin{align*}
  \bfN_\sigma(u,\kappa)&\geq \int_0^1 \frac{\kappa^2+ |u'|^2}{2 \,u_*^\sigma}
  \dd y \geq \frac{\kappa^2 +(2 u_*{-}u_+{-}u_-)^2}{2\,u_*^\sigma}.
\end{align*}
For the second estimate one minimizes $\int_0^1 |u'|^2\dd y $ over all
functions with $u(0)=u_-$, $u(1)=u_+$, and \AAA $u(y_*)=u_*$ for some
$y_*\in [0,1]$. \EEE The nominator can be estimated from below by
$\kappa^2/2 + a u_*$ with \AAA
$a=4\big( ((u_+{+}u_-)^2{+}\kappa^2/2)^{1/2}-u_+{-}u_-\big)$. \EEE Minimizing
with respect to $u_*$ we obtain the lower bound
$\hat c_\sigma |\kappa|^{2-2\sigma}a^\sigma$. Using
$\sqrt{1{+}y^2}-1 \geq \min\{ 3y^2/8, y/2\}$ \AAA for
$y=\kappa/(\sqrt2(u_+{+}u_-))$ \EEE we can estimate $a$ from below, and the
lower bound \eqref{eq:olN.uppB0} follows.

For $\sigma>1$ we proceed similarly and obtain
\[
\bfN_\sigma (u,\kappa) \geq \int_0^1 |\kappa| \frac{|u'|}{u^\sigma} \dd y \geq 
|\kappa| \Big( \int_{u_-}^{u_*} \frac{\dd u}{u^\sigma} +  \int_{u_+}^{u_*}
\frac{\dd u}{u^\sigma}\Big) = \frac{|\kappa|}{\sigma{-}1} \Big( u_+^{1-\sigma}
+  u_-^{1-\sigma} -2 u_*^{1-\sigma}\big). 
\]
Here, we used $\sigma>1$ and exploiting $ u_* \geq \max\{u_+,u_-\}$ gives the
lower bound \eqref{eq:olN.uppB3}.

Using \eqref{eq:Nsigma.linTest} for $\sigma>1$ we can drop the negative term
$\frac{2^\sigma}{1{-}\sigma}(a\kappa{+}u_+{+}u_-)^{1-\sigma}$ and the desired
estimate \AAA \eqref{eq:olN.uppB4} \EEE follows by choosing
$a=\mafo{sign}(\kappa)$.   
\end{proof}

\subsection{Boltzmann entropy and power-law mobility}
\label{su:BoltzmannPower}

Here we present a very short method to find $\wt \bfR_{\beta,1}$
explicitly. Surprisingly, it is possible to calculate $\ol M_{\beta,1}$
explicitly without determining the associated saddle points. A first, but much
longer derivation is contained in \cite{LMPR17MOGG}, for a slightly more general
approach than the one presented here, see \cite[Thm.\,V.1]{Miel23NESS}.
To find $\ol M_{\beta,1}$ we transform the corresponding BER functional by
inserting $\xi = \log (u z^{-2/\beta})$ and obtain the very simple result
\[
\wt\bfB(u,z) = \bfB_{\beta,1}(u,\log(u z^{-2/\beta})) 
= - \int_0^1 \frac{2}{\beta^2} \,z'\, \big( \tdfrac{u^\beta}z \big)' \dd y . 
\]
Since $(u,\xi) \mapsto \bfB(u,\xi)$ has a unique saddle point, given the
boundary condition $ u \in \bfP(u_\PM)$ and $\xi \in \bfD(\xi_\PM)$, the same
is true for $(u,z) \mapsto \wt \bfB(u,z)$. It can be found by searching for the
unique critical point via 
\[
0= \rmD_u \wt \bfB(u,z) =  \frac{2 u^{\beta-1}}{\beta z}\: z''
\quad \text{and} \quad 
0= \rmD_z \wt\bfB(u,z) = \cdots .
\]
We do not need the second equation, because $ 0= z''$
implies that $y \to z(y)$ is affine, giving $z'(1)=z'(0)=z(1){-}z(0)$. 
Moreover, integration by parts gives 
\[
\wt\bfB(u,z)= -\frac2{\beta^2} \big( z'(1) u_+^\beta/z(1)  - z'(0)
u_-^\beta/z(0)\big) = \frac2{\beta^2} \big( z(1)-z(0)\big) \big( 
 u_-^\beta/z(0) -  u_+^\beta/z(1)   \big) 
\]
Using the transformation $\xi = \log(u z^{-2/\beta})$ we find the boundary
conditions 
\[
z(1)= u_+^{\beta/2} \ee^{-\beta \xi_+/2} \quad \text{and} \quad 
z(0)= u_-^{\beta/2} \ee^{-\beta \xi_-/2} .
\] 
Inserting this we find the desired expression 
\begin{align*}
\ol B_{\beta,1}(u_\PM,\xi_\PM) &= \wt\bfB(u,z)= \frac2{\beta^2}   \big( 
  u_+^{\beta/2} \ee^{-\beta \xi_+/2} - u_-^{\beta/2} \ee^{-\beta \xi_-/2}
 \big) \big( 
 u_-^{\beta/2} \ee^{\beta \xi_-/2}-  u_+^{\beta/2} \ee^{\beta \xi_+/2} 
\big)
\\
& =\frac{2}{\beta^2} \Big( u_-^{\beta/2}u_+^{\beta/2}\big( \ee^{\beta \delta/2} 
+  \ee^{-\beta \delta/2}\big) - u_+^\beta  -u_-^\beta \Big) 
\\
& = \frac1{\beta^2} \Big( u_-^{\beta/2}u_+^{\beta/2} \CCC^*(\beta \delta) - 
2\big( u_+^{\beta/2} - u_-^{\beta/2} \big)^2 \Big)  
\quad \text{with } \delta= \xi_+- \xi_-. 
\end{align*}

Thus,  we have derived the explicit formulas 
\[
\wt\bfR^*_{\beta,1}(u_\PM,\delta)= \frac{ u_-^{\beta/2}u_+^{\beta/2}}{\beta^2}
\CCC^*(\beta \delta) , \quad 
\wt\bfR_{\beta,1}(u_\PM,\kappa) =   \frac{u_-^{\beta/2}u_+^{\beta/2}}{\beta^2}
  \,\CCC\big(\tdfrac{\beta\:\kappa}{ u_-^{\beta/2}u_+^{\beta/2}} \big).
\] 
Moreover, from
$\ol N_1(u_\PM,\kappa)= \wt\bfR_{1,1}(u_\PM,\kappa) + \wt\bfR^*_{1,1} \big( u_\PM,
\log(u_-/u_+) \big)$ we obtain identity \eqref{eq:olN.1.expl}.

\appendix

\section{Appendix}
\label{se:App}

Here we provide some auxiliary results that are not problem specific and are of
general interest. 

\subsection{The entropy relative-entropy  estimate involving $\CCC$}
\label{su:App.calC}

We introduce the perspective function of $\CCC$ defined via
by 
\[
\wh \CCC(a,s) := \left\{ \ba{cl} a\,\CCC(s/a)& \text{for } a>0,\\ 
 0&\text{for } (a,s)=(0,0),\\ \infty&\text{otherwise}.  \ea \right. 
\]
The function $\wh \CCC: \R_\geq \ti \R \to [0,\infty]$ is lower
semi-continuous, jointly convex, and non-increasing in $a$. 
The following result is used in Proposition \ref{pr:kappa.int} to obtain
higher integrability. A similar estimate is provided in
\cite[Prop.\,A.1]{HeMiSt25?DCLN}. 

\begin{proposition}\label{pr:calC.estimate}
Let $q>1$ and assume $a\in \rmL^q(\Sigma)$ with $a\geq 0$ a.e.\ Then,
for every measurable function $s:\Sigma\to \R$ we have the estimate 
\[
 \int_\Sigma \CCC(s(y)) \dd y  \leq \frac{q}{q{-}1} \int_\Sigma
 \wh\CCC(a(y),s(y)) \SSS \dd y + \EEE \frac{2}{q{-}1} \|
 a\|_{\rmL^q(\Sigma)}^q . 
\]
\end{proposition}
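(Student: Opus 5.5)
The estimate is pointwise in $y$, so I plan to prove the scalar inequality
\[
\CCC(s) \leq \tfrac{q}{q{-}1}\, \wh\CCC(a,s) + \tfrac{2}{q{-}1}\, a^q \qquad \text{for all } a\geq 0,\ s\in\R,
\]
and then integrate over $\Sigma$. For $a=0$ and $s\neq 0$ the right-hand side is $+\infty$, and for $(a,s)=(0,0)$ both sides vanish, so only $a>0$ matters. Since $\CCC$ is even and increasing in $|s|$, I may take $s\geq 0$.

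The main tool is a comparison between $\CCC(s)$ and $a\CCC(s/a)$ based on $\CCC(s)\approx 2s\log s$ for large $s$. First I would compute $\wh\CCC(a,s)=2s\sinh^{-1}(s/(2a)) -2\sqrt{4a^2+s^2}+4a$. Its derivative in $a$ is $-2\sqrt{4a^2+s^2}/a\cdot(\ldots)+4$, and comparing it with $\CCC(s)=\wh\CCC(1,s)$ gives
\[
\CCC(s)-\wh\CCC(a,s) = \int_a^1 \pl_b\wh\CCC(b,s)\,\rmd b .
\]
A cleaner route is to use the dual representation $\CCC(s)=\sup_\zeta(s\zeta-\CCC^*(\zeta))$ together with $\wh\CCC(a,s)=\sup_\zeta(s\zeta - a\CCC^*(\zeta))$, with $\CCC^*(\zeta)=4\cosh(\zeta/2)-4$. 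Choose the optimal $\zeta_*$ for $\CCC(s)$, so that $s=\CCC^{*\prime}(\zeta_*)=2\sinh(\zeta_*/2)$. Then
\[
\CCC(s) = s\zeta_* - \CCC^*(\zeta_*) = \tfrac{q}{q-1}\bigl(s\tfrac{q-1}{q}\zeta_* - a\CCC^*(\tfrac{q-1}{q}\zeta_*)\bigr) + \bigl[a\tfrac{q}{q-1}\CCC^*(\tfrac{q-1}{q}\zeta_*) - \CCC^*(\zeta_*)\bigr].
\]
The first term is at most $\tfrac{q}{q-1}\wh\CCC(a,s)$. It then remains to show that the bracket is bounded by $\tfrac{2}{q-1}a^q$, uniformly in $\zeta_*$.

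This is a one-variable Young-type inequality. With $\lambda=(q{-}1)/q$, I need
\[
\tfrac{a}{\lambda}\,\CCC^*(\lambda\zeta) - \CCC^*(\zeta) \leq \tfrac{2}{q-1}a^q .
\]
Using $\CCC^*(\zeta)\approx 2e^{|\zeta|/2}$, the left side behaves like $\tfrac{2a}{\lambda}X^{\lambda}-2X$ with $X=e^{|\zeta|/2}$. Maximizing over $X$ gives $2(1-\lambda)(a/\lambda)^{1/(1-\lambda)}\cdot$const, which is of order $a^q$, consistent with the claimed constant. The main obstacle is making this exact rather than asymptotic. The $-4$ terms and the cosh-versus-exponential discrepancy must be handled. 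I would use $\CCC^*(\lambda\zeta)\leq \lambda\CCC^*(\zeta)^{\lambda}\cdot c$, or bound $\cosh(\lambda t)-1\leq (\cosh t -1)^\lambda$-type inequalities, and then apply Young's inequality $aY^\lambda \leq \lambda Y + (1-\lambda)a^{1/(1-\lambda)}$ with $Y=\CCC^*(\zeta)$. The constant $2/(q{-}1)$ indicates that this Young step, possibly with a factor $2$ absorbed from the $\cosh$ normalisation, yields the bound. I would check the borderline $q\to1$ and small $\zeta$ separately.

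Finally, I integrate the pointwise bound over $\Sigma$ to obtain the proposition. Measurability is immediate since $\CCC$ and $\wh\CCC$ are lower semicontinuous.
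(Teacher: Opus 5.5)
Your reduction is correct and is essentially the paper's Step~1. It is in fact slightly leaner: by testing with a single $\zeta$ you use only the easy direction, and never need the identity $M(a)=\max_\eta\big(a\CCC^*(\theta\eta)-\theta\CCC^*(\eta)\big)$. After that, however, the whole content of the proposition is the scalar inequality
\[
a\,\CCC^*(\lambda\zeta)-\lambda\,\CCC^*(\zeta)\le \frac{2}{q}\,a^q \qquad \text{for all } \zeta\in\R,\ a\ge 0,\quad \lambda=\frac{q-1}{q},
\]
and you leave it open, offering only a large-$|\zeta|$ heuristic and tentative comparisons. There is no slack in this inequality. Carrying out your asymptotic computation exactly shows that the supremum over $\zeta$ of the left-hand side is asymptotic to $\frac2q a^q$ as $a\to\infty$, so the constant is sharp. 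Consequently the route ``comparison $\CCC^*(\lambda\zeta)\le c\,\CCC^*(\zeta)^\lambda$ plus Young'' works only with exactly $c=2^{1-\lambda}=2^{1/q}$. Young gives $\sup_{Y\ge0}\big(ca\,Y^\lambda-\lambda Y\big)=\frac1q(ca)^q$, so you need $c^q\le 2$; but letting $|\zeta|\to\infty$ forces $c\ge 2^{1-\lambda}$. Your concrete candidate $\cosh(\lambda t)-1\le(\cosh t-1)^\lambda$ is true, but it only yields $c=4^{1-\lambda}$ and hence the constant $\frac{4}{q-1}$ instead of $\frac{2}{q-1}$. The ``factor $2$ absorbed from the cosh normalisation'' is precisely the point that needs proof.

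The missing lemma is the sharp comparison $\CCC^*(\lambda\zeta)\le 2^{1-\lambda}\,\CCC^*(\zeta)^\lambda$, i.e.\ the fact that the ratio $\CCC^*(\lambda\zeta)/\CCC^*(\zeta)^\lambda$ is largest in the limit $|\zeta|\to\infty$. Write $\CCC^*(\zeta)=8\sinh^2(\zeta/4)=2\big(2\sinh(|\zeta|/4)\big)^2$. Then the lemma is equivalent to $2\sinh(\lambda u)\le (2\sinh u)^\lambda$ for $u\ge 0$. The paper obtains this from the monotonicity of $\theta\mapsto \theta^{-1}\log\big(2\sinh(\theta u)\big)$ on ${]0,1]}$: its derivative is $\theta^{-2}\big(\theta u\coth(\theta u)-\log(2\sinh(\theta u))\big)>0$, because $\coth x>1$ and $\log(2\sinh x)<x$. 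Young's inequality $(2^{1/q}a)\,Y^\lambda\le \frac1q\,(2^{1/q}a)^q+\lambda Y$ with $Y=\CCC^*(\zeta)$ then gives exactly $\frac2q a^q$. Dividing by $\lambda$ yields your bracket bound $\frac{2}{q-1}a^q$. Without this step the argument is incomplete, and with your weaker comparison (once proved) it would only give the constant $\frac{4}{q-1}$.
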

\begin{proof} \STEP{1:} 
We set $\theta=1-1/q \in {]0,1[}$ \SSS and \EEE estimate $\wh\CCC$ from below via 
\begin{align}
 \label{eq:whC.bound}
\wh\CCC (a,s)& \geq  \theta \CCC(s) - M(a) \qquad 
\text{with } M(a):= 
  \max\bigset{ \theta \CCC(s) - \wh\CCC(a,s)}{s\in \R}.
\end{align}
For $a\in [0,1]$ we have $\wh\CCC(a,s)\geq \wh\CCC(1,s)=\CCC(s)$,
which implies $M(a)=0$ for $a\in [0,1]$. Taking the Legendre transform
of the inequality defining $M$, we obtain $a\CCC^*(\xi) \leq \theta 
\CCC^*(\xi/\theta) + M(a)$ which gives 
\[
M(a)= \max\bigset{ a \CCC^*(\theta \eta) - \theta \CCC^*(\eta) 
  }{\eta \in \R}.
\]

\STEP{2:} To estimate $M$ we use the special form
$\CCC^*(\xi)=4 \cosh (\xi/2)-4 = 8\big( \sinh(\xi/4)\big)^2$. Starting from the
function $f:{]0,\infty[} \ni \eta \mapsto \log(\ee^\eta{-}\ee^{-\eta}\big)$, we
have $f(\eta) < \eta$ and $f'(\eta)=\coth(\eta)>1$. Hence, the function
$g_\eta: {]0,\infty[} \ni \theta \mapsto \frac1\theta f(\theta \eta)$ satisfies
$g'_\eta(\theta)= \frac1{\theta^2} \big( \theta \eta f'(\theta\eta) - f(\theta
\eta) \big) >0$. From this we conclude $g_\eta(1)\geq g_\eta(\theta) $ for
$\eta>0$ and $\theta \in {]0,1]}$. Exponentiating this we arrive at
\[
\forall \, \eta\in \R\ \forall\, \theta\in {]0,1]}: \quad 
2|\sinh \eta| \geq \big( 2 \sinh(\theta\eta)|\big)^{1/\theta} \quad
\text{or} \quad \CCC^*(\eta)^{q-1} \geq \frac12 \,\CCC^*(\theta \eta )^q. 
\] 

\STEP{3:} With this we estimate $M$ using Young's inequality applied to
$a \CCC^*(\theta \eta)$, namely
\begin{align*}
M(a)
& \leq  \frac1q \big(2^{1/q} a\big)^q + \frac{ q{-}1 }{q}\big(
2^{-1/q}\CCC^*(\theta \eta)\big)^{q/(q-1)} - \theta \CCC^*(\eta) \\
&= 
\frac2q \,a^q + \theta \Big( 2^{1/(1-q)} \CCC^*(\theta \eta)^{q/(q-1)}
 - \CCC^*(\eta) \Big) \leq \frac2q \, a^q.
\end{align*}
Now the desired estimate follows by integrating the pointwise
estimate \eqref{eq:whC.bound}.  
\end{proof}

We point out that the constant $2/(q{-}1)$ in front of $\|a\|_{\rmL^q}^q$ is
optimal for our special choice of $\CCC$. This can be seen easily by
taking $a$ very large.

\subsection{A convex functional on measures}
\label{suA:CvxFunctional}

For a closed  $\Sigma \subset \R^d$ we provide the proper
analytical framework to deal with convex 
functionals of the form
\[
\calF^\circ(w,v) := \int_\Sigma \frac{ \SSS |w(y)|^2 \EEE }{m(y,v(y))} 
 \SSS \dd y \EEE
\]
defined for $(w ,v)\in   \rmL^1(\Sigma;\R^N) \ti\rmL^1(\Sigma)$, 
where $m:\Sigma \ti {[0,\infty[}$ is a Caratheodory function
satisfying 
\begin{equation}
  \label{eq:Cond.m}
  m(y,v)>0 \text{ for }v>0, \quad m(y,v)\leq C(1{+}v), \quad 
m(y,\cdot) \text{ is concave}. 
\end{equation}
The main observation concerning the well-posedness of such functionals
on $\rmL^1(\Sigma)$ stems from the fact that an $\rmL^1$ bound for the
denominator and the finiteness of the functional imply an $\rmL^1$
bound for the nominator. More generally, for $\varrho\in [1,2]$ we have 
\begin{equation}
  \label{eq:A.Holder}
  \| w\|^\varrho_{\rmL^\varrho}= \int_\Sigma |w|^{\varrho}\dd x = \int_\Sigma
\big(\frac{|w|^2}m\big)^{\varrho/2} m^{\varrho/2} \dd x  \leq
\calF^\circ(w,v)^{\varrho/2} 
\big\|m(\cdot,v(\cdot))\big\|_{\rmL^{\varrho/(2{-}\varrho)}}^{1-\varrho/2},
\end{equation}
see also Step 2 in Proposition \ref{pr:SpaTimeComp}. 
Clearly, $\varrho=1$ is critical and yields the desired
$\rmL^1$ bound. 

To define a proper extension $\calF: \mfS\mfM(\Sigma)^N \ti
\mfM(\Sigma) \to [0,\infty]$ of 
$\calF^\circ$ we define the function $h:\Sigma\ti \R^N\ti {[0,\infty[} \to
[0,\infty]$ via 
\[
h(y,w,v)= \left\{ \ba{cl} \frac{|w|^2}{m(y,v)}& \text{for } m(y,v)>0,\\ 
   0     & \text{for } w=0 \text{ and }m(y,v)=0,\\ 
  \infty&\text{for } w\neq 0 \text{ and } m(y,v)=0.
  \ea \right.  
\]
To exploit the lower semi-continuity results in \cite[Thm.\,6.57]{FonLeo07MMCV} 
(in the slightly weaker version of Remark 6.58 there), we first set
$\wt m^\infty(y)=\lim_{v\to +\infty} \frac1v m(y,v)$, which exists by
concavity and lies in ${[0,\infty[}$. Next we set
$m^\infty(y)=\limsup_{x\to y} \wt m^\infty(x)$. With this we obtain the
lower bound $h^\infty$ for the recession function $\wt h^\infty: 
(y,w,v) \mapsto \lim_{\lambda\to \infty}
\frac1\lambda h(y,\lambda w, \lambda v)$, namely 
\[
 h^\infty(y,w,v)= 
  \begin{cases} 
   \frac{|w|^2}{m^\infty(y)v} & \text{for } m^\infty(y)v>0, \\ 
   0 & \text{for }w=0 \text{ and }m^\infty(y)v=0, \\ 
   \infty& \text{for }w\neq 0 \text{ and }m^\infty(y)v=0.
  \end{cases}
\]
The importance is that $h^\infty$ is positively 1-homogeneous, i.e.\
$h^\infty(y,\lambda w,\lambda v)= \lambda h^\infty(y,w,v)$. Thus, it can be
used to describe concentration effects involving measures. 

\begin{theorem}[{Relaxation of $\calF^\circ$ \cite[Thm.\,6.57]{FonLeo07MMCV}}] 
\label{th:Relaxation}
We define $\calF: \mfS\mfM(\Sigma)^N \ti
\mfM(\Sigma) \to [0,\infty]$ as follows 
\begin{equation}
  \label{eq:def.calF}
  \calF(\ol w, \ol v) = \int_\Sigma h(y,w(y), v(y)) \dd y +
  \int_\Sigma h^\infty\big(y, \frac{\rmd \ol w^\rmS}{\rmd \eta},
  \frac{\rmd \ol v^\rmS}{\rmd \eta} \big) \dd \eta ,
\end{equation}
where $\ol w_j  = w_j \LEB^d + \ol  w^\rmS_j$ and $\ol v =  v \LEB^d +
\ol v^\rmS$ are  the Hahn decompositions of (signed)  measures $\ol f$
into a Lebesgue  part $f \LEB^d$ and a singular  part $\ol f{}^\rmS$,
and  $\eta$  is a reference  measure  with   $\ol v^\rmS  +
\sum_{j=1}^d |\ol w_j^\rmS| \,\ll \ol\eta$.
Then, for arbitrary sequences $(u_n)_{n\in \N} $ in
$\rmL^1(\Sigma;\R^N)$ and $(v_n)_{n\in \N}$ in $\rmL^1_{\geq
  0}(\Sigma)$ with
\[
  u_n\LEB^d \weaks \ol u \text{ in } \mfS\mfM(\Sigma)^N \quad
  \text{and} \quad 0\leq v_n\LEB^d \weaks \ol v \text{ in } 
 \mfM(\Sigma),
\]
we have the liminf estimate \ $\liminf_{n\to \infty} \calF^\circ(u_n,v_n) \geq
\calF(\ol u,\ol v)$. 
\end{theorem}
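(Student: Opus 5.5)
The plan is to deduce the statement from the general lower semicontinuity theorem for convex integrands of linear growth on measures, \cite[Thm.\,6.57]{FonLeo07MMCV} in the weakened form of Remark 6.58 there. The work then consists of verifying its hypotheses for the integrand $h$. Concretely, I will show three things.
\begin{itemize}
\item[(a)] $h(y,\cdot,\cdot)$ is convex, lower semicontinuous and nonnegative on $\R^N\ti{[0,\infty[}$.
\item[(b)] $h$ is a normal integrand in $(y,(w,v))$.
\item[(c)] The function $h^\infty$ above is a lower bound for the recession function $\wt h^\infty$, has the required upper-semicontinuity in $y$, and is positively $1$-homogeneous.
\end{itemize}
Once these hold, the singular part of the relaxed functional can be evaluated with $h^\infty$ instead of $\wt h^\infty$; this only decreases the right-hand side, so the stated liminf inequality still follows.

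For (a) I write $h(y,w,v)=\Phi(w,m(y,v))$ with $\Phi(w,s)=|w|^2/s$, extended by $0$ at $(0,0)$ and by $\infty$ for $w\ne0$, $s=0$. This $\Phi$ is the perspective of $|w|^2$. It is jointly convex, lower semicontinuous and nonincreasing in $s$. Since $v\mapsto m(y,v)$ is concave by \eqref{eq:Cond.m}, the composition $h(y,\cdot,\cdot)$ is convex: a nonincreasing convex function composed with a concave one gives a convex function. Lower semicontinuity in $(w,v)$ follows from the continuity of $m(y,\cdot)$ together with the lower semicontinuity of $\Phi$. Nonnegativity is clear. Measurability in $y$, i.e.\ (b), follows from $m$ being Carath\'eodory.

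For (c), fix $(w,v)$ with $v>0$ and compute $\lambda^{-1}h(y,\lambda w,\lambda v)=|w|^2/(m(y,\lambda v)/\lambda)$. By concavity and $m(y,0)\ge0$, the quotient $m(y,\lambda v)/\lambda$ decreases to $\wt m^\infty(y)v$. This gives $\wt h^\infty(y,w,v)=|w|^2/(\wt m^\infty(y)v)$, with the same conventions as in $h$. Replacing $\wt m^\infty$ by its upper-semicontinuous envelope $m^\infty\ge\wt m^\infty$ yields $h^\infty\le\wt h^\infty$. Moreover, $(y,w,v)\mapsto h^\infty(y,w,v)$ is lower semicontinuous, because $m^\infty$ is upper semicontinuous and $\Phi$ is decreasing in $s$. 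The growth bound $m\le C(1{+}v)$ gives $m^\infty\le C$, so $h^\infty(y,w,v)\ge|w|^2/(Cv)$; in particular $h^\infty$ is finite only where it is controlled. Positive $1$-homogeneity is immediate.

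The main obstacle is the $y$-dependence. Theorem 6.57 wants an integrand that is lower semicontinuous jointly in $(y,w,v)$, and Remark 6.58 is needed precisely because $m$ is only Carath\'eodory. If the cited version does not cover this directly, I would instead argue by hand via a dual representation. The first ingredient is to write $h(y,\cdot,\cdot)=\sup_k\big(a_k(y)\cdot w+b_k(y)v+c_k(y)\big)$ using countably many affine minorants. By Scorza--Dragoni, these coefficients can be taken continuous off a set of small Lebesgue measure, which is harmless for the absolutely continuous part. Then I test the weak* convergences with nonnegative $\varphi\in\rmC_\rmc(\Sigma)$. This gives $\liminf\calF^\circ\ge\sup\sum_k\int\varphi_k\,\rmd(a_k\cdot\ol u+b_k\ol v+c_k\LEB^d)$ for disjointly supported families $(\varphi_k)$. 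Finally, the localization lemma identifies this supremum: with the Lebesgue part of $(\ol u,\ol v)$ it recovers $\int h$, and with the singular part it recovers an integrand of the form $\sup_k(a_k\cdot w+b_kv)$, which is bounded below by $h^\infty$. This last identification, where the upper-semicontinuous envelope $m^\infty$ enters, is the step I expect to require the most care.
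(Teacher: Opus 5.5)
Your main route is the paper's. The paper also only invokes \cite[Thm.\,6.57]{FonLeo07MMCV} in the form of Remark~6.58, after introducing $\wt m^\infty$, its upper semicontinuous envelope $m^\infty$ and the bound $h^\infty\le\wt h^\infty$, and it sketches a dual formula with continuous test functions. Your checks (a)--(c) are correct. The gap sits exactly where you suspect it, and your fallback does not close it. The claim that the Scorza--Dragoni exceptional set is ``harmless for the absolutely continuous part'' is false. That set has small Lebesgue measure but may be dense, and the sequence may put all its mass on it. This mass can then reappear in the \emph{absolutely continuous} part of the limit, where your continuous coefficients do not see the true integrand.

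In fact the inequality fails for merely Carath\'eodory $m$, so no version of the cited theorem can cover it. Take $\Sigma=[0,1]$, $N=1$, a fat Cantor set $K$ with $|K|>0$, and $A=\Sigma\setminus K$, which is open and dense. Let $m(y,v)=a(y)v$ with $a=1$ on $A$ and $a=c\in{]0,1[}$ on $K$; this satisfies \eqref{eq:Cond.m}. For every interval $J$ of the partition of $[0,1]$ into intervals of length $1/n$, choose an interval $J'\subset A\cap J$ and set $u_n=v_n=\sum_J \frac{|K\cap J|}{|J'|}\mathbf{1}_{J'}$. Then $u_n\LEB,\,v_n\LEB\weaks\mathbf{1}_K\LEB$ and $\calF^\circ(u_n,v_n)=\int_A v_n\dd y=|K|$. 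The limit has no singular part, and $\calF(\ol u,\ol v)=\int_K c^{-1}\dd y=|K|/c>|K|$. The paper's dual formula with continuous $(\mu,\nu)$ returns $|K|$ here, so it carries the same caveat.

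What is missing is upper semicontinuity of $m$ on $\Sigma\times{[0,\infty[}$. Under that hypothesis:
\begin{itemize}
\item your argument for $h^\infty$ shows that $h$ itself is jointly lower semicontinuous;
\item $\wt m^\infty=\inf_{T>0}m(\cdot,T)/T$ is already upper semicontinuous, so the envelope changes nothing;
\item the standard theorem for jointly lower semicontinuous convex integrands then applies with your (a)--(c).
\end{itemize}
This covers the only place where the result is used, the functional $\wt\mfD_\mfM$ in \eqref{eq:def.wt.mfD0}. There $m$ is $\sfn$ or $\sfg$, does not depend on $(t,y)$, and is therefore continuous.
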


Indeed, it is not difficult to show that for every $(\ol u,\ol v) \in
\mfS\mfM(\Sigma)^N \ti \mfM(\Sigma) $ there \AAA exist \EEE sequences
$(u_n)_{n\in \N}$ and $(v_n)_{n\in \N}$ as above such that
$\calF^\circ(u_n,v_n) \to \calF(\ol u,\ol v)$.

The proof relies on the dual characterization obtained as follows.
Clearly, for a.a.\ $y\in \Sigma$ the function $h(y,\cdot,\cdot)$ is
convex and lower semi-continuous. Hence, it is uniquely determined by
its Legendre-Fenchel transform $h^*:\Sigma\ti \R^N\ti
{]{-}\infty,0]}\to [0,\infty]$:
\[
h^*(y,\mu,\nu)=\sup\bigset{\mu \cdot w+ \nu v - h(y,w,v)}{
  (w,v)\in \R^N \ti \R},
\]
(note that $\nu \leq 0$).  Defining the concave transform
$ \SSS \wt m: \EEE {]{-}\infty,0]} \to [0,\infty] $ via
$\wt m(y,\nu)=\sup\bigset{m(y,v)+\nu v}{v\geq 0} $ \AAA gives \EEE
\[
h^*(y,\mu,\nu) = \begin{cases} \frac{|\mu|^2}4\, 
       \wt m\big(y, \frac{4\nu}{|\mu|^2} \big) 
           &\text{for }\mu\neq 0,\\ \AAA  0 \EEE  
        &\text{for } \mu=0, \end{cases} 
\]
\AAA where $\wt m(y,\wh\nu)=\infty$ for $\wh\nu >- m^\infty(y)$. \EEE 
In the case $m(v)=2v$ we obtain $\wt m(\nu) =\infty$ for
$\nu\in {]{-}2,0]}$ and $\wt m(\nu)=0$ for $\nu \leq -2$, which leads
  to $h^*(y,\mu,\nu) = 0$ for $\nu\leq - |\mu|^2/2$ and $\infty$
  otherwise, cf.\ \cite[(38)\&(39)]{BenBre00CFMS}. 

The dual definition of $\calF:  \mfS\mfM(\Sigma)^N \ti
\mfM(\Sigma) \to [0,\infty]$ is now given by 
\begin{equation}
  \label{eq:calF.ext}
 \begin{aligned}
 \calF(\ol u, \ol v):= \sup\Big\{\;\sum_{k=1}^N\int_\Sigma \mu_k\dd \ol u_k
  + \int_\Sigma \nu \dd \ol v - \int_\Sigma h^*(y,\mu(y),\nu(y)) \dd y
  \; \Big| \qquad \\ 
  \mu\in \rmC^0(\ol\Sigma;\R^N),\ \nu\in \rmC^0(\Sigma),\ \nu\leq 0 \;\Big\}. 
\end{aligned}
\end{equation}
Here $\ol u_k$ are signed Radon measures on $\Sigma$ and $\ol v\in
\mfM(\Sigma)$.

\subsection{Too weak coercivity of \TOS{$\bfM$}{calJ} }
\label{su:Bound.calM}

The proof of Proposition \eqref{pr:liminf.memb} strongly relies on 
the properties of the functional $\ol M_{1,1}=\ol N_1$ which is defined via 
$\ol M_{1,1}(u_\PM,\kappa) = \inf\bigset{ \bfM_{1,1}(\kappa ,u)}{u\in
  \bfP(u_\PM)}$, see Section \ref{su:PowerLaw}.  A crucial point of our theory
is that $\kappa \mapsto \ol M_{1,1}(u_\PM,\kappa) =\ol N_1(u_\PM,\kappa)$ is
superlinear, see \eqref{eq:olN.1.expl} for the explicit formula. In Remark
\ref{rem:kappa} we argue that the theory would be considerably simpler if the
functional 
\begin{equation}
  \label{eq:A.wh.J}
Q \mapsto \calJ(u_\PM,Q):= \bigset{\bfM_{1,1} (u,Q) }{ u \in \bfP(u_\PM)}
\end{equation}
would be superlinear in $\rmL^1(I_0)$. Here we want to show that 
this is \SSS unfortunately not the \EEE case. For this we simply
consider functions having compact support on $I_0={]0,1[}$, e.g.\ 
$Q(x)= 0$ for $x \in {]0,\delta[}\cup {]1{-}\delta, 1[}$. 
For $a>\max\{u^-, u^+\}$ we use the specific competitor in $\bfP(u_\PM)$
\[
u_{a,\delta} (x) := \frac a{\delta^2} \,\min \big\{  (x{+}\eta^-)^2, \,\delta^2,\, 
 (1{+}\eta^+{-}x)^2\big\} \quad \text{ with }\eta^\pm = \delta
 \sqrt{u^\pm/a}\, \in {]0,\delta[}
\]
that satisfies $u_{a,\delta}(0)=u_-$, $u_{a,\delta}(1)=u_+$, and
$u_{a,\delta}(x)= a$ on ${[\delta,1{-}\delta]}$.  Then, we find
\[
\calJ(u_\PM,Q) \leq \bfM_{1,1}(Q,u_{a,\delta}) \leq \frac1{2a}
\int_\delta^{1-\delta}\!\! Q^2 \dd x + 4\frac a\delta. 
\]
Choosing $a$ proportional to $\| Q\|_{\rmL^2}$ we see that
$\calJ(u_\PM,Q)$ has at most linear growth for large functions with compact
support in $I_0$. In particular, it shows that $\calJ(u_\PM,\cdot)$
doesn't have superlinear growth on all of $\rmL^1(I_0)$. 

Indeed, it is easy to show linear coercivity via
\begin{align*}
\|Q\|^2_{\rmL^2} &\leq \int_0^1 \frac{Q^2}{2u}\dd x\: 2\| u\|_{\rmL^\infty}
 \leq \bfM_{1,1}(Q,u) \,\Big(4u^- {+} 4u^++
\int_0^1\frac{(u')^2}{u } \dd x  \Big)\\
& \leq   4 \bfM_{1,1}(Q,u)^2 + 2(u^-{+} u^+)^2. 
\end{align*}
Minimizing over $u\in \bfP(u_\PM)$ implies the coercivity
$\calJ(Q,u_\PM)\geq \frac12 \|Q\|_{\rmL^2} - (u^-{+}u^+)$.

\paragraph*{Acknowledgments.} The authors are grateful for stimulating
discussions with Felix Otto, Mark Peletier, and Andr\'e Schlichting.  \AAA
Moreover, they are thankful to two careful reviewers that helped to improve the
manuscript considerably. \EEE The research has been partially funded by
Deutsche Forschungsgemeinschaft (DFG) through the Collaborative Research Center
SFB 1114 ``\emph{Scaling Cascades in Complex Systems}'' (DFG project no.\
235221301), Subproject C05 ``Effective models for materials and interfaces with
multiple scales''.

\footnotesize
\addcontentsline{toc}{section}{References}


\begin{thebibliography}{11}\itemsep0.1em

\bibitem[AGS05]{AmGiSa05GFMS}
L.~Ambrosio, N.~Gigli, and G.~Savar{\'e}, \emph{Gradient flows in metric spaces
  and in the space of probability measures}, Lectures in Mathematics ETH
  Z\"urich, Birkh\"auser Verlag, Basel, 2005.

\bibitem[AM{\etalchar{*}}12]{AMPSV12PLWG}
S.~Arnrich, A.~Mielke, M.~A.~Peletier, G.~Savar\'e, and M.~Veneroni:
  \emph{Passing to the limit in a {W}asserstein gradient flow: from diffusion
  to reaction}. Calc. Var. Part. Diff. Eqns. \textbf{44} (2012) 419--454.

\bibitem[BeB00]{BenBre00CFMS}
J.-D.~Benamou and Y.~Brenier: \emph{A computational fluid mechanics solution to
  the {M}onge-{K}antorovich mass transfer problem}. Numer. Math. \textbf{84}:3
  (2000) 375--393.

\bibitem[BoD04]{BodDer04CFNE}
T.~Bodineau and B.~Derrida: \emph{Current fluctuations in non-equilibrium
  diffusive systems: an additivity principle}. Phys. Review Letters
  \textbf{92}:18 (2004) 1--4, Revision 30 Oct 2018 arXiv:0402305v1.

\bibitem[Br{\'e}71]{Brez71MMHS}
H.~Br{\'e}zis, \emph{Monotonicity methods in {Hilbert} spaces and some
  applications to nonlinear partial differential equations}, Contribution to
  Nonlinear Functional Analysis, Madison, 1971, {Proc. Sympos. Univ.\
  Wisconsin}, pp.~101--156.

\bibitem[CaM14]{CarMaa14A2WM}
E.~A.~Carlen and J.~Maas: \emph{An analog of the 2-{W}asserstein metric in
  non-commutative probability under which the {Fermionic Fokker-Planck}
  equation is gradient flow for the entropy}. Commun. Math. Phys.
  \textbf{331}:3 (2014) 887--926.

\bibitem[CaM17]{CarMaa17GFEI}
\bysame: \emph{Gradient flow and entropy inequalities for quantum {Markov}
  semigroups with detailed balance}. J. Funct. Analysis \textbf{273}:5 (2017)
  1810--1869.

\bibitem[CDP24]{CiDaPo24EICP}
G.~Ciavolella, N.~David, and A.~Poulain: \emph{Effective interface conditions
  for a porous medium type problem}. Interfaces Free Bound. \textbf{26}:2
  (2024) 161--188.

\bibitem[CG{\etalchar{*}}19]{CGLP19DAEI}
M.~A.~J.~Chaplain, C.~Giverso, T.~Lorenzi, and L.~Preziosi: \emph{Derivation
  and application of effective interface conditions for continuum mechanical
  models of cell invasion through thin membranes}. SIAM J. Appl. Math.
  \textbf{79}:5 (2019) 2011--2031.

\bibitem[CH{\etalchar{*}}12]{CHLZ12FPEF}
S.-N.~Chow, W.~Huang, Y.~Li, and H.~Zhou: \emph{Fokker-{P}lanck equations for a
  free energy functional or {M}arkov process on a graph}. Arch. Rational Mech.
  Anal. \textbf{203}:3 (2012) 969--1008.

\bibitem[DeZ87]{DemZei87LDTA}
A.~Dembo and O.~Zeitouni, \emph{Large deviations techniques and applications},
  2 ed., Springer, New York, 1987.

\bibitem[DFM19]{DoFrMi19GSWE}
P.~Dondl, T.~Frenzel, and A.~Mielke: \emph{A gradient system with a wiggly
  energy and relaxed {EDP}-convergence}. ESAIM Control Optim. Calc. Var.
  \textbf{25} (2019) 68/1--45.

\bibitem[ErM14]{ErbMaa14GFSD}
M.~Erbar and J.~Maas: \emph{Gradient flow structures for discrete porous medium
  equations}. Discr. Cont. Dynam. Systems Ser.~A \textbf{34}:4 (2014)
  1355--1374.

\bibitem[FeG23]{FehGes23NELD}
B.~Fehrman and B.~Gess: \emph{Non-equilibrium large deviations and
  parabolic-hyperbolic {PDE} with irregular drift}. Invent. Math.
  \textbf{234}:2 (2023) 573--636.

\bibitem[Fei72]{Fein72CKCC}
M.~Feinberg: \emph{On chemical kinetics of a certain class}. Arch. Rational
  Mech. Anal. \textbf{46} (1972) 1--41.

\bibitem[FoL07]{FonLeo07MMCV}
I.~Fonseca and G.~Leoni, \emph{Modern methods in the calculus of variations:
  {$L^p$} spaces}, Springer, 2007.

\bibitem[Fre19]{Fren19DEGS}
T.~Frenzel, \emph{On the derivation of effective gradient systems via
  {EDP}-convergence}, Ph.D. thesis, Humboldt-Universit\"at zu Berlin,
  Mathematisch-Naturwissenschaftliche Fakult\"at, 2019, doi 10.18452/21391.

\bibitem[FrL21]{FreLie21EDTS}
T.~Frenzel and M.~Liero: \emph{Effective diffusion in thin structures via
  generalized gradient systems and {EDP}-convergence}. Discr. Cont. Dynam.
  Systems Ser.~S \textbf{14}:1 (2021) 395--425.

\bibitem[GeH25]{GesHey25PMEL}
B.~Gess and D.~Heydecker: \emph{The porous medium equation: large deviations
  and gradient flow with degenerate and unbounded diffusion}. Comm. Pure Appl.
  Math. (2025) .

\bibitem[GlM13]{GliMie13GSSC}
A.~Glitzky and A.~Mielke: \emph{A gradient structure for systems coupling
  reaction-diffusion effects in bulk and interfaces}. Z. angew. Math. Phys.
  (ZAMP) \textbf{64} (2013) 29--52.

\bibitem[GNK17]{GaNeKn17DETC}
M.~Gahn, M.~{Neuss-Radu}, and P.~Knabner: \emph{Derivation of effective
  transmission conditions for domains separated by a membrane for different
  scaling of membrane diffusivity}. Discr. Cont. Dynam. Systems Ser.~S
  \textbf{10}:4 (2017) 773--797.

\bibitem[Grm10]{Grme10MENT}
M.~Grmela: \emph{Multiscale equilibrium and nonequilibrium thermodynamics in
  chemical engineering}. Adv. Chem. Eng. \textbf{39} (2010) 75--128.

\bibitem[HMS25]{HeMiSt25?DCLN}
G.~Heinze, A.~Mielke, and A.~Stephan: \emph{Discrete-to-continuum limit for
  nonlinear reaction-diffusion systems via {EDP} convergence for gradient
  systems}. Submitted (2025) , WIAS Preprint 3194, arXiv:2504.06837.

\bibitem[HPS24]{HePiSc24?GFMG}
G.~Heinze, J.-F.~Pietschmann, and A.~Schlichting: \emph{Gradient flows on
  metric graphs with reservoirs: Microscopic derivation and multiscale limits}.
  Preprint (2024) , arXiv:2412.16775.

\bibitem[JKO98]{JoKiOt98VFFP}
R.~Jordan, D.~Kinderlehrer, and F.~Otto: \emph{The variational formulation of
  the {F}okker-{P}lanck equation}. SIAM J. Math. Analysis \textbf{29}:1 (1998)
  1--17.

\bibitem[L{\'e}o95]{Leon95LDLR}
C.~L{\'e}onard: \emph{Large deviations for long range interacting particle
  systems with jumps}. Annalles Inst. H. Poincar\'e sec.\,B \textbf{31}:2
  (1995) 289--323.

\bibitem[LM{\etalchar{*}}17]{LMPR17MOGG}
M.~Liero, A.~Mielke, M.~A.~Peletier, and D.~R.~M.~Renger: \emph{On microscopic
  origins of generalized gradient structures}. Discr. Cont. Dynam. Systems
  Ser.~S \textbf{10}:1 (2017) 1--35.

\bibitem[Maa11]{Maas11GFEF}
J.~Maas: \emph{Gradient flows of the entropy for finite {M}arkov chains}. J.
  Funct. Anal. \textbf{261} (2011) 2250--2292.

\bibitem[Mar15]{Marc15CECP}
R.~Marcelin: \emph{Contribution a l'\'etude de la cin\'etique
  physico-chimique}. Annales de Physique \textbf{III} (1915) 120--231.

\bibitem[Mie11]{Miel11GSRD}
A.~Mielke: \emph{A gradient structure for reaction-diffusion systems and for
  energy-drift-diffusion systems}. Nonlinearity \textbf{24} (2011) 1329--1346.

\bibitem[Mie13a]{Miel13GCRE}
\bysame: \emph{Geodesic convexity of the relative entropy in reversible
  {Markov} chains}. Calc. Var. Part. Diff. Eqns. \textbf{48}:1 (2013) 1--31.

\bibitem[Mie13b]{Miel13TMER}
\bysame: \emph{Thermomechanical modeling of energy-reaction-diffusion systems,
  including bulk-interface interactions}. Discr. Cont. Dynam. Systems Ser.~S
  \textbf{6}:2 (2013) 479--499.

\bibitem[Mie16]{Miel16EGCG}
\bysame, \emph{On evolutionary {$\Gamma$}-convergence for gradient systems
  {(Ch.\,3)}}, Macroscopic and Large Scale Phenomena: Coarse Graining, Mean
  Field Limits and Ergodicity (A.~Muntean, J.~Rademacher, and A.~Zagaris,
  eds.), Lecture Notes in Applied Math. Mechanics Vol.\,3, Springer, 2016,
  Proc. of Summer School in Twente University, June 2012, pp.~187--249.

\bibitem[Mie23a]{Miel23IAGS}
\bysame, \emph{An introduction to the analysis of gradient systems}, WIAS
  Preprint 3022, arXiv:2306.05026, 2023, (Script of a lecture course 2022/23,
  100\,pp.).

\bibitem[Mie23b]{Miel23NESS}
\bysame: \emph{Non-equilibrium steady states as saddle points and
  {EDP}-convergence for slow-fast gradient systems}. J. Math. Physics
  \textbf{64}:123502 (2023) 1--27.

\bibitem[Mie25]{Miel25?PGSN}
\bysame: \emph{Port gradient systems, non-equilibrium steady states, and
  {Prigogine}'s principle of minimal dissipation}. In preparation (2025) .

\bibitem[MiM17]{MitMie17EGSL}
M.~Mittnenzweig and A.~Mielke: \emph{An entropic gradient structure for
  {Lindblad} equations and couplings of quantum systems to macroscopic models}.
  J. Stat. Physics \textbf{167}:2 (2017) 205--233.

\bibitem[MiN22]{MieNau22EGTW}
A.~Mielke and J.~Naumann: \emph{On the existence of global-in-time weak
  solutions and scaling laws for {Kolmogorov}'s two-equation model of
  turbulence}. Z.\ angew.\ Math.\ Mech. (ZAMM) \textbf{102}:9 (2022)
  e202000019/1--31.

\bibitem[MMP21]{MiMoPe21EFED}
A.~Mielke, A.~Montefusco, and M.~A.~Peletier: \emph{Exploring families of
  energy-dissipation landscapes via tilting --- three types of {EDP}
  convergence}. Contin. Mech. Thermodyn. \textbf{33} (2021) 611--637.

\bibitem[MMZ26]{MaMiZi26?NACR}
D.~Matthes, A.~Mielke, and J.~Zimmer: \emph{A new approach to the chain rule
  for some doubly-nonlinear diffusion equations with {$p$-Laplacian}}. In
  preparation (2026) .

\bibitem[Mou16]{Mous16VCAL}
A.~Moussa: \emph{Some variants of the classical {Aubin--Lions} lemma}. J. Evol.
  Eqns. \textbf{16}:1 (2016) 65--93.

\bibitem[MP{\etalchar{*}}17]{MPPR17NETP}
A.~Mielke, R.~I.~A.~Patterson, M.~A.~Peletier, and D.~R.~M.~Renger:
  \emph{Non-equilibrium thermodynamical principles for chemical reactions with
  mass-action kinetics}. SIAM J. Appl. Math. \textbf{77}:4 (2017) 1562--1585.

\bibitem[MPS21]{MiPeSt21EDPC}
A.~Mielke, M.~A.~Peletier, and A.~Stephan: \emph{{EDP}-convergence for
  nonlinear fast-slow reaction systems with detailed balance}. Nonlinearity
  \textbf{34}:8 (2021) 5762--5798.

\bibitem[MSS25]{MiScSt25?DFOD}
A.~Mielke, A.~Schlichting, and A.~Stephan: \emph{Derivation of the fourth-order
  {DLSS} equation with nonlinear mobility via chemical reactions}. Submitted
  (2025) , arXiv:2510.07149, WIAS Preprint 3221.

\bibitem[NeJ07]{NeuJag07ETCR}
M.~Neuss-Radu and W.~J{\"a}ger: \emph{Effective transmission conditions for
  reaction-diffusion processes in domains separated by an interface}. SIAM J.
  Math. Analysis \textbf{39}:3 (2007) 687--720.

\bibitem[Ott96]{Otto96DDDE}
F.~Otto, \emph{Double degenerate diffusion equations as steepest descent},
  Preprint no.\ 480, SFB 256, University of Bonn, 1996.

\bibitem[Ott01]{Otto01GDEE}
\bysame: \emph{The geometry of dissipative evolution equations: the porous
  medium equation}. Comm. Partial Diff. Eqns. \textbf{26} (2001) 101--174.

\bibitem[PeS23]{PelSch23CGST}
M.~A.~Peletier and A.~Schlichting: \emph{Cosh gradient systems and tilting}.
  Nonlinear Analysis \textbf{231}:113094 (2023) 1--113.

\bibitem[PRV14]{PeReVa14LDSH}
M.~A.~Peletier, F.~Redig, and K.~Vafayi: \emph{Large deviations in stochastic
  heat-conduction processes provide a gradient-flow structure for heat
  conduction}. J. Math. Physics \textbf{55} (2014) 093301/19.

\bibitem[RoS03]{RosSav03TIEC}
R.~Rossi and G.~Savar\'e: \emph{Tightness, integral equicontinuity and
  compactness for evolution problems in {Banach} spaces}. Ann. Scoula Norm.
  Sup. Pisa Cl. Sci (5) \textbf{2}:2 (2003) 395--431.

\bibitem[SaS04]{SanSer04GCGF}
E.~Sandier and S.~Serfaty: \emph{Gamma-convergence of gradient flows with
  applications to {G}inzburg-{L}andau}. Comm. Pure Appl. Math. \textbf{LVII}
  (2004) 1627--1672.

\bibitem[Ste21]{Step21CGED}
A.~Stephan: \emph{{EDP-convergence} for a linear reaction-diffusion system with
  fast reversible reaction}. Calc. Var. Part. Diff. Eqns. \textbf{60}:6 (2021)
  226/35\,pp.

\end{thebibliography}

\newcommand{\etalchar}[1]{$^{#1}$}
\def\cprime{$'$}
\providecommand{\bysame}{\leavevmode\hbox to3em{\hrulefill}\thinspace}
\providecommand{\MR}{}

\end{document}